\documentclass{amsart}
\title{Forcing with overlapping supercompact extenders}
\date{\today}
\author{Sittinon Jirattikansakul}
\usepackage{graphicx}
\usepackage[export]{adjustbox}
\graphicspath{ {./images/} }
\usepackage{amssymb}
\usepackage{amsmath}
\usepackage{float}
\usepackage{caption}
\usepackage{hyperref}
\usepackage[dvipsnames]{xcolor} 
\usepackage{tikz-cd}
\usepackage{enumitem}
\DeclareMathOperator{\dom}{dom}

\DeclareMathOperator{\rge}{rge}

\DeclareMathOperator{\cf}{cf}
\DeclareMathOperator{\crit}{crit}
\DeclareMathOperator{\OB}{OB}

\DeclareMathOperator{\mc}{mc}

\DeclareMathOperator{\supp}{supp}

\DeclareMathOperator{\Ult}{Ult}
\theoremstyle{plain}
\newtheorem{thm}{Theorem}[section]
\newtheorem{lemma}[thm]{Lemma}
\newtheorem{prop}[thm]{Proposition}
\newtheorem{claim}{Claim}[thm]

\newtheorem{coll}[thm]{Corollary}

\theoremstyle{definition}
\newtheorem{defn}[thm]{Definition}
\newtheorem{rmk}[thm]{Remark}
\usepackage[dvipsnames]{xcolor}
\newenvironment{claimproof}[1]{\par\noindent{Proof}\space#1}{\hfill $\blacksquare$}
\tikzset{close/.style={near start,outer sep=-1pt}}

\begin{document}

\begin{abstract}

We build a supercompact version of the forcing defined in \cite{gitik2019}.
For each singular cardinal in the ground model with any fixed cofinality, which is a limit of supercompact cardinals, it is possible to force so that the size of the powerset of the singular cardinal is arbitrarily large, while preserving the singular cardinal.
An important feature of this forcing is that it is possible to define the forcing so that the successor of the singular cardinal is collapsed, but all the cardinals above it are preserved.

\end{abstract}

\maketitle

\section{introduction}

The {\em Singular Cardinal Hypothesis} (SCH) states that if $\kappa$ is a singular cardinal, then $2^\kappa$ has the smallest possible value under all cardinal arithmetic provable in ZFC.
In particular, if $\kappa$ is singular strong limit, then $2^\kappa=\kappa^+$.
Gitik \cite{gitik2019} built a forcing from overlapping extenders witnessing strongness in which SCH fails for a singular cardinal of arbitrary fixed cofinality which is singular in a ground model.
Let $\kappa$ be such a cardinal and $\lambda>\kappa$ be regular.
It is possible to blow up the value of $2^\kappa$ to be $\lambda$.
The large cardinal assumption used to build the forcing in \cite{gitik2019} is the existence of an increasing sequence of strong cardinals $\langle \kappa_\alpha: \alpha<\cf(\kappa) \rangle$ whose supremum is $\kappa$, where each cardinal $\kappa_\alpha$ carries an extender witnessing strongness of $\kappa_\alpha$, and for each $\alpha<\beta$, $E_\alpha \in \Ult(V,E_\beta)$.
This hypothesis is proven to be weaker than the existence of a Woodin cardinal \cite{jir2021}
With some interleaved supercompact cardinals, the forcing also gives some interesting combinatorial results on successors of singular cardinals, e.g. a stationary reflection principle and a failure of the approachability property (see \cite{gitik2021nonap}, \cite{benhayutunger2021}).

In this paper, we build a forcing which is a supercompact version of \cite{gitik2019}.
We prepare an increasing sequence of supercompact cardinals $\langle \kappa_\alpha: \alpha<\eta \rangle$ such that if $\lambda> \sup_{\alpha<\eta} \kappa_\alpha$ is regular, each $\kappa_\alpha$ carries a long extender $E_\alpha$ witnessing $\kappa_\alpha$ being $\lambda$-supercompact, and $j_\alpha:V \to \Ult(V,E_\alpha)$ is such that $j_\alpha(\lambda) \geq \lambda^{++}$ (we can prepare the value $j_\alpha(\lambda)$ to be arbitrarily high).
Then we define a $\lambda^{++}$-c.c. forcing such that in an extension, $2^{\sup_\alpha{\kappa_\alpha}} \geq \sup_\alpha |j_\alpha(\lambda)|$. Below $\sup_\alpha \kappa_\alpha$, most of the cardinals will be preserved.
The only cardinals which are collapsed in the extension belong to one of the intervals of the form $(\sup_{\gamma<\beta} \kappa_\gamma,\lambda_\beta^+)$ with $\beta<\eta$ limit, for some $\lambda_\beta<\kappa_\beta$, or the interval $(\sup_{\alpha<\eta}\kappa_\alpha,\lambda^+)$.

We make some notations for our convenience.
For a sequence $\vec{x}=\langle x_\alpha: \alpha<\eta \rangle$, let $\vec{x} \restriction \beta=\langle x_\alpha: \alpha<\beta \rangle$ and  $\vec{x} \setminus \beta=\langle x_\alpha: \alpha \geq \beta \rangle$.
If $X$ is a set of sequence, let $X \restriction \beta= \{\vec{x} \restriction \beta: \vec{x} \in X\}$.
Define $X \setminus \beta$ is a corresponding fashion
For functions $f,g$ and $h$, define $f \circ g \circ h$ as a function with domain $\{ x \in \dom(h): h(x) \in \dom(g) \text{ and } g(h(x)) \in \dom(f)\}$, and for $x \in \dom(f \circ g \circ h)$, $f \circ g \circ h=f(g(h(x)))$.
If $X$ is a set of functions and $g$ and $h$ are functions, define $g \circ X \circ h$ as $\{g \circ f \circ h: f \in X\}$.
If $X$ is a set of functions and $d$ is a set, define $X \restriction d=\{f \restriction d: f \in X\}$, where $f \restriction d= f \restriction d \cap \dom(f)$.
If $f$ and $g$ are functions and $\dom(g) \subseteq \dom(f)$, define $f \oplus g$, {\em $f$ overwritten by $g$}, as a function $h$ with $\dom(h)=\dom(f)$, $h(x)=g(x)$ if $x \in \dom(g)$, otherwise $g(x)=f(x)$.
We point out the locations where we give non-standard notations which we use throughout the paper in order to facilitate the readers: the paragraph after Lemma \ref{woodinscc2} and the last paragraph in Section \ref{analysisofextenders}.
We also recall some important values timely.
The organization of this paper is as follows, where we note that from Section \ref{analysisofextenders} - \ref{scaleanalysis} we assume GCH.

\begin{itemize}

\item In Section \ref{supercompactextenders} we make an analysis of a characterization of extenders which capture supercompactness.

\item In Section \ref{woodinizedsupercompact} the definition of a Woodinized supercompact cardinal is introduced. 
Then we use a variation of the characterization of Woodinizedsupercompactness to build a sequence of extenders witnessing supercompactness. 
It will be served as an initial setting to build the forcing.

\item In Section \ref{analysisofextenders} we analyze the extenders which are built from Section \ref{woodinizedsupercompact}.
We define {\em domains} and {\em objects} which were first introduced by Gitik and Merimovich \cite{merimovich}.
Those concepts are important features in our forcing.

\item In each of Section \ref{forcing1extender}, \ref{forcing2extenders}, \ref{forcingomegaextender}, and \ref{forcinganyextender}, we define forcings from sequences of extenders whose sequences are of different lengths.
This is due to the fact that the proofs of the Prikry property and the strong Prikry property of a forcing rely on inductions on the lengths of the sequences of extenders.
In particular, to prove that the properties hold for a forcing defined from a sequence of exntenders whose sequence has a certain length,  the Prikry property and the strong Prikry property of the forcings defined from sequences of extenders of shorter lengths are required. 
We prove the Prikry property in Section \ref{forcing1extender}, \ref{forcing2extenders}, and \ref{forcingomegaextender}, and the sketch of the proof of the strong Prikry property is provided in \ref{forcingomegaextender} only.
The forcing in Section \ref{forcinganyextender} has the most general form.

\item In Section \ref{cardinalpreserve} we determine the cardinals which are preserved and collapsed.

\item In Section \ref{blowinguppowersets} we show that our forcing breaks the Singular Cardinal Hypothesis on the singular cardinals which are suprema of the supercompact cardinals.

\item In Section \ref{scaleanalysis} we make some analysis of the scales which are derived naturally from our forcing.

\end{itemize}

We finally draw a conclusion in Section \ref{conclusion}.
 We assume that the readers are familiar with forcings and extenders.

\section{supercompact extenders}
\label{supercompactextenders}

In this section we determine the lengths of an extender which captures supercompactness of an embedding.
For the detailed account on extenders, see Chapter 26 of \cite{higherinf}.

\begin{defn} \label{<scc}

Let $\kappa<\lambda$ be cardinals.
$\kappa$ is ${<}\lambda$-supercompact if there is an elementary embedding $j:V \to M$ such that $\crit(j)=\kappa$, $j(\kappa)>\lambda$, and ${}^{< \lambda} M \subseteq M$.

\end{defn}

Some set theorists define $\kappa$ to be ${<}	\lambda$-supercompact as $\kappa$ being $\gamma$-supercompact for $\gamma<\lambda$.
This is strictly weaker than what we define in Definition \ref{<scc}.
Others may allow the value $j(\kappa)$ in Definition \ref{<scc} to be at least $\lambda$, although such $\kappa$ in the definition with $j(\kappa)=\lambda$  characterizes almost hugeness of $\kappa$.

We will make an analysis of the extender derived from an elementary embedding $j$ witnessing some supercompactness.

Let $j:V \to M$ be an embedding witnessing $\kappa$ being ${<}\lambda$-supercompact. 
Fix $\xi \geq \lambda$. We give an overview of the structure of the $(\kappa,\xi)$-extender $E$ derived from $j$.
For each $a \in [\xi]^{<\omega}$, let $\lambda_a$ be the least ordinal such that $\max(a) < j(\lambda_a)$. Define 

\begin{center}
$A \in E_a$ if $a \in j(A)$.
\end{center}
$E_a$ is a $\kappa$-complete ultrafilter on $[\lambda_a]^{|a|}$ (note that $E_{\{\kappa\}}$ is normal).
Let $j_a:V \to M_a=\Ult(V,E_a)$.
The map $k_a([f]_{E_a})=j(f)(a)$ is an elementary embedding from $M_a$ to $M$.

If $a \subseteq b \in [\xi]^{<\omega}$, we enumerate $b$ in  increasing order as $\alpha_0< \dots<\alpha_{|b|-1}$. Assume the increasing enumeration of $a$ is $\alpha_{n_0}< \alpha_{n_1}\dots < \alpha_{n_{|a|-1}}$. There is a natural projection map $\pi_{b,a}:[\lambda_b]^{|b|} \to [\lambda_a]^{|a|}$ defined by 
  
  \begin{center}
  
  $\pi_{b,a}(\{\beta_0, \beta_1, \dots , \beta_{|b|-1}\})=\{\beta_{n_0},\beta_{n_1},\dots,\beta_{n_{|a|-1}}\}$ where $\beta_0< \beta_1< \dots <\beta_{|a|-1}$.
  
   \end{center} 
   
   This induces an elementary embedding $i_{a,b}:M_a \to M_b$ by the map 
   
   \begin{center}
   
   $[f]_{E_a} \mapsto [f \circ \pi_{b,a}]_{E_b}$.
   
   \end{center} 
   
   The family
   
   \begin{center}
   
    $\langle M_a ,i_{a,b}: a \subseteq b \in [\xi]^{<\omega} \rangle$
    
    \end{center}
    
     forms a directed system.
     Let $M_E$ be its direct limit. 
     We can form, for all $a \in [\xi]^{<\omega}$, elementary embeddings
     
     \begin{center}
     
     $j_{a,E} : M_a \to M_E$
     
     \end{center} 
     
     and 
     
     \begin{center}
     
     $j_E : V \to M_E$
     
     \end{center}
     
      such that 
      
      \begin{center}
      
      $j_E=j_{a,E} \circ j_a$ for all $a$.
      
      \end{center}
      
  Note that $M_E$ is isomorphic to an elementary submodel of $M$ with the factor map $k:M_E \to M$ defined as follows:
  For $x \in M_E$, $x=j_{a,E}([f]_{E_a})$ for some $a$,
  define $k(x)=j(f)(a)$.
  Hence, $M_E$ is well-founded, we identify $M_E$ as its transitive collapse, and assume that $k$ is the inverse of the transitive collapse from $\rge(k)$ onto $M_E$.
  
  \begin{center}

\[\begin{tikzcd}
	V &&&& M \\
	\\
	\\
	{M_a} &&&& M_E \\
	&& {M_b}
	\arrow[close,"j" {pos=0.5}, from=1-1, to=1-5]
	\arrow[close,"{j_{a,E}}"{pos=0.5}, from=4-1, to=4-5]
	\arrow[close,"k"'{pos=0.5}, from=4-5, to=1-5]
	\arrow[close,"{j_E}"{pos=0.3}, from=1-1, to=4-5]
	\arrow[close,"{i_{a,b}}"'{pos=0.5}, from=4-1, to=5-3]
	\arrow[close,"{k_b}"{pos=0.65}, from=5-3, to=1-5]
	\arrow[close,"{j_b}"{pos=0.35}, from=1-1, to=5-3]
	\arrow[close,"{k_a}"{pos=0.7}, from=4-1, to=1-5]
	\arrow[close,"{j_a}"'{pos=0.5}, from=1-1, to=4-1]
	\arrow[close,"{j_{b,E}}"'{pos=0.5}, from=5-3, to=4-5]
\end{tikzcd}\]

\end{center}

We prove a series of basic properties regarding the map $j_E$.

\begin{prop}\label{critpoint}
$\crit(k) \geq \xi$. If $\xi=j(\beta)$ for some $\beta$, then $\crit(k)>\xi$.
In particular, if $\xi>\lambda$, then $j_E(\kappa)>\lambda$.
\end{prop}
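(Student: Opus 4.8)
My plan is to derive all three statements from the single fact that the factor map $k$ is the identity on ordinals below $\xi$. Recall that $k\colon M_E\to M$ is elementary and both models are transitive, so $k$ is strictly increasing on ordinals and $k(\mu)\geq\mu$ for every ordinal $\mu$ of $M_E$; the whole content of the first claim is the reverse inequality $k(\mu)\leq\mu$ for $\mu<\xi$.

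To prove $\crit(k)\geq\xi$ I would argue in two steps. First I would check that $\xi\subseteq\rge(k)$. Fixing $\alpha<\xi$ and writing $\operatorname{id}$ for the identity function $\{\beta\}\mapsto\beta$ on $[\lambda_{\{\alpha\}}]^1$, the element $s_\alpha:=j_{\{\alpha\},E}([\operatorname{id}]_{E_{\{\alpha\}}})$ is an ordinal of $M_E$, and the definition of $k$ gives $k(s_\alpha)=j(\operatorname{id})(\{\alpha\})=\alpha$, using that $\alpha<j(\lambda_{\{\alpha\}})$ so that $\{\alpha\}$ lies in the domain of the identity function $j(\operatorname{id})$. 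Thus every ordinal below $\xi$ is hit by $k$. Second, I would run a counting argument to upgrade this to $k\restriction\xi=\operatorname{id}$: given $\alpha<\xi$, write $\alpha=k(\mu)$, so $\mu\leq k(\mu)=\alpha$; since every $\beta<\alpha$ is also of the form $k(\nu_\beta)$, strict monotonicity forces $\nu_\beta<\mu$, and $\beta\mapsto\nu_\beta$ is an order-preserving injection of $\alpha$ into $\mu$, whence $\mu\geq\alpha$ and therefore $k(\alpha)=k(\mu)=\mu=\alpha$. This yields $\crit(k)\geq\xi$.

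For the second statement I would use the standard fact that a critical point never lies in the range of its embedding: if we had $\crit(k)=\xi$, then $k$ would fix every ordinal below $\xi$ and send $\xi$ and everything above it strictly past $\xi$, so $\xi\notin\rge(k)$. But the hypothesis $\xi=j(\beta)=k(j_E(\beta))$ puts $\xi$ in $\rge(k)$, a contradiction; combined with $\crit(k)\geq\xi$ this forces $\crit(k)>\xi$. For the third statement, assuming $\xi>\lambda$ I would suppose toward a contradiction that $j_E(\kappa)\leq\lambda$. Then $j_E(\kappa)\leq\lambda<\xi\leq\crit(k)$, so $k$ fixes $j_E(\kappa)$, and from $k\circ j_E=j$ I would get $j(\kappa)=k(j_E(\kappa))=j_E(\kappa)\leq\lambda$, contradicting $j(\kappa)>\lambda$ from the ${<}\lambda$-supercompactness of $\kappa$. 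Hence $j_E(\kappa)>\lambda$.

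I expect the only real work to be in the first claim, and specifically in the identification $k(s_\alpha)=\alpha$: it requires tracking the identity function through the direct-limit map $j_{\{\alpha\},E}$ and the defining equation $k(j_{a,E}([f]_{E_a}))=j(f)(a)$. The counting step then needs nothing more than transitivity of $M_E$ (so that the ordinals of $M_E$ below $\mu$ are exactly the ordinals $<\mu$) together with monotonicity of $k$. Neither step presents a genuine obstacle, so the main care is bookkeeping: making sure the generators $s_\alpha$ are correctly matched with their images $\alpha$ under $k$.
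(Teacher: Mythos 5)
Your proof is correct and follows essentially the same route as the paper: both show $\xi\subseteq\rge(k)$ by representing each $\alpha<\xi$ via the identity function as $k\bigl(j_{\{\alpha\},E}([\operatorname{id}]_{E_{\{\alpha\}}})\bigr)=j(\operatorname{id})(\{\alpha\})=\alpha$, and then conclude $k\restriction\xi=\operatorname{id}$, treat $\xi\in\rge(k)$ for the second clause, and use $k\circ j_E=j$ with $j(\kappa)>\lambda$ for the third. The only cosmetic differences are that you verify by hand (via the order-preserving injection argument) the fact the paper gets for free from identifying $k$ as the inverse of the transitive collapse, and you spell out the ``in particular'' clause that the paper leaves implicit.
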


\begin{proof}

For $\gamma < \xi$, $\gamma=j(id)(\gamma) \in \rge(k)$, so $\gamma \in \rge(k)$.
Since $k$ is the inverse of the transitive collapse, $\xi \subseteq M_E$ and $k \restriction \xi = id$.
If $\xi=j(\beta)$ for some $\beta<\xi$, then since $j_E(\beta)=j(\beta)$, $\xi=k \circ j_E(\beta)=k \circ j(\beta)=k(\xi)$.
If $\xi=j(\xi)$, then $\xi=j(\xi)=k \circ j_E(\xi) \geq k(\xi) \geq \xi$. 

\end{proof}

The following proposition is not used in this paper, but is worth being mentioned.

\begin{prop}

If there is a function $s:\kappa \to \kappa$ such that $j(s)(\kappa)=\lambda$, then $\crit(k)>\lambda$.

\end{prop}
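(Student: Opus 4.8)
The plan is to reduce the claim to the single equation $k(\lambda)=\lambda$, and to verify it by exhibiting $\lambda$ as the $k$-image of an ordinal of $M_E$ built from $s$. By Proposition \ref{critpoint} we already have $\crit(k) \geq \xi \geq \lambda$, so $k \restriction \lambda$ is the identity; in particular $k(\kappa)=\kappa$ (recall $\kappa<\lambda$) and, by order-preservation, $k(\lambda) \geq \lambda$. Since everything below $\lambda$ is already fixed by $k$, the critical point of $k$ is either $\lambda$ or strictly larger, and $\crit(k)>\lambda$ is equivalent to $k(\lambda)=\lambda$. (When $\xi>\lambda$ the conclusion is immediate from Proposition \ref{critpoint}, so the content is really the case $\xi=\lambda$, but the argument below makes no use of which case we are in.)

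First I would introduce the ordinal $\mu := j_E(s)(\kappa)$, evaluated in $M_E$. This is well defined: since $\crit(j_E)=\kappa$ we have $\kappa<j_E(\kappa)=\dom\big(j_E(s)\big)$, so $\mu$ is an ordinal below $j_E(\kappa)$. The key computation is to push $\mu$ through $k$. Because $k \circ j_E = j$ we have $k\big(j_E(s)\big)=j(s)$, and combined with $k(\kappa)=\kappa$, elementarity of $k$ gives
\[
k(\mu)=k\big(j_E(s)(\kappa)\big)=j(s)\big(k(\kappa)\big)=j(s)(\kappa)=\lambda,
\]
which is the only place the hypothesis $j(s)(\kappa)=\lambda$ enters.

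It then remains to identify $\mu$ with $\lambda$ using that $k$ is an injective, order-preserving map equal to the identity below $\lambda$. If $\mu<\lambda$ then $k(\mu)=\mu<\lambda$, contradicting $k(\mu)=\lambda$; if $\mu>\lambda$ then $k(\mu)>k(\lambda)\geq\lambda$, again contradicting $k(\mu)=\lambda$. Hence $\mu=\lambda$, and therefore $k(\lambda)=k(\mu)=\lambda$, giving $\crit(k)>\lambda$. I do not anticipate a real obstacle here: the whole idea is the choice of representative $j_E(s)(\kappa)$, and the only points requiring care are that $\lambda \subseteq M_E$ with $k \restriction \lambda=\mathrm{id}$ (both furnished by Proposition \ref{critpoint}), so that the order-comparison with $\lambda$ is legitimate, together with the bookkeeping in the final trichotomy.
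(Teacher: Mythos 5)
Your proposal is correct and uses the paper's own argument: the key point in both is the representative $j_E(s)(\kappa)$, whose image under the factor map is $k(j_E(s)(\kappa))=j(s)(\kappa)=\lambda$, combined with $\crit(k)\geq\xi\geq\lambda$ from Proposition \ref{critpoint}. The paper phrases the conclusion as ``$\lambda\in\rge(k)$, hence $\crit(k)>\lambda$'' (since $k$ is the anticollapse, its critical point is not in its range), while you instead verify $j_E(s)(\kappa)=\lambda$ by the order-preservation trichotomy; these are the same one-line idea written with different bookkeeping.
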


\begin{proof}

Since $\xi \geq \lambda$, $\crit(k) \geq \lambda$.
If $\lambda=j(s)(\kappa)=k(j_E(s)(\kappa))$, $\lambda \in \rge(k)$, and so $\crit(k)>\lambda$.

\end{proof}

\begin{prop}\label{rep1}

$M_E=\{j_E(f)(a): a \in [\xi]^{<\omega} \text{ and } f:[\lambda_a]^{|a|} \to V\}$ and $k(j_E(f)(a))=j(f)(a)$.

\end{prop}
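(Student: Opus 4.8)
The plan is to push the computation down to the single ultrapowers $M_a=\Ult(V,E_a)$ and then transport the answer up to the direct limit along the maps $j_{a,E}$. Write $d_a:=[id]_{E_a}\in M_a$ for the seed of $E_a$. The first ingredient, which I would record as a preliminary observation, is the standard seed representation: for every $f:[\lambda_a]^{|a|}\to V$ one has $[f]_{E_a}=j_a(f)(d_a)$. This follows directly from {\L}o\'s's theorem by writing $j_a(f)$ as the class of the function constantly equal to $f$ and evaluating it at $[id]_{E_a}$.

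The key step, and the one I expect to be the main obstacle, is to identify the image of the seed in the limit, namely to prove $j_{a,E}(d_a)=a$, where $a$ is viewed as a finite subset of $\xi\subseteq M_E$ (the inclusion $\xi\subseteq M_E$ together with $k\restriction\xi=id$ is supplied by Proposition \ref{critpoint}). To establish this I would use the defining coherence of the factor map, $k\circ j_{a,E}=k_a$, which is exactly how $k$ was defined: evaluating gives $k(j_{a,E}(d_a))=k_a(d_a)=j(id)(a)=a$. On the other hand $a$ is a finite set of ordinals below $\xi$, so since $\crit(k)\geq\xi$ we get $k(a)=a$. As $k$ is injective, being the inverse of a transitive collapse on $\rge(k)$, the two equalities $k(j_{a,E}(d_a))=a=k(a)$ force $j_{a,E}(d_a)=a$. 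The delicate point here is precisely to combine injectivity of $k$ with the fact that $k$ fixes the finitely many ordinals of $a$; nothing more than elementarity and this fixed-point behaviour is needed.

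With these two facts in hand the rest is assembly. Using $j_E=j_{a,E}\circ j_a$ and elementarity of $j_{a,E}$, I would compute for each $f$
\[
j_{a,E}([f]_{E_a})=j_{a,E}\bigl(j_a(f)(d_a)\bigr)=j_E(f)\bigl(j_{a,E}(d_a)\bigr)=j_E(f)(a).
\]
Because $M_E$ is the direct limit of the directed system $\langle M_a,i_{a,b}\rangle$, every element of $M_E$ has the form $j_{a,E}(x)$ for some $a\in[\xi]^{<\omega}$ and some $x\in M_a$, and every such $x$ equals $[f]_{E_a}$ for a suitable $f:[\lambda_a]^{|a|}\to V$; hence every element of $M_E$ is of the form $j_E(f)(a)$, while conversely each such object plainly lies in $M_E$. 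This yields the stated representation. For the last clause I would apply $k$ and invoke $k\circ j_{a,E}=k_a$ once more:
\[
k\bigl(j_E(f)(a)\bigr)=k\bigl(j_{a,E}([f]_{E_a})\bigr)=k_a([f]_{E_a})=j(f)(a),
\]
which completes the proof.
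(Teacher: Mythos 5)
Your proof is correct and takes essentially the same route as the paper: the paper's one-line argument (``$k(a)=a$, so $k^{-1}(j(f)(a))=j_E(f)(a)$'') rests on exactly the facts you verify explicitly, namely that $k$ fixes the finitely many ordinals of $a$ below $\xi\leq\crit(k)$ together with the factor identities $k\circ j_{a,E}=k_a$ and $j_E=j_{a,E}\circ j_a$. Your seed computation $[f]_{E_a}=j_a(f)([id]_{E_a})$ and the identification $j_{a,E}([id]_{E_a})=a$ are just the standard unpacking of that same line, so nothing differs in substance.
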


\begin{proof}

For $a \in [\xi]^{<\omega}$, $k(a)=a$, so $k^{-1}(j(f)(a))=j_E(f)(a)$.

\end{proof}

To determine if $M_E$ is closed under ${<}\lambda$-sequences, it is enough to determine if $\rge(k)$ is closed under ${<}\lambda$-sequences.

\begin{prop}
\label{capture<lambdaseq}
Assume $\xi \geq \sup j[\lambda]$.
The following are equivalent:

\begin{enumerate}

\item ${}^{<\lambda} \rge(k) \subseteq \rge(k)$.

\item ${}^{<\lambda} \xi \subseteq \rge(k)$ and for $\gamma<\lambda$, $j[\gamma] \in \rge(k)$.

\end{enumerate}

\end{prop}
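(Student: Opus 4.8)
The plan is to handle the two implications separately, with all the content sitting in $(2)\Rightarrow(1)$. The tools I would keep at hand are: Proposition \ref{critpoint}, which gives $\crit(k)\ge\xi$ and $k\restriction\xi=\mathrm{id}$, so that $\xi\subseteq\rge(k)$ and $k$ fixes every set of rank ${<}\crit(k)$ (in particular every finite $a\subseteq\xi$ and every ordinal ${<}\delta$ for $\delta<\lambda$); the standing hypothesis $\xi\ge\sup j[\lambda]$, so that $j(\alpha)<\xi$ for all $\alpha<\lambda$; and the representation of Proposition \ref{rep1}, which yields $\rge(k)=\{\,j(f)(a):a\in[\xi]^{<\omega},\ f\colon[\lambda_a]^{|a|}\to V\,\}$ together with $k(j_E(f)(a))=j(f)(a)$. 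For $(1)\Rightarrow(2)$ I would argue directly: any $s\colon\mu\to\xi$ with $\mu<\lambda$ is a ${<}\lambda$-sequence of elements of $\xi\subseteq\rge(k)$, hence in $\rge(k)$ by (1), giving ${}^{<\lambda}\xi\subseteq\rge(k)$; and for $\gamma<\lambda$ the function $j\restriction\gamma=\langle j(\alpha):\alpha<\gamma\rangle$ has all values below $\sup j[\lambda]\le\xi$ and length $\gamma<\lambda$, so it lies in $\rge(k)$ by (1), whence $j[\gamma]=\rge(j\restriction\gamma)\in\rge(k)$ because $\rge(k)\prec M$ and ``range of'' is definable.

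For $(2)\Rightarrow(1)$, fix $\delta<\lambda$ and $\langle x_\zeta:\zeta<\delta\rangle$ with each $x_\zeta\in\rge(k)$, noting $\delta<\crit(k)$. Using the representation I pick, for each $\zeta$, a support $a_\zeta\in[\xi]^{<\omega}$ and a function $f_\zeta$ with $x_\zeta=j(f_\zeta)(a_\zeta)$, so that $y_\zeta:=j_E(f_\zeta)(a_\zeta)\in M_E$ satisfies $k(y_\zeta)=x_\zeta$. Since $\delta<\crit(k)$, it suffices to show that the single object $S:=\langle y_\zeta:\zeta<\delta\rangle$ belongs to $M_E$: then $k(S)$ has domain $k(\delta)=\delta$ and $k(S)(\zeta)=k(y_\zeta)=x_\zeta$ for each $\zeta<\delta$, so $\langle x_\zeta:\zeta<\delta\rangle=k(S)\in\rge(k)$. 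Thus the whole problem reduces to producing $S$ inside $M_E$, and this is exactly where hypothesis (2) enters.

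I would build $S$ from two members of $M_E$ by an application carried out internally in $M_E$. For the seeds, I code $[\xi]^{<\omega}$ into $\xi$ by a bijection definable from $\xi$, so that $\vec a=\langle a_\zeta:\zeta<\delta\rangle$ becomes a ${<}\lambda$-sequence of ordinals $<\xi$ and hence lies in $\rge(k)$ by the first clause of (2); as each $a_\zeta$ and each index $\zeta<\delta$ is fixed by $k$, the preimage $\mathbf a:=k^{-1}(\vec a)\in M_E$ is literally the function $\zeta\mapsto a_\zeta$. For the functions, I set $f_\bullet=\langle f_\zeta:\zeta<\delta\rangle\in V$ and invoke the index shift $j\restriction\delta$: by the second clause of (2), $j[\delta]\in\rge(k)$, and since $j\restriction\delta$ is its increasing enumeration it too lies in $\rge(k)$, and being of rank ${<}\crit(k)$ it is fixed by $k$ and so is literally an element of $M_E$. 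Because $j_E(\zeta)=k^{-1}(j(\zeta))=j(\zeta)$ for $\zeta<\delta$, the composition $\mathbf F:=j_E(f_\bullet)\circ(j\restriction\delta)$, i.e. the function $\zeta\mapsto j_E(f_\bullet)(j(\zeta))$, is formed from two elements of $M_E$, hence lies in $M_E$, and satisfies $\mathbf F(\zeta)=j_E(f_\bullet)(j_E(\zeta))=j_E(f_\zeta)$ for all $\zeta<\delta$. Finally, since $\mathbf F,\mathbf a,\delta\in M_E$ and $M_E\models\mathrm{ZFC}$, the pointwise application $\langle\mathbf F(\zeta)(\mathbf a(\zeta)):\zeta<\delta\rangle$ is computed internally in $M_E$ and equals $\langle j_E(f_\zeta)(a_\zeta):\zeta<\delta\rangle=S$.

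The step I expect to be the main obstacle is precisely this amalgamation of the representing functions $f_\zeta$ into one object of $M_E$: the supports $a_\zeta$ are finite but their union is typically infinite, and $\delta$ may exceed $\kappa$, so there is no common finite support over which to merge the $f_\zeta$. The device that breaks the deadlock is to pass from the external index $\zeta$ to the internal index $j(\zeta)$ through $j\restriction\delta$, which is exactly what the clause $j[\gamma]\in\rge(k)$ provides, while the distinct treatment of the seeds (where I need the \emph{literal} $a_\zeta$, not $j_E(a_\zeta)$) forces the coding route through ${}^{<\lambda}\xi$. Everything else --- the well-definedness of each $j_E(f_\zeta)(a_\zeta)$, i.e.\ $a_\zeta\in\dom(j_E(f_\zeta))$, and the commutations $k(y_\zeta)=x_\zeta$ and $k(S)(\zeta)=x_\zeta$ --- is routine bookkeeping given $\delta<\crit(k)$ and Proposition \ref{rep1}.
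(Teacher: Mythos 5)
Your proof is correct and is essentially the paper's argument: the paper likewise amalgamates the representing functions into a single sequence $\vec f\in V$, captures the seeds as a ${<}\lambda$-sequence into $\xi$ (its ``simplification'' to singleton supports $\gamma_\alpha<\xi$ plays the role of your coding of the finite supports), and uses $j[\gamma]\in\rge(k)$ for exactly your index shift $\alpha\mapsto j(\alpha)$. The only difference is presentational: the paper assembles the sequence directly inside $\rge(k)\prec M$ from the three ingredients $j(\vec f)$, $j[\gamma]$, $\vec\gamma$, whereas you pull them back to $M_E$ along $k^{-1}$ and push the assembled object forward with $k$, at the cost of the extra (correct) bookkeeping that $k$ fixes sets of rank below $\crit(k)$.
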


\begin{proof}

The forward direction is trivial.
We prove the backward direction.
By some simplification, assume that elements in ${}^{<\lambda} \rge(k)$ are of the form $\vec{x}=\langle j(f_\alpha)(\gamma_\alpha): \alpha<\gamma \rangle$, where $\gamma_\alpha<\xi$ and $\gamma<\lambda$.
Let $\vec{f}=\langle f_\alpha : \alpha<\gamma \rangle \in V$ and  $\vec{\gamma}=\langle \gamma_\alpha: \alpha<\gamma \rangle$.
Then the map $\alpha \mapsto$ the $\alpha$-th element of $j(\vec{f})(j[\gamma])$ computed at $\vec{\gamma}(\alpha)$ represents $x$.
By our assumption, $\vec{x} \in \rge(k)$.

\end{proof}

Similar proof shows that

\begin{prop}
\label{capturerhoseq}
Assume $\lambda=\rho^+$ for some cardinal $\rho$ and $\xi \geq \sup j[\rho]$.
The following are equivalent:

\begin{itemize}

\item ${}^\rho \rge(k) \subseteq \rge(k)$.

\item ${}^\rho[\xi] \subseteq \rge(k)$ and $j[\rho] \in \rge(k)$.

\end{itemize}

\end{prop}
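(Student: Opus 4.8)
The plan is to mirror the proof of Proposition \ref{capture<lambdaseq}, exploiting that $k$ is (the inverse of) the transitive collapse, so that $\rge(k)$ is an elementary submodel of $M$. Consequently, to show that an object of $M$ lies in $\rge(k)$ it suffices to show that it is definable in $M$ from parameters that already lie in $\rge(k)$. The forward direction is again trivial, so I would concentrate on the backward direction. Fix $\vec{x}=\langle x_\alpha:\alpha<\rho\rangle \in {}^\rho\rge(k)$. By Proposition \ref{rep1} each $x_\alpha=j(f_\alpha)(a_\alpha)$ for some $a_\alpha\in[\xi]^{<\omega}$ and some $f_\alpha\in V$. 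Set $\vec{f}=\langle f_\alpha:\alpha<\rho\rangle$ and $\vec{a}=\langle a_\alpha:\alpha<\rho\rangle$; note $\vec{f}\in V$, and since $j=k\circ j_E$ we have $j(\vec{f})=k(j_E(\vec{f}))\in\rge(k)$.

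The one step requiring care is the analogue of the ``simplification'' in the previous proof: I would verify that the index sequence $\vec{a}$ itself lies in $\rge(k)$. Since $\rho$ is an infinite cardinal, $\rho\cdot\omega=\rho$, so flattening $\vec{a}$ gives a single $\rho$-sequence of ordinals below $\xi$ (the concatenation of the increasing enumerations of the finite sets $a_\alpha$) together with the $\rho$-sequence $\langle |a_\alpha|:\alpha<\rho\rangle$ of their sizes. Both of these lie in ${}^\rho[\xi]\subseteq\rge(k)$ by hypothesis, and $\vec{a}$ is reassembled from them by a definition with no extra parameters, whence $\vec{a}\in\rge(k)$. Next I would record the key identity: by elementarity $j(\vec{f})(j(\alpha))=j(\vec{f}(\alpha))=j(f_\alpha)$, so $x_\alpha=j(f_\alpha)(a_\alpha)=j(\vec{f})(j(\alpha))(\vec{a}(\alpha))$. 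Finally, the assumption $\xi\geq\sup j[\rho]$ forces $j[\rho]\subseteq\xi$ (as $\rho$, being a cardinal, is a limit ordinal), and $j$ is order preserving and injective, so $\ot(j[\rho])=\rho$ and the increasing enumeration of $j[\rho]$, computed inside the transitive $M$, is exactly $\alpha\mapsto j(\alpha)$.

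I would then conclude: writing $\mathrm{enum}$ for the (definable) increasing enumeration map, we have
\[
\vec{x}=\big\langle\, j(\vec{f})\big(\mathrm{enum}(j[\rho])(\alpha)\big)\big(\vec{a}(\alpha)\big) : \alpha<\rho \,\big\rangle,
\]
which is definable in $M$ from the parameters $j(\vec{f})$, $j[\rho]$, $\vec{a}$, and $\rho$. The first three lie in $\rge(k)$ by the paragraph above and the hypothesis $j[\rho]\in\rge(k)$, and $\rho\in\rge(k)$ because $\rho<\lambda\leq\xi\leq\crit(k)$ by Proposition \ref{critpoint}. Since $\rge(k)\prec M$, the unique object so defined belongs to $\rge(k)$, i.e.\ $\vec{x}\in\rge(k)$, as desired. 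The main obstacle is precisely the flattening bookkeeping of the middle paragraph, namely checking that $\vec{a}\in\rge(k)$ follows from ${}^\rho[\xi]\subseteq\rge(k)$; once that is in place, closure under definability in the elementary submodel $\rge(k)$ finishes the argument exactly as before.
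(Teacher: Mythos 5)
Your proposal is correct and takes essentially the same route as the paper: the paper proves Proposition \ref{capturerhoseq} by declaring it ``similar to the proof of Proposition \ref{capture<lambdaseq}'', i.e.\ exactly the representation argument you give, where $\vec{x}$ is recovered from $j(\vec{f})$, the enumeration of $j[\rho]$, and the index sequence. The only difference is that you spell out the bookkeeping the paper compresses into ``by some simplification'' (flattening the finite tuples $a_\alpha$ into $\rho$-sequences covered by the hypothesis ${}^\rho[\xi]\subseteq\rge(k)$, and checking $\rho\in\rge(k)$ via $\crit(k)\geq\xi$), which is a faithful filling-in rather than a new approach.
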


\begin{proof}

Similar to the proof of Proposition \ref{capture<lambdaseq}.

\end{proof}

\begin{rmk}

In Proposition \ref{capture<lambdaseq} (Proposition \ref{capturerhoseq}), it might be possible that in some circumstances, e.g. $j(\lambda)$ ($j(\rho)$) is very high, there is $\xi<j[\lambda]$ ($\xi<j[\rho]$) which captures ${<}\lambda$-supercompactness ($\rho$-supercompactness). 

\end{rmk}

\begin{lemma}
\label{lengthcapturescc}
Assume GCH.

\begin{enumerate}
\label{lengthcap<lambdascc}
\item \label{<lambdaitem1}  There is a sequence $\Gamma = \langle x_\alpha : \alpha \in ON \rangle$ of elements $[ON]^{<\kappa}$ such that if $\gamma$ is a cardinal, $cf(\gamma) \geq \kappa$, then $\Gamma \restriction \gamma \supseteq [\gamma]^{<\kappa}$. 

\item \label{<lambdaitem2} Let $s: \kappa \to \kappa$. There is a sequence $\Gamma = \langle x_\alpha : \alpha \in ON \rangle$ of $[ON]^{<\kappa}$ such that if $\gamma \geq \kappa$ is a cardinal, and $\alpha$ is such that $\cf(\gamma) \geq s(\alpha)$, then $\Gamma \restriction \gamma \supseteq [\gamma]^{<s(\alpha)}$.

\item Let $j:V \to M$ witness $\kappa$ being ${<}\lambda$-supercompact. 
Then

\begin{itemize}

\item if $\xi$ is a cardinal in $M$, $\xi \geq \sup j[\lambda]$, and $\cf^M(\xi) \geq j(\kappa)$, then the $(\kappa,\xi)$-extender $E$ derived from $j$ preserves ${<}\lambda$-supercompactness of $\kappa$.

\item if there is a function $s:\kappa \to \kappa$ such that $j(s)(\kappa)=\lambda$, then for an ordinal $\xi$ which is a cardinal in $M$, $\xi \geq \sup j[\lambda]$, and $cf^M(\xi) \geq \lambda$, the $(\kappa,\xi)$-extender $E$ derived from $j$ preserves the ${<}\lambda$-supercompactness of $\kappa$.

\end{itemize}

\end{enumerate}

\end{lemma}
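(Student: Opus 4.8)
The plan is to prove the three parts in the stated order, since the third item is driven by applying $j$ to the sequences produced in parts (\ref{<lambdaitem1}) and (\ref{<lambdaitem2}). For the first two parts the engine is the GCH computation that $|[\gamma]^{<\kappa}|=\gamma$ whenever $\gamma$ is a cardinal with $\cf(\gamma)\ge\kappa$ (and likewise $|[\gamma]^{<\tau}|=\gamma$ when $\cf(\gamma)\ge\tau\le\gamma$): under GCH one has $\gamma^{\delta}=\gamma$ for every $\delta<\cf(\gamma)$ with $\delta<\gamma$, so summing over the $<\kappa$-many (resp.\ $<\tau$-many) sizes still yields $\gamma$. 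Granting this, I would build $\Gamma$ by recursion along the class of cardinals of cofinality $\ge\kappa$, arranging at each such $\gamma$ that every member of $[\gamma]^{<\kappa}$ occurs among the first $\gamma$ entries; there are exactly $\gamma$ sets to list and $\gamma$ positions below $\gamma$, and the sets already forced in by smaller cardinals sit below $\gamma$, so a standard interleaving works. Part (\ref{<lambdaitem2}) is identical, tracking the thresholds $s(\alpha)$ in place of $\kappa$: at a cardinal $\gamma$ the demand is to list $[\gamma]^{<\tau}$ for $\tau=\sup\{s(\alpha):s(\alpha)\le\cf(\gamma)\}$, again a set of size $\gamma$. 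The only care needed is the bookkeeping, which is routine.

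For the third item, recall from the discussion preceding Proposition \ref{capture<lambdaseq} that it suffices to prove $\rge(k)$ is closed under $<\lambda$-sequences, and that by Proposition \ref{capture<lambdaseq} this reduces to (i) ${}^{<\lambda}\xi\subseteq\rge(k)$ and (ii) $j[\gamma]\in\rge(k)$ for every $\gamma<\lambda$. Note first that $j(\kappa)>\lambda$ and $\xi\ge\sup j[\lambda]\ge j(\kappa)>\lambda$, so $\xi>\lambda$ and hence $j_E(\kappa)>\lambda$ by Proposition \ref{critpoint}; moreover $\crit(k)\ge\xi$, so $k\restriction\xi=\mathrm{id}$ and every ordinal below $\xi$ (in particular every $j(\eta)$ with $\eta<\lambda$, since $j(\eta)<\sup j[\lambda]\le\xi$) lies in $\rge(k)$. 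I would then apply $j$ to an initial segment of the sequence $\Gamma$ from part (\ref{<lambdaitem1}): by elementarity, in $M$ we have $j(\Gamma)\restriction\xi\supseteq[\xi]^{<j(\kappa)}$, because $\xi$ is an $M$-cardinal with $\cf^M(\xi)\ge j(\kappa)$. Since $M$ is closed under $<\lambda$-sequences and $\lambda<j(\kappa)$, any $D\subseteq\xi$ with $|D|<\lambda$ lies in $M$ with $M$-cardinality $<j(\kappa)$, so $D=j(\Gamma)(\alpha)$ for some $\alpha<\xi$; using $k\circ j_E=j$, $\alpha<\xi\le\crit(k)$, and elementarity of $k$, this gives $D=k(j_E(\Gamma)(\alpha))\in\rge(k)$. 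Thus every small subset of $\xi$ is in $\rge(k)$, which immediately yields (ii), as $j[\gamma]$ is such a set.

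The nontrivial point is (i), where the sequence is \emph{ordered} and may repeat while parts (\ref{<lambdaitem1})/(\ref{<lambdaitem2}) capture only \emph{sets}. To bridge this, given $\vec\delta=\langle\delta_\beta:\beta<\rho\rangle$ with $\rho<\lambda$ and $\delta_\beta<\xi$, I would fix the G\"odel pairing $p:\xi\times\xi\to\xi$ (a bijection since $\xi$ is an infinite cardinal) and set $D=\{p(\beta,\delta_\beta):\beta<\rho\}\subseteq\xi$, a set of size $\rho<\lambda$. By the previous paragraph $D\in\rge(k)$, and since $\vec\delta$ is definable over $M$ from $D$ and the absolute function $p$, the elementarity $\rge(k)\prec M$ forces $\vec\delta\in\rge(k)$. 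Hence ${}^{<\lambda}\xi\subseteq\rge(k)$, and Proposition \ref{capture<lambdaseq} delivers the first bullet. For the second bullet I would run the same argument with the sequence from part (\ref{<lambdaitem2}) built from $s$: applying $j$ and evaluating the conclusion at the cardinal $\xi$ and the index $\kappa$ — for which $j(s)(\kappa)=\lambda$ — the weaker hypothesis $\cf^M(\xi)\ge\lambda$ already gives $j(\Gamma)\restriction\xi\supseteq[\xi]^{<\lambda}$ in $M$, which is exactly the covering of small subsets of $\xi$ that the rest of the argument consumes.

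I expect the main obstacle to be exactly this internalization step for (i): turning an arbitrary short sequence of ordinals below $\xi$ into a small subset of $\xi$ that the reflected enumeration can capture, and then decoding it back inside $\rge(k)$ using $\rge(k)\prec M$. A secondary point to keep straight is the cardinal-arithmetic matching — $\cf^M(\xi)\ge j(\kappa)$ is what part (\ref{<lambdaitem1}) needs for the first bullet, whereas the witnessing function $s$ lowers this to $\cf^M(\xi)\ge\lambda$ for the second — which is precisely why the two cases are stated and proved separately.
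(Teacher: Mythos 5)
Your proposal is correct and follows essentially the same route as the paper: the same recursion along cardinals of cofinality $\geq\kappa$ under GCH for parts (\ref{<lambdaitem1}) and (\ref{<lambdaitem2}), and for part (3) applying $j$ to $\Gamma$ so that $j(\Gamma)\restriction\xi$ captures $[\xi]^{<\lambda}$ (in particular each $j[\alpha]$ for $\alpha<\lambda$), with the weaker hypothesis $\cf^M(\xi)\geq\lambda=j(s)(\kappa)$ sufficing in the second bullet, before invoking Proposition \ref{capture<lambdaseq}. Your explicit G\"odel-pairing step, converting a sequence in ${}^{<\lambda}\xi$ into a small subset of $\xi$ and decoding it back inside $\rge(k)$ via elementarity, merely spells out a detail the paper's terser proof leaves implicit.
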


\begin{proof}

\begin{enumerate}

\item We proceed by induction that at each cardinal $\gamma$ with $\cf(\gamma) \geq \kappa$, the length of the sequence being built so far will have total length $\gamma$. 
We abusively write $\Gamma \restriction \gamma$ for the sequence of length $\gamma$ we have built so far.
The first cardinal is $\kappa$.
By GCH, $|\kappa^{<\kappa}|=\kappa$, so that building a sequence containing $[\kappa]^{<\kappa}$ of length $\kappa$ is trivial.
Suppose $\gamma$ is a cardinal, $\cf(\gamma) \geq \kappa$ and the $\Gamma \restriction \gamma^\prime$ is built for $\gamma^\prime<\gamma$.
If $\gamma=(\gamma^\prime)^+$, then the sequence had been built so far has length $\gamma^\prime$.
Since $|\gamma \setminus \gamma^\prime|=\gamma$ and $|\gamma^{<\kappa}|=\gamma$, we can list the sequence so that $\Gamma \restriction [\gamma^\prime,\gamma) \supseteq [\gamma]^{<\kappa}$.
If $\gamma$ is a limit cardinal and $\cf(\gamma) \geq \kappa$, then $[\gamma]^{<\kappa}=\cup_{\gamma^\prime<\gamma}[\gamma^\prime]^{<\kappa}$, so the sequence that has been built so far takes care of the cardinal $\gamma$ already. Hence, the proof is done.

\item A similar argument as in (\ref{<lambdaitem1}) works.

\item Assume $j:V \to M$ witnesses $\kappa$ being ${<}\lambda$-supercompact. 

\begin{itemize}

\item Let $\Gamma$ be as in (\ref{<lambdaitem1}).
Let $\vec{x}:=j(\Gamma)\restriction \xi$. 
Then $\vec{x} \supseteq ([\xi]^{<j(\kappa)})^M \supseteq [\xi]^{<\lambda}$.
We see that for $x \in [\xi]^{<\lambda}$, we have that $x=\vec{x}(\gamma)$ for some $\gamma<\xi$.
Since $\xi \geq \sup j[\lambda]$, we can also consider the case where $x$ is $j[\alpha]$ for any $\alpha<\lambda$ as well, so the proof is done.

\item Let $\Gamma$ be as in ($2$).
Note that in this case, $\cf^M(\xi) \geq \lambda =j(s)(\kappa)$, so $j(\Gamma) \restriction \xi \supseteq ([\xi]^{<\lambda})^M=[\xi]^{<\lambda}$. 
The rest of the proof is the same as in the previous bullet.

\end{itemize}

\end{enumerate}

\end{proof}

The following lemma can be proved in a similar fashion.

\begin{lemma}

\begin{enumerate}
\label{lengthcaprhoscc}

Assume GCH.

\item Let $s: \kappa \to \kappa$. There is a sequence $\Gamma = \langle x_\alpha : \alpha \in ON \rangle$ of $[ON]^{\kappa}$ such that if $\gamma \geq \kappa$ is a cardinal, and $\alpha$ is such that $\cf(\gamma) > s(\alpha)$, then $\Gamma \restriction \gamma \supseteq [\gamma]^{s(\alpha)}$.

\item Let $j:V \to M$ witness $\kappa$ being $\rho$-supercompact.
Then

\begin{itemize}

\item if $\xi$ is a cardinal in $M$, $\xi \geq \sup j[\rho]$, and $\cf^M(\xi) \geq j(\kappa)$, then the $(\kappa,\xi)$-extender $E$ derived from $j$ preserves $\rho$-supercompactness of $\kappa$.

\item if there is a function $s:\kappa \to \kappa$ such that $j(s)(\kappa)=\rho$, then for an ordinal $\xi$ which is a cardinal in $M$ and $cf^M(\xi) > \rho$, the $(\kappa,\xi)$-extender $E$ derived from $j$ preserves $\rho$-supercompactness of $\kappa$.

\end{itemize}

\end{enumerate}

\end{lemma}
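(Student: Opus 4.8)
The plan is to follow the proof of Lemma~\ref{lengthcapturescc} almost verbatim, replacing the family of ${<}\kappa$-sized subsets by subsets of the \emph{exact} size $s(\alpha)$ and changing the cofinality threshold from $\geq$ to $>$.

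For item (1) I would define $\Gamma$ by recursion along the cardinals $\geq\kappa$, keeping the invariant that the part constructed before reaching a cardinal $\gamma$ has length exactly $\gamma$. At a successor cardinal $\gamma=(\gamma')^+\geq\kappa^+$ one has $\cf(\gamma)=\gamma>s(\alpha)$ for every $\alpha<\kappa$, so it suffices to list $\bigcup_{\alpha<\kappa}[\gamma]^{s(\alpha)}$ in the block of coordinates $[\gamma',\gamma)$; under GCH each $[\gamma]^{s(\alpha)}$ has cardinality $\gamma$ and there are only $\kappa$ many values $s(\alpha)$, so the union has size $\gamma$ and fits. At a limit cardinal $\gamma$ nothing new is needed: whenever $\cf(\gamma)>s(\alpha)$, every member of $[\gamma]^{s(\alpha)}$ is bounded below $\gamma$, whence $[\gamma]^{s(\alpha)}=\bigcup_{\gamma'<\gamma}[\gamma']^{s(\alpha)}$ is already enumerated by the portion of length $\gamma$ built so far. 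This boundedness is exactly what forces the strict inequality $\cf(\gamma)>s(\alpha)$ (in place of $\cf(\gamma)\geq\kappa$ in Lemma~\ref{lengthcapturescc}), and I expect it to be the only delicate point of the construction.

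For item (2) I would argue as in Lemma~\ref{lengthcapturescc}(3). As in the remark preceding Proposition~\ref{capture<lambdaseq}, closure of $M_E$ under $\rho$-sequences reduces to ${}^\rho\rge(k)\subseteq\rge(k)$, and by Proposition~\ref{capturerhoseq} this follows once $([\xi]^\rho)^M\subseteq\rge(k)$ and $j[\rho]\in\rge(k)$ are established; that $\crit(j_E)=\kappa$ and $j_E(\kappa)>\rho$ come from Proposition~\ref{critpoint} together with $\xi\geq\sup j[\rho]$. Put $\vec{x}:=j(\Gamma)\restriction\xi$. For the first bullet I would take $\Gamma$ as in Lemma~\ref{lengthcapturescc}(1); since $\cf^M(\xi)\geq j(\kappa)$, elementarity gives $\vec{x}\supseteq([\xi]^{<j(\kappa)})^M\supseteq([\xi]^\rho)^M$, using $\rho<j(\kappa)$. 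For the second bullet I would take $\Gamma$ as in item (1) for the given $s$; as $\kappa<j(\kappa)$ lies in $\dom(j(s))$ in $M$ and $\cf^M(\xi)>\rho=j(s)(\kappa)$, evaluating the image of item (1) at the coordinate $\kappa$ gives $\vec{x}\supseteq([\xi]^{j(s)(\kappa)})^M=([\xi]^\rho)^M$.

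It remains to read off the two closure facts from $\vec{x}$. In both bullets $\cf^M(\xi)>\rho$, so every $\rho$-sized subset of $\xi$ in $M$ is bounded below $\xi$ and hence equals $j(\Gamma)(\zeta)=k(j_E(\Gamma)(\zeta))$ for some $\zeta<\xi$; since $\crit(k)\geq\xi$ we have $k\restriction\xi=\mathrm{id}$, so such a subset lies in $\rge(k)$, giving $([\xi]^\rho)^M\subseteq\rge(k)$. For $j[\rho]$, the $\rho$-closure of $M$ yields $j\restriction\rho\in M$, so $j[\rho]\in M$, and as $|j[\rho]|=\rho<\cf^M(\xi)$ it too is bounded below $\xi$ and therefore captured by $\vec{x}$. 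The passage from these captured subsets to genuine closure under $\rho$-sequences is then precisely Proposition~\ref{capturerhoseq}, proved like Proposition~\ref{capture<lambdaseq}. The main obstacle, as already flagged, is the limit-stage boundedness in item (1); everything in item (2) is bookkeeping layered on top of the cited propositions.
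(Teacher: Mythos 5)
Your proposal is correct and is essentially the paper's own proof: the paper proves this lemma only by the remark that it ``can be proved in a similar fashion'' to Lemma \ref{lengthcapturescc}, and your write-up is precisely that adaptation, including the one genuinely new point (at limit cardinals, $\cf(\gamma)>s(\alpha)$ makes every set in $[\gamma]^{s(\alpha)}$ bounded, hence already enumerated at an earlier successor stage) and the routine transfer through Propositions \ref{critpoint} and \ref{capturerhoseq}. Note only that in the second bullet you tacitly (and correctly) use $\xi \geq \sup j[\rho]$ to capture $j[\rho]$, a hypothesis the paper's statement omits there but includes in the parallel Lemma \ref{lengthcapturescc}.
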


\begin{rmk}

The $GCH$ assumption in Lemma \ref{lengthcap<lambdascc} and \ref{lengthcaprhoscc} can be weakened. 
The cardinal arithmetic requirement for $\xi$ to capture supercompactness depends on the value $\xi$.

\end{rmk}

\begin{rmk}

\begin{itemize}

\item If $j$ witnesses $\kappa$ being $\rho$-supercompact (equivalently ${<}\rho^+$-supercompact), then definability of $\rho$ is equivalent to definability of $\lambda$.
More specifically, if $j(s)(\kappa)=\lambda$, then there is a function $t$ such that $t(\alpha)=s(\alpha)^+$ on a measure-one set, and hence $j(t)(\kappa)=\rho$.

\item For a ${<}\lambda$-supercompact embedding $j:V \to M$, a few instances of $\xi$ where the $(\kappa,\xi)$-extender derived from $j$ preserves the ${<}\lambda$-supercompactness are $((\sup(j[\lambda]))^+)^M$,  $j(\lambda)$, and if there is a function $s: \kappa \to \kappa$ such that $j(s)(\kappa)=\lambda$, then $\xi=\sup(j[\lambda])$ also works.

\item For a $\rho$-supercompact embedding $j:V \to M$ with $\lambda=\rho^+$, some values $\xi$ where the $(\kappa,\xi)$-extender $E$ derived from $j$ preserves the $\rho$-supercompactness are $((\sup(j[\rho])^+)^M$ and $j(\rho)$.

\end{itemize}

\end{rmk}

Note that if a we derive a $(\kappa,j(\lambda))$-extender $E$ from $j$, then $j_E(\lambda)=j(\lambda)$.
In the future, when we introduce an $E$, we may say that $E$ is a $(\kappa,j_E(\lambda))$ without referring $j$.
Observe that among elementary maps $j:V \to M$ witnessing ${<}\lambda$-supercompactness, the lowest possible  cardinal of $j(\lambda)$ computed in $V$ is $\lambda$.
Similarly, among elementary maps $j:V \to M$ witnessing $\rho$-supercompactness, the lowest possible cardinal of $j(\rho)$ in $V$ is $\rho^+$.
We end this section by giving a definition of an extender in our context.

\begin{defn}

$E$ is a {\em $(\kappa,\xi)$-extender} if $E$ is a $(\kappa,\xi)$-extender derived from some elementary embedding $j$.

\end{defn}

\section{woodinized supercompact cardinals}
\label{woodinizedsupercompact}

Woodin introduced the definition of a Woodinized supercompact cardinal, which has an application on stationary tower. Some set theorists refers to Woodinized supercompact cardinal as Woodin-for-supercompactness cardinal.
We follow the definition of a Woodinized supercompact cardinal from Definition 9.26 from Foreman's chapter in \cite{handbook}.
This section is an analogue of Section 7 in \cite{jir2021}.

\begin{defn}

A cardinal $\delta$ is {\em Woodinized supercompact} if for every function $f: \delta \to \delta$ there is a cardinal $\kappa<\delta$ such that $\kappa$ is a closure point of $f$, i.e. $f[\kappa] \subseteq \kappa$, and there is an elementary embedding $j:V \to M$ such that 

\begin{enumerate}

\item $\crit(j)=\kappa$.

\item ${}^{j(f)(\kappa)} M \subseteq M$.

\end{enumerate}

\end{defn}

The definition resembles to the definition of a Woodin cardinal in the following sense: recall that $\delta$ is Woodin if for $f: \delta \to \delta$ there is a cardinal $\kappa<\delta$ such that $f[\kappa] \subseteq \kappa$, and there is an elementary embedding $j:V \to M$ such that $\crit(j)=\kappa$ and $V_{j(f)(\kappa)} \subseteq M$.

\begin{defn}

Let $A$ be a set, and $\gamma$ be a cardinal. A cardinal $\kappa$ is $(\gamma,A)$-{\em supercompact} if there exists an elementary embedding $j: V \to M$ such that $\crit(j)=\kappa$, and 

\begin{enumerate}

\item $\gamma<j(\kappa)$.

\item ${}^{\gamma} M \subseteq M$.

\item $j(A) \cap V_\gamma=A \cap V_\gamma$.

\end{enumerate}

\end{defn}

The following lemma is proved in \cite{perlmutterthesis}. 
We include the proof for the sake of completeness.

\begin{lemma}
\label{woodinscc1}
Let $\delta$ be a cardinal.
The following are equivalent.

\begin{enumerate}

\item $\delta$ is a Woodinized supercompact cardinal.

\item Let $A \subseteq V_\delta$.
Then the collection of $\kappa$ satisfying the following condition is stationary: for all $\gamma<\delta$, $\kappa$ is  $\gamma$-$A$-supercompact. 

\end{enumerate}

\end{lemma}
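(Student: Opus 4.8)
The plan is to prove both directions of the equivalence, leveraging the definitional resemblance between Woodinized supercompactness and Woodinness. First I would fix a set $A \subseteq V_\delta$ and observe that condition (2) asserts stationarity (in the sense appropriate to $\delta$, i.e.\ meeting every club $C \subseteq \delta$) of the set of $\kappa<\delta$ such that for every $\gamma<\delta$ there is an embedding witnessing $\kappa$ being $(\gamma,A)$-supercompact. The natural strategy for $(2)\Rightarrow(1)$ is contrapositive/coding: given an arbitrary $f:\delta\to\delta$, I would cook up a suitable parameter $A$ (coding $f$ together with enough of $V_\delta$, e.g.\ letting $A$ encode the graph of $f$ via a pairing function so that $f\restriction V_\gamma$ is computable from $A\cap V_\gamma$) so that the existence of a $(\gamma,A)$-supercompact $\kappa$ for all $\gamma$ at a single $\kappa$ yields an embedding $j$ with $\crit(j)=\kappa$, $f[\kappa]\subseteq\kappa$, and ${}^{j(f)(\kappa)}M\subseteq M$. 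The key point is that the condition $j(A)\cap V_\gamma = A\cap V_\gamma$ lets me recover that $j(f)$ agrees with the guessed value, so taking $\gamma$ slightly above $j(f)(\kappa)$ secures the required closure.

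For the forward direction $(1)\Rightarrow(2)$, I would argue by contradiction: suppose the set of good $\kappa$ is nonstationary, so there is a club $C\subseteq\delta$ disjoint from it. From $C$ and $A$ I would build a single function $f:\delta\to\delta$ that simultaneously diagonalizes against the failures — for each $\kappa$ the value $f(\kappa)$ is chosen to exceed, for the least offending $\gamma=\gamma(\kappa)<\delta$, a bound large enough that any embedding with closure ${}^{j(f)(\kappa)}M\subseteq M$ is forced to be $(\gamma,A)$-supercompact at $\kappa$. Applying Woodinizedness to this $f$ produces $\kappa<\delta$ that is a closure point of $f$ and an embedding $j$ with $\crit(j)=\kappa$ and ${}^{j(f)(\kappa)}M\subseteq M$; I then check this $\kappa$ actually lies in the good set and in $C$, the contradiction. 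Reflecting $A\subseteq V_\delta$ correctly — getting $j(A)\cap V_\gamma = A\cap V_\gamma$ out of the raw closure hypothesis — is where I would have to be careful, using that $V_\gamma\subseteq M$ for $\gamma<j(f)(\kappa)$ and that $A$ is coded into $f$ so that $j(f)$ carries the agreement.

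The main obstacle I anticipate is the bookkeeping in the coding step: I must arrange a single $f$ (resp.\ a single $A$) that handles \emph{all} $\gamma<\delta$ uniformly at whichever $\kappa$ Woodinizedness hands back, since I do not control $\kappa$ in advance. Concretely, the delicate point is ensuring $j(f)(\kappa)$ is large enough to give $\gamma$-closure for the relevant $\gamma$ while still having $\gamma<j(\kappa)$ and the agreement $j(A)\cap V_\gamma=A\cap V_\gamma$; this requires choosing $f$ so that its values grow fast enough relative to the coding of $A$, and using elementarity to transport the agreement between $A$ and $j(A)$ below the closure level. Once this alignment of the closure ordinal $j(f)(\kappa)$ with the supercompactness degree $\gamma$ is set up correctly, both implications should follow routinely from elementarity and the stationarity bookkeeping.
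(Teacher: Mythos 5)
Your direction $(2)\Rightarrow(1)$ is essentially the paper's argument: code $f$ together with a padding function (the paper uses $g(\kappa)=\max\{\kappa,f(\kappa)\}+2$) into $A$, take $\gamma=\sup g[\kappa+1]$, and use the agreement $j(A)\cap V_\gamma=A\cap V_\gamma$ to conclude $j(f)(\kappa)=f(\kappa)$, whence ${}^{j(f)(\kappa)}M\subseteq M$ and the closure-point property. That half is fine.

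The gap is in $(1)\Rightarrow(2)$, exactly at the step you flag as delicate. First, your proposed mechanism for the $A$-agreement --- ``$A$ is coded into $f$ so that $j(f)$ carries the agreement'' --- cannot work: since $\crit(j)=\kappa$, the functions $j(f)$ and $f$ agree only below $\kappa$, so any information about $A$ transported through $f$ lives in $V_\kappa$, while the definition of $(\gamma,A)$-supercompactness demands $j(A)\cap V_\gamma=A\cap V_\gamma$ for $\gamma$ well above $\kappa$. Second, and more structurally, you propose to ``check this $\kappa$ actually lies in the good set,'' i.e.\ to verify in $V$ that $\kappa$ is $\gamma$-$A$-supercompact for \emph{all} $\gamma<\delta$; a single embedding $j$ with closure ${}^{j(f)(\kappa)}M\subseteq M$ cannot certify this unboundedly-many-$\gamma$ statement, and moreover the value $j(f)(\kappa)$ is computed from the $M$-version of the diagonal function (referring to $j(A)$-supercompactness in $M$), so it need not dominate the $V$-least failure ordinal for $A$. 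The paper's proof instead establishes the \emph{reflected} statement: $\kappa\in j(C)$ (arranged by folding $f_0(\kappa)=\min(C\setminus\kappa)$ into the function fed to Woodinizedness) and, in $M$, $\kappa$ is $\gamma$-$j(A)$-supercompact for all $\gamma<j(\delta)$; one then pulls back by elementarity of $j$ to find a good point of $C$ in $V$. The internal verification is the real work your plan omits: supposing the least $M$-failure is $j(f_1)(\kappa)$, one derives from $j$ a supercompact measure $\mathcal{U}$ on $\mathcal{P}_\kappa(j(f_2)(\kappa))$; the function $F(\kappa)=|\mathcal{P}(f_2(\kappa)^{<\kappa})|^+ +3$ is chosen precisely so that the closure ${}^{j(F)(\kappa)}M\subseteq M$ puts $\mathcal{U}$ inside $M$ as a supercompact measure there; and the internal ultrapower $j^M_{\mathcal{U}}$ is shown to witness $j(f_1)(\kappa)$-$j(A)$-supercompactness in $M$, where the needed agreement $j(A)\cap V^M_{j(f_1)(\kappa)}=j^M_{\mathcal{U}}(j(A))\cap V^M_{j(f_1)(\kappa)}$ comes from $V^{\Ult(M,\mathcal{U})}_{j(f_1)(\kappa)}=V^M_{j(f_1)(\kappa)}$ together with the factor map $k:M_{\mathcal{U}}\to M$ having $\crit(k)>\kappa$. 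Without this reflect-inside-$M$-and-pull-back scheme and the derived-measure/factor-map computation, the step ``$\kappa$ lies in the good set'' does not go through.
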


\begin{proof}

$(2) \rightarrow (1)$:
 Let $f: \delta \to \delta$, define $g(\kappa)=\max\{\kappa,f(\kappa)\}+2$.
Let $A$ code $f$ and $g$. 
By $(2)$, let $\kappa<\delta$ be $\gamma$-$A$-supercompact for all $\gamma<\delta$.
Take $\gamma=\sup g[ \kappa+1]$.
Hence, there is a map $j:V \to M$ such that $\crit(j)=\kappa$, $j(\kappa)>\gamma$, ${}^\gamma M \subseteq M$, and $A \cap V_{\gamma}=j(A) \cap V_{\gamma}$.
Since $A$ codes $f$ and $f(\kappa)<g(\kappa)$, we have that
 $f(\kappa)=j(f)(\kappa)$, so ${}^{j(f)(\kappa)}M \subseteq M$, and for $\alpha<\kappa$, $j(f)(\alpha)=f(\alpha)<j(\kappa)$, so $f(\alpha)<\kappa$.
 Hence, $f$ is closed under $\kappa$.
 
 $(1) \rightarrow (2)$: 
 Let $C \subseteq \delta$ be a club. 
 Define $f_i:\delta \to \delta$ for $i<3$ as follows.
\begin{align*}
f_0(\kappa)=\min(C \setminus \kappa).
\end{align*}

\begin{align*}
f_1(\kappa)=
\begin{cases}
0 & \text{ if } \kappa \text{ is } \gamma$-$A$-$\text{supercompact for all } \gamma<\delta, \\
\lambda & \text{ if } \lambda \text{ is the least such that } \kappa \text{ is not } \lambda$-A-$\text{supercompact}.
\end{cases}
\end{align*}

\begin{align*}
f_2(\kappa)=|V_{f_1(\kappa)} |+(f_0(\kappa)+1).
\end{align*}

Let $F(\kappa)=|\mathcal{P}(f_2(\kappa)^{<\kappa})|^+ +3$.
By Woodinized supercompactness, let $\kappa<\delta$ be such that $F[\kappa] \subseteq \kappa$, and there is an elementary embedding $j:V \to M$ such that $\crit(j)=\kappa$ and ${}^{j(F)(\kappa)}M \subseteq M$.
For $\alpha<\kappa$, $\alpha \leq f_0(\alpha)<F(\alpha)<\kappa$ and $f_0(\alpha) \in C$, so $C \cap \kappa$ is unbounded in $\kappa$.
Since $\crit(j)=\kappa$, we have $C \cap \kappa =j(C) \cap \kappa$ which is unbounded in $\kappa$, so $\kappa \in j(C)$.
To finish the proof, we will show that in $M$, $\kappa$ is $\gamma$-$j(A)$-supercompact for all $\gamma<j(\delta)$.
Suppose not, then $\kappa$ is not $j(f_1)(\kappa)$-$j(A)$-supercompact in $M$.
Since $j(f_2)(\kappa)<j(F)(\kappa)$, we can derive a supercompact measure $\mathcal{U}$ on $\mathcal{P}_\kappa(j(f_2)(\kappa))$ from $j$.
With the definition of $F$, $\mathcal{U} \in M$ and $M \models \mathcal{U}$ is a supercompact measure.
Let $j_\mathcal{U}: V \to M_\mathcal{U}=\Ult(V,\mathcal{U})$ and $j_\mathcal{U}^M: M \to \Ult(M,\mathcal{U})$.

 \begin{figure}[H]
 \includegraphics[scale = 0.75]{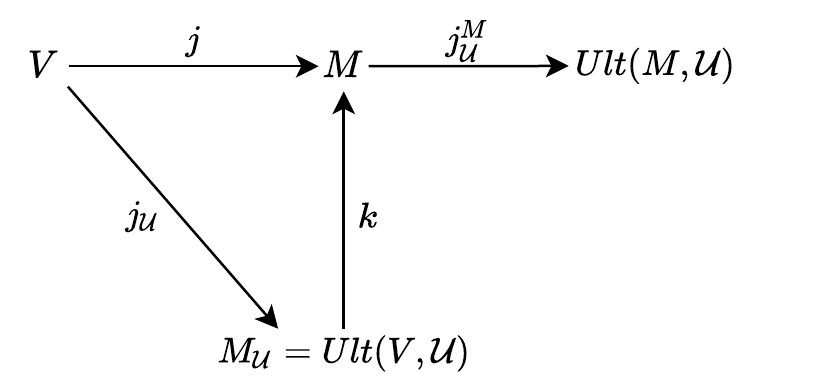}
\end{figure}

We will show that $j_\mathcal{U}^M$ witnesses $\kappa$ being $j(f_1)(\kappa)$-$j(A)$-supercompact in $M$, which is a contradiction.
Clearly $j_\mathcal{U}^M$ witnesses $\kappa$ being $j(f_1)(\kappa$)-supercompact in $M$.
It remains to show that 

\begin{center}

$j(A) \cap V_{j(f_1)(\kappa)}^M=j_\mathcal{U}^M(j(A)) \cap V_{j(f_1)(\kappa)}^M.$

\end{center}

Note that $M$ is closed under $j(F)(\kappa)$ sequences, and by the definition of $f_2$, $V_{j(f_1)(\kappa)}^{\Ult(M,\mathcal{U})}=V_{j(f_1)(\kappa)}^M$.
If we can show that

\begin{center}

$j(A) \cap V_{j(\kappa)}^M=j_\mathcal{U}^M(j(A)) \cap j_\mathcal{U}^M(V_\kappa)$,

\end{center}

then by intersecting both sides of the equation by $V_{j(f_1)(\kappa)}^M$, the proof will be completed.
It is easy to check that if $k:M_\mathcal{U} \to M$ is the factor map (namely $j_\mathcal{U} \circ k =j$), then $\crit(k)> \kappa$.
Hence,
\begin{center}

$j(A) \cap V_{j(\kappa)}^M=j(A \cap V_\kappa)=j_\mathcal{U}(A \cap V_\kappa)=j_\mathcal{U}^M(A \cap V_\kappa)=j_\mathcal{U}^M(j(A) \cap V_\kappa)=j_\mathcal{U}^M(j(A)) \cap j_\mathcal{U}^M(V_\kappa)$.

\end{center}

\end{proof}

We now characterize a Woodinized supercompact cardinal in a similar way to Lemma \ref{woodinscc1}, except with some Laver function.

\begin{lemma}
\label{woodinscc2}
Let $\delta$ be a cardinal.
The following are equivalent.

\begin{enumerate}

\item $\delta$ is Woodinized supercompact.

\item Let $A \subseteq V_\delta$.
Then the collection of $\kappa$ such that $\varphi(\kappa,\delta,A)$ holds is stationary, where $\varphi(\kappa,\gamma,A)$ is described as follows: there is a function $s: \kappa \to V_\kappa$ such that for each cardinal $\lambda \in (\kappa,\delta)$ and $x \in H_{|V_\lambda|^+}$, there is a supercompact measure $\mathcal{U}$ on $\mathcal{P}_\kappa(|V_\lambda|)$ an elementary embedding $j_\mathcal{U}:V \to M=\Ult(V,\mathcal{U})$ witnessing $\kappa$ being $\lambda$-$A$-supercompact, and $j_\mathcal{U}(s)(\kappa)=x$.

\end{enumerate}

\end{lemma}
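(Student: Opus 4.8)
The plan is to reduce both directions to the previously established Lemma~\ref{woodinscc1}, so that the only genuinely new work is a Laver-function construction carried out inside the target model of a Woodinized-supercompactness embedding. For the direction $(2)\Rightarrow(1)$, I would observe that condition $(2)$ is formally stronger than condition $(2)$ of Lemma~\ref{woodinscc1}. Fix $A\subseteq V_\delta$ and suppose $\varphi(\kappa,\delta,A)$ holds; given any $\gamma<\delta$, choose a cardinal $\lambda$ with $\max(\gamma,\kappa)<\lambda<\delta$. The measure $\mathcal{U}$ supplied by $\varphi$ yields an embedding $j_\mathcal{U}$ witnessing that $\kappa$ is $\lambda$-$A$-supercompact, and since $V_\gamma\subseteq V_\lambda$ this immediately gives $\gamma$-$A$-supercompactness, the closure ${}^{\gamma}M\subseteq M$ and the agreement $j_\mathcal{U}(A)\cap V_\gamma=A\cap V_\gamma$ both following by restriction. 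Hence the stationary set provided by $(2)$ is contained in the stationary set demanded by Lemma~\ref{woodinscc1}$(2)$, and $\delta$ is Woodinized supercompact.

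For $(1)\Rightarrow(2)$, fix $A\subseteq V_\delta$ and a club $C\subseteq\delta$; the goal is to produce $\kappa\in C$ with $\varphi(\kappa,\delta,A)$. I would run the proof of Lemma~\ref{woodinscc1}$(1)\Rightarrow(2)$ essentially verbatim, building the same auxiliary functions $f_0,f_1,f_2,F$ from a code of $A$ and $C$ and applying Woodinized supercompactness to $F$. This yields $\kappa\in j(C)$ and an elementary embedding $j\colon V\to M$ with $\crit(j)=\kappa$, $F[\kappa]\subseteq\kappa$ and ${}^{j(F)(\kappa)}M\subseteq M$, such that $M\models$ ``$\kappa$ is $\lambda$-$j(A)$-supercompact for every cardinal $\lambda<j(\delta)$''. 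Since Woodinized supercompact cardinals are inaccessible, $j(\delta)$ is inaccessible in $M$, so $W:=V^{M}_{j(\delta)}$ is a model of $\mathrm{ZFC}$; setting $B:=j(A)\cap W$, the displayed conclusion says precisely that in $W$ the cardinal $\kappa$ is $\lambda$-$B$-supercompact for every $\lambda<\mathrm{Ord}^{W}$, i.e.\ $\kappa$ is fully $B$-supercompact in $W$.

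Working inside $W$, I would then apply the Laver-function construction relativized to the predicate $B$ (the usual recursion, with ``$\lambda$-supercompact'' replaced throughout by ``$\lambda$-$B$-supercompact'') to obtain a single $s\colon\kappa\to V_\kappa$ such that for every $\lambda<\mathrm{Ord}^{W}$ and every $x\in H_{|V_\lambda|^+}$ there is a $\lambda$-$B$-supercompact measure $\mathcal{U}$ on $\mathcal{P}_\kappa(|V_\lambda|)$ with $j_\mathcal{U}(s)(\kappa)=x$. This is exactly $W\models\varphi(\kappa,j(\delta),B)$. Because $W=V^{M}_{j(\delta)}$ computes correctly all the objects named in $\varphi$ at parameters below $j(\delta)$ — the sets $V_\lambda$, the measures on $\mathcal{P}_\kappa(|V_\lambda|)$, and the agreement with $j(A)$ — we get $M\models\varphi(\kappa,j(\delta),j(A))$. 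Combined with $\kappa\in j(C)$ this gives $M\models\exists\kappa'\in j(C)\,\varphi(\kappa',j(\delta),j(A))$, and since the right-hand side is the $j$-image of the statement ``$\exists\bar\kappa\in C\,\varphi(\bar\kappa,\delta,A)$'', elementarity of $j$ produces such a $\bar\kappa\in C$. As $C$ was an arbitrary club, $\{\kappa:\varphi(\kappa,\delta,A)\}$ is stationary.

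The step I expect to be the main obstacle is the relativized Laver construction inside $W$, and there are two points that require care. First, one must check that insisting every witnessing embedding respect the predicate $B$ does not break the verification step of Laver's recursion: given a putative least failure $(\lambda,x)$ at $\kappa$ one derives, from a $\theta$-$B$-supercompact embedding of $W$ with $\theta\in(\lambda,\mathrm{Ord}^{W})$, a $\lambda$-$B$-supercompact measure on $\mathcal{P}_\kappa(|V_\lambda|)$ realizing $x$, and it is precisely the $B$-agreement of that embedding that guarantees $W$ and the derived ultrapower compute the relevant failures below $V_\lambda$ identically; this is where full $B$-supercompactness of $\kappa$ in $W$, rather than mere supercompactness, is used. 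Second, one must ensure that $M$, carrying only the closure ${}^{j(F)(\kappa)}M\subseteq M$, genuinely contains the supercompact measures that the argument invokes; this is arranged, exactly as with $f_2$ and $F$ in Lemma~\ref{woodinscc1}, by choosing $F$ to dominate the sizes of all the power sets and measures that occur.
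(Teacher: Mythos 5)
Your proposal is correct, and your $(2)\Rightarrow(1)$ direction is the same one-line reduction to Lemma \ref{woodinscc1} that the paper uses; but your $(1)\Rightarrow(2)$ takes a genuinely different route. The paper never enters the target model: it fixes a bijection $H\colon\delta\setminus\{0\}\to V_\delta$ in advance, defines the Laver-candidate function $f_0$ by a single recursion along all of $\delta$ (together with a rank-control function $f_1$ guaranteeing that $(H\circ f_0)\restriction\kappa$ maps into $V_\kappa$ at the good points), codes $f_0,f_1,H,A$ into one set $B$, and applies Lemma \ref{woodinscc1}(2) to $B$ as a black box: every $\kappa$ in the resulting stationary set already carries its Laver function $s=(H\circ f_0)\restriction\kappa$, and the verification is the least-failure contradiction run inside $V$ with a $2^{2^{|V_\lambda|}}$-$B$-supercompact embedding, the $B$-agreement being exactly what forces $j$ to compute $f_0$, hence $s$ and the least failure, correctly. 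You reverse the order of construction and reflection: per club $C$ you rerun the embedding argument (rather than citing Lemma \ref{woodinscc1} as a black box, since you need $j$, $M$, and $\kappa\in j(C)$ in hand), build the Laver function afterwards inside the set model $W=V_{j(\delta)}^M$ from full $B$-supercompactness of $\kappa$ there, then transfer to $M$ and pull back by elementarity. Your route makes the Laver verification the textbook one, since in $W$ you have arbitrarily high relativized supercompactness and need not anticipate all $\kappa<\delta$ simultaneously; the paper's route makes all absoluteness automatic, since everything is literally the same $V$-object transported through the coded predicate $B$, and it yields the whole stationary set in one stroke. Two points you gloss should be filled in if this is written up. First, the recursion inside $W$ still needs a canonical choice of witnesses $x$, i.e., an analogue of the paper's $H$: fix a bijection $H'\colon j(\delta)\to W$ in $M$, which exists because $j(\delta)$ is inaccessible in $M$ --- so this ingredient does not actually disappear from your approach. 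Second, the transfer $W\models\varphi\Rightarrow M\models\varphi(\kappa,j(\delta),j(A))$ requires checking that all measures on $\mathcal{P}_\kappa(|V_\lambda|)$ for $\lambda<j(\delta)$ lie in $W$, that $\Ult(W,\mathcal{U})$ is the appropriate rank-initial segment of $\Ult(M,\mathcal{U})$, and that the $B$- and $j(A)$-agreement conditions coincide (they do: since $\lambda<j_{\mathcal{U}}(\kappa)$, both reduce to statements about $j(A)\cap V_\kappa$ and about functions into $V_{\kappa+1}$ common to both models). With those two standard checks supplied, your argument is a valid alternative proof.
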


\begin{proof}

$(2) \rightarrow (1)$: Since $(2)$ implies the second item in Lemma \ref{woodinscc1}, it is obvious.

$(1) \rightarrow (2)$:  First, note that by a standard argument, $\delta$ is strongly inaccessible.
Fix a bijection $H:\delta \setminus \{0\} \to V_\delta$.
Define $f_0,f_1:\delta \to \delta$ recursively as follows:

If there are $\lambda \in (\kappa,\delta)$ and $x \in H_{|V_\lambda|^+}$ such that there is no supercompactness measure $\mathcal{U}$ on $\mathcal{P}_\kappa(|V_\lambda|)$ witnessing $\lambda$-$A$-supercompact, such that $j_\mathcal{U}(H \circ (f_0 \restriction \kappa))(\kappa)=x$, then let $\lambda$ be the least such, and $\gamma$ be the least such that $H(\gamma)$ is an $x$ witnessing the argument above.
Otherwise, $f_0(\kappa)=0$.
Note that  if $\mathcal{U}$ be as above, then $j_\mathcal{U}(A) \cap V_\lambda=j_\mathcal{U}(A \cap V_\lambda) \cap V_\lambda$, so $j_\mathcal{U}$ witnesses $\kappa$ being $\lambda$-$A$-supercompact if and only if $\lambda$-$A \cap V_\lambda$-supercompact, so we may replace $\lambda$-$A$-supercompact by $\lambda$-$A \cap V_\lambda$-supercompact.

$f_1(\kappa)=rank(H(f_0(\kappa)))$ if $f_0(\kappa) \neq 0$, otherwise $f_1(\kappa)=0$.

Let $B$ code $f_0,f_1,H$ and $A$.
By $(2)$ of Lemma \ref{woodinscc1}, the collection of $\kappa<\delta$ being $\gamma$-$B$-supercompact for all $\gamma<\delta$ is stationary.
Fix such $\kappa$. 
Let $t: \delta \to V_\delta$ be defined as follows: $t(\alpha)=H(f_0(\alpha))$.
By the definition of $f_1$, for $\alpha<\kappa$, we have that $rank(t(\alpha))<\kappa$, so $\rge(t \restriction \kappa) \subseteq V_\kappa$.
Let $s=t \restriction \kappa$.
Suppose for a contradiction that there are a cardinal $\lambda \in (\kappa,\delta)$ and $x \in H_{\lambda^+}$ such that there is no supercompact measure $\mathcal{U}$ on $\mathcal{P}_\kappa(|V_\lambda|)$ such that when letting $j_\mathcal{U}:V \to M$ witnessing $\kappa$ being $\lambda$-$A$-supercompact , then $j(s)(\kappa)=x$.
Let $\rho=2^{2^{|V_\lambda|}}$ and let $j: V \to M$ witness $\kappa$ being $\rho$-$A$-supercompact.
Then $H_{|V_\lambda|^+} \subseteq M$ and by an agreement between $M$ and $V$, we have that every supercompact measure on $\mathcal{P}_\kappa(|V_\lambda|)$ is in $M$.
Furthermore, $j(A) \cap V_\lambda=j(A \cap V_\lambda) \cap V_\lambda$.
Furthermore, for every such supercompact measure $\mathcal{U}$, we have that $V_\lambda \subseteq \rge(j_\mathcal{U})$, thus, we have that

\begin{center}

$M \models$ there is no supercompact measure $\mathcal{U}$ on $\mathcal{P}_\kappa(|V_\lambda|)$ such that $j_\mathcal{U}$ witnesses $\kappa$ being $\lambda$-$A \cap V_\lambda$-supercompact, and $j_\mathcal{U}(s)(\kappa)=x$.

\end{center}

Let $y=j(s)(\kappa)$.
Then in $M$, there is $\mu \leq \lambda$ with no supercompact measure $\mathcal{U}$ such that $j_\mathcal{U}$ witnesses $\kappa$ being $\lambda$-$A \cap V_\lambda$-supercompact.
Derive a supercompact measure $\mathcal{U}$ on $\mathcal{P}_\kappa(|V_\lambda|)$.
Let $j_\mathcal{U}:V \to M_\mathcal{U}=\Ult(V,\mathcal{U})$ and $k:M_\mathcal{U} \to M$ be the factor map.
Then $\crit(k)>|V_\lambda|$.
A routine argument shows that $j_\mathcal{U}(s)(\kappa)=y$.
Since $\mathcal{U}$ witnesses $|V_\lambda|$-supercompactness, we have that $j_\mathcal{U}(A) \cap V_\lambda=j_\mathcal{U}(A \cap V_\lambda) \cap V_\lambda=j(A \cap V_\lambda) \cap V_\lambda=A \cap V_\lambda$.
Hence $j_\mathcal{U}$ witnesses $\kappa$ being $\lambda$-$A$-supercompact and $j(s)(\kappa)=y$, which is a contradiction.

\end{proof}

For $x \in V$, denote $j_E(x)$ by $x^{j_E}$ (for instance, $j(\kappa),j(\lambda)$ are denoted by $\kappa^{j_E},\lambda^{j_E}$, respectively).
Fix a sequence  $\langle E_\alpha: \alpha<\eta\rangle$ such that for $\alpha<\eta$, $E_\alpha$ is an $(\kappa_\alpha,\lambda^{j_{E_\alpha}})$-extender. 
For $\alpha<\eta$, define $\kappa_\alpha^j={\kappa_\alpha}^{j_{E_\alpha}}$, $\lambda_\alpha^j=\lambda^{j_{E_\alpha}}$ (so that $E_\alpha$ is an $(\kappa_\alpha,\lambda_\alpha^j$)-extender). 
For $\alpha  \in (0, \eta]$ define $\bar{\kappa}_\alpha=\sup_{\beta<\alpha}\kappa_\beta$, $\bar{\kappa}_\alpha^j=\sup_{\beta<\alpha} \kappa_\beta^j$, and $\bar{\lambda}_\alpha^j=\sup_{\beta<\alpha} \lambda_\beta^j$.
Let $\bar{\lambda}_0^j=\lambda$.
Note that $\bar{\kappa}_\alpha$, $\bar{\kappa}_\alpha^j$, and $\bar{\lambda}_\alpha^j$ are defined without mentioning $\kappa_\alpha$, $\kappa_\alpha^j$, and $\lambda_\alpha^j$, respectively.

Given an extender $E=\langle E_a: a \in [\xi]^{<\omega} \rangle$ and $\gamma<\xi$, we define $E \restriction \gamma$ as $\langle E_a: a \in [\gamma]^{<\omega} \rangle$.

\begin{defn}

Let $E$ and $F$ be a $(\kappa_E,\lambda^{j_E})$-extender and a $(\kappa_F,\lambda^{j_F})$-extender, respectively, where $\kappa_E<\kappa_F<\lambda<\lambda^{j_E}<\lambda^{j_F}$.
$E$ is {\em coherent in} $F$ if $j_F(E) \restriction \lambda^{j_E}=E$.
A sequence of extenders $\langle E_\alpha: \alpha<\eta \rangle$ is {\em coherent} if for $\beta<\alpha$, $E_\beta$ is coherent in $E_\alpha$.

\end{defn}

We sometimes say that $E$ and $F$ are coherent instead of saying that $E$ is coherent in $F$, however, it is important to be aware that saying that $E$ and $F$ are coherent is {\em not} the same as saying that $F$ and $E$ are coherent.
The conclusion of the following theorem will be our initial setting to our forcing construction.

\begin{thm} \label{deriveext}

Assume GCH.
Let $\delta$ be a Woodinized supercompact cardinal, and $\eta \in (0,\delta)$ is an ordinal.
Then there is an increasing sequence of cardinals $\langle \kappa_\alpha: \alpha<\eta \rangle$ such that $\eta<\kappa_0$, $\kappa_\alpha<\delta$ for all $\alpha$, and for every fixed cardinal $\lambda \in [\bar{\kappa}_\eta,\delta)$ , if  $\cf(\lambda) \geq \bar{\kappa}_\eta$, then there is a sequence of extenders $\langle E_\alpha:\alpha<\eta \rangle$ such that $E_\alpha$ is a $(\kappa_\alpha,\lambda_\alpha^j)$- extender such that by letting  $j_{E_\alpha}: V \to M_\alpha=\Ult(V,E_\alpha)$, then the following hold:

\begin{enumerate}

\item $j_{E_\alpha}$ witnesses $\kappa_\alpha$ being $\lambda$-supercompact.

\item there is a function $s_0: \kappa_0 \to \kappa_0$ such that $j_{E_0}(s_0)(\kappa_0)=\lambda$ (as a consequence, $\lambda<\kappa_0^j$).

\item for $\alpha \in (0,\eta)$,  there are functions $s_\alpha,t_\alpha^\beta:\kappa_\alpha \to \kappa_\alpha$ for $\beta<\alpha$ such that $j_{E_\alpha}(s_\alpha)(\kappa_\alpha)=\lambda$, $j_{E_\alpha}(t_\alpha^\beta)(\kappa_\alpha)=\lambda_\beta^j$ and $|\lambda_\beta^j|^V$ is regular (recall that $\lambda_\beta^j=j_{E_\beta}(\lambda)$).
As a consequence, for $\beta \leq \alpha$ there is a function $u_\alpha^\beta:\kappa_\alpha \to \kappa_\alpha$ such that $j_{E_\alpha}(u_\alpha^\beta)(\kappa_\alpha)=\bar{\lambda}_\beta^j$.
 
 \item The sequence $\langle E_\alpha: \alpha<\eta \rangle$ is coherent.

\end{enumerate}

Furthermore, let $\langle \gamma_\alpha: \alpha<\eta \rangle$ be a sequence of ordinals below $\delta$, then $E_\alpha$ can be constructed so that $\kappa_\alpha^j> \gamma_\alpha$.

\end{thm}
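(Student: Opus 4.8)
The plan is to separate the construction into a \emph{once-and-for-all} choice of the cardinals $\langle\kappa_\alpha:\alpha<\eta\rangle$, made independently of $\lambda$, followed by a recursion that, for each admissible $\lambda$, builds the extenders and feeds the already-constructed ones back into the Laver function to force coherence. For the selection I apply Lemma \ref{woodinscc2} with $A=\emptyset$: the class of $\kappa<\delta$ for which $\varphi(\kappa,\delta,\emptyset)$ holds is stationary, hence unbounded, so I may fix an increasing sequence $\langle\kappa_\alpha:\alpha<\eta\rangle$ of such cardinals with $\eta<\kappa_0$ and each $\kappa_\alpha<\delta$, and let $s^{(\alpha)}\colon\kappa_\alpha\to V_{\kappa_\alpha}$ be the Laver function witnessing $\varphi(\kappa_\alpha,\delta,\emptyset)$. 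Since $\delta$ is inaccessible, every extender produced below has length less than $\delta$ and so remains in $V_\delta$.

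Now fix a cardinal $\lambda\in[\bar{\kappa}_\eta,\delta)$ with $\cf(\lambda)\geq\bar{\kappa}_\eta$ and build $\langle E_\alpha:\alpha<\eta\rangle$ by recursion on $\alpha$. At stage $\alpha$, with $\vec{E}\restriction\alpha=\langle E_\beta:\beta<\alpha\rangle$ in hand, I choose a cardinal $\rho_\alpha\in(\kappa_\alpha,\delta)$ so large that $|V_{\rho_\alpha}|\geq\max(\gamma_\alpha,\sup_{\beta<\alpha}\lambda_\beta^j)$ and the pair $x_\alpha=\langle\lambda,\vec{E}\restriction\alpha\rangle$ lies in $H_{|V_{\rho_\alpha}|^+}$. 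By $\varphi(\kappa_\alpha,\delta,\emptyset)$ there is a supercompactness measure $\mathcal{U}_\alpha$ on $\mathcal{P}_{\kappa_\alpha}(|V_{\rho_\alpha}|)$ whose embedding $j_\alpha\colon V\to M$ witnesses $\kappa_\alpha$ being $|V_{\rho_\alpha}|$-supercompact and satisfies $j_\alpha(s^{(\alpha)})(\kappa_\alpha)=x_\alpha$. I then let $E_\alpha$ be the $(\kappa_\alpha,\xi_\alpha)$-extender derived from $j_\alpha$ with $\xi_\alpha=j_\alpha(\lambda)$. Since $\xi_\alpha$ is a cardinal of $M$ with $\xi_\alpha\geq\sup j_\alpha[\lambda]$ and, using the hypothesis $\cf(\lambda)\geq\bar{\kappa}_\eta\geq\kappa_\alpha$, with $\cf^M(\xi_\alpha)=j_\alpha(\cf(\lambda))\geq j_\alpha(\kappa_\alpha)$, Lemma \ref{lengthcapturescc}(3) shows that $j_{E_\alpha}$ carries the supercompactness required in clause (1); moreover $\lambda_\alpha^j=j_{E_\alpha}(\lambda)=j_\alpha(\lambda)=\xi_\alpha$, and by a suitable choice of $\mathcal{U}_\alpha$ (invoking GCH) I arrange $|\lambda_\alpha^j|^V$ to be regular. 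Because $\rho_\alpha$ may be taken arbitrarily close to $\delta$, the value $\kappa_\alpha^j=j_\alpha(\kappa_\alpha)$ can be pushed above $\gamma_\alpha$, which secures the final clause.

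Clauses (2) and (3) are then read off the guess $x_\alpha$. Its first coordinate is $\lambda$, which yields a function $s_\alpha\colon\kappa_\alpha\to\kappa_\alpha$ with $j_\alpha(s_\alpha)(\kappa_\alpha)=\lambda$; for $\beta<\alpha$ the length of the $\beta$-th entry of the second coordinate is $\lambda_\beta^j$, giving $t_\alpha^\beta\colon\kappa_\alpha\to\kappa_\alpha$ with $j_\alpha(t_\alpha^\beta)(\kappa_\alpha)=\lambda_\beta^j$, and setting $u_\alpha^\beta=\sup_{\gamma<\beta}t_\alpha^\gamma$ (legitimate as $\beta<\kappa_0\leq\kappa_\alpha$) produces $j_\alpha(u_\alpha^\beta)(\kappa_\alpha)=\bar{\lambda}_\beta^j$. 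By Proposition \ref{critpoint} we have $\crit(k_\alpha)>\lambda_\alpha^j$ (here $\xi_\alpha=j_\alpha(\lambda)$ is a $j_\alpha$-image), so by Proposition \ref{rep1} the maps $j_{E_\alpha}$ and $j_\alpha$ agree on all of these values, each of which is below $\lambda_\alpha^j$; hence the very same functions witness (2) and (3) for $j_{E_\alpha}$.

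The heart of the matter, and where I expect the genuine difficulty, is coherence in clause (4). Fix $\beta<\alpha$; I must show $j_{E_\alpha}(E_\beta)\restriction\lambda_\beta^j=E_\beta$. Since $\crit(k_\alpha)>\lambda_\alpha^j>\lambda_\beta^j$ and $j_\alpha=k_\alpha\circ j_{E_\alpha}$, the factor map is the identity on all coordinates in $[\lambda_\beta^j]^{<\omega}$, whence $j_{E_\alpha}(E_\beta)\restriction\lambda_\beta^j=j_\alpha(E_\beta)\restriction\lambda_\beta^j$, reducing the task to the single equation $j_\alpha(E_\beta)\restriction\lambda_\beta^j=E_\beta$ for the supercompactness embedding $j_\alpha$. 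The guess $j_\alpha(s^{(\alpha)})(\kappa_\alpha)=\vec{E}\restriction\alpha$ is engineered exactly for this: evaluating $j_{E_\alpha}(s^{(\alpha)})$ at the coordinate $\{\kappa_\alpha\}\in[\lambda_\alpha^j]^{<\omega}$ and applying $\crit(k_\alpha)>\lambda_\alpha^j$ places the true sequence $\vec{E}\restriction\alpha$, and in particular $E_\beta$, inside $M_\alpha=\Ult(V,E_\alpha)$. To turn this into the coherence equation I intend to run the standard reflection argument on the recursion itself: the map $\alpha\mapsto E_\alpha$ is uniformly definable from $(\delta,\lambda,\langle s^{(\alpha)}:\alpha<\eta\rangle)$, so by elementarity $j_\alpha$ carries it to the corresponding recursion computed in $M$, whose first $\alpha$ stages—using the closure of $M$ together with the fact that $j_\alpha$ fixes every index $\beta<\alpha<\kappa_\alpha$—must reproduce precisely the guessed sequence; comparing this with the image sequence $\langle j_\alpha(E_\beta):\beta<\alpha\rangle$ then forces $j_\alpha(E_\beta)\restriction\lambda_\beta^j=E_\beta$. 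Checking that this reflection really closes, namely that the measures of $j_\alpha(E_\beta)$ and of $E_\beta$ coincide on every $a\in[\lambda_\beta^j]^{<\omega}$ in spite of the overlap $\kappa_\alpha<\lambda<\lambda_\beta^j$, is the crux of the theorem, and I expect it to proceed in close parallel with Section 7 of \cite{jir2021}.
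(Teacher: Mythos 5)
Your handling of clauses (1)--(3) and of the ``furthermore'' clause is essentially sound and parallels the paper: deriving the $(\kappa_\alpha,j_\alpha(\lambda))$-extender, using Lemma \ref{lengthcapturescc} with $\cf^M(j_\alpha(\lambda))=j_\alpha(\cf(\lambda))\geq j_\alpha(\kappa_\alpha)$, reading $s_\alpha,t_\alpha^\beta$ off the Laver guess, and transferring everything from $j_\alpha$ to $j_{E_\alpha}$ via $\crit(k_\alpha)>\lambda_\alpha^j$ (Proposition \ref{critpoint}). Likewise your reduction of clause (4) to the single equation $j_\alpha(E_\beta)\restriction\lambda_\beta^j=E_\beta$ is correct. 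The gap is exactly where you flag it, and your proposed reflection argument does not close it. Guessing with $A=\emptyset$ yields only $\vec{E}\restriction\alpha\in M$, i.e.\ membership; coherence says that for every $a\in[\lambda_\beta^j]^{<\omega}$ the measures $(j_\alpha(E_\beta))_a$ and $(E_\beta)_a$ coincide, which is a \emph{correctness} property of $j_\alpha$ about $E_\beta$ and does not follow from $E_\beta\in M$ plus closure of $M$. The reflection itself breaks for a concrete reason: since $\crit(j_\alpha)=\kappa_\alpha<\lambda$, we have $j_\alpha(\lambda)>\lambda$, so elementarity carries your recursion to the recursion in $M$ computed with parameter $j_\alpha(\lambda)$ (and $j_\alpha(\delta)$, $j_\alpha(\langle s^{(\alpha)}\rangle)$). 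Its first $\alpha$ stages therefore produce extenders witnessing $\kappa_\beta$ being $j_\alpha(\lambda)$-supercompact in $M$ --- not the guessed sequence $\vec{E}\restriction\alpha$, whose entries are extenders for $\lambda$ --- so ``must reproduce precisely the guessed sequence'' is false, and comparing $j_\alpha(E_\beta)$ with $E_\beta$ below $\lambda_\beta^j$ simply restates what is to be proved. (There is also an unresolved choice issue: each stage picks \emph{some} measure realizing the guess; even after fixing a definable well-order, the $M$-least choice at stage $\beta$ for the parameter $j_\alpha(\lambda)$ bears no relation to the $V$-least choice for $\lambda$.)

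The paper repairs this at the level of the choice of the $\kappa_\alpha$'s, and this is precisely where Woodinized supercompactness buys more than a bare Laver function (which any supercompact carries). The $\kappa_\alpha$ are chosen \emph{recursively}, not once and for all: at stage $\alpha$ one already has, for every admissible $\lambda'$, the families $E_{\beta,\lambda'}$ ($\beta<\alpha$), codes them into a set $A_\alpha\subseteq V_\delta$, and applies Lemma \ref{woodinscc2} with this nonempty $A_\alpha$, so that the guessing embedding $j_{\alpha,\lambda}$ is moreover $\max\{(\bar{\theta}^j_{\alpha,\lambda})^+,\gamma_\alpha^+\}$-$A_{\alpha,\lambda}$-supercompact. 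The agreement $j_{\alpha,\lambda}(A_{\alpha,\lambda})\cap V_{(\bar{\theta}^j_{\alpha,\lambda})^+}=A_{\alpha,\lambda}\cap V_{(\bar{\theta}^j_{\alpha,\lambda})^+}$, where $(\bar{\theta}^j_{\alpha,\lambda})^+>\lambda_\beta^j$, yields $j_\alpha(E_\beta)\cap V_{j_\beta(\lambda)}=E_\beta\cap V_{j_\beta(\lambda)}$ in one line, and your factor-map observation then transfers this to $j_{E_\alpha}$. Because the whole family over all $\lambda'$ is coded at each stage, the theorem's quantifier order (cardinals first, then $\lambda$) is preserved; but the cardinals must be chosen \emph{after} the earlier extenders, so your once-and-for-all selection with $A=\emptyset$ cannot work as it stands --- to fix your write-up, replace the empty predicate by the recursive coding and invoke the $\gamma$-$A$-supercompactness clause rather than the guessed value $x_\alpha$ to obtain coherence.
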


Note that from Theorem \ref{deriveext} for $\beta<\alpha<\eta$, we see that $E_\beta=j_{E_\alpha}(\gamma \mapsto E_\beta \restriction t_\alpha^\beta(\gamma))(\kappa_\alpha)$, so  $E_\beta \in M_\alpha$.
Furthermore, for $\beta<\alpha$, $\lambda_\beta^j<\kappa_\alpha^j<\lambda_\alpha^j$.

\begin{proof}[Proof of Theorem \ref{deriveext}]

Fix $\eta$.
By Lemma \ref{woodinscc2} (with $A= \emptyset$), let $\kappa_0<\delta$ be such that for each cardinal $\lambda \in [\kappa_0,\delta)$ such that $\cf(\lambda) \geq \kappa_0$, there is $j_{0,\lambda}:V \to M_{0.\lambda}$ witnessing $\kappa_0$ being $\max\{\lambda,\gamma_0^+\}$-supercompact, such that there is a map $s_{0,\lambda}: \kappa_0 \to \kappa_0$ with $j_{E_{0,\lambda}}(s_{0,\lambda})(\kappa_0)=\lambda$.
We may assume that $|j_{0,\lambda}(\lambda)|^V$ is regular.
The regularity of $|j_{0,\lambda}(\lambda)|^V$ can be assumed by making sure that $j_{0,\lambda}$ is an ultrapower map from some supercompact measure.
Note that $\cf(j_{0,\lambda}(\lambda))^{M_{0,\lambda}} \geq j_{0,\lambda}(\kappa_0)$.
Derive a $(\kappa_0,j_{0,\lambda}(\lambda))$-extender $E_{0,\lambda}$ from $j_{0,\lambda}$.
By Lemma \ref{lengthcaprhoscc}, $j_{E_{0,\lambda}}$ witnesses $\kappa_0$ being $\lambda$-supercompact, $j_{E_{0,\lambda}}(s_{0,\lambda})(\kappa_0)=\lambda$, and $\gamma_0<j_{E_{0,\lambda}}(\kappa_0)<j_{E_{0,\lambda}}(\lambda)$. 

Let $\alpha \in (0,\eta)$.
Assume for $\beta^\prime<\beta<\alpha$, $A_{\beta,\lambda}$, $\kappa_\beta$ are designated so that for every $\lambda \in \kappa_\beta,\delta)$ of cofinality at least $\bar{\kappa}_\beta$, $E_{\beta,\lambda}$, $s_{\beta,\lambda}$, $t_{\beta,\lambda}^{\beta^\prime}$ are defined in a way that by letting $j_{E_{\beta,\lambda}}:V \to M_{E_{\beta,\lambda}}=\Ult(V,E_{\beta,\lambda})$ with $\theta_{\beta,\lambda}^j=\lambda^{j_{E_{\beta,\lambda}}}$, we have that

\begin{enumerate}

\item $E_{\beta,\lambda}$ is an $(\kappa_\beta,\theta_{\beta,\lambda}^j)$-extender.

\item $j_{E_{\beta,\lambda}}$ witnesses $\kappa_\beta$ being $\lambda$-$A_\beta$-supercompact, and $|j_{E_{\beta,\lambda}}(\lambda)|^V$ is regular.

\item $j_{E_{\beta,\lambda}}(\kappa_\beta)>\gamma_\beta$.

\item $j_{E_{\beta,\lambda}}(s_\beta)(\kappa_\beta)=\lambda$ and $j_{E_{\beta,\lambda}}(t_\beta^{\beta^\prime})(\kappa_\beta)=\theta_{\beta^\prime,\lambda}^j$ for $\beta^\prime<\beta$.

\end{enumerate}

For $\lambda \in (\bar{\kappa}_\alpha,\delta)$, let $A_{\alpha,\lambda}$ code $A_{\beta,\lambda},E_{\beta,\lambda},s_{\beta,\lambda},t_{\beta,\lambda}^{\beta^\prime}$ for $\beta^\prime<\beta<\alpha$, and let $A_\alpha$ code $\langle A_{\alpha,\lambda}:\lambda \in [\bar{\kappa}_\alpha,\delta) \rangle$.
Let $\bar{\theta}_{\alpha,\lambda}^j=\sup_{\beta<\alpha} \theta_{\beta,\lambda}^j$.
By Lemma \ref{woodinscc2}, let $\kappa_\alpha \in (\bar{\kappa}_\alpha,\delta)$ be such that for $\lambda \in [\kappa_\alpha,\delta)$ with $\cf(\lambda) \geq \kappa_\alpha$, there is a map  $j_{\alpha,\lambda}:V \to M_{\alpha,\lambda}$ witnessing $\kappa_\alpha$ being $(\max\{(\bar{\theta}_{\alpha,\lambda}^j)^+,\gamma_\alpha^+\})$-$A_{\alpha,\lambda}$-supercompact, and there are $s_{\alpha,\lambda},t_{\alpha,\lambda}^\beta: \kappa_\alpha \to \kappa_\alpha$ for $\beta<\alpha$ such that $j_{\alpha,\lambda}(s_{\alpha,\lambda})(\kappa_\alpha)=\lambda$ and $j_{\alpha,\lambda}(t_{\alpha,\lambda}^\beta)(\kappa_\alpha)=\theta_{\beta,\lambda}^j$ (so the map $j_{\alpha,\lambda}$ is correct about $A_{\alpha,\lambda}$ up to $V_{\max \{ (\bar{\theta}_{\alpha,\lambda}^j)^+,\gamma_\alpha^+\} })$.
We may also assume that $|j_{\alpha,\lambda}(\lambda)|^V$ is regular by making sure that $j_{\alpha,\lambda}$ is an ultrapower map from some supercompact measure.
Derive a $(\kappa_\alpha,\lambda^{j_{\alpha,\lambda}})$-extender $E_{\alpha,\lambda}$.
By letting $j_{E_{\alpha,\lambda}}:V \to M_{E_{\alpha,\lambda}}=\Ult(V,E_{\alpha,\lambda})$, we have that

\begin{enumerate}

\item $j_{E_{\alpha,\lambda}}$ witnesses $\kappa_\alpha$ being $\lambda$-supercompact, and $|j_{E_{\alpha,\lambda}}(\lambda)|^V$ is regular.

\item $j_{E_{\alpha,\lambda}}(\kappa_\alpha)=j_{\alpha,\lambda}(\kappa_\alpha)>\gamma_\alpha$.

\item $j_{E_{\alpha,\lambda}}(s_{\alpha,\lambda})(\kappa_\alpha)=\lambda$.

\item For $\beta<\alpha$, $j_{E_{\alpha,\lambda}}(t_{\alpha,\lambda}^\beta)(\kappa_\alpha)=\theta_{\beta,\lambda}^j$.

\end{enumerate}

This finishes our construction.
Fix each $\lambda \in [\bar{\kappa}_\eta,\delta)$ with $\cf(\lambda) \geq \bar{\kappa}_\eta$. 
Set $j_\alpha=j_{\alpha,\lambda}, E_\alpha=E_{\alpha,\lambda}$ $s_\alpha=s_{\alpha,\lambda}$, and $t_\alpha^\beta=t_{\alpha,\lambda}^\beta$.
It remains to show that $\langle E_\alpha: \alpha<\eta \rangle$ is coherent.
Let $\beta<\alpha<\eta$.
With our preparation, $j_\alpha(E_\beta) \cap V_{j_\beta(\lambda)}=E_\beta \cap V_{j_\beta(\lambda)}$.
Also, $j_\beta(\lambda)=j_{E_\beta}(\lambda)$.
 For $a \in [j_\beta(\lambda)]^{<\omega}$, we have that 
 \begin{align*}
A \in j_{E_\alpha}(E_\beta)_a \Leftrightarrow A \in j_\alpha(E_\beta)_a \Leftrightarrow A \in (E_\beta)_a
\end{align*} 
In conclusion, $j_{E_\alpha}(E_\beta) \restriction \lambda^{j_{E_\beta}}=E_\beta$.

\end{proof}

\section{an analysis on a coherent sequence of extenders}
\label{analysisofextenders}

From now on, we always assume GCH.
Let $\langle \kappa_\alpha:\alpha<\eta \rangle, \lambda, \langle E_\alpha:\alpha<\eta \rangle, \langle s_\alpha : \alpha<\eta \rangle, \langle t_\alpha^\beta:\beta<\alpha<\eta \rangle$ and $\langle u_\alpha^\beta:\beta \leq \alpha<\eta \rangle$ be as in Theorem \ref{deriveext}, where $\lambda$ is regular.
Let $\bar{\kappa}_0=\eta^+$.
We also assume that $s_\alpha, t_\alpha^\beta,u_\alpha^\beta$ are strictly increasing, and their ranges are subsets of  $(\bar{\kappa}_\alpha,\kappa_\alpha)$.
Note that $\bar{\lambda}_{\beta+1}^j=\lambda_\beta^j$, we may assume that $u_\alpha^{\beta+1}=t_\alpha^\beta$, which implies that the notion $u_\alpha^{\beta+1}$ is redundant.
If $\alpha$ is limit, and $\beta<\alpha$, then $\lambda_\beta^j<\bar{\lambda}_\alpha^j$, so we may assume that $t_\alpha^\beta(\gamma)<u_\alpha^\alpha(\gamma)$ for all $\gamma$.
In general, if $f,g: \kappa_\alpha \to \kappa_\alpha$, $f$ represent an ordinal lower than the ordinal represented by $g$ in $M_\alpha=\Ult(V,E_\alpha)$, we will assume that $f(\gamma)<g(\gamma)$ for all $\gamma$.
Define $E_\alpha(\kappa_\alpha)$ as follows: For $A \subseteq \kappa_\alpha$, let $A \in E_\alpha(\kappa_\alpha)$ iff $\kappa_\alpha \in j_{E_\alpha}(A)$.
Then $E_\alpha(\kappa_\alpha)$ is just a normal measure on $\kappa_\alpha$, which is isomorphic to $E_\alpha(\{\kappa_\alpha\})$.

Fix $\alpha<\eta$.
Recall that $j_{E_\alpha}:V \to M_\alpha=\Ult(V,E_\alpha)$.
We refer to the notions (which will often be used for the rest of the paper) in the paragraph after Lemma \ref{woodinscc2}.
We now list all key ingredients in $M_\alpha$.
Then we reflect the ingredients down to a measure-one set with respect to $E_\alpha(\kappa_\alpha)$.
Let $\beta<\alpha$. 
We have the following:

\begin{itemize}

\item $\lambda=j_{E_\alpha}(\gamma \mapsto s_\alpha(\gamma))(\kappa_\alpha)$ and $\lambda$ is regular (both in $M_\alpha$ and in $V$).

\item $\lambda_\beta^j=j_{E_\alpha}(\gamma \mapsto t_\alpha^\beta(\gamma))(\kappa_\alpha)$.

\item $\cf^{M_\beta}(\lambda_\beta^j) > \kappa_\beta^j>\lambda$, so $\cf^{M_\alpha}(\lambda_\beta^j) \geq \cf^V(\lambda_\beta^j) > \lambda=j_{E_\alpha}(s_\alpha)(\kappa_\alpha)$.

\item $E_\beta=j_{E_\alpha}(\gamma \mapsto E_\beta \restriction t_\alpha^\beta(\gamma))(\kappa_\alpha)$.

\item $\kappa_\beta^j = j_{E_\alpha}(\gamma \mapsto j_{E_\beta \restriction t_\alpha^\beta(\gamma)}(\kappa_\beta))(\kappa_\alpha)$.

\item \begin{align}
j_{E_\alpha}(t_\alpha^\beta)(\kappa_\alpha)&=\lambda_\beta^j\\
&=j_{E_\beta}(\lambda)\\
&=j_{E_\alpha}(\gamma \mapsto j_{E_\beta \restriction t_\alpha^\beta(\gamma)}(s_\alpha(\gamma)))(\kappa_\alpha)
\end{align}

\end{itemize}

Furthermore, if  $\xi<\beta<\alpha$, we have that

\begin{enumerate}

\item $\lambda_\xi^j \leq \bar{\lambda}_\beta^j<\kappa_\beta^j<\lambda_\beta^j$, which directly translates to $j_{E_\alpha}(t_\alpha^\xi)(\kappa_\alpha) \leq j_{E_\alpha}(u_\alpha^\beta)(\kappa_\alpha) <\kappa_\beta^j<j_{E_\alpha}(t_\alpha^\beta)(\kappa_\alpha)$ (recall $\kappa_\beta^j = j_{E_\alpha}(\gamma \mapsto j_{E_\beta \restriction t_\alpha^\beta(\gamma)}(\kappa_\beta))(\kappa_\alpha)$).

\item \label{lambdaxi}
\begin{align*}
j_{E_\alpha}(\gamma \mapsto j_{E_\beta \restriction t_\alpha^\beta(\gamma)}(t_\beta^\xi)(\kappa_\beta))(\kappa_\alpha)&=j_{E_\beta}(t_\beta^\xi)(\kappa_\beta)\\
&=\lambda_\xi^j\\
&=j_{E_\alpha}(t_\alpha^\xi)(\kappa_\alpha).
\end{align*}

\item \label{lambdabarxi}
\begin{align*}
j_{E_\alpha}(\gamma \mapsto j_{E_\beta \restriction t_\alpha^\beta(\gamma)}(u_\beta^\xi)(\kappa_\beta))(\kappa_\alpha)&=j_{E_\beta}(u_\beta^\xi)(\kappa_\beta)\\
&=\bar{\lambda}_\xi^j\\
&=j_{E_\alpha}(u_\alpha^\xi)(\kappa_\alpha).
\end{align*}

\item \label{fullcommute}
\begin{align*}
E_\xi &=j_{E_\beta}(E_\xi) \restriction \lambda_\xi^j\\
&=j_{E_\alpha}(\gamma \mapsto j_{E_\beta \restriction t_\alpha^\beta(\gamma)}(E_\xi) \restriction t_\alpha^\xi(\gamma))(\kappa_\alpha)\\
&=j_{E_\alpha}(\gamma \mapsto j_{E_\beta \restriction t_\alpha^\beta(\gamma)}(E_\xi \restriction t_\alpha^\xi(\gamma)) \restriction t_\alpha^\xi(\gamma))(\kappa_\alpha).
\end{align*}

\end{enumerate}

Finally, we note that $\langle E_\beta: \beta<\alpha \rangle$ is coherent in $M_\alpha$.
We present the order of all relevant ordinals in $\Ult(V,E_\alpha)$  in Figure \ref{cardinalline} below, when $\xi<\beta<\alpha$.

 \begin{figure}[H]
 \centering
 \captionsetup{justification=centering} 
 \includegraphics[scale = 0.75]{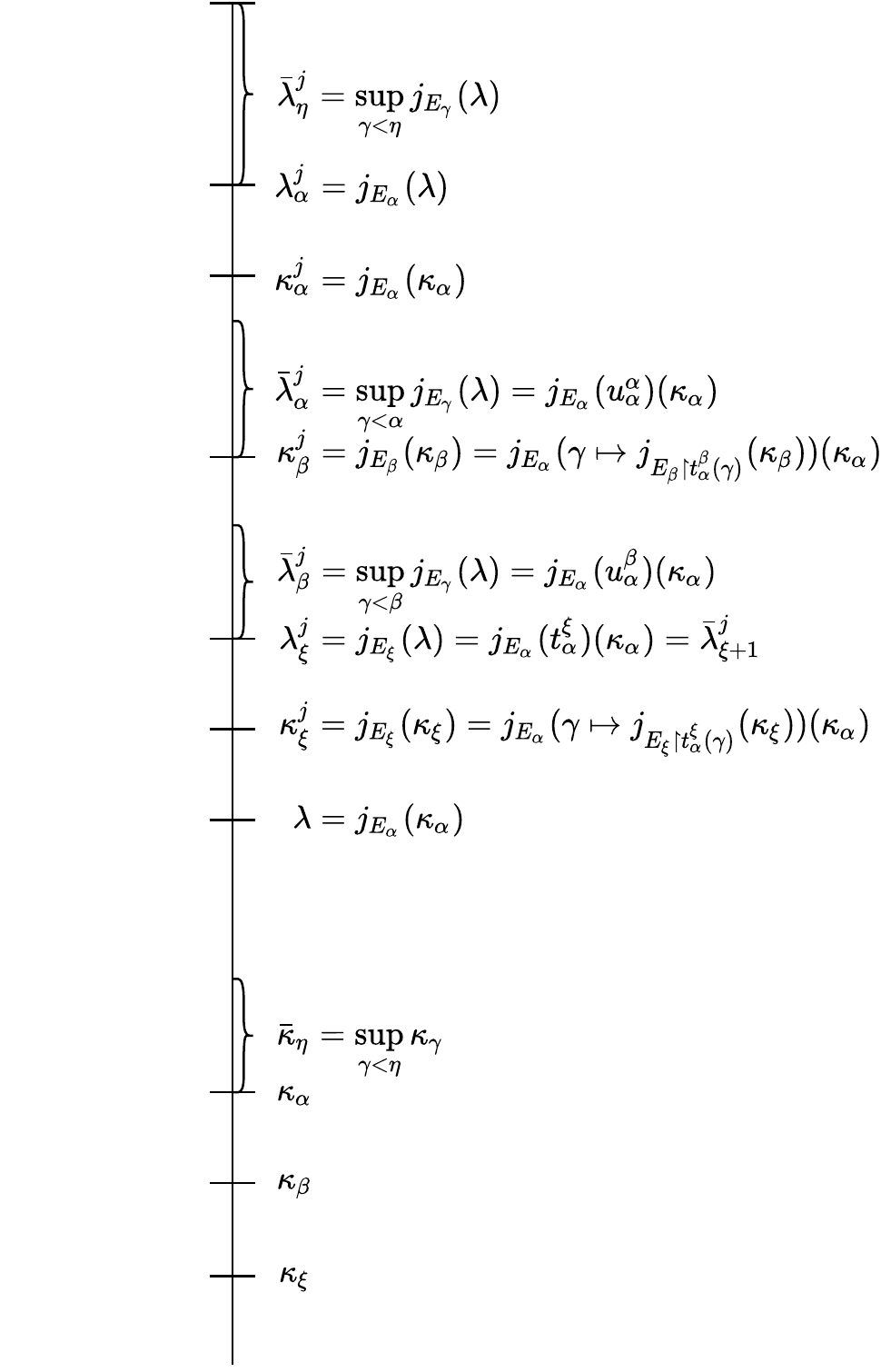}
  \caption{Important ordinals presented in $\Ult(V,E_\alpha)$}
  \label{cardinalline}
\end{figure}

Notice that in Figure \ref{cardinalline} we have a few ordinals indicated by parentheses.
The main reason is that we do not assume $\eta$ to be a limit ordinal.
From the bottommost parentheses, if $\eta=\alpha+1$, then $\bar{\kappa}_\eta=\kappa_\alpha$, otherwise $\bar{\kappa}_\eta>\kappa_\alpha$. 
We also have that $\bar{\lambda}_\beta^j \geq \lambda_\xi^j$, and they are equal if and only if $\beta=\xi+1$.
The analogues also explain the behaviors of $\bar{\lambda}_\alpha^j$ and $\bar{\lambda}_\eta^j$.

The intuition of the requirements for $\gamma$ to be $\alpha$-reflected in Definition \ref{reflect} is that the requirements are what  $\kappa_\alpha$ behaves in $M_\alpha=\Ult(V,E_\alpha)$. 
The collection of such $\gamma$ in Definition \ref{reflect} is of measure-one, which is stated in Lemma \ref{los}.

\begin{defn}
\label{reflect}
$\gamma<\kappa_\alpha$ is {\em $\alpha$-reflected for the sequence $\langle E_\beta: \beta \leq \alpha \rangle$} if

\begin{enumerate}

\item $\gamma > \bar{\kappa}_\alpha$ is regular.

\item $s_\alpha(\gamma)$ is regular.

\item If $\alpha>0$, then for $\beta<\alpha$, let $e_\beta=E_\beta \restriction t_\alpha^\beta(\gamma)$, then we have that

\begin{itemize}

\item $\bar{\kappa}_\alpha<s_\alpha(\gamma)<u_\alpha^\beta(\gamma)<j_{e_\beta}(\kappa_\beta)<t_\alpha^\beta(\gamma)$.

\item $\cf(t_\alpha^\beta(\gamma)) > s_\alpha(\gamma)$.

\item $j_{e_\beta}(s_\beta)(\kappa_\beta)=s_\alpha(\gamma)$.

\item $t_\beta^\alpha(\gamma)=j_{e_\beta}(s_\alpha(\gamma))$.

\item If $\alpha$ is limit, then $t_\alpha^\beta(\gamma)<u_\alpha^\alpha(\gamma)$.

\item $e_\beta$ witnesses $\kappa_\beta$ being $s_\alpha(\gamma)$-supercompact.

\item The sequence $\langle e_\beta: \beta<\alpha \rangle$ is coherent.

\end{itemize}

\item For $\beta<\alpha$, let $e_\beta=E_\beta \restriction t_\alpha^\beta(\gamma)$ be as above.
If $\xi<\beta<\alpha$, 

\begin{itemize}

\item $t_\alpha^\xi(\gamma) \leq u_\alpha^\beta(\gamma)<t_\alpha^\beta(\gamma)$.

\item $j_{e_\beta}(t_\beta^\xi)(\kappa_\beta)=t_\alpha^\xi(\gamma)$.

\item $j_{e_\beta}(u_\beta^\xi)(\kappa_\beta)=u_\alpha^\xi(\gamma)$.

\item $e_\xi=j_{e_\beta}(e_\xi) \restriction t_\alpha^\xi(\gamma)$.

\end{itemize}

\end{enumerate}

\end{defn}

From our analysis, the following lemma is trivial.

\begin{lemma}
\label{los}
$\{\gamma<\kappa_\alpha: \gamma \textrm{ is } \alpha\textrm{-reflected for the sequence } \langle E_\beta : \beta \leq \alpha \rangle\} \in E_\alpha(\kappa_\alpha)$.

\end{lemma}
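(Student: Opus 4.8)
The plan is to read the statement off directly from {\L}o\'{s}'s theorem for the normal measure $E_\alpha(\kappa_\alpha)$, combined with the list of representations recorded just before Definition \ref{reflect}. Recall that for $A \subseteq \kappa_\alpha$ we have $A \in E_\alpha(\kappa_\alpha)$ if and only if $\kappa_\alpha \in j_{E_\alpha}(A)$. Write $S$ for the set in question and let $\psi(\gamma)$ abbreviate the conjunction of clauses (1)--(4) of Definition \ref{reflect}, with parameters $\bar{\kappa}_\alpha$, $s_\alpha$, the $t_\alpha^\beta$, the $u_\alpha^\beta$, the $E_\beta$ and the $\kappa_\beta, s_\beta, t_\beta^\xi, u_\beta^\xi$ for $\xi<\beta<\alpha$. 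Then elementarity gives
\[
j_{E_\alpha}(S)=\{\gamma<\kappa_\alpha^j : M_\alpha \models \psi^{M_\alpha}(\gamma)\},
\]
where $\psi^{M_\alpha}$ is $\psi$ with every parameter replaced by its $j_{E_\alpha}$-image. So it suffices to check that $M_\alpha \models \psi^{M_\alpha}(\kappa_\alpha)$, i.e.\ that $\kappa_\alpha$ is $\alpha$-reflected \emph{as computed inside} $M_\alpha=\Ult(V,E_\alpha)$.

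The first task is to evaluate the relevant parameters at $\gamma=\kappa_\alpha$. By the representations from the paragraph after Lemma \ref{woodinscc2} and the analysis preceding Definition \ref{reflect} we have $j_{E_\alpha}(s_\alpha)(\kappa_\alpha)=\lambda$, $j_{E_\alpha}(t_\alpha^\beta)(\kappa_\alpha)=\lambda_\beta^j$ and $j_{E_\alpha}(u_\alpha^\beta)(\kappa_\alpha)=\bar{\lambda}_\beta^j$ for $\beta<\alpha$, while $j_{E_\alpha}(\bar{\kappa}_\alpha)=\bar{\kappa}_\alpha$ since $\bar{\kappa}_\alpha<\crit(j_{E_\alpha})=\kappa_\alpha$. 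Most importantly, the object ``$e_\beta$'' that the definition attaches to $\gamma=\kappa_\alpha$, namely $j_{E_\alpha}(E_\beta)\restriction j_{E_\alpha}(t_\alpha^\beta)(\kappa_\alpha)=j_{E_\alpha}(E_\beta)\restriction \lambda_\beta^j$, is exactly the genuine extender $E_\beta$, because $E_\beta=j_{E_\alpha}(\gamma \mapsto E_\beta \restriction t_\alpha^\beta(\gamma))(\kappa_\alpha)$. Consequently $j_{e_\beta}=j_{E_\beta}$ at $\gamma=\kappa_\alpha$, and the subsidiary quantities $j_{e_\beta}(\kappa_\beta)$, $j_{e_\beta}(s_\beta)(\kappa_\beta)$, $j_{e_\beta}(t_\beta^\xi)(\kappa_\beta)$ and $j_{e_\beta}(u_\beta^\xi)(\kappa_\beta)$ become $\kappa_\beta^j$, $\lambda$, $\lambda_\xi^j$ and $\bar{\lambda}_\xi^j$ respectively.

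With these substitutions each clause of Definition \ref{reflect} turns into a fact already established. Clause (1) is the regularity (indeed inaccessibility) of $\kappa_\alpha$ in $M_\alpha$ together with $\kappa_\alpha>\bar{\kappa}_\alpha$; clause (2) is the regularity of $\lambda$ in $M_\alpha$, noted in the first bullet of the analysis. The bullets of clause (3) become the order relations $\bar{\kappa}_\alpha<\lambda<\bar{\lambda}_\beta^j<\kappa_\beta^j<\lambda_\beta^j$, the cofinality computation $\cf^{M_\alpha}(\lambda_\beta^j)>\lambda$, the identity $j_{E_\beta}(s_\beta)(\kappa_\beta)=\lambda$, the aligned identity giving $j_{E_\beta}(\lambda)=\lambda_\beta^j$, and the $\lambda$-supercompactness of $\kappa_\beta$ witnessed by $E_\beta$ from Theorem \ref{deriveext}; the bullets of clause (4) are precisely item (1), (\ref{lambdaxi}), (\ref{lambdabarxi}) and (\ref{fullcommute}) of the ``Furthermore'' list in the analysis. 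Finally, the coherence of $\langle e_\beta : \beta<\alpha\rangle$ in $M_\alpha$ is just the coherence of $\langle E_\beta : \beta<\alpha\rangle$ recorded at the close of the analysis. The only point requiring care — and the only real obstacle — is the bookkeeping: one must confirm that the function chosen to represent each parameter of $\psi^{M_\alpha}$ is the one whose value at $\kappa_\alpha$ was computed in the analysis, so that the translation of each clause matches a listed fact verbatim. Granting this, $M_\alpha\models\psi^{M_\alpha}(\kappa_\alpha)$, and hence $S\in E_\alpha(\kappa_\alpha)$.
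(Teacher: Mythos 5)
Your proposal is correct and is exactly the argument the paper intends: the paper dismisses Lemma \ref{los} as ``trivial from our analysis,'' meaning precisely the {\L}o\'{s}-style verification you carry out, namely that $S\in E_\alpha(\kappa_\alpha)$ iff $\kappa_\alpha\in j_{E_\alpha}(S)$, which reduces clause by clause (via $j_{E_\alpha}(s_\alpha)(\kappa_\alpha)=\lambda$, $j_{E_\alpha}(t_\alpha^\beta)(\kappa_\alpha)=\lambda_\beta^j$, $j_{E_\alpha}(u_\alpha^\beta)(\kappa_\alpha)=\bar{\lambda}_\beta^j$, and $E_\beta=j_{E_\alpha}(\gamma\mapsto E_\beta\restriction t_\alpha^\beta(\gamma))(\kappa_\alpha)$) to the bulleted facts, the ``Furthermore'' items (\ref{lambdaxi})--(\ref{fullcommute}), and the coherence of $\langle E_\beta:\beta<\alpha\rangle$ in $M_\alpha$ recorded in Section \ref{analysisofextenders}. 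You have simply written out the details the paper leaves implicit, so there is nothing to correct.
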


We will abbreviate the term $\langle E_\beta: \beta \leq \alpha \rangle$ defined in Definition \ref{reflect} if the relevant sequence of extenders is clear from the context, so we sometimes say that $\gamma$ is $\alpha$-reflect.
We emphasize some important different features between $E_\beta$ and $e_\beta=E_\beta \restriction t_\alpha^\beta(\gamma)$ as defined in Definition \ref{reflect}.
Let $j_{e_\beta}:V \to \Ult(V,e_\beta)$.
Then $j_{e_\beta}$ witnesses $\kappa_\beta$ being $s_\alpha(\gamma)$-supercompact, where $s_\alpha(\gamma)<\kappa_\alpha$. 
Also, $j_{e_\beta}(\kappa_\beta)<j_{e_\beta}(s_\alpha(\gamma))=t_\alpha^\beta(\gamma)$.
The reason we mention these matters is because later while a lot of definitions are defined with respect to an extender, the definitions can be applied on the appropiate restrictions of extenders, which have different parameters.
For example, see a comment after Definition \ref{domain}.

We now introduce a notion of domain, which was first established by Gitik and Merimovich, for example see \cite{merimovich}.

\begin{defn}
\label{domain}
Let $\alpha<\eta$.
$d$ is an {\em $\alpha$-domain (with respect to $E_\alpha$)} if $d \in [\lambda_\alpha^j]^{ \lambda}$ is such that $ \lambda+1 \subseteq d$, for $\beta < \alpha$, $\kappa_\beta^j,\bar{\lambda}_\beta^j, \lambda_\beta^j \in d$, and $\kappa_\alpha^j,\bar{\lambda}_\alpha^j \in d$.
Fix an $\alpha$-domain $d$.
Define $\mc_\alpha(d)=(j_{E_\alpha} \restriction d)^{-1}$.
Finally, let $A \in E_\alpha(d)$ iff $\mc_\alpha(d) \in j_{E_\alpha}(d)$.

\end{defn}

Note that the notion of $\alpha$-domain actually depends on the structure the extender $E_\alpha$.
If $\gamma$ is $\alpha$-reflected and $e_\beta=E_\beta \restriction t_\alpha^\beta(\gamma)$, then Definition \ref{domain} is applicable for $e_\beta$, namely we can say that $d$ is a $\beta$-domain with respect to $e_\beta$, if $d \in [t_\alpha^\beta(\gamma)]^{s_\alpha(\gamma)}$ with certain containment.
 This matter will be investigated further with some forcings' features, which can be seen in, for instance,  Definition \ref{2extender1} and \ref{2extender01}.

We sometimes abbreviate $\mc_\alpha(d_\alpha)$ as $\mc_\alpha$ whenever $d_\alpha$ and the relevant extenders are clear from the context.
The notion of $\alpha$-domain  is not ambiguous in the following sense: if $d$ is an $\alpha$-domain, then we see that $\bar{\lambda}_\alpha^j \in d$ and for $\beta>\alpha$, $\bar{\lambda}_\beta \not \in d$, so the ordinal parameter used to define the domains is easily distinguished.

\begin{defn}
\label{alphaobject}
Let $d$ be an $\alpha$-domain.
$\mu$ is an {\em $\alpha$-object with respect to the domain $d$} if $\mu$ is a function such that

\begin{enumerate}

\item $\dom(\mu) \subseteq d$, $\rge(\mu) \subseteq \lambda$.

\item For $\beta < \alpha$, $\kappa_\beta^j,\bar{\lambda}_\beta^j, \lambda_\beta^j \in \dom(\mu)$, and $\kappa_\alpha,\kappa_\alpha^j,\bar{\lambda}_\alpha^j \in \dom(\mu)$.

\item $\bar{\kappa}_\alpha+1 \subseteq \dom(\mu) \cap \kappa_\alpha=\mu(\kappa_\alpha)<\kappa_\alpha$.

\item $\mu(\kappa_\alpha)$ is $\alpha$-reflected.

\item $\mu$ is order-preserving.

\item $\mu(j(\kappa_\alpha^j))=\kappa_\alpha^j$, as a consequence, $\rge(\mu \restriction \kappa_\alpha^j) \subseteq \kappa_\alpha$.
In particular, the values $\mu(\kappa_\alpha),\mu(\lambda),\mu(\kappa_\beta^j), \mu(\bar{\lambda}_\beta^j)$, and $\mu(\lambda_\beta^j)$ are below $\kappa_\alpha$ for $\beta<\alpha$.

\item For $\gamma<\mu(\kappa_\alpha)$, $\mu(\gamma)=\gamma$.

\item $s_\alpha(\mu(\kappa_\alpha))=\mu(\lambda)$, $t_\alpha^\beta(\mu(\kappa_\alpha))=\mu(\lambda_\beta^j)$, $u_\alpha^\beta(\mu(\kappa_\alpha))=\mu(\bar{\lambda}_\beta^j)$ for $\beta<\alpha$.

\item $|\dom(\mu)|=s_\alpha(\mu(\kappa_\alpha))$ (which is below $\kappa_\alpha$).

\item $\rge(\mu) \supseteq s_\alpha(\mu(\kappa_\alpha))>\mu(\kappa_\alpha)$.

\end{enumerate}

\end{defn}

\begin{defn}

For an $\alpha$-domain $d$, let $\OB_{E_\alpha}(d)=\{\mu: \mu \text{ is an } \alpha \text{ object with respect to } d\}$.
We may just write $\OB_\alpha(d)$ instead of $\OB_{E_\alpha}(d)$ if $E_\alpha$ and $d$ are clear from the context.

\end{defn}

We visualize a typical $\alpha$-object $\mu$ in Figure \ref{objectdiagram} below, where in the figure, $\beta<\alpha$ is fixed.

 \begin{figure}[H]
 \centering
 \captionsetup{justification=centering} 
 \includegraphics[scale = 1]{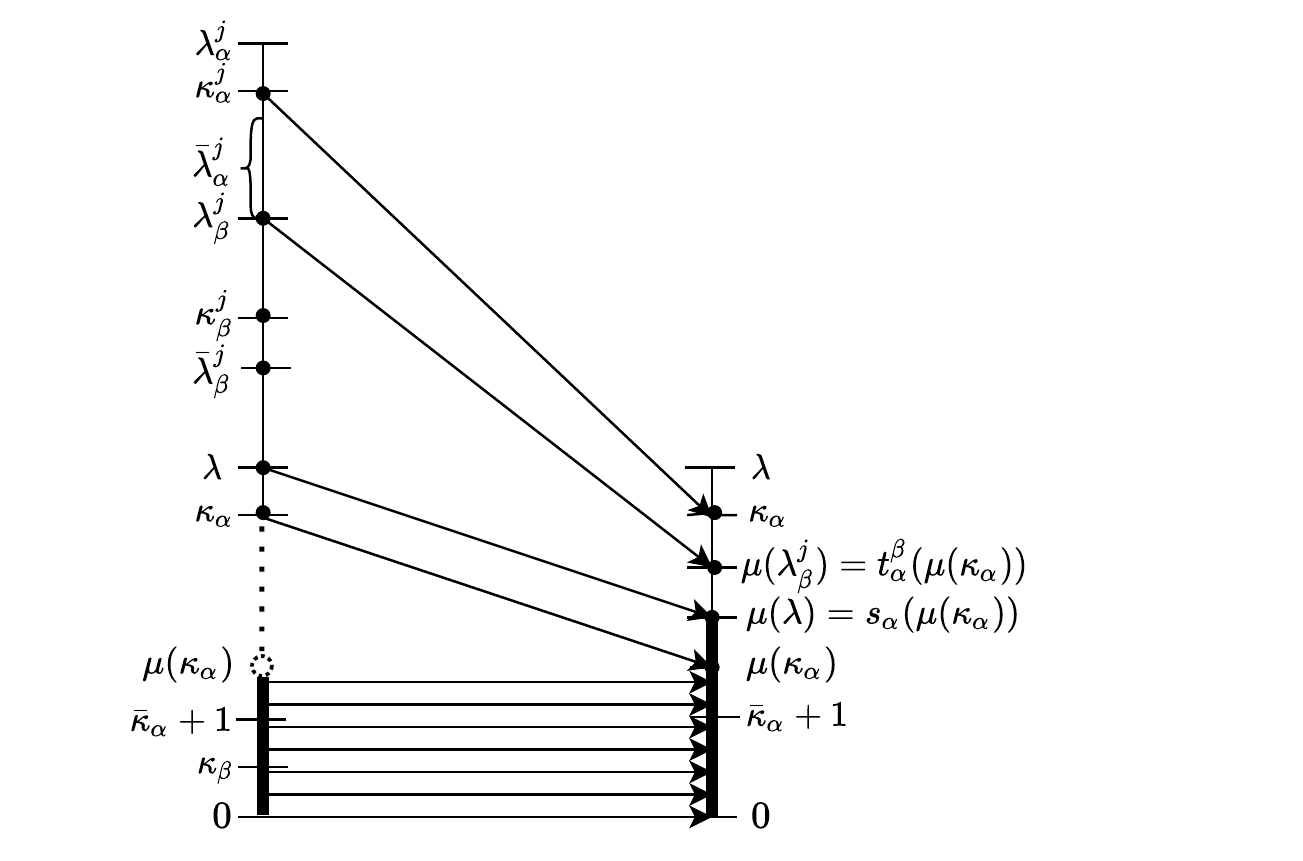}
  \caption{A diagram of a typical $\alpha$-object $\mu$}
  \label{objectdiagram}
\end{figure}

In Figure \ref{objectdiagram}, the left vertical line represents the domain and the right vertical line represents the range.
The arrows from the left-hand side to the right-hand side represents how the $\alpha$-object $\mu$ maps some certain values.
The parts which are highlighted with bold lines and the bold dots are guaranteed to be in the domain or the range. 
The parts in the domain which are represented by the dash line and the dash-line circle are guaranteed to be missing from the domains.
Note that the top ordinals, i.e. $\lambda_\alpha^j$ and $\lambda$ are not in the domain and the range, respectively.
The most important feature of the $\alpha$-objects, which is said in Definition \ref{alphaobject}, is that all important ordinals defined from $E_\beta$, including $\lambda$, are mapped to ordinals below $\kappa_\alpha$.
The notion of objects is not ambiguous in the sense that if $\mu$ is an $\alpha$-object, then $ |\dom(\mu)| \in (\bar{\kappa}_\alpha,\kappa_\alpha)$.

\begin{prop}

If $d$ is an $\alpha$-domain, then $\OB_{E_\alpha}(d) \in E_\alpha(d)$.

\end{prop}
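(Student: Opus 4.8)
The plan is to unfold the definition of the measure $E_\alpha(d)$ and reduce the statement to a \L o\'s-style verification. By Definition \ref{domain}, the assertion $\OB_{E_\alpha}(d) \in E_\alpha(d)$ means exactly that $\mc_\alpha(d) \in j_{E_\alpha}(\OB_{E_\alpha}(d))$. Writing $j = j_{E_\alpha}$ and applying elementarity, $j(\OB_{E_\alpha}(d)) = \OB_{j(E_\alpha)}(j(d))$ is the collection of ``$\alpha$-objects with respect to $j(d)$'' computed in $M_\alpha$ from the extender $j(E_\alpha)$, where every parameter occurring in Definition \ref{alphaobject} (for instance $\kappa_\alpha,\lambda,s_\alpha,\lambda_\beta^j$) is replaced by its $j$-image ($\kappa_\alpha^j,\lambda_\alpha^j,j(s_\alpha),j(\lambda_\beta^j)$). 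Thus it suffices to check that $\nu := \mc_\alpha(d) = (j \restriction d)^{-1}$ satisfies clauses (1)--(10) of Definition \ref{alphaobject} under this image interpretation. Note first that $\dom(\nu) = j[d]$, $\rge(\nu) = d$, and $\nu(j(\xi)) = \xi$ for $\xi \in d$; since $j$ witnesses $\kappa_\alpha$ being $\lambda$-supercompact and $|d| = \lambda$, we have ${}^{\lambda} M_\alpha \subseteq M_\alpha$, so $\nu \in M_\alpha$ and the question is meaningful.

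Most clauses are then immediate from the fact that $\nu$ inverts $j \restriction d$, together with $d$ being an $\alpha$-domain. Because $\lambda+1 \subseteq d$ and $\kappa_\beta^j,\bar{\lambda}_\beta^j,\lambda_\beta^j,\kappa_\alpha^j,\bar{\lambda}_\alpha^j$ all lie in $d$ (and $\kappa_\alpha<\lambda$ lies in $d$ as well), their $j$-images lie in $j[d] = \dom(\nu)$, giving (1) and (2); clause (5) holds since the inverse of the increasing map $j \restriction d$ is order-preserving. Using $\crit(j) = \kappa_\alpha$ together with $\kappa_\alpha,\kappa_\alpha^j \in d$ one computes $j[d] \cap \kappa_\alpha^j = \kappa_\alpha$ and $\nu(\kappa_\alpha^j) = \kappa_\alpha$, and these facts about the inverse collapse yield (3), (6), and (7) (indeed for $\gamma<\kappa_\alpha$ we have $\gamma \in d$ and $\nu(\gamma)=\gamma$). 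For (8)--(10) I would invoke the defining equations from Theorem \ref{deriveext}, namely $j(s_\alpha)(\kappa_\alpha) = \lambda$, $j(t_\alpha^\beta)(\kappa_\alpha) = \lambda_\beta^j$, and $j(u_\alpha^\beta)(\kappa_\alpha) = \bar{\lambda}_\beta^j$, combined with $\nu(j(\lambda)) = \lambda$, $\nu(j(\lambda_\beta^j)) = \lambda_\beta^j$, $\nu(j(\bar{\lambda}_\beta^j)) = \bar{\lambda}_\beta^j$ (all valid since $\lambda,\lambda_\beta^j,\bar{\lambda}_\beta^j \in d$); clause (9) also uses $|\dom(\nu)| = |j[d]| = \lambda$, computed in $M_\alpha$ via its closure under $\lambda$-sequences, and (10) uses $\rge(\nu) = d \supseteq \lambda$.

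The one clause carrying actual content is (4): that $\nu(\kappa_\alpha^j) = \kappa_\alpha$ is ``$\alpha$-reflected'' as interpreted in $M_\alpha$ relative to the image sequence $\langle j(E_\beta) : \beta \leq \alpha \rangle$. This is precisely Lemma \ref{los}: by the definition of the normal measure $E_\alpha(\kappa_\alpha)$, the statement $\{\gamma < \kappa_\alpha : \gamma \text{ is } \alpha\text{-reflected}\} \in E_\alpha(\kappa_\alpha)$ unfolds to $\kappa_\alpha \in j(\{\gamma : \gamma \text{ is } \alpha\text{-reflected}\})$, i.e. $\kappa_\alpha$ satisfies the conditions of Definition \ref{reflect} in $M_\alpha$. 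I expect the main point requiring care to be the consistent translation of every parameter in Definition \ref{alphaobject} into its $j$-image when passing to $M_\alpha$, and the cardinality computation in clause (9), which genuinely relies on the closure of $M_\alpha$ under $\lambda$-sequences; everything else is bookkeeping about the inverse collapse $(j \restriction d)^{-1}$. Collecting clauses (1)--(10) gives $\nu \in \OB_{j(E_\alpha)}(j(d)) = j(\OB_{E_\alpha}(d))$, and hence $\OB_{E_\alpha}(d) \in E_\alpha(d)$.
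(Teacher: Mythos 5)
Your proof is correct and takes essentially the same route as the paper: both unfold the measure to the requirement $\mc_\alpha(d) \in j_{E_\alpha}(\OB_{E_\alpha}(d))$ and then verify clauses (1)--(10) of Definition \ref{alphaobject} for the inverse collapse $(j_{E_\alpha} \restriction d)^{-1}$ under the $j$-image translation. Your explicit appeal to Lemma \ref{los} for the $\alpha$-reflectedness clause, and to ${}^{\lambda}M_\alpha \subseteq M_\alpha$ for the cardinality clause, supplies exactly the justifications the paper leaves implicit in its lines $\mc_\alpha(\kappa_\alpha^j)=\kappa_\alpha$ and $|\mc_\alpha|=|d|=\lambda$.
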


\begin{proof}

We will show that $\mc_\alpha(d) \in j_{E_\alpha}(\OB_\alpha(d))$.
Abbreviate $\mc_\alpha(d)$ by $\mc_\alpha$.

\begin{enumerate}

\item $\dom(\mc_\alpha) \subseteq j_{E_\alpha}[d] \subseteq j_{E_\alpha}(d)$ and $\rge(\mc_\alpha)=d \subseteq \lambda_\alpha^j$.

\item Obvious by the definition of an $\alpha$-domain, and the definition of $\mc_\alpha$.

\item $\dom(\mc_\alpha) \cap \kappa_\alpha^j=\kappa_\alpha=\mc_\alpha(\kappa_\alpha^j)$ and $\kappa_\alpha<\kappa_\alpha^j$.

\item $\mc_\alpha(\kappa_\alpha^j)=\kappa_\alpha$.

\item $\mc_\alpha$ is order-preserving.

\item $\mc_\alpha(j_{E_\alpha}(\kappa_\alpha^j))=\kappa_\alpha^j$.

\item $\crit(j_{E_\alpha})=\kappa_\alpha$.
Hence for $\gamma<\kappa_\alpha$, $\mc(j_{E_\alpha}(\gamma))=\gamma=j_{E_{\alpha}}(\gamma)$ (note that $\kappa_\alpha=\mc(j_{E_\alpha}(\kappa_\alpha))$.

\item $j_{E_\alpha}(s_\alpha)(\mc_\alpha(j_{E_\alpha}(\kappa_\alpha)))=j_{E_\alpha}(s_\alpha)(\kappa_\alpha)=\lambda=\mc_\alpha(\lambda_\alpha^j)$. 
The rests are similar.

\item $|\mc_\alpha|=|d|=\lambda<\kappa_\alpha^j$.

\item $\rge(\mc_\alpha)) =d \supseteq \lambda=j_{E_\alpha}(s_\alpha)(\kappa_\alpha)=j_{E_\alpha}(s_\alpha)(\mc_\alpha(\kappa_\alpha^j)))$.

\end{enumerate}

\end{proof}

We assume that for every $A \in E_\alpha(d)$, $A \subseteq \OB_\alpha(d)$.
Note that although $E_\alpha(d)$ is only an $\kappa_\alpha$-complete ultrafilter, each $A \in E_\alpha(d)$ has size $\lambda$.
We may add extra properties into the definition of an $\alpha$-object as long as the properties are reflected from $\mc_\alpha$.
For example, for $\gamma< \lambda_\alpha^j$ such that $\gamma \in d$, if we let $\lambda_{E_\alpha,\gamma}$ be the least ordinal $\xi \leq \lambda$ such that $\gamma<j_{E_\alpha}(\xi)$, then there is a measure-one set of $\mu$ such that $\gamma \in \dom(\mu)$ and $\mu(\gamma)<\lambda_{E_\alpha,\gamma}$, 
If $j_{E_\alpha}(\gamma) \in d$, we can even assume that $\mu(j_{E_\alpha}(\gamma))=\gamma$.
For each fixed $a \subseteq d$ of size less than $\kappa_\alpha$, we may assume that each $\alpha$-object $\mu$ has a domain containing $a$.

\begin{prop}
 \label{addsmallset}
 If $a \subseteq d$ and $|a|< \kappa_\alpha$, then $\{ \mu: a \subseteq \dom(\mu)\} \in E_\alpha(d)$.

\end{prop}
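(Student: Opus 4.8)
The plan is to unwind the definition of the ultrafilter $E_\alpha(d)$ and verify the generating condition directly at the point $\mc_\alpha(d)$. By Definition \ref{domain}, a set $A$ lies in $E_\alpha(d)$ precisely when $\mc_\alpha(d) \in j_{E_\alpha}(A)$, so I want to show that $\mc_\alpha(d) \in j_{E_\alpha}(\{\mu : a \subseteq \dom(\mu)\})$. Since every member of $E_\alpha(d)$ is tacitly a subset of $\OB_\alpha(d)$, and the preceding proposition already establishes $\mc_\alpha(d) \in j_{E_\alpha}(\OB_\alpha(d))$, it suffices to check the one extra clause: that $\mc_\alpha(d)$ satisfies $j_{E_\alpha}(a) \subseteq \dom(\mc_\alpha(d))$, where $j_{E_\alpha}(a)$ is the parameter $a$ transported across the embedding.

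The key observation is that $a$ is small relative to the critical point. Since $|a| < \kappa_\alpha = \crit(j_{E_\alpha})$, the standard fact about elementary embeddings yields $j_{E_\alpha}(a) = j_{E_\alpha}[a] = \{j_{E_\alpha}(x) : x \in a\}$: enumerating $a$ in order type $\theta < \kappa_\alpha$ and noting that $j_{E_\alpha}$ fixes $\theta$ and every ordinal below it, the image of the enumeration is exactly the pointwise image of $a$.

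Now recall from Definition \ref{domain} that $\dom(\mc_\alpha(d)) = j_{E_\alpha}[d]$, as $\mc_\alpha(d) = (j_{E_\alpha} \restriction d)^{-1}$. Because $a \subseteq d$, each $x \in a$ satisfies $j_{E_\alpha}(x) \in j_{E_\alpha}[d]$, so
\[
j_{E_\alpha}(a) = j_{E_\alpha}[a] \subseteq j_{E_\alpha}[d] = \dom(\mc_\alpha(d)).
\]
This is exactly the condition placing $\mc_\alpha(d)$ inside $j_{E_\alpha}(\{\mu : a \subseteq \dom(\mu)\})$, which by the definition of $E_\alpha(d)$ completes the argument.

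There is essentially no obstacle here beyond correctly recognizing that $a$, being of size below the critical point, is sent by $j_{E_\alpha}$ to its pointwise image rather than to some larger set. This is precisely why the smallness hypothesis $|a| < \kappa_\alpha$ is needed: the membership $j_{E_\alpha}(a) \subseteq \dom(\mc_\alpha(d))$ relies on $\dom(\mc_\alpha(d)) = j_{E_\alpha}[d]$ consisting only of points of the form $j_{E_\alpha}(x)$, and it would fail for a set $a$ of size $\geq \kappa_\alpha$, whose image under $j_{E_\alpha}$ need not be contained in $j_{E_\alpha}[d]$.
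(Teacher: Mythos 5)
Your proof is correct, and it is exactly the intended argument: the paper states this proposition without proof, treating it as routine, and your verification follows the same template used for the neighboring results (the proof that $\OB_{E_\alpha}(d) \in E_\alpha(d)$ and Lemma \ref{aux}), namely checking membership at $\mc_\alpha(d)$ via the standard fact that $|a| < \crit(j_{E_\alpha})$ gives $j_{E_\alpha}(a) = j_{E_\alpha}[a] \subseteq j_{E_\alpha}[d] = \dom(\mc_\alpha(d))$. Nothing further is needed.
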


\begin{defn}
\label{measureoneclosure}
Let $\beta<\alpha<\eta$, $d_\beta$ and $d_\alpha$ be $\beta$ and $\alpha$ domains respectively, and $d_\beta \subseteq d_\alpha$. 
Let $A \in E_\beta(d_\beta)$ and $\tau \in \OB_\alpha(d_\alpha)$.
Define $A_\tau$ as $\{\mu \in A: \dom(\mu) \cup \rge(\mu) \subseteq \dom(\tau)\}$.

\end{defn}

\begin{lemma}
\label{aux}
Let $\beta<\alpha<\eta$, $d_\beta$ and $d_\alpha$ be $\beta$ and $\alpha$ domains respectively, and $d_\beta \subseteq d_\alpha$. 
Let $A \in E_\beta(d_\beta)$.
Then 

$ \{\mu\in j_{E_\alpha}(A): \dom(\mu) \cup \rge(\mu) \subseteq \dom(\mc_\alpha(d_\alpha))\} = j_{E_\alpha}[A]$.
\end{lemma}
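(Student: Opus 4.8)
The plan is to prove the two inclusions separately, the forward one ($\supseteq$) being a routine pointwise-image computation and the reverse one ($\subseteq$) being the substantive step. Throughout I will use that $\crit(j_{E_\alpha})=\kappa_\alpha$, that $d_\beta \subseteq d_\alpha$ by hypothesis, and that $\lambda+1 \subseteq d_\beta \subseteq d_\alpha$ since both are domains; recall also that $\dom(\mc_\alpha(d_\alpha)) = j_{E_\alpha}[d_\alpha]$, because $\mc_\alpha(d_\alpha)=(j_{E_\alpha}\restriction d_\alpha)^{-1}$.

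For $\supseteq$, I would fix $\nu \in A$. Since $A \subseteq \OB_\beta(d_\beta)$, $\nu$ is a $\beta$-object, so $\dom(\nu) \subseteq d_\beta$, $\rge(\nu) \subseteq \lambda$, and by the size clause of Definition \ref{alphaobject} we have $|\dom(\nu)| = s_\beta(\nu(\kappa_\beta)) < \kappa_\beta < \kappa_\alpha$. Because $|\nu| < \crit(j_{E_\alpha})$, the image $j_{E_\alpha}(\nu)$ is computed pointwise, so $\dom(j_{E_\alpha}(\nu)) = j_{E_\alpha}[\dom(\nu)]$ and $\rge(j_{E_\alpha}(\nu)) = j_{E_\alpha}[\rge(\nu)]$. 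As $\dom(\nu) \subseteq d_\alpha$ and $\rge(\nu) \subseteq \lambda \subseteq d_\alpha$, both sets lie inside $j_{E_\alpha}[d_\alpha] = \dom(\mc_\alpha(d_\alpha))$, and $j_{E_\alpha}(\nu) \in j_{E_\alpha}(A)$ by elementarity; hence $j_{E_\alpha}(\nu)$ belongs to the left-hand set.

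For $\subseteq$, I would fix $\mu \in j_{E_\alpha}(A)$ with $\dom(\mu)\cup\rge(\mu) \subseteq j_{E_\alpha}[d_\alpha]$ and first extract the crucial size bound. Since $A \subseteq \OB_{E_\beta}(d_\beta)$, elementarity places $\mu$ in $\OB_{j_{E_\alpha}(E_\beta)}(j_{E_\alpha}(d_\beta))$ as computed in $M_\alpha$; and because $\beta,\kappa_\beta,s_\beta$ all lie below $\crit(j_{E_\alpha})$ and are therefore fixed, the size clause reads in $M_\alpha$ as $|\dom(\mu)| = s_\beta(\mu(\kappa_\beta)) < \kappa_\beta < \kappa_\alpha$, so $|\mu| < \crit(j_{E_\alpha})$ in $V$ as well. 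The capturing hypothesis says every pair $(a,b)\in\mu$ has both coordinates in $j_{E_\alpha}[d_\alpha]$, hence $a=j_{E_\alpha}(x)$, $b=j_{E_\alpha}(y)$ for unique $x,y \in d_\alpha$; so I would set the pullback $\nu = \mc_\alpha(d_\alpha)\circ \mu \circ (j_{E_\alpha}\restriction d_\alpha)$, a function in $V$ with $|\nu| = |\mu| < \kappa_\alpha$.

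Then I would verify $j_{E_\alpha}(\nu)=\mu$: as $|\nu| < \crit(j_{E_\alpha})$, the image is again pointwise, namely $\{(j_{E_\alpha}(x),j_{E_\alpha}(y)) : (x,y)\in\nu\}$, and by the construction of $\nu$ together with the capturing hypothesis this recovers exactly $\mu$. Finally, applying elementarity to the formula ``$\,\cdot \in A$'' turns $j_{E_\alpha}(\nu)=\mu \in j_{E_\alpha}(A)$ into $\nu \in A$, so $\mu \in j_{E_\alpha}[A]$. The main obstacle is this reverse inclusion, and specifically pinning down that $|\mu| < \crit(j_{E_\alpha})$: this is what makes the pullback a genuine $j_{E_\alpha}$-preimage rather than merely some function mapped into $\mu$, and the capturing hypothesis $\dom(\mu)\cup\rge(\mu) \subseteq \dom(\mc_\alpha(d_\alpha))$ is then precisely the condition guaranteeing that no pair of $\mu$ escapes $\rge(j_{E_\alpha}\restriction d_\alpha)$, so that the pointwise image returns all of $\mu$.
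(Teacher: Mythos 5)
Your proof is correct and follows essentially the same route as the paper's: the forward inclusion is the same pointwise-image computation using $|\nu|<\kappa_\beta<\crit(j_{E_\alpha})$, and the reverse inclusion is the same pullback argument, where the capturing hypothesis identifies every pair of $\mu$ as a $j_{E_\alpha}$-image and the size bound $|\mu|<\kappa_\beta$ (which the paper asserts tersely and you justify carefully via elementarity of the size clause of Definition \ref{alphaobject}) guarantees $\mu=j_{E_\alpha}(\nu)$ for the pulled-back $\nu$, whence $\nu\in A$ by elementarity. Your extra detail on why $|\mu|<\crit(j_{E_\alpha})$ transfers from $M_\alpha$ to $V$ is a welcome clarification of a step the paper leaves implicit.
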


\begin{proof}

($\supseteq$): For $\mu \in A$, $|\mu|<\kappa_\beta<\kappa_\alpha$, so $\dom(j_{E_\alpha}(\mu)) \cup \rge(j_{E_\alpha}(\mu)) \subseteq j_{E_\alpha}[d_\beta \cup \lambda] \subseteq \dom(\mc_\alpha(d_\alpha))$.

($\subseteq$): Let $\mu \in j_{E_\alpha}(A)$ be such that $\dom(\mu) \cup \rge(\mu) \subseteq j_{E_\alpha}[d_\alpha]$.
Then each ordered pair $(\gamma_0,\gamma_1) \in \mu$ is of the form $(j_{E_\alpha}(\gamma_0^\prime),j_{E_\alpha}(\gamma_1^\prime))$.
Since every object in $A$ has size less than $\kappa_\beta$, so is $\mu$.
Thus, we have that $\mu = j_{E_\alpha}(\mu^\prime)$ for some $\mu^\prime$, and so $\mu^\prime \in A$.
Therefore, $\mu \in j_{E_\alpha}[A]$.

\end{proof}

\begin{defn}
\label{defndiag}
Let $\beta<\alpha<\eta$, $d_\beta$ and $d_\alpha$ be $\beta$ and $\alpha$ domains respectively, and $d_\beta \subseteq d_\alpha$.
For each $\mu \in \OB_\beta(d_\beta)$, let $A_\mu \in E_\alpha(d_\alpha)$.
Define the {\em diagonal intersection} of $\langle A_\mu: \mu \in \OB_\beta(d_\beta) \rangle$ as follows:

\begin{center}

$\triangle_{\mu \in \OB_\beta(d_\beta)} A_\mu =\{\tau: \forall \mu \in \OB_\beta(d_\beta) (\dom(\mu) \cup \rge(\mu) \subseteq \dom(\tau) \implies \tau \in A_\mu) \}$.

\end{center}

\end{defn}

\begin{lemma}
\label{diagintersect}

With the settings stated in Definition \ref{defndiag}, we have $\triangle_{\mu \in \OB_\beta(d_\beta)} A_\mu  \in E_\alpha(d_\alpha)$.

\end{lemma}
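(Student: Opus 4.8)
The plan is to unwind the definition of $E_\alpha(d_\alpha)$ and reduce the claim to the hypothesis that each $A_\mu$ lies in $E_\alpha(d_\alpha)$, with Lemma \ref{aux} supplying the one nontrivial ingredient. Recall that a set belongs to $E_\alpha(d_\alpha)$ precisely when $\mc_\alpha(d_\alpha)$ is a member of its $j_{E_\alpha}$-image. So it suffices to show that $\mc_\alpha(d_\alpha) \in j_{E_\alpha}(\triangle_{\mu \in \OB_\beta(d_\beta)} A_\mu)$.

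First I would regard the family as a function $F$ with $\dom(F)=\OB_\beta(d_\beta)$ and $F(\mu)=A_\mu$, and apply elementarity to compute the image of the diagonal intersection: $j_{E_\alpha}(\triangle_{\mu} A_\mu)$ is the set of all $\tau$ such that for every $\mu \in j_{E_\alpha}(\OB_\beta(d_\beta))$ with $\dom(\mu)\cup\rge(\mu)\subseteq\dom(\tau)$ one has $\tau \in j_{E_\alpha}(F)(\mu)$. Plugging in $\tau=\mc_\alpha(d_\alpha)$, the goal becomes: for every $\mu \in j_{E_\alpha}(\OB_\beta(d_\beta))$ whose domain and range are contained in $\dom(\mc_\alpha(d_\alpha))$, verify that $\mc_\alpha(d_\alpha) \in j_{E_\alpha}(F)(\mu)$.

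The crucial step is to pin down exactly which $\mu$ occur in this universal quantifier. Since $\OB_\beta(d_\beta) \in E_\beta(d_\beta)$, Lemma \ref{aux} applied with $A=\OB_\beta(d_\beta)$ yields that the collection of $\mu \in j_{E_\alpha}(\OB_\beta(d_\beta))$ satisfying $\dom(\mu)\cup\rge(\mu)\subseteq\dom(\mc_\alpha(d_\alpha))$ is precisely the pointwise image $j_{E_\alpha}[\OB_\beta(d_\beta)]$. Thus it is enough to check the requirement for $\mu=j_{E_\alpha}(\mu')$ as $\mu'$ ranges over $\OB_\beta(d_\beta)$. For such $\mu$, elementarity gives $j_{E_\alpha}(F)(j_{E_\alpha}(\mu'))=j_{E_\alpha}(F(\mu'))=j_{E_\alpha}(A_{\mu'})$, so the requirement reads $\mc_\alpha(d_\alpha)\in j_{E_\alpha}(A_{\mu'})$, which is exactly the hypothesis $A_{\mu'} \in E_\alpha(d_\alpha)$. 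This finishes the verification.

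I expect the only delicate point to be the reduction in the third paragraph: one must be confident that the universal quantifier over $\mu \in j_{E_\alpha}(\OB_\beta(d_\beta))$ interpreted in $M_\alpha$ collapses to a quantifier over $j_{E_\alpha}[\OB_\beta(d_\beta)]$, which is precisely the content of Lemma \ref{aux}. The restriction $\dom(\mu)\cup\rge(\mu)\subseteq\dom(\tau)$ built into the diagonal intersection is what forces this collapse, since it is exactly the side condition appearing in Lemma \ref{aux}. Everything else is routine bookkeeping with elementarity and the definition of $E_\alpha(d_\alpha)$.
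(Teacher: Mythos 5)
Your proof is correct and follows essentially the same route as the paper: both reduce the claim to showing $\mc_\alpha(d_\alpha) \in j_{E_\alpha}(\triangle_{\mu} A_\mu)$, apply Lemma \ref{aux} with $A=\OB_\beta(d_\beta)$ to identify the relevant indices in $j_{E_\alpha}(\OB_\beta(d_\beta))$ with $j_{E_\alpha}[\OB_\beta(d_\beta)]$, and then invoke elementarity together with the hypothesis $A_{\mu'} \in E_\alpha(d_\alpha)$. Your identification of the side condition $\dom(\mu)\cup\rge(\mu)\subseteq\dom(\tau)$ as precisely what enables the collapse of the quantifier is exactly the point of the paper's argument.
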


\begin{proof}

Let $\vec{B}=j_{E_\alpha}(\langle A_\mu: \mu \in \OB_\beta(d_\beta) \rangle)$.
Write $\vec{B}$ as a sequence $\langle B_\tau: \tau \in j_{E_\alpha}(\OB_\beta(d_\beta)) \rangle$, where $B_{j_{E_\alpha}(\mu)}=j_{E_\alpha}(A_\mu)$ for all $\mu \in \OB_\beta(d_\beta)$.
For $\mu \in \OB_\beta(d_\beta)$, $\mc_
\alpha(d_\alpha) \in B_{j_{E_\alpha}(\mu)}$.
Note that by Lemma \ref{aux}, the collection of $\mu \in j_{E_\alpha}(\OB_\beta(d_\beta))$ such that $\dom(\mu) \cup \rge(\mu) \subseteq \dom(\mc_\alpha(d_\alpha))$ is exactly $j_{E_\alpha}[\OB_\beta(d_\beta))]$ and $\mc_\alpha(d_\alpha) \in \cap_{\mu \in \OB_\beta(d_\beta)}B_{j_{E_\alpha}(\mu)}$, hence, $\mc_\alpha(d_\alpha) \in j_{E_\alpha} (\triangle_{\mu \in \OB_\beta(d_\beta)}A_\mu)$.

\end{proof}

Before we investigate further on the interactions between two extenders, we provide some conventions.
For each function $f$ whose domain is an $\alpha$-domain, and $f(\kappa_\alpha)$ is $\alpha$-reflected for $\langle E_\beta:\beta \leq \alpha \rangle$, we denote $s_\alpha(f(\kappa_\alpha))$, $t_\alpha^\beta(f(\kappa_\alpha))$,$u_\alpha^\beta(f(\kappa_\alpha))$, and $E_\beta \restriction t_\alpha^\beta(f(\kappa_\alpha))$ by $\lambda_\alpha(f)$, $\lambda_{\beta,\alpha}^j(f)$, $\bar{\lambda}_{\beta,\alpha}^j(f)$, and $e_{\beta,\alpha}(f)$, respectively.
Note that those values actually depend only on $f(\kappa_\alpha)$.

\begin{lemma}
\label{squishablemeasure}

Let $\beta<\alpha<\eta$, $d_\beta$ and $d_\alpha$ be $\beta$ and $\alpha$ domains respectively, and $d_\beta \subseteq d_\alpha$.
Let $A \in E_\beta(d_\beta)$.
Let $B$ be the collection of $\mu \in E_\alpha(d_\alpha)$ such that by letting $e_\beta=e_{\beta,\alpha}(\mu)$,

\begin{enumerate}

\item $\mu[d_\beta]$ is a $\beta$-domain with respect to $e_\beta$.

\item $\mu \circ A_\mu \circ \mu^{-1} \in e_\beta(\mu[ d_\beta])$, where $A_\mu$ is defined as in Definition \ref{measureoneclosure}.

\end{enumerate}

Then $B \in E_\alpha(d_\alpha)$.

\end{lemma}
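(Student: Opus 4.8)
The plan is to verify directly the defining equivalence of $E_\alpha(d_\alpha)$, namely to show that $\mc_\alpha:=\mc_\alpha(d_\alpha)$ belongs to $j_{E_\alpha}(B)$. Writing $j=j_{E_\alpha}$ and unfolding $j(B)$ by elementarity, $j(B)$ consists of those $\mu\in j(\OB_\alpha(d_\alpha))=\OB_{j(E_\alpha)}(j(d_\alpha))$ (computed in $M_\alpha$) for which, setting $e:=j(E_\beta)\restriction j(t_\alpha^\beta)(\mu(j(\kappa_\alpha)))$, both (1) $\mu[j(d_\beta)]$ is a $\beta$-domain with respect to $e$, and (2) $\mu\circ (j(A))_\mu\circ\mu^{-1}\in e(\mu[j(d_\beta)])$, all interpreted in $M_\alpha$. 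That $\mc_\alpha$ meets the $\OB$-membership clause is exactly the fact, shown in the proof that $\OB_{E_\alpha}(d_\alpha)\in E_\alpha(d_\alpha)$, that $\mc_\alpha\in j(\OB_\alpha(d_\alpha))$; so it remains to check (1) and (2) for $\mu=\mc_\alpha$.

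First I would identify $e$. Since $\kappa_\alpha\in d_\alpha$ (as $\kappa_\alpha<\lambda$ and $\lambda+1\subseteq d_\alpha$) and $\mc_\alpha=(j\restriction d_\alpha)^{-1}$, we get $\mc_\alpha(j(\kappa_\alpha))=\mc_\alpha(\kappa_\alpha^j)=\kappa_\alpha$, whence $j(t_\alpha^\beta)(\mc_\alpha(j(\kappa_\alpha)))=j(t_\alpha^\beta)(\kappa_\alpha)=\lambda_\beta^j$ by the representation $\lambda_\beta^j=j_{E_\alpha}(t_\alpha^\beta)(\kappa_\alpha)$. Coherence of the sequence (Theorem \ref{deriveext}) then yields $e=j(E_\beta)\restriction\lambda_\beta^j=E_\beta$. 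For clause (1), note $j(d_\beta)\cap\dom(\mc_\alpha)=j(d_\beta)\cap j[d_\alpha]=j[d_\beta]$ because $d_\beta\subseteq d_\alpha$, so the pointwise image is $\mc_\alpha[j(d_\beta)]=d_\beta$; thus (1) reduces to the assertion that $d_\beta$ is a $\beta$-domain with respect to $E_\beta$, which holds by hypothesis.

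For clause (2), the crux is that conjugation by $\mc_\alpha$ undoes $j$. By Lemma \ref{aux}, $(j(A))_{\mc_\alpha}=\{\nu\in j(A):\dom(\nu)\cup\rge(\nu)\subseteq\dom(\mc_\alpha)\}=j[A]$. For $\nu\in A$ we have $|\nu|<\kappa_\beta<\crit(j)$, so $j(\nu)=j[\nu]$; since $\mc_\alpha^{-1}=j\restriction d_\alpha$, while $\dom(\nu)\subseteq d_\beta\subseteq d_\alpha$ and $\rge(\nu)\subseteq\lambda\subseteq d_\alpha$, a direct computation gives $\mc_\alpha\circ j(\nu)\circ\mc_\alpha^{-1}=\nu$. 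Hence $\mc_\alpha\circ (j(A))_{\mc_\alpha}\circ\mc_\alpha^{-1}=A$, and clause (2) reduces to $A\in E_\beta(d_\beta)$, again the hypothesis.

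The one point requiring care—and the step I expect to be the main obstacle—is that (1) and (2) must hold \emph{in $M_\alpha$}, not merely in $V$; that is, I must transfer ``$d_\beta$ is a $\beta$-domain with respect to $E_\beta$'' and ``$A\in E_\beta(d_\beta)$'' from $V$ to $M_\alpha$. This is an absoluteness argument: $E_\beta\in M_\alpha$ (it is $j_{E_\alpha}(\gamma\mapsto E_\beta\restriction t_\alpha^\beta(\gamma))(\kappa_\alpha)$), $M_\alpha$ is closed under $\lambda$-sequences (as $j$ witnesses $\lambda$-supercompactness), and both $d_\beta$ (a set of ordinals of size $\lambda$) and the seed $\mc_\beta(d_\beta)$ lie in $M_\alpha$, while $A$ itself is recovered inside $M_\alpha$ as the definable object $\mc_\alpha\circ j[A]\circ\mc_\alpha^{-1}$. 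Since $E_\beta(d_\beta)$ is uniformly definable from $E_\beta$ and $d_\beta$, and $M_\alpha$ computes $E_\beta$ correctly, both the $\beta$-domain property of $d_\beta$ and membership $A\in E_\beta(d_\beta)$ are absolute between $V$ and $M_\alpha$. With both clauses verified in $M_\alpha$ for $\mu=\mc_\alpha$, we conclude $\mc_\alpha\in j_{E_\alpha}(B)$, i.e.\ $B\in E_\alpha(d_\alpha)$.
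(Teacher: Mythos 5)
Your proof is correct and takes essentially the same route as the paper: both verify $\mc_\alpha(d_\alpha)\in j_{E_\alpha}(B)$ by computing $\mc_\alpha(d_\alpha)[j_{E_\alpha}(d_\beta)]=d_\beta$ and then, via Lemma \ref{aux}, showing that the conjugation $\mc_\alpha(d_\alpha)\circ j_{E_\alpha}[A]\circ \mc_\alpha(d_\alpha)^{-1}$ collapses back to $A$ because each $j_{E_\alpha}(\tau)$ for $\tau\in A$ is returned to $\tau$. The only difference is that you make explicit the identification $e_{\beta,\alpha}(\mc_\alpha(d_\alpha))=E_\beta$ via coherence and the absoluteness transfer of both clauses into $M_\alpha$, steps the paper's proof leaves implicit since they were already established in the analysis of Section \ref{analysisofextenders}.
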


\begin{proof}

First, notice that $\mc_\alpha(d_\alpha)[j_{E_\alpha}(d_\beta)]=d_\beta$ is a $\beta$-domain with respect to $E_\beta$.

To show the second item, by Lemma \ref{aux}, $\{ \tau \in j_{E_\alpha}(A) : \dom(\tau) \cup \rge(\tau) \subseteq \dom(\mc_\alpha(d_\alpha))\}=j_{E_\alpha}[A]$.
Note that 
\begin{align*}
& \mc_\alpha(d_\alpha) \circ j_{E_\alpha}[A] \circ \mc_\alpha(d_\alpha)^{-1}\\ = & \mc_\alpha(d_\alpha) \circ
\{ \tau \in j_{E_\alpha}(A): \dom(\tau) \cup \rge(\tau) \subseteq \dom(\mc_\alpha(d_\alpha))\} \circ \mc_\alpha(d_\alpha)^{-1}\\
=&j_{E_\alpha}(\mu \mapsto \mu \circ A_\mu \circ \mu^{-1})(\mc_\alpha(d_\alpha)).
\end{align*}

Let $\tau \in A$ and $f=\mc_\alpha(d_\alpha) \circ j_{E_{\alpha}}(\tau) \circ \mc_\alpha(d_\alpha)^{-1}$.
Note that $\gamma \in \dom(f)$ iff $\gamma \in d_\alpha$, $j_{E_\alpha}(\gamma) \in j_{E_\alpha}(\dom(\tau))$, and $j_{E_\alpha}(\tau(\gamma)) \in j_{E_\alpha}[d_\alpha]$.
Since $\rge(\tau) \subseteq \lambda \subseteq d_\alpha$ and $\dom(\tau) \subseteq d_\beta \subseteq d_\alpha$, we have that $\dom(f)= \dom(\tau)$.
A straightforward calculation shows that for $\gamma \in \dom(f)$, $f(\gamma)=\tau(\gamma)$.
Hence, $f=\tau$, and so $\mc_\alpha(d_\alpha) \circ j_{E_\alpha}[A] \circ \mc_\alpha(d_\alpha)^{-1}=A $. The proof is done.

\end{proof}

\begin{defn}

Let $\beta<\alpha$.
Fix the $\beta$-domain and the $\alpha$-domain $d_\beta$ and $d_\alpha$, respectively, and assume that $d_\beta \subseteq d_\alpha$.
Let $A \in E_\beta(d_\beta)$.
Let $\mu \in \OB_\alpha(d_\alpha)$.
We say that $\mu$ is {\em ($\beta$-)squishable with respect to $A$} if $\mu \circ A_\mu \circ \mu^{-1} \in e_\beta(\mu[d_\beta])$ where $e_\beta=e_{\beta,\alpha}(\mu)$.
The notion $\mu \circ A_\mu \circ \mu^{-1}$ is called {\em the conjugation of $A_\mu$ by $\mu$}.

\end{defn}

 \begin{figure}[H]
 \centering
 \captionsetup{justification=centering} 
 \includegraphics[scale = 0.3, left]{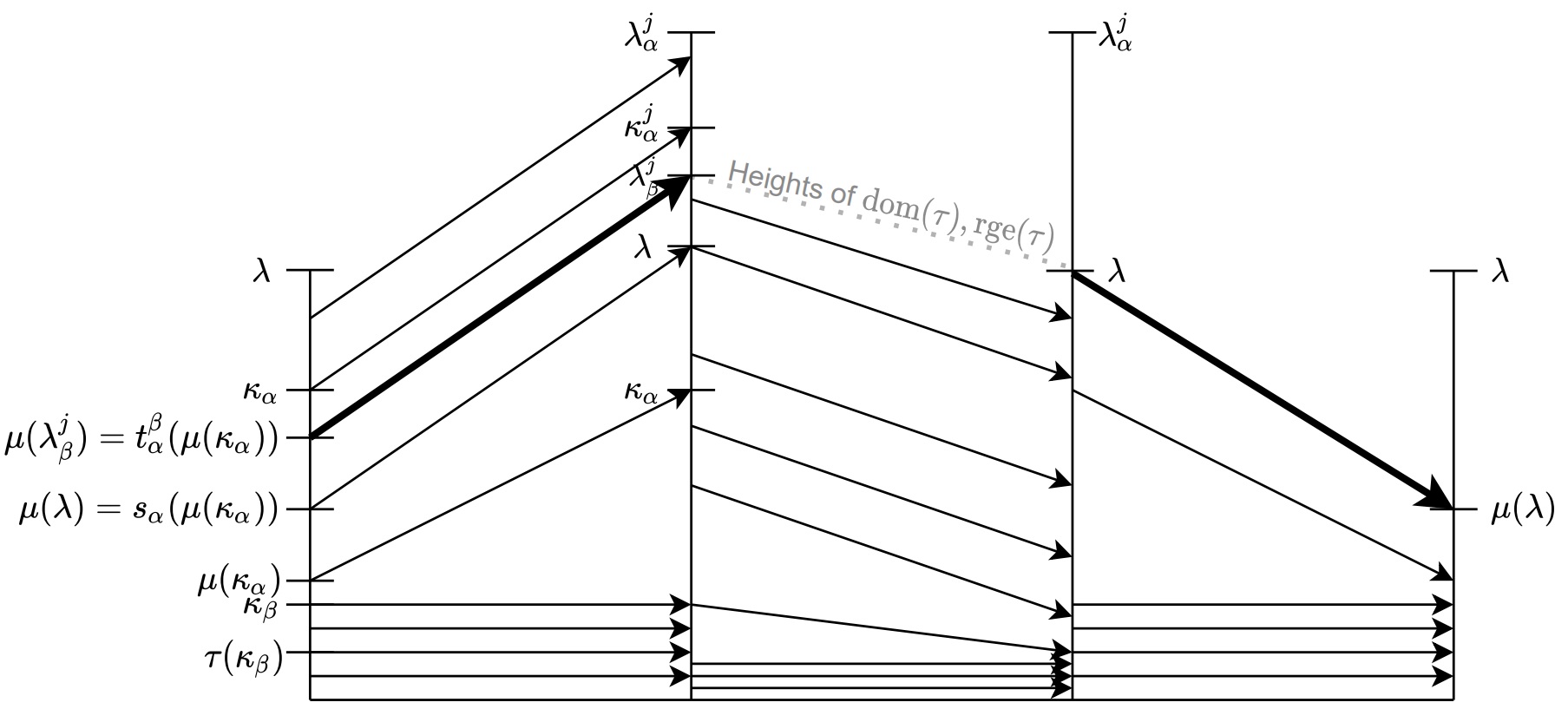}
  \caption{The conjugation $\mu \circ \tau \circ \mu^{-1}$ for an $\alpha$-object $\mu$ and a $\tau \in A_\beta$}
  \label{conjugate}
\end{figure}

In Figure \ref{conjugate}, we exhibit the situation when $\beta<\alpha<\eta$, $\tau \in A_\beta \in E_\beta(d_\beta)$, $\mu \in A_\alpha \in E_\alpha(d_\alpha)$.
From left to right, the diagram shows the maps $\mu{-1}$, $\tau$, $\mu$, respectively.
We can see from the bold arrows and the gray dot-line that the resulting conjugation is a partial function from $t_\alpha^\beta(\mu(\kappa_\alpha))$ to $s_\alpha(\mu(\kappa_\alpha))$.
A similar diagram, which is obtained by replacing the middle part of the diagram in Figure \ref{conjugate} by the function $f$ in Definition \ref{defnsquishablefunction}, also explains the situation in Definition \ref{defnsquishablefunction}.

\begin{defn}

Let $\gamma_0$ is be regular and $\cf(\gamma_1) > \gamma_0$, define a poset $\mathbb{A}(\gamma_0,\gamma_1)$ as the collection of functions $f$ such that $\dom(f) \subseteq \gamma_1$, $\rge(f) \subseteq \gamma_0$, and $|f| \leq \gamma_0$.
Define $f \leq g$ iff $f \supseteq g$.
We also define a Cohen forcing with a certain restriction on the range.

\end{defn}

The forcing is $\gamma_0^+$-closed and is $\gamma_0^{++}$-c.c., so $\mathbb{A}(\gamma_0,\gamma_1)$ preserves all cardinals and cofinalities. 
The forcing $\mathbb{A}(\gamma_0,\gamma_1)$ is equivalent to the Cohen forcing adding $|\gamma_1|$ new subsets of $\gamma_0^+$.

\begin{lemma}
\label{squishablefunction}

Let $\beta<\alpha<\eta$, $d_\beta$ and $d_\alpha$ be $\beta$ and $\alpha$ domains respectively, and assume that $d_\beta \subseteq d_\alpha$.
Let $f \in \mathbb{A}(\lambda,\lambda_\beta^j)$ with $\dom(f)=d_\beta$.
Let $B$ be the collection of $\mu \in E_\alpha(d_\alpha)$ such that by letting $e_\beta=e_{\beta,\alpha}(\mu)$, 

\begin{enumerate}

\item $f[\dom(\mu)] \subseteq \dom(\mu)$.

\item $\mu[d_\beta]$ is a $\beta$-domain with respect to $e_\beta$.

\item $\mu \circ f \circ \mu^{-1} \in \mathbb{A}(s_\alpha(\gamma),t_\alpha^\beta(\gamma))$.

\item if $ \rge(f \restriction \kappa_\beta^j) \subseteq \kappa_\beta$, then $\rge((\mu \circ f \circ \mu^{-1}) \restriction j_{e_\beta}(\kappa_\beta)) \subseteq \kappa_\beta$.

\end{enumerate}

Then $B \in E_\alpha(d_\alpha)$.

\end{lemma}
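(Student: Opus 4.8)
The plan is to follow the template of the proof of Lemma \ref{squishablemeasure}: writing $\mc_\alpha = \mc_\alpha(d_\alpha) = (j_{E_\alpha}\restriction d_\alpha)^{-1}$, it suffices to check that $\mc_\alpha \in j_{E_\alpha}(B)$, that is, that $\mc_\alpha$ satisfies the four clauses with $f$ replaced by $j_{E_\alpha}(f)$, with $e_\beta$ replaced by $\bar e_\beta := j_{E_\alpha}(e_{\beta,\alpha})(\mc_\alpha)$, and with the parameters $s_\alpha, t_\alpha^\beta, \kappa_\beta, \kappa_\beta^j$ replaced by their $j_{E_\alpha}$-images. The linchpin of the whole argument is to identify $\bar e_\beta$. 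Since $\mc_\alpha(\kappa_\alpha^j) = \kappa_\alpha$, where $\kappa_\alpha^j = j_{E_\alpha}(\kappa_\alpha)$, I compute $\bar e_\beta = j_{E_\alpha}(E_\beta)\restriction j_{E_\alpha}(t_\alpha^\beta)(\kappa_\alpha) = j_{E_\alpha}(E_\beta)\restriction \lambda_\beta^j$, which by coherence of $\langle E_\gamma : \gamma < \eta\rangle$ equals $E_\beta$ itself. Consequently $j_{\bar e_\beta} = j_{E_\beta}$ and $j_{\bar e_\beta}(\kappa_\beta) = \kappa_\beta^j$; so the generic object's associated extender collapses to the \emph{full} $E_\beta$ rather than a proper restriction, which is exactly what makes the clauses reduce to statements already recorded in our analysis.

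The core computation is $\mc_\alpha \circ j_{E_\alpha}(f) \circ \mc_\alpha^{-1} = f$. For $\gamma \in d_\beta = \dom(f)$ I have $\mc_\alpha^{-1}(\gamma) = j_{E_\alpha}(\gamma)$, then $j_{E_\alpha}(f)(j_{E_\alpha}(\gamma)) = j_{E_\alpha}(f(\gamma))$ by elementarity, and finally $\mc_\alpha(j_{E_\alpha}(f(\gamma))) = f(\gamma)$ because $f(\gamma) \in \lambda \subseteq d_\alpha$; a check of the composite's domain (using $\dom(f) = d_\beta \subseteq d_\alpha$ and $\rge(f) \subseteq \lambda \subseteq d_\alpha$) shows it is exactly $d_\beta$. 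With this in hand, clause (1) holds since for $\gamma \in d_\beta$ the value $j_{E_\alpha}(f)(j_{E_\alpha}(\gamma)) = j_{E_\alpha}(f(\gamma))$ lies in $j_{E_\alpha}[d_\alpha] = \dom(\mc_\alpha)$; and clause (2) holds because $\mc_\alpha[j_{E_\alpha}(d_\beta)] = d_\beta$, which is a $\beta$-domain with respect to $E_\beta = \bar e_\beta$ by hypothesis (this is precisely the opening observation of Lemma \ref{squishablemeasure}).

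Clause (3) reduces, via $j_{E_\alpha}(s_\alpha)(\kappa_\alpha) = \lambda$ and $j_{E_\alpha}(t_\alpha^\beta)(\kappa_\alpha) = \lambda_\beta^j$ (here $\gamma = \mc_\alpha(\kappa_\alpha^j) = \kappa_\alpha$), to checking that $f \in j_{E_\alpha}(\mathbb{A})(\lambda, \lambda_\beta^j)$ as computed in $M_\alpha$. This follows because $\lambda$ is regular in $M_\alpha$, $\cf^{M_\alpha}(\lambda_\beta^j) > \lambda$, $\dom(f) = d_\beta \subseteq \lambda_\beta^j$, $\rge(f) \subseteq \lambda$, $|f| = \lambda$, and $f \in M_\alpha$ by the $\lambda$-closure of $M_\alpha$. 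I expect this to be the only real bookkeeping hazard, since the regularity of $\lambda$ and the cofinality of $\lambda_\beta^j$ must be read off inside $M_\alpha$, which is exactly what the analysis preceding Definition \ref{reflect} supplies.

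Clause (4) is the one place where the two antecedents must be matched. After applying $j_{E_\alpha}$ and substituting $\mc_\alpha \circ j_{E_\alpha}(f) \circ \mc_\alpha^{-1} = f$, $j_{\bar e_\beta}(\kappa_\beta) = \kappa_\beta^j$, and $j_{E_\alpha}(\kappa_\beta) = \kappa_\beta$ (as $\kappa_\beta < \crit(j_{E_\alpha})$), the consequent becomes exactly $\rge(f \restriction \kappa_\beta^j) \subseteq \kappa_\beta$, while the antecedent $\rge(j_{E_\alpha}(f)\restriction j_{E_\alpha}(\kappa_\beta^j)) \subseteq \kappa_\beta$ is, by elementarity of $j_{E_\alpha}$ applied to the statement $\rge(f\restriction\kappa_\beta^j)\subseteq\kappa_\beta$, equivalent to that same consequent. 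Hence the implication is trivially satisfied, and $\mc_\alpha \in j_{E_\alpha}(B)$, so $B \in E_\alpha(d_\alpha)$.
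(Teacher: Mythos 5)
Your proof is correct and follows essentially the same route as the paper's: verifying $\mc_\alpha(d_\alpha) \in j_{E_\alpha}(B)$ clause by clause, with the same core computation $\mc_\alpha(d_\alpha) \circ j_{E_\alpha}(f) \circ \mc_\alpha(d_\alpha)^{-1} = f$, the same appeal to Lemma \ref{squishablemeasure} for clause (2), and the same use of coherence to identify the reflected extender with the full $E_\beta$ for clause (4). You additionally spell out details the paper leaves implicit (membership of $f$ in $\mathbb{A}(\lambda,\lambda_\beta^j)^{M_\alpha}$ via $\lambda$-closure and the $M_\alpha$-cofinality facts, and the elementarity step matching the two antecedents), which is fine but not a different argument.
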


\begin{proof}

To check the first item, note that $j_{E_\alpha}(f)[j_{E_\alpha}[d_\alpha]]=j_{E_\alpha}[\rge(f)] \subseteq j_{E_\alpha}[\lambda] \subseteq j_{E_\alpha}[d_\alpha]=\dom(\mc_\alpha(d_\alpha))$.
The second item is proved in Lemma \ref{squishablemeasure}.
Next, we prove the third item.
Let $F=\mc_\alpha(d_\alpha) \circ j_{E_\alpha}(f) \circ \mc_\alpha(d_\alpha)^{-1}$.
Then we see that $\gamma \in \dom(F)$ iff $\gamma \in d_\alpha \cap \dom(f)=\dom(f)$, and for $\gamma \in \dom(F)$, $F(\gamma)=f(\gamma)$.
Hence, $F=f$.
Finally, observe that $E_\beta=j_{E_\alpha}(\mu \mapsto E_\beta \restriction e_{\beta,\alpha}(\mu))(\mc_\alpha(d_\alpha))$ and if $\rge(f \restriction \kappa_\beta^j) \subseteq \kappa_\beta$, then $\rge(F \restriction \kappa_\beta^j)= \rge(f \restriction \kappa_\beta^j) \subseteq \kappa_\beta$.

\end{proof}

We define a Cohen subforcing of the forcing of the form $\mathbb{A}(\gamma_0,\gamma_1)$ by adding a constraint on each Cohen condition.
The subforcing will have the same chain condition and closure.

\begin{defn}

Let $\mathbb{B}^{E_\beta}(\lambda,\lambda_\beta^j)$ be the collection of $f \in \mathbb{A}(\lambda,\lambda_\beta^j)$ such that $\rge(f \restriction \kappa_\beta^j) \subseteq \kappa_\beta$.
Let $\gamma<\kappa_\alpha$ be $\alpha$-reflected with respect to $\langle E_\beta: \beta \leq \alpha \rangle$ and $\beta<\alpha$.
Let $\lambda^\prime=s_\alpha(\gamma)$, $(\lambda^\prime_\beta)^j=t_\alpha^\beta(\gamma)$, and $e_\beta=E_\beta \restriction (\lambda^\prime_\beta)^j$.
Define $\mathbb{B}^{e_{\beta}}(\lambda^\prime,(\lambda^\prime_\beta)^j)$ as the collection of $f \in \mathbb{A}(\lambda^\prime,(\lambda^\prime_\beta)^j)$ such that $\rge(f \restriction \kappa_\beta^{j_{e_\beta}}) \subseteq \kappa_\beta$.

\end{defn}

\begin{defn}
\label{defnsquishablefunction}

Let $\beta<\alpha$.
Fix the $\beta$-domain and the $\alpha$-domain $d_\beta$ and $d_\alpha$, respectively, and $d_\beta \subseteq d_\alpha$.
Let $f \in \mathbb{B}^{E_\beta}(\lambda,\lambda_\beta^j)$, $\dom(f)=d_\beta$.
Let $\mu \in \OB_\alpha(d_\alpha)$ with $\gamma=\mu(\kappa_\alpha)$.
We say that $\mu$ is {\em ($\beta$-)squishable with respect to $f$} if $f[\dom(\mu)] \subseteq \dom(\mu)$, $\mu \circ f \circ \mu^{-1} \in \mathbb{B}^{e_{\beta,\alpha}(\mu)}(s_\alpha(\gamma),t_\beta^\alpha(\gamma))$.
The notion $\mu \circ f \circ \mu^{-1}$ is called {\em the conjugation of $f$ by $\mu$}.
\end{defn}

\begin{defn}

Let $\beta<\alpha$.
Fix $\mu \in E_\alpha(d_\alpha)$ for some $\alpha$-domain $d_\alpha$.
Assume $a$ is either $\langle f \rangle$ or $\langle f,A \rangle$ where $f \in \mathbb{B}^{E_\beta}(\lambda,\lambda_\beta^j)$ and $A \in E_\beta(\dom(f_\beta))$.
Then $\mu$ is {\em $a$-squishable} if $\mu$ is $f$ and $A$ squishable (if $A$ exists).

Let $p$ be the sequence $\langle p_\beta: \beta \in [\xi,\alpha)\rangle$ where $p_\beta$ is either $\langle f_\beta \rangle$ or $\langle f_\beta,A_\beta \rangle$, $f_\beta \in \mathbb{B}^{E_\beta}(\lambda,\lambda_\beta^j)$ and $A_\beta \in E_\beta(\dom(f_\beta))$, $\mu \in E_\alpha(d_\alpha)$ for some $\alpha$-domain $d_\alpha$.
Then $\mu$ is {\em $p$-squishble} if for $\beta \in [\xi,\alpha)$, $\mu$ is $p_\beta$-squishable.

\end{defn}

Our forcing will be of Prikry-type.
An important feature of the forcing is that once one performs an extension using a legitimate $\alpha$-object, (one of the requirements for a legitimate object is that it is squishable), then for $\beta<\alpha$, all the $\beta$th components appearing in the forcing will be ``squished".
To present a rough idea, in Lemma \ref{squishablemeasure}, if $\beta<\alpha$ and $A \in E_\beta(d_\beta)$ for some $\beta$-object, then the conjugation of $A_\mu$ by $\mu \in E_\alpha(d_\alpha)$ is of measure-one in $e_{\beta,\alpha}(\mu)(\mu[d_\beta])$.
The height of $e_{\beta,\alpha}(\mu)$ is below  $\kappa_\alpha$.
We will see that the ``squished" $\beta$th components in our forcing will lie in $V_{\kappa_\alpha}$. In fact,  they will live in $V_\gamma$ for some $\gamma<\kappa_\alpha$, where $\gamma$ depends on $\mu(\kappa_\alpha)$.

Later we define forcings and repeat some certain notions very often, so we give notions to compact our description.
If $p$ is of the form $\langle f\rangle$ or $\langle f,A \rangle$, where $f$ is a function, we denote $f, \dom(f),\rge(f)$, and $A$ by $f^p,d^p,r^p$ and $A^p$, respectively.
If $p=\langle p_\beta: \beta<\alpha \rangle$ where $p_\beta=\langle f_\beta\rangle $ or $\langle f_\beta,A_\beta \rangle$, we denote $f_\beta,\dom(f_\beta),\rge(f_\beta)$ and $A_\beta$ by $f_\beta^p,d_\beta^p,r_\beta^p$, and $A_\beta^p$, respectively.
We sometimes remove the superscript $p$ if it is clear from the context.

\section{forcing with a single extender}
\label{forcing1extender}
We begin with the simplest case, only one extender.
We drop all the subscripts $0$ here.
Recall that we have a $(\kappa,\lambda^j)$-extender $E$ (recall $\lambda^j=j(\lambda)$) where $j$ is the elementary embedding $j_E:V \to M=\Ult(V,E)$ witnesses $\kappa$ being $\lambda$-supercompact, $\lambda$ is regular, and there is a function $s: \kappa \to \kappa$ such that $j(s)(\kappa)=\lambda$.

\begin{defn}

We define a forcing $\mathbb{P}_E^\emptyset$ where the conditions are of the form $p=\langle f,A \rangle$ such that $f \in \mathbb{B}^E(\lambda,\lambda^j)$,  and by letting $d=\dom(f)$, we have that $d$ is a ($0$-)domain, and $A \in E(d)$.
For $p=\langle f^p,A^p \rangle$ and $q=\langle f^q,A^q \rangle$, in $\mathbb{P}^{\emptyset}$, we define $ p \leq q$ if $f^p \leq f^q$ and $A^p \restriction d^q\subseteq A^q$ (we call the last relation ``{\em $A^p$ projects down to a subset of $A^q$}").

\end{defn}

\begin{defn}

Define $\mathbb{P}_E^{\{0\}}$ as  the collection of $\langle f \rangle$ for $f$ in $  \mathbb{B}^E(\lambda,\lambda^j)$.
The ordering in $\mathbb{P}_E^{\{0\}}$ is just the usual ordering.

\end{defn}

\begin{defn}
\label{bfnotation}

Let $\mathbb{P}_E=\mathbb{P}_E^\emptyset \cup \mathbb{P}^{\{ 0\}}_E$.

\end{defn}

We drop the subscript $E$ to make notations simpler.
For example, we have seen that we wrote $\lambda^j$ instead of $\lambda^{j_E}$.
We may also write $\mathbb{P}$ to refer to $\mathbb{P}_E$.
Define a direct extension $\leq^*$ on $\mathbb{P}$ as $\leq_{\mathbb{P}^\emptyset} \cup \leq_{\mathbb{P}^{\{0\}}}$.
If $p \in \mathbb{P}^{\emptyset}$, we say that $p$ is {\em pure}, and we write the {\em support} of $p$ as $\supp(p)=\emptyset$.
Otherwise, $p$ is said to be {\em impure}, and we write the support of $p$ as $\supp(p)=\{0\}$.

\begin{defn}

Let $p \in \mathbb{P}^{\emptyset}$ and $\mu \in A^p$.
The {\em one-step extension of $p$ by $\mu$}, denoted by $p+\mu$, is simply just $f^p \oplus \mu$.
Note that $f^p \oplus \mu \in \mathbb{P}^{\{0\}}$.

\end{defn}

\begin{defn}

Define $\leq$ on $\mathbb{P}$ by $p \leq q$ if $p \leq^*q$ or $p$ is a direct extension of a one-step extension of $q$.

\end{defn}

It is easy to check that if $r \leq^* q+\mu$ and $q \leq^* p$, then $r \leq^* p+ (\mu \restriction d^p)$.
Using this fact, the relation $\leq$ is transitive.

\begin{thm}
\label{prikry1}
$\mathbb{P}$ has the Prikry property.

\end{thm}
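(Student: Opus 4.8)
The plan is to run the classical Prikry-style argument, exploiting the fact that with a single extender the only ``combinatorial'' choice a condition ever makes is the object $\mu$ used to pass from a pure condition to an impure one: everything strictly below an impure condition is governed by the forcing $\mathbb{B}^E(\lambda,\lambda^j)$, which is $\lambda^+$-closed. First I would dispose of the impure case. If $p=\langle g\rangle\in\mathbb{P}^{\{0\}}$, then one-step extensions are defined only on pure conditions, so every $r\leq p$ is in fact a direct extension $r\leq^* p$; hence the deciding conditions below $p$, which are dense, are themselves $\leq^*$-extensions and we are done. Thus it suffices to treat a pure $p=\langle f,A\rangle$ with $d=\dom(f)$ and a sentence $\sigma$.

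The key reduction is that it is enough to produce a pure $q=\langle f^*,A^*\rangle\leq^* p$ such that $\langle f^*\oplus\mu\rangle$ decides $\sigma$, \emph{with one fixed truth value}, for every $\mu\in A^*$. Indeed, suppose $r\leq q$ forced the opposite value. Using the transitivity remark recorded just before the statement (if $r\leq^* q+\mu$ and $q\leq^* p$ then $r\leq^* p+(\mu\restriction d^p)$), after a harmless one-step extension we may assume $r$ is impure, say $r=\langle h\rangle$ with $h\supseteq f^*\oplus\mu$ for some $\mu\in A^*$; then $\langle h\rangle\leq^*\langle f^*\oplus\mu\rangle$ would force both values, a contradiction. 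So $q\Vdash\sigma$ (or $q\Vdash\neg\sigma$), as desired.

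Next I would extract a uniform decision by a colouring argument. For $\mu\in A$ look at the impure condition $f\oplus\mu$, below which $\mathbb{B}^E(\lambda,\lambda^j)$ is $\lambda^+$-closed and the deciders are dense; assign to $\mu$ the colour $1$ if some $\langle g\rangle\leq^*\langle f\oplus\mu\rangle$ forces $\sigma$, else $2$ if some such condition forces $\neg\sigma$, else $0$. Since $E(d)$ is $\kappa$-complete, there is $A_1\in E(d)$, $A_1\subseteq A$, on which the colour is constant. The colour $0$ is impossible: if no $\langle f\oplus\mu\rangle$, $\mu\in A_1$, had a deciding direct extension, then by the one-step reduction of the previous paragraph no extension of $\langle f,A_1\rangle$ at all would decide $\sigma$, contradicting density of the deciders below $\langle f,A_1\rangle$. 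So the constant colour is, say, $1$, and for each $\mu\in A_1$ we may fix a witness $g_\mu\supseteq f\oplus\mu$ in $\mathbb{B}^E(\lambda,\lambda^j)$ with $\langle g_\mu\rangle\Vdash\sigma$.

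The crux, and the step I expect to be the main obstacle, is the amalgamation of the $\lambda$-many witnesses $\langle g_\mu:\mu\in A_1\rangle$ into a single pure condition. Writing $g_\mu=(f\oplus\mu)\cup h_\mu$, where $h_\mu$ lives on coordinates outside $d$, the target is one Cohen part $f^*\supseteq f$ with $f^*\restriction d=f$ and $f^*\supseteq h_\mu$ for the surviving objects, so that $f^*\oplus\mu\supseteq g_\mu$ and therefore $\langle f^*\oplus\mu\rangle\Vdash\sigma$; the reduction above then yields $\langle f^*,A^*\rangle\Vdash\sigma$. The difficulty is that the $h_\mu$ need not be pairwise compatible, and there are $\lambda\geq\kappa$ of them, which exceeds the completeness of $E(d)$, so a plain intersection of the relevant measure-one sets is unavailable. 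I would resolve this by a simultaneous fusion: build a $\leq^*$-decreasing sequence of pure conditions that absorbs the witnesses one object at a time into the Cohen part while shrinking the measure-one set to the objects still compatible with the part built so far, taking unions of the size-$\leq\lambda$ Cohen parts at limits by the $\lambda^+$-closure of $\mathbb{B}^E(\lambda,\lambda^j)$ and keeping the measure-one sets large by the diagonal-intersection technique of Lemma \ref{diagintersect} (in its single-extender form, where the diagonal is taken over $\OB(d)$) together with the $\kappa$-completeness of $E(d)$. Arranging the bookkeeping so that every object's requirement is met on a \emph{diagonal} measure-one set, rather than an honest intersection, is the delicate point on which the whole argument turns.
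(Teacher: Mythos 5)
Your skeleton --- the impure case, the reduction via the transitivity fact to ``$\langle f^*\oplus\mu\rangle$ decides with one fixed value on a measure-one set,'' and the two-colour split using $\kappa$-completeness of $E(d)$ --- matches the paper's proof, and you have correctly located the crux. But the amalgamation you propose for that crux does not work as described, and it is missing the one idea the paper's proof actually turns on. Three concrete problems with fixing witnesses $g_\mu=(f\oplus\mu)\cup h_\mu$ and ``absorbing'' them by fusion: (i) once the Cohen part has grown past $f$, the colour of a remaining object is computed relative to the new part, and a $\mu$ all of whose deciding witnesses are incompatible with what you have built so far may simply lose colour $1$; your fix, ``shrink the measure-one set to the objects still compatible with the part built so far,'' is unjustified, because that set is carved out by Cohen-compatibility of the $h_\mu$'s with the accumulated condition, which is not controlled by $\kappa$-completeness or by any diagonal intersection --- nothing makes it $E$-large. (ii) Even for the objects you do serve, the final verification must handle one-step extensions by objects $\tau$ of the \emph{enlarged} domain $d^*=\dom(f^*)$, and $f^*\oplus\tau$ overwrites $f^*$ on $\dom(\tau)$, which will in general meet the coordinates where $h_\mu$ was planted; so the absorbed witness is clobbered by exactly the extensions you need to control, and $f^*\oplus\tau\supseteq g_\mu$ fails. (iii) The objects that matter live on the final domain, which your fusion produces only at the end, so ``one object at a time'' has no legitimate enumeration to run on.

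The missing idea is that one should not fix witnesses at all. For each $\mu$, consider the set $D_\mu$ of Cohen conditions $f$ with $\dom(\mu)\subseteq\dom(f)$ such that \emph{if} some $g\leq f\oplus\mu$ decides $b$, \emph{then} $f\oplus\mu$ already decides $b$. This $D_\mu$ is dense below the original Cohen part by a pull-back trick: given a deciding $g\leq f\oplus\mu$, define $g'$ on $\dom(g)$ by $g'(\gamma)=f(\gamma)$ for $\gamma\in\dom(f)$ and $g'(\gamma)=g(\gamma)$ otherwise; then $g'\leq f$ and $g'\oplus\mu=g$, since the discarded values are overwritten by $\mu$ or agree with $f$ anyway. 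This converts each requirement into an honest dense open subset of the $\lambda^+$-closed poset $\mathbb{B}^E(\lambda,\lambda^j)$, and these requirements never conflict with one another, so no diagonal intersection and no compatibility bookkeeping are needed. The paper then meets all of them simultaneously by taking $f'$ generic over the union $N$ of an internally approachable chain of length $\kappa$ of elementary submodels of $H_\theta$ with $|N|=\lambda$ and ${}^{<\kappa}N\subseteq N$: the closure guarantees that every $\mu\in A'$, where $A'\in E(d')$ and $d'=N\cap\lambda^j$, lies in $N$ together with $D_\mu$, and fixing $d'=N\cap\lambda^j$ in advance dissolves your enumeration problem. With the property $(\star(f'))$ so obtained, your two-colour split and your reduction paragraph go through essentially verbatim and yield $p^*=\langle f',B\rangle$ deciding $b$.
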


\begin{proof}

Fix a Boolean value $b$.
If $p$ is impure, the proof is easy.
Suppose $p$ is pure.
Let $\mathbb{B}=\mathbb{B}^{E}(\lambda,\lambda^j)$ (this is exactly $\mathbb{P}^{\{0\}}$, but we want to distinguish them as $\mathbb{P}^{\{0\}}$ refers to impure conditions).
First, we show that there are 

\begin{enumerate}

\item $N \prec (H_\theta, \in ,<)$ for some sufficiently large regular cardinal $\theta$, where $<$ is a well-ordering on $H_\theta$, such that $|N|=\lambda$ and ${}^{<\kappa} N \subseteq N$ such that $\lambda, \lambda^j ,b,p, \mathbb{P} \in N$ and  $f^p,\lambda \subseteq N$.

\item $f^\prime \leq f^p$ which is $N$-generic, meaning for each open dense set $D$ in $N$ with respect to $\mathbb{B}$,  there is $f \in D \cap N$ such that $f^\prime \leq f \leq f^p$ (as a consequence $f^\prime \in D$).

\end{enumerate}

To accomplish the construction, we build an {\em internally approachable} chain of substructures of $(H_\theta, \in , <)$ for some sufficiently large regular cardinal $\theta$, and a well-ordering $<$ on $H_\theta$, namely a sequence $\langle N_\alpha:\alpha<\kappa \rangle$ such that $N_\alpha \prec H_\theta$, ${}^{<\kappa} N_\alpha \subseteq N_{\alpha+1}$, $|N_\alpha|=\lambda$, $\lambda,\lambda^j,b,p,\mathbb{P} \in N_0$, $\lambda, f \subseteq N_0$, $\langle N_\beta:\beta \leq \alpha \rangle \in N_{\alpha+1}$, and if $\alpha$ is limit, $N_\alpha= \bigcup_{\beta<\alpha}N_\alpha$.

Let $N= \bigcup_{\alpha<\kappa} N_\alpha$.
It remains to find $f^\prime$.
We build a decreasing sequence $\langle f^\alpha: \alpha<\kappa \rangle$ of elements in $\mathbb{B} \cap N$ such that $f^\alpha \in N_{\alpha+1}$, $f^0 \leq f^p$, and $f^\alpha$ meets every dense open set in $N_\alpha \cap \mathbb{B}$.
Note that this is possible since $\langle N_\beta: \beta \leq \alpha \rangle \in N_{\alpha+1}$ and $\mathbb{B}$ is $\lambda^+$-closed.
Take $f^\prime$ as a lower bound of $\langle f^\alpha: \alpha<\kappa \rangle$.

Now let $N$ and $f^\prime$ be as described above.
Let $d^\prime=\dom(f^\prime)$.
By our construction,
$d^\prime$ is simply just $N \cap \lambda^j$.
Let $A^\prime \in E(d^\prime)$, $A^\prime$ projects down to a subset of $A^p$.
For $\mu \in A^\prime$, $\dom(\mu) \subseteq d^\prime$, $\rge(\mu) \subseteq \lambda$ and $|\mu|<\kappa$, so $\mu \in N$.

Fix $\mu \in A^\prime$.
Define $D_\mu$ as the collection of $f \in \mathbb{A}(\lambda,\lambda^j)$ such that $\dom(\mu) \subseteq \dom(f)$, and if there is $g \leq f \oplus \mu$ such that $g  \parallel b$, then $f \oplus \mu \parallel b$.

\begin{claim}
\label{denseopen1}
$D_\mu$ is open dense below $f^p$ and is in $N$.

\end{claim}

\begin{claimproof}{(Claim \ref{denseopen1})} 

We actually prove that $D_\mu$ is open dense, which is stronger that the statement.
Clearly $D_\mu$ is open. 
Since $D_\mu$ is defined using parameters in $N$, $D_\mu \in N$.
To check the density of $D_\mu$, let $f \in \mathbb{A}(\lambda,\lambda^j)$.
By the density, assume $\dom(\mu) \subseteq \dom(f)$.
Find a condition $g \leq f \oplus \mu$ such that $g$ decides $b$.
Define $g^\prime$ with $\dom(g^\prime)=\dom(g)$, $g^\prime(\gamma)=f(\gamma)$ if $\gamma \in \dom(f)$, otherwise, $g^\prime(\gamma)=g(\gamma)$.
Clearly $g^\prime \leq f$ and $g^\prime \oplus \mu=g$, so $g^\prime \in D$.

\end{claimproof}{(Claim \ref{denseopen1})}

By genericity, $f^\prime \in D_\mu$.
We have the following property for $f^\prime$ (call it $(\star(f^\prime))$):

\begin{center}

$\boldsymbol{(\star(f^\prime))}$ For $\mu \in A^\prime$, if there is $g \leq f^\prime \oplus \mu$ such that $g$ decides $b$, then $f^\prime \oplus \mu$ decides $b$.

\end{center}

Let $A_0$ be the collection of $\mu \in A^\prime$ such that there is $g \leq f^\prime$ such that $g \oplus \mu \Vdash b$.
Hence, for $\mu \in A_0$, $f^\prime \oplus \mu \Vdash b$.
Let $A_1=A^\prime \setminus A_0$.
It is easy to check that if $\mu \in A_1$ then $f^\prime \oplus \mu \Vdash \neg b$.
Exactly one of $A_0$ and $A_1$ is of measure-one, call the one of measure-one $B$.
Let $p^*=\langle f^\prime,B \rangle$.

\begin{claim}
\label{finalcond1}
$p^* \leq^*p$ and $p^*$ decides $b$.

\end{claim}

\begin{claimproof}{(Claim \ref{finalcond1})}

Clearly $p^* \leq^* p$. 
Let $q \leq p^*$ be such that $q$ decides $b$.
Assume $q$ is impure.
Without loss of generality, assume $q\Vdash b$.
Then $q \leq^*p^*+\mu$ for some $\mu \in B$.
By the property of $(\star(f^\prime))$, $f^\prime \oplus \mu \Vdash b$.
Thus, $B=A_0$ and for every $\mu \in A_0$, $f^\prime \oplus \mu \Vdash b$. 
By the density, $p^* \Vdash b$.

\end{claimproof}{(Claim \ref{finalcond1})}

\end{proof}

Forcing with $\mathbb{P}$ is equivalent to adding $\lambda^j$ new Cohen subsets of $\lambda^+$, while preserving all cardinals and cofinalities.

\section{forcing defined from two extenders}
\label{forcing2extenders}

In this section we deal with the case where the forcing is defined from a sequence of extenders whose sequence length $2$.
The proof of the Prikry property in this section requires the Prikry property of the forcings defined sequences of extenders whose lengths are $1$ (i.e. the forcing in Section \ref{forcing1extender}).
In Section \ref{forcinganyextender}, we define a forcing with any length of a sequence of extenders, including any finite lengths.
The structure of the proof of the Prikry property in this section can apply to forcings defined from sequences of extenders whose sequences have finite lengths greater than $1$, assuming the Prikry property holds for forcings defined from sequences of extenders of shorter finite lengths.

\begin{defn}

A forcing $\mathbb{P}_{\langle E_0,E_1 \rangle}^\emptyset$ consists of conditions of the form $p=\langle p_0,p_1 \rangle$, where for $n=0,1$, $p_n=\langle f_n^p, A_n^p \rangle$, $f_n^p \in \mathbb{B}^{E_n}(\lambda,\lambda_n^j)$, $d_n^p=\dom(f_n^p)$ is an $n$-domain, $A_n \in E_n(d_n^p)$, and $d_0^p \subseteq d_1^p$.
A condition in $\mathbb{P}_{\langle E_0,E_1 \rangle}^\emptyset$ is said to be {\em pure}.

\end{defn}

\begin{defn}

A forcing $\mathbb{P}_{\langle E_0,E_1 \rangle}^{\{0\}}$ consists of conditions $p=\langle p_0,p_1 \rangle$ such that $p_0 \in \mathbb{P}_{E_0}^{\{0\}}$ and $p_1 \in \mathbb{P}_{E_1}^\emptyset$ and $d_0^p \subseteq d_1^p$.

\end{defn}

\begin{defn}
\label{2extender1}
A forcing $\mathbb{P}_{\langle E_0,E_1 \rangle}^{\{1\}}$ consists of a condition $p=\langle p_0,p_1 \rangle$ such that $p_0=\langle f_0^p,A_0^p \rangle$, $p_1=\langle f_1^p \rangle$ such that

\begin{enumerate}

\item $f_1^p \in \mathbb{B}^{E_1}(\lambda,\lambda_1^j)$,  $d_1^p$ is a $1$-domain with respect to $E_1$.

\item $f_1^p(\kappa_1)$ is $1$-reflected for the sequence $\langle E_0,E_1 \rangle$.

\item $f_0^p \in \mathbb{B}^{e_{0,1}(f_1^p)}(\lambda_1(f_1^p),\lambda_{0,1}^j(f_1^p))$
(Recall that $\lambda_{0,1}^j(f_1^p)$ is $t_1^0(f_1^p(\kappa_1))$ where $j_{E_1}(t_1^0)(\kappa_1)=\lambda_0^j$, $e_{0,1}(f_1^p)$ is $E_0 \restriction \lambda_{0,1}^j(f_1^p)$, which means that $\rge(f_0^p \restriction j_{e_{0,1}(f_1^p)}(\kappa_0) \subseteq \kappa_0$.

\item $d_0^p=\dom(f_0^p)$ is a $0$-domain with respect to $e_{0,1}(f_1^p)$.

\item $A_0^p \in e_{0,1}(f_1^p)(d_0^p)$.

\end{enumerate}

Equivalently, $p_0 \in \mathbb{P}^{\emptyset}_{e_{0,1}(f_1^p)}$ and $p_1 \in \mathbb{P}^{\{0\}}_{E_1}$ .

\end{defn}

\begin{defn}
\label{2extender01}
A forcing $\mathbb{P}_{\langle E_0,E_1 \rangle}^{\{0,1\}}$ consists of a condition $p=\langle p_0,p_1 \rangle$ such that $p_n=\langle f_n^p \rangle$ where

\begin{enumerate}

\item $f_1^p \in \mathbb{B}^{E_1}(\lambda,\lambda_1^j)$ and $d_1^p$ is a $1$-domain.

\item $f_1^p(\kappa_1)$ is $1$-reflected with respect to $\langle E_0,E_1 \rangle$. 

\item  $f_0^p \in \mathbb{B}^{e_{0,1}(f_1^p)}(\lambda_1(f_1^p),\lambda_{0,1}^j(f_1^p))$
(we recall some relevant notations in Definition \ref{2extender1}).

\item $d_0^p$ is a $0$-domain with respect to $e_{0,1}(f_1^p)$.

\end{enumerate}

Equivalently, $p_0 \in \mathbb{P}^{\{0\}}_{e_{0,1}(f_1^p)}$ and $p_1 \in \mathbb{P}^{\{0\}}_{E_1}$.

\end{defn}

For a condition $p$ which is described in Definition \ref{2extender1} or \ref{2extender01}, it is automatic that for each condition $p$, $d_0^p \cup r_0^p \subseteq d_1^p$. For $a \subseteq 2$, we define $\leq_{\mathbb{P}_{\langle E_0,E_1 \rangle}^a}$ to be the coordinate-wise ordering.
Let $\mathbb{P}_{\langle E_0,E_1 \rangle }=\cup_{a \subseteq 2} \mathbb{P}_{\langle E_0,E_1 \rangle}^a$.
Define a direct extension relation $\leq^*$ on $\mathbb{P}_{\langle E_0,E_1\rangle}$ to be $\cup_{a \supseteq 2} \leq_{\mathbb{P}_{\langle E_0,E_1 \rangle}^a}$.

We sometimes abbreviate $\mathbb{P}_{\langle E_0,E_1 \rangle}$ as $\mathbb{P}$, and abbreviate other forcings in a similar fashion, i.e. we abbreviate $\mathbb{P}_{\langle E_0,E_1 \rangle}^a$ as $\mathbb{P}^a$, written as $\supp(p)=a$.
Note that $p$ is {\em pure} if $\supp(p)=\emptyset$.
We say that $p$ has support $a$ if $p \in \mathbb{P}^a$.
We say $p$ is {\em impure} if $\supp(p) \neq \emptyset$.

\begin{defn}
\label{extendat0}
Let $p \in \mathbb{P}$, $0 \not \in \supp(p)$ and $\mu \in A_0^p$ is squishable. then we define the {\em one-step extension of $p$ by $\mu$} is a condition $q$, denoted by $p+\mu$, with $\supp(q)=a \cup \{0\}$ such that 

\begin{enumerate}

\item $q_0=\langle f_0^p \oplus \mu \rangle$.

\item $q_1=\langle f_1^p \rangle$ if $1 \not \in \supp(p)$, otherwise $q_1=\langle f_1^p,B_1 \rangle$, where $B_1=\{ \mu_1 \in A_1^p:  \mu_1 \text{ is } \langle f_0^p,A_0^p \rangle \text{-squishable and} \dom(\mu) \cup \rge(\mu) \subseteq \dom(\mu_1)\}$.

\end{enumerate}

\end{defn}

\begin{defn}
\label{onestepext2}
Let $p \in \mathbb{P}$, $1 \not \in \supp(p)$ and $\mu \in A_1^p$ is $p_0$-squishable.
Then {\em one-step extension of $p$ by $\mu$}, denoted by $p+\mu$ is a condition $q \in \mathbb{P}^{a \cup \{1\}}$ such that 

\begin{enumerate}

\item $f_1^q=f_1^p \oplus \mu$.

\item $f_0^q= \mu \circ f_0^p \circ \mu^{-1}$.

\item $A_0^q=\mu \circ (A_0^p)_\mu \circ \mu^{-1}$ (if $A_0^p$ exists). Recall that $(A_0^p)_\mu=\{\tau \in A_0^p: \dom(\tau) \cup \rge(\tau) \subseteq \dom(\mu)\}$.

\end{enumerate}

\end{defn}

$p$ is a $2$-step extension of $q$ if $p=(q+\mu)+\mu^\prime$ for some objects $\mu$ and $\mu^\prime$, and their one-step extensions are legitimate. 
We denote $(q+\mu)+\mu^\prime$ by $q+ \langle \mu, \mu^\prime \rangle$.
Finally, for $p,q \in \mathbb{P}$, we say that $p \leq q$ if $p\leq^* q$, $p$ is a direct extension of a $1$ or $2$-step extension of $q$.
Before we make an analysis further, we give a picture of a situation in Definition \ref{onestepext2}, where $p$ is a pure condition.

 \begin{figure}[H]
 \centering
 \captionsetup{justification=centering} 
 \includegraphics[scale = 0.75]{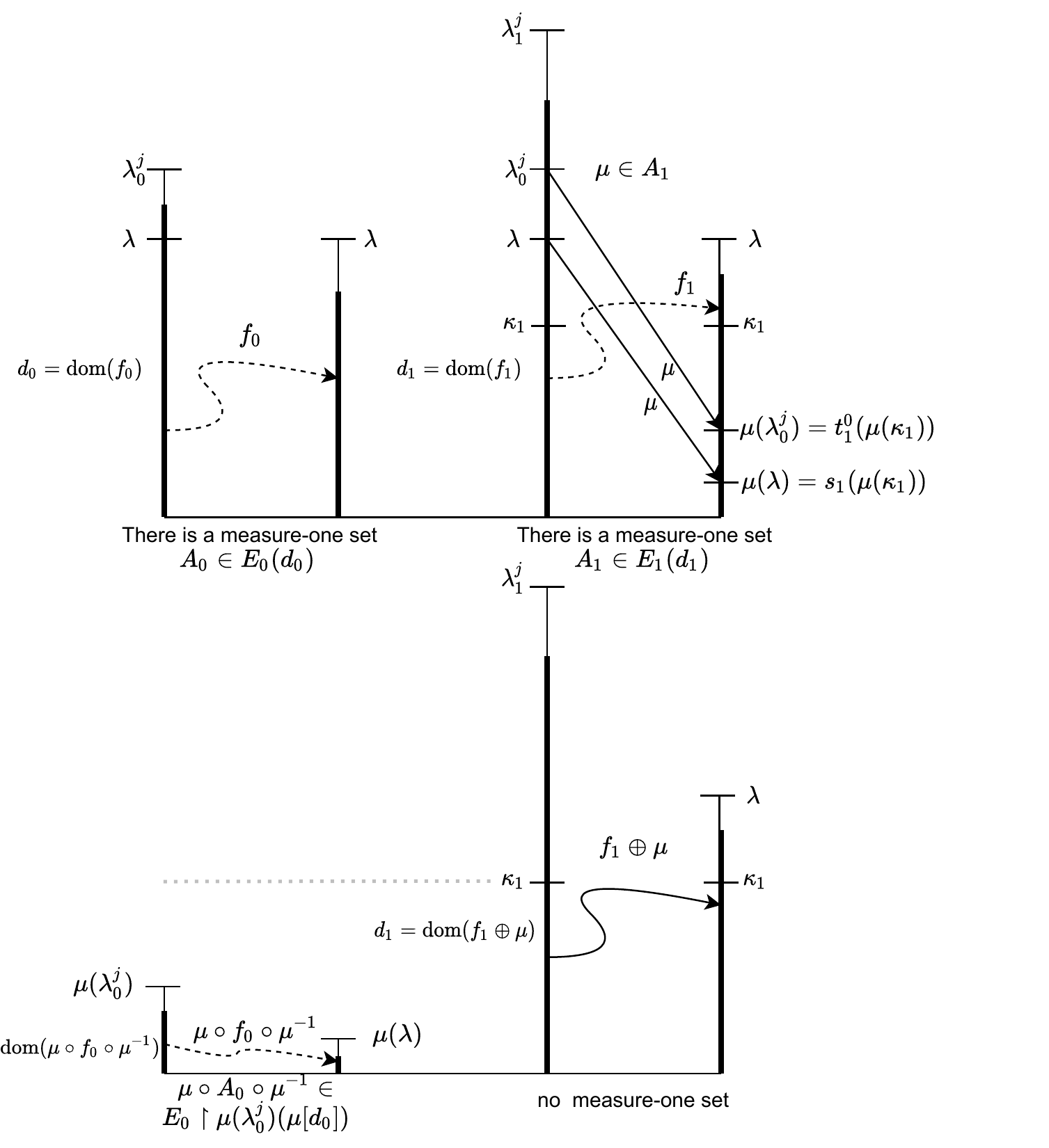}
  \caption{A one-step extension of a pure condition using a $1$-object}
  \label{extend2extenders}
\end{figure}

In Figure \ref{extend2extenders}, the first diagram shows how a pure condition $p$ looks like, while the second diagram is a condition which is a result from performing a $1$-step extension of $p$ by a $1$-object $\mu$.
The bold bars represent how high domains and ranges can be (they do \textbf{not} represent that domains and ranges are initial segments of ordinals, and in fact, they may not).
From the figure, we see that after performing the one-step extension, the first coordinate of the condition lies in $V_{\kappa_1}$ (actually, it lives in $V_{\mu(\lambda_0^j)}$).
The wiggle lines represent how the maps of Cohen parts are like.
The straight lines in the first diagram indicate the map from a few crucial values of $\dom(\mu)$.
We make an analysis on the ordering $\leq$ on $\mathbb{P}$.

\begin{lemma}
\label{feature}
\begin{enumerate}

\item \label{tran} Let $i \in \{0,1\}$.
Let $p$ be a condition such that $i \not \in \supp(p)$ and $q \leq^* p$.
Suppose $\mu \in A_i^q$, $\mu$ is $\langle E_n: n \not \in \supp(q) \rangle$-squishable.
Then $q+\mu \leq^* p+(\mu \restriction d_i^p)$.

\item The ordering $\leq$ is transitive.

\item Suppose $p$ is pure and $q$ is a $2$-step extension of $p$.
Then there are $\mu_i \in A_i^p$ for $i=0,1$ such that $q=p+ \langle \mu_0,\mu_1 \rangle=\langle \mu_1 \circ \mu_0 \circ \mu_1^{-1},\mu_1 \rangle$.

\end{enumerate}

\end{lemma}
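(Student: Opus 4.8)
The plan is to prove (\ref{tran}) first, since it is the engine for transitivity, and to settle the normal-form statement by a direct computation; throughout I rely on two algebraic facts about $\oplus$, $\circ$ and $\restriction$. First, conjugation by an order-preserving injection distributes over overwriting: if $\dom(h)\subseteq\dom(g)$ and $\mu$ is defined and injective on $\dom(g)\cup\rge(g)$, then $\mu\circ(g\oplus h)\circ\mu^{-1}=(\mu\circ g\circ\mu^{-1})\oplus(\mu\circ h\circ\mu^{-1})$, checked pointwise at each $\mu(z)$. Second, $\oplus$ is monotone under $\supseteq$ in both arguments (under the evident domain conditions). Since for any $0$-coordinate datum we have $\dom(f_0)\subseteq d_0\subseteq\lambda$ and $\rge(f_0)\subseteq\lambda$, everything relevant to coordinate $0$ sits below $\kappa_1$ and inside every $1$-domain, so $\mu$ and $\mu\restriction d_1^p$ act identically on it; this is what lets me compare conjugations by $\mu$ and by $\mu\restriction d_1^p$.

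For (\ref{tran}), note $\leq^*$ preserves the support class, so $\supp(q)=\supp(p)$, $i\notin\supp(q)$, and $\mu\restriction d_i^p\in A_i^p$ by the projection clause of $q\leq^*p$; hence both sides are defined. When $i=0$, the coordinate-$0$ inequality $f_0^q\oplus\mu\supseteq f_0^p\oplus(\mu\restriction d_0^p)$ is immediate from $f_0^q\supseteq f_0^p$ and monotonicity of $\oplus$, and (when coordinate $1$ still carries a measure) the remaining requirement is that the shrunk set $B_1^q$ projects into $B_1^p$, i.e. for $\mu_1\in B_1^q$ the restriction $\mu_1\restriction d_1^p$ is $\langle f_0^p,A_0^p\rangle$-squishable and still absorbs $\dom(\mu\restriction d_0^p)\cup\rge(\mu\restriction d_0^p)$. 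When $i=1$, coordinate $1$ is as above, while coordinate $0$ asks for $\mu\circ f_0^q\circ\mu^{-1}\supseteq(\mu\restriction d_1^p)\circ f_0^p\circ(\mu\restriction d_1^p)^{-1}$ and for $\mu\circ(A_0^q)_\mu\circ\mu^{-1}$ to project into $(\mu\restriction d_1^p)\circ(A_0^p)_{\mu\restriction d_1^p}\circ(\mu\restriction d_1^p)^{-1}$, both following from $f_0^q\supseteq f_0^p$, $A_0^q\restriction d_0^p\subseteq A_0^p$ and the two facts above. I expect the genuine obstacle of the whole lemma to be exactly these coordinate-$0$ (for $i=1$) and coordinate-$1$ (for $i=0$) checks: one must show that \emph{squishability} and \emph{measure-one-ness} survive simultaneous conjugation, the closure operation $(\cdot)_\mu$ of Definition \ref{measureoneclosure}, and the projection of measures, which is where Lemmas \ref{squishablemeasure} and \ref{squishablefunction}, applied inside $e_{0,1}(\mu)$, are invoked and where the domain bookkeeping is most delicate.

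To prove transitivity, I would write $p\leq q$ uniformly as: there is a legitimate tuple $\vec\nu$ with $|\vec\nu|\leq2$ and $p\leq^*q+\vec\nu$. Given $r\leq^*q+\vec\nu$ and $q\leq^*p+\vec\sigma$, iterate (\ref{tran}) along the coordinates of $\vec\nu$, each lying outside $\supp(q)=\supp(p+\vec\sigma)$, to get $q+\vec\nu\leq^*(p+\vec\sigma)+(\vec\nu\restriction\cdots)$, whence $r\leq^*(p+\vec\sigma)+(\vec\nu\restriction\cdots)$ by transitivity of $\leq^*$. Each step enlarges the support by one coordinate and the support is contained in $\{0,1\}$, so the coordinates used in $\vec\sigma$ and $\vec\nu$ are distinct and $|\vec\sigma|+|\vec\nu|\leq2$; thus the right-hand side is a legitimate $({\leq}2)$-step extension of $p$ and $r\leq p$. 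Legitimacy of the restricted steps is guaranteed by (\ref{tran}) together with the fact that restricting an object preserves squishability.

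For the normal form, $\supp(p)=\emptyset$ forces the two steps of $q=(p+\nu)+\nu'$ to occur at the distinct coordinates $0$ and $1$. In the order $(0,1)$, with $\nu=\mu_0\in A_0^p$ and $\nu'=\mu_1$, Definitions \ref{extendat0} and \ref{onestepext2} give $q=\langle\mu_1\circ(f_0^p\oplus\mu_0)\circ\mu_1^{-1},\,f_1^p\oplus\mu_1\rangle$, which is the asserted form after absorbing the base Cohen parts $f_0^p,f_1^p$. In the order $(1,0)$, with $\nu=\mu_1\in A_1^p$ and $\nu'=\mu_0'\in A_0^{p+\mu_1}=\mu_1\circ(A_0^p)_{\mu_1}\circ\mu_1^{-1}$, I would write $\mu_0'=\mu_1\circ\sigma\circ\mu_1^{-1}$ with $\sigma\in(A_0^p)_{\mu_1}\subseteq A_0^p$ and use the distributivity fact to rewrite the coordinate-$0$ value $(\mu_1\circ f_0^p\circ\mu_1^{-1})\oplus\mu_0'$ as $\mu_1\circ(f_0^p\oplus\sigma)\circ\mu_1^{-1}$; this exhibits $q$ as the order-$(0,1)$ extension $p+\langle\sigma,\mu_1\rangle$ and closes the argument.
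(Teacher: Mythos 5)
Your route is the same as the paper's, part for part: (\ref{tran}) is verified coordinatewise from $f_0^q \supseteq f_0^p$, $A_0^q \restriction d_0^p \subseteq A_0^p$ and $\mu \supseteq \mu\restriction d_1^p$; transitivity is obtained by iterating (\ref{tran}) along the extension path; and the normal form in both extension orders reduces to the single identity $\mu_1\circ(f_0^p\oplus\mu_0)\circ\mu_1^{-1}=(\mu_1\circ f_0^p\circ\mu_1^{-1})\oplus(\mu_1\circ\mu_0\circ\mu_1^{-1})$, together with writing the squished object as $\mu_0'=\mu_1\circ\sigma\circ\mu_1^{-1}$ with $\sigma\in(A_0^p)_{\mu_1}\subseteq A_0^p$ --- which is exactly the paper's argument for item (3).

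There is, however, one concrete flaw in how you set up your central algebraic tool. You state the distributivity fact under the hypothesis that $\mu$ is defined on $\dom(g)\cup\rge(g)$, and then apply it with $g=f_0^p$ and $\mu=\mu_1$; but this hypothesis provably never holds there: $|\dom(f_0^p)|=\lambda$, whereas $|\dom(\mu_1)|=s_1(\mu_1(\kappa_1))<\kappa_1<\lambda$ (Definition \ref{alphaobject}), and likewise $\rge(f_0^p)\subseteq\lambda$ while $\dom(\mu_1)\cap\lambda=\mu_1(\kappa_1)<\kappa_1$. For the same reason your justification ``$\dom(f_0)\subseteq d_0\subseteq\lambda$, so everything at coordinate $0$ sits below $\kappa_1$'' is false for a pure condition: a $0$-domain is an element of $[\lambda_0^j]^\lambda$ with $\lambda_0^j=j_{E_0}(\lambda)>\lambda>\kappa_1$. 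The conjugations here are \emph{partial} compositions (the paper's convention for $f\circ g\circ h$), and the correct hypotheses for your identity are precisely the squishability clause of Definition \ref{defnsquishablefunction}, namely $f_0^p[\dom(\mu_1)]\subseteq\dom(\mu_1)$, together with $\dom(\mu_0)\cup\rge(\mu_0)\subseteq\dom(\mu_1)$; under these, both sides have domain $\mu_1[\dom(f_0^p)\cap\dom(\mu_1)]$ and agree pointwise, which is how the paper closes item (3). (Note also that as literally stated your fact can fail even when its hypothesis holds, since nothing guarantees $\rge(h)\subseteq\dom(\mu)$: at a point $y\in\dom(h)$ with $h(y)\notin\dom(\mu)$ the left side is undefined while the right side falls back to $\mu(g(y))$.) Similarly, the correct reason $\mu$ and $\mu\restriction d_1^p$ treat coordinate-$0$ data identically in (\ref{tran}) --- used, e.g., to see that $(A_0^q)_\mu$ restricts into $(A_0^p)_{\mu\restriction d_1^p}$ --- is not a bound below $\kappa_1$ but the definitional requirements $d_0^p\subseteq d_1^p$ and $\rge(f_0^p)\cup\rge(\sigma)\subseteq\lambda\subseteq d_1^p$ (every domain contains $\lambda+1$). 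With these hypotheses corrected, your pointwise computations go through and the proof coincides with the paper's.
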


\begin{proof}

\begin{enumerate}

\item We will show the case $p$ is pure and $i=1$ since other cases are easier to prove.
Let $\tau = \mu \restriction d_1^p$.
First note that since $q \leq^*p$, $\tau \in A_1^p$. 
It is straightforward to check that since $\mu$ is $q_0$-squishable, $\tau$ is $p_0$-squishable.
Let $r=q+\mu$.
Since $f_0^q \leq f_0^p$ and $\mu \supseteq \tau$,  $f_0^r=\mu \circ f_0^q \circ \mu^{-1} \leq \tau \circ f_0^p \circ \tau^{-1}$.
Since $A_0^q \restriction d_0^p \subseteq A_0^p$ and $\mu \supseteq \tau$, $\mu \circ (A_0^q)_\mu \circ \mu^{-1}$ projects down to a subset of $\tau \circ (A_0^p)_\tau \circ \tau^{-1}$.
Finally, $f_1^r=f_0^q \oplus \mu \leq f_0^p \oplus \tau$, so $r \leq^* p+\tau$.

\item Let $p \leq q \leq r$.
It is trivial if $p \leq^*q$, otherwise, we apply (\ref{tran}) to show that every object in $q$ used in the path to extend $q$ to $p$ corresponds to the restriction of the object to the corresponding domain in $r$, and hence, $p$ is a direct extension of some $n$-step extension of $r$, so $\leq$ is transitive.

\item From the context, the $1$-object we used to extend is always $\mu_1$.
It is important to be aware of the objects we are allowed to use to extend.
Note that there are two possible ways to extend $p$ using $2$ objects.
The first way is to first extend $p$ using a $0$-object $\mu_0$ in $A_0^p$, then extend $p+\mu_0$ further using a $1$-object $\mu_1$ in a subset of $A_1^p$, namely the set $B_1$ defined in Definition \ref{extendat0}.
The second way is to extend by a $1$-object $\mu_1$ first, and then a $0$-object $\mu_1 \circ \mu_0 \circ \mu_1^{-1} \in \mu_1 \circ (A_0^p)_{\mu_1} \circ \mu_1^{-1}$. 
In both cases, $\dom(\mu_0) \cup \rge(\mu_0) \subseteq \dom(\mu_1)$.
From the first way, we see that from Definition \ref{onestepext2}, $\mu_1$ is $\langle f_0^p,A_0^p \rangle$-squishable.
In the second way, since $\mu_1$ is $\langle f_0^p,A_0^p \rangle$-squishable and $\dom(\mu_0) \cup \rge(\mu_0) \subseteq \dom(\mu_1)$, it concludes that $\mu_1$ is $\langle f_0^p \oplus \mu_0 \rangle$-squishable.
In both cases, the orders of the objects to perform one-step extensions can be commuted.
Since the $1$-object we use is always $\mu_1$, it is enough to show that the final conditions of the extensions in both ways are equal at the $0$-th coordinate.
Equivalently, $\mu_1 \circ (f_0^p \oplus \mu_0) \circ \mu_1^{-1}=(\mu_1 \circ f_0^p \circ \mu_1^{-1}) \oplus (\mu_1 \circ \mu_0 \circ \mu_1^{-1})$.
The fact that $\dom(\mu_0) \cup \rge(\mu_0) \subseteq \dom(\mu_1)$ and $\mu_1$ is $p_0$-squishable implies that the equation holds.

\end{enumerate}

\end{proof}

\begin{prop}

Suppose that $p \in \mathbb{P}$ with $1 \in \supp(p)$.
Then $\mathbb{P}/p$ is factored into $\mathbb{P}_0=\mathbb{P}_{e_{0,1}(f_1^p)}$ and $\mathbb{P}_1=\mathbb{B}^{E_1}(\lambda,\lambda_1^j)$.
$\mathbb{P}_0$ is $\lambda_1(f_1^p)^{++}$-c.c. and $(\mathbb{P}_1, \leq^*)$ is  $\lambda^+$-closed.
If $0 \in \supp(p)$, then $\mathbb{P}_0$ is $\lambda_1(f_1^p)^+$-closed.

\end{prop}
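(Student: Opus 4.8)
The plan is to show that once $1\in\supp(p)$ the two coordinates of the conditions below $p$ evolve independently, so that $\mathbb{P}/p$ is (isomorphic to a dense subset of) the product of the bottom single-extender forcing and the top Cohen forcing, and then to read off the chain condition and closure of each factor from Section \ref{forcing1extender} and from the closure facts recorded for $\mathbb{A}$. First I would fix the data determined by $p$: set $\gamma=f_1^p(\kappa_1)$, which is $1$-reflected for $\langle E_0,E_1\rangle$, and let $e=e_{0,1}(f_1^p)=E_0\restriction t_1^0(\gamma)$. By Definition \ref{reflect}, $e$ is a $(\kappa_0,\lambda_{0,1}^j(f_1^p))$-extender witnessing $\kappa_0$ being $\lambda_1(f_1^p)$-supercompact with $j_e(s_0)(\kappa_0)=\lambda_1(f_1^p)$, so $e$ meets exactly the hypotheses under which the forcing of Section \ref{forcing1extender} is defined, now with $\lambda,\lambda^j$ played by $\lambda_1(f_1^p),\lambda_{0,1}^j(f_1^p)$. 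Since $1\in\supp(p)$, any $q\le p$ has $f_1^q\supseteq f_1^p$ and hence $f_1^q(\kappa_1)=\gamma$, so $e_{0,1}(f_1^q)=e$ stays constant along the entire cone below $p$.

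Next I would set up the factorization map $q\mapsto(q_0,f_1^q)$. The key structural point is that below $p$ no extension ever uses a $1$-object: the one- and two-step extensions by $1$-objects of Definition \ref{onestepext2} are available only at conditions with $1\notin\supp$, whereas every $q\le p$ has $1\in\supp(q)$. Consequently the only moves are direct extensions and (when $0\notin\supp$) one-step extensions by $0$-objects, and by Definition \ref{extendat0} a $0$-object extension alters only the $0$th coordinate while leaving $f_1^q$ fixed. Thus $q_0$ ranges over $\mathbb{P}_e=\mathbb{P}_{e_{0,1}(f_1^p)}$ and $f_1^q$ ranges over $\mathbb{B}^{E_1}(\lambda,\lambda_1^j)$ below $f_1^p$, and by the transitivity analysis of Lemma \ref{feature} one has $q'\le q$ iff $q_0'\le_{\mathbb{P}_e}q_0$ and $f_1^{q'}\le f_1^q$. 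This is the product ordering, modulo the side condition $d_0^q\cup r_0^q\subseteq d_1^q$; since $|d_0^q\cup r_0^q|\le\lambda_1(f_1^p)<\lambda$, this condition is always met after a direct extension of the (highly closed) top coordinate, so the map identifies $\mathbb{P}/p$ with a dense subset of $\mathbb{P}_0\times\mathbb{P}_1$, where $\mathbb{P}_0=\mathbb{P}_e$ and $\mathbb{P}_1=\mathbb{B}^{E_1}(\lambda,\lambda_1^j)/f_1^p$.

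For the chain condition of $\mathbb{P}_0$ I would appeal to the concluding remark of Section \ref{forcing1extender}: forcing with the single extender $e$ is equivalent to adding $\lambda_{0,1}^j(f_1^p)$ Cohen subsets of $\lambda_1(f_1^p)^+$, i.e. to $\mathbb{A}(\lambda_1(f_1^p),\lambda_{0,1}^j(f_1^p))$, which under GCH is $\lambda_1(f_1^p)^{++}$-c.c. by the facts stated after the definition of $\mathbb{A}$; hence $\mathbb{P}_0$ is $\lambda_1(f_1^p)^{++}$-c.c. For the closure of $\mathbb{P}_1$, note $\mathbb{B}^{E_1}(\lambda,\lambda_1^j)\subseteq\mathbb{A}(\lambda,\lambda_1^j)$ consists of functions of size $\le\lambda$; the union of a $\le^*$-decreasing chain of length $\le\lambda$ is again such a function with range in $\lambda$, and the constraint $\rge(f\restriction\kappa_1^j)\subseteq\kappa_1$ is preserved under unions, so the union is a lower bound and $(\mathbb{P}_1,\le^*)$ is $\lambda^+$-closed. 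Finally, if $0\in\supp(p)$ then $p_0=\langle f_0^p\rangle$ is impure, so below $p$ the $0$th coordinate admits only direct extensions; hence $\mathbb{P}_0$ reduces to the Cohen forcing $\mathbb{B}^{e}(\lambda_1(f_1^p),\lambda_{0,1}^j(f_1^p))\subseteq\mathbb{A}(\lambda_1(f_1^p),\lambda_{0,1}^j(f_1^p))$, whose conditions have size $\le\lambda_1(f_1^p)$, and the same union argument yields $\lambda_1(f_1^p)^+$-closure.

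The technical heart, and the step I expect to be the main obstacle, is the decoupling of the ordering in the second paragraph: one must verify carefully that with $1\in\supp(p)$ the cone below $p$ never invokes a $1$-object extension, so that $e_{0,1}(f_1^q)$ is frozen at $e$ and the two coordinates genuinely move independently. Handling the side condition $d_0\cup r_0\subseteq d_1$ cleanly — so that the identification with the product is a dense embedding rather than something more delicate — is precisely where the $\lambda^+$-closure of $\mathbb{P}_1$ is used.
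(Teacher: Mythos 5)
Your proposal is correct, but it reaches the chain condition by a genuinely different route than the paper. The paper's proof is a short direct argument: a standard $\Delta$-system argument shows that the Cohen poset $\mathbb{B}^{e_{0,1}(f_1^p)}(\lambda_1(f_1^p),\lambda_{0,1}^j(f_1^p))$ is $\lambda_1(f_1^p)^{++}$-c.c. and $\lambda_1(f_1^p)^+$-closed, and then it observes that compatibility of Cohen parts lifts to compatibility of full conditions in $\mathbb{P}_0$ --- if $g \leq g_0,g_1$ with $B_0 \in e_{0,1}(f_1^p)(\dom(g_0))$ and $B_1 \in e_{0,1}(f_1^p)(\dom(g_1))$, one picks $B \in e_{0,1}(f_1^p)(\dom(g))$ projecting down to subsets of both, so $\langle g,B\rangle$ is a common extension; the factorization and the closure of $\mathbb{P}_1$ are treated as immediate. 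You instead cite the concluding remark of Section \ref{forcing1extender} (equivalence of the one-extender forcing with Cohen forcing) and transfer the chain condition across the equivalence. This is sound, but only because the equivalence is witnessed by a dense embedding: the impure conditions, i.e.\ the copy of $\mathbb{B}^{e_{0,1}(f_1^p)}$, are dense in $\mathbb{P}_{e_{0,1}(f_1^p)}$ (one-step extensions in a single-extender forcing carry no squishability constraint), and chain conditions pass along dense embeddings; a bare claim of "same generic extensions" would not suffice, and the remark you lean on is itself stated in the paper without proof, so the paper's amalgamation-of-measure-one-sets argument is the more self-contained foundation. One simplification you can make: your care over the side condition $d_0^q \cup r_0^q \subseteq d_1^q$ is unnecessary, since the paper notes right after Definition \ref{2extender01} that this containment is automatic whenever $1 \in \supp(q)$ (as $d_0^q \cup r_0^q \subseteq \lambda_{0,1}^j(f_1^q) < \kappa_1$ while $\lambda+1 \subseteq d_1^q$), so your map gives a genuine product decomposition and the $\lambda^+$-closure of $\mathbb{P}_1$ plays no role in the factorization itself. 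Your remaining observations --- that $f_1^q \supseteq f_1^p$ freezes $f_1^q(\kappa_1)$ and hence $e_{0,1}$ along the cone below $p$, that no $1$-object extension is ever available once $1 \in \supp$, and the union arguments giving $\lambda^+$-closure of $(\mathbb{P}_1,\leq^*)$ and $\lambda_1(f_1^p)^+$-closure of $\mathbb{P}_0$ when $0 \in \supp(p)$ --- match what the paper leaves implicit, and your write-up buys a more explicit verification of the decoupling at the cost of invoking a heavier unproved remark for the chain condition.
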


\begin{proof}

Note that for each condition in $\mathbb{P}_0$, its Cohen part lives in $\mathbb{B}^{e_{0,1}(f_1^p)}(\lambda_1(f_1^p),\lambda_{0,1}^j(f_1^p))$. 
By a standard $\Delta$-system argument, the poset $\mathbb{B}^{e_{0,1}(f_1^p)}(\lambda_1(f_1^p),\lambda_{0,1}^j(f_1^p))$ is $\lambda_1(f_1^p)^{++}$-c.c., and is $\lambda_1(f_1^p)^+$-closed.
If $g_0,g_1 \in\mathbb{B}^{e_{0,1}(f_1^p)}(\lambda_1(f_1^p),\lambda_{0,1}^j(f_1^p))$ are compatible, say $g \leq g_0,g_1$, $B_0 \in e_{0,1}(f_1^p)(\dom(g_0))$, and $B_1 \in e_{0,1}(f_1^p)(\dom(g_1))$, then we can find $B$ such that $\langle g,B \rangle \leq \langle g_0, B_0 \rangle$, $\langle g_1,B_1 \rangle$ by finding $B \in e_{0,1}(f_1^p)(\dom(g))$ which projects down to subsets of $B_0$, $B_1$.

\end{proof}

\begin{lemma}
\label{integration1}

Let $p \in \mathbb{P}$ and $1 \not \in \supp(p)$.
Let $f^\prime \leq f_1^p$ with $d^\prime=\dom(f^\prime)$.
Let $A^\prime \in E_1(d^\prime)$, $A^\prime$ projects down to a subset of $A_1^p$.
Assume that for each $\mu \in A^\prime$ with $\mu \restriction d_1^p$ being $p_0$-squishable, we have $t(\mu) \leq^* (p+(\mu \restriction d_1^p))_0$ in $\mathbb{P}_{e_{0,1}(\mu)}$.
Then there is a condition $q \leq^*p$ such that

\begin{enumerate}

\item $f_1^q \leq f^\prime$, and $A_1^q$ projects down to $A^\prime$.

\item for $\tau \in A_1^q$ with $\mu=\tau \restriction d^\prime$, $(q+ \tau)_0=t(\mu)$.

\end{enumerate}

\end{lemma}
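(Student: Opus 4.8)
The plan is to lift the assignment $\mu \mapsto t(\mu)$ through $j := j_{E_1}$ and to read off the coordinate-$0$ part of $q$ from its value at the canonical object. Write $\mc = \mc_1(d')$ and record that $\mc \restriction j[d_1^p] = \mc_1(d_1^p)$, and that each $t(\mu)$ is of the form $\langle h_\mu \rangle$ or $\langle h_\mu, B_\mu \rangle$. Since $A' \in E_1(d')$ we have $\mc \in j(A')$, and $\mc_1(d_1^p)$ is $j(p)_0$-squishable by the computations in Lemmas \ref{squishablemeasure} and \ref{squishablefunction}; so the hypothesis, pushed through $j$ and evaluated at $\mc$, yields $t^\ast := j(t)(\mc) \leq^\ast (j(p) + \mc_1(d_1^p))_0$, an inequality in the coordinate-$0$ forcing computed inside $M_1 = \Ult(V,E_1)$.

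First I would identify that forcing together with its right-hand side. The critical value of $\mc$ is $\mc(\kappa_1^j) = \kappa_1$, so by coherence $e_{0,1}(\mc) = j(E_0) \restriction j(t_1^0)(\kappa_1) = j(E_0) \restriction \lambda_0^j = E_0$; thus $t^\ast$ is a condition of $\mathbb{P}_{E_0}$. Moreover $(j(p) + \mc_1(d_1^p))_0 = p_0$: conjugating $j(f_0^p)$ by $\mc_1(d_1^p)$ returns $f_0^p$ by the calculation in Lemma \ref{squishablefunction}, and conjugating $j[A_0^p]$, which by Lemma \ref{aux} is exactly $(j(A_0^p))_{\mc_1(d_1^p)}$, by $\mc_1(d_1^p)$ returns $A_0^p$ by Lemma \ref{squishablemeasure}. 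Hence $t^\ast \leq^\ast p_0$. I set $q_0 := t^\ast$, with function part $f_0^q$ and, when $0 \notin \supp(p)$, measure part $A_0^q$. Because $M_1$ is transitive, $t^\ast \in V$; that $\dom(f_0^q)$ is a $0$-domain of size $\lambda$, that $f_0^q \in \mathbb{B}^{E_0}(\lambda,\lambda_0^j)$, and that $A_0^q \in E_0(\dom(f_0^q))$ all follow by applying {\L}o\'s to the functions $\mu \mapsto h_\mu$ and $\mu \mapsto B_\mu$ and noting that these notions are absolute between $M_1$ and $V$, since $E_0, j_{E_0} \in M_1$.

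Next I would build the coordinate-$1$ part. Because $\dom(f_0^q) \subseteq \lambda_0^j$ need not lie in $d'$ (its points can fall outside $\rge(\mc)$), I first fix $f_1^q \leq f'$ whose domain $d_1^q$ is a $1$-domain containing $d' \cup \dom(f_0^q) \cup \rge(f_0^q)$, assigning values below $\kappa_1$ on the new part of $\dom(f_0^q) \subseteq \lambda_0^j < \kappa_1^j$ so that $f_1^q \in \mathbb{B}^{E_1}(\lambda,\lambda_1^j)$; this secures $\dom(f_0^q) \cup \rge(f_0^q) \subseteq d_1^q$. I then let $A_1^q$ be the collection of $\tau \in \OB_1(d_1^q)$ that are $q_0$-squishable, satisfy $\tau \restriction d' \in A'$ and $\tau \restriction d_1^p$ being $p_0$-squishable, and obey $\tau \circ f_0^q \circ \tau^{-1} = h_{\tau \restriction d'}$ together with $\tau \circ (A_0^q)_\tau \circ \tau^{-1} = B_{\tau \restriction d'}$.

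The crux, and the step I expect to be the main obstacle, is to check $A_1^q \in E_1(d_1^q)$, which I would verify by evaluating the defining clauses at $\mc_1(d_1^q)$ under $j$. Squishability of $\mc_1(d_1^q)$ against $q_0$ comes from Lemmas \ref{squishablemeasure} and \ref{squishablefunction}; the projection clause holds because $\mc_1(d_1^q) \restriction j[d'] = \mc \in j(A')$; and the recovery equations reflect to $\mc_1(d_1^q) \circ j(f_0^q) \circ \mc_1(d_1^q)^{-1} = j(H)(\mc_1(d_1^q) \restriction j[d'])$, where $H$ denotes $\mu \mapsto h_\mu$. The left side equals $f_0^q$ by the conjugation calculation of Lemma \ref{squishablefunction}, while $\mc_1(d_1^q) \restriction j[d'] = \mc$ makes the right side $j(H)(\mc) = f_0^q$; the two agree, and likewise for the measure part, so $\mc_1(d_1^q)$ witnesses $A_1^q \in E_1(d_1^q)$ by {\L}o\'s. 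This is precisely where the apparent difficulty, namely that $f_0^q$ sees coordinates of $\tau$ lying outside $d'$, dissolves: the canonical object restricts correctly to $\mc$, so no pointwise dependence on $\tau \restriction (\dom(f_0^q) \setminus d')$ survives. Finally $q := \langle q_0, q_1 \rangle$ with $q_1 = \langle f_1^q, A_1^q \rangle$ satisfies $q \leq^\ast p$ (from $q_0 \leq^\ast p_0$, $f_1^q \leq f' \leq f_1^p$, and $A_1^q \restriction d' \subseteq A'$ projecting into $A_1^p$), gives conclusion (1) outright, and for $\tau \in A_1^q$ Definition \ref{onestepext2} produces $(q + \tau)_0 = \langle h_{\tau \restriction d'}, B_{\tau \restriction d'} \rangle = t(\tau \restriction d')$, which is conclusion (2).
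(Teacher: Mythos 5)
Your proposal is correct and follows essentially the same route as the paper: you lift $\mu \mapsto t(\mu)$ through $j_{E_1}$ and evaluate at $\mc_1(d')$ to get $q_0$, verify $q_0 \leq^* p_0$ via Lemma \ref{aux} and the conjugation computations of Lemmas \ref{squishablemeasure} and \ref{squishablefunction}, enlarge the coordinate-$1$ Cohen domain to absorb $\dom(f_0^q)$, and shrink the coordinate-$1$ measure-one set to those $\tau$ whose conjugations recover $t(\tau \restriction d')$, reflecting via $\mc_1(d_1^q) \restriction j_{E_1}[d'] = \mc_1(d')$. Your extra squishability clauses in $A_1^q$ and the explicit discussion of points of $\dom(f_0^q)$ outside $d'$ only make explicit what the paper leaves implicit.
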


\begin{proof}

Let $d^\prime=\dom(f^\prime)$.
We may assume $p$ is pure (the case $\supp(p)=\{0\}$ is slightly simpler).
Let $p_0^*=j_{E_1}(\mu \mapsto t(\mu))(\mc_1(d^\prime))$.
Write $p_0^*=\langle f_0^*,A_0^* \rangle$ and $d_0^*=\dom(f_0^*)$.
Notice that

\begin{align*}
p_0^*  & \leq^* j_{E_1}(\mu \mapsto (p+(\mu \restriction d_1^p))_0)(\mc_1(d^ \prime)) \\ & =(j_{E_1}(p)+\mc_1(d^\prime)  \restriction j_{E_1}(d_1^p))_0\\
 &=(j_{E_1}(p)+ \mc_1(d_1^p))_0 \\ &= \langle \mc_1(d_1^p) \circ j_{E_1}(f_0^p) \circ \mc_1(d_1^p)^{-1}, \mc_1(d_1^p) \circ j_{E_1}[A_0^p] \circ \mc_1(d_1^p)^{-1} \rangle.
\end{align*}

The last equation follows from Lemma \ref{aux}. 
Exact calculations in Lemma \ref{squishablemeasure} and Lemma \ref{squishablefunction} show that

\begin{center}

$\mc_1(d_1^p) \circ j_{E_1}[A_0^p] \circ \mc_1(d_1^p)^{-1} =A_0^p$

\end{center}

and

\begin{center}

$\mc_1(d_1^p) \circ j_{E_1}(f_0^p) \circ \mc_1(d_1^p)^{-1}=f_0^p$.

\end{center}

Hence, $p_0^*$ is a direct extension of $p_0$.

Let $f_1^* \in \mathbb{B}^{E_1}(\lambda,\lambda_1^j)$ with $\dom(f_1^*)=d^\prime \cup d_0^*=:d_1^*$ be such that $f^*_1(\gamma)=f^\prime(\gamma)$ if $\gamma \in d^\prime$, otherwise $f^*_1(\gamma)=0$.

Since $d_0^* \subseteq d_1^*$, we have that

\begin{center}

$\mc_1(d_1^*) \circ j_{E_1}(f_0^*) \circ \mc_1(d_1^*)^{-1}=f_0^*$ 

\end{center}

and

\begin{center}

$\mc_1(d_1^*) \circ j_{E_1}[A_0^*] \circ \mc_1(d_1^*)^{-1} = A_0^*$.

\end{center}

Let $A_1^*$ be the collection of $\tau \in E_1(d_1^*)$ such that the following hold:

\begin{enumerate}

\item $\tau \restriction d^\prime \in A^\prime$, which implies that $\tau  \restriction d_0^p \in A_0^p$.

\item by letting $\mu=\tau \restriction d^\prime$, we have that $\tau \circ f_0^* \circ \tau^{-1}=f_0^{t(\mu)}$ and $\tau \circ (A_0^*)_\tau \circ \tau^{-1} = A_0^{t(\mu)}$.

\end{enumerate}

Then $A_1^*$ is of measure-one.
Let $q= \langle p_0^*,\langle f_1^*,A_1^* \rangle \rangle$.
The condition $q$ is as required.

\end{proof}

\begin{thm}

$\mathbb{P}$ has the Prikry property.

\end{thm}

\begin{proof}

Let $b$ be a Boolean value, and $p \in \mathbb{P}$.
Let $\mathbb{B}_1=\mathbb{B}^{E_1}(\lambda,\lambda_1^j)$.
We will divide into two cases.

\textbf{\underline{CASE I}: $1 \in \supp(p)$.}
Note that $\theta:=|\mathbb{P}_{e_{0,1}(f_1^p)}| <\kappa_1$. Enumerate $\mathbb{P}_{e_{0,1}(f_1^p)}$ as $\{t_\alpha: \alpha<\theta \}$.
Build a decreasing sequence $\{f_{1,\alpha}:\alpha \leq \theta\}$ recursively such that 

\begin{enumerate}

\item $f_{1,0} =f_1^p$.

\item for $\alpha \leq \theta$ limit, let $f_{1,\alpha}=\cup_{\beta<\alpha} f_{1,\beta}$.

\item for $\alpha=\beta+1$, if there is $g\leq f_{1,\beta}$ such that $t_\beta ^\frown \langle g \rangle$ decides $b$, then $t_\beta^\frown \langle f_{1,\alpha} \rangle$ decides $b$.

\end{enumerate}

The construction is straightforward, and can be proceeded until and including the stage $\theta$ since $\mathbb{B}_1$ is $\lambda^+$-closed and $\theta<\kappa_1$.
Let $f_1^\prime=f_{1,\theta}$.
Note that $e_{0,1}(f_1^p)=e_{0,1}(f_1^\prime)$ since $f_1^p(\kappa_1)=f_1^\prime(\kappa_1)$.
We record the following property about $f_1^\prime$ (call it $(\star(f_1^\prime))$): 

\begin{center}

$\boldsymbol{(\star(f_1^\prime))}$ For each $t \in \mathbb{P}_{e_{0,1}(f_1^p)}$, if there is $g \leq f_1^\prime$
such that $t^\frown \langle g \rangle$ decides $b$, then $t ^\frown \langle f_1^\prime \rangle$ decides $b$.

\end{center}

Let $\dot{G}$ be the canonical name for a generic object of the forcing $\mathbb{P}_{e_{0,1}(f_1^p)}$.
By the Prikry property for a forcing defined from one extender, let $p_0^* \leq^* p_0$ such that $p_0^*$ decides the following statement:

\begin{center}

$\varphi \equiv \exists p^\prime \in \dot{G}(p^\prime {}^\frown \langle f_1^\prime \rangle \parallel b)$.

\end{center}

If the decision is positive, we may extend $p_0^*$ further so that exactly one of the following holds:

\begin{itemize}

\item $p_0^* \Vdash \exists p^\prime \in \dot{G}(p^\prime {}^\frown \langle f_1^\prime \rangle \Vdash b)$.

\item $p_0^* \Vdash \exists p^\prime \in \dot{G}(p^\prime {}^\frown \langle f_1^\prime \rangle \Vdash \neg b)$.

\end{itemize}

If the decision is negative, by extending further, assume $p_0^*$ is such that $p_0^* \Vdash \nexists p^\prime \in \dot{G}(p^\prime{}^\frown \langle f_1^\prime \rangle \Vdash b)$.
Let $p^*=p_0^* {}^\frown \langle f_1^\prime \rangle$.

\begin{claim}
\label{case1prik2}
$p^* \leq^* p$ and $p^*$ decides $b$.

\end{claim}{(\ref{case1prik2})}

\begin{claimproof}{(Claim \ref{case1prik2})}

Clearly $p^* \leq^* p$.
Let $q \leq p^*$ be a condition deciding $b$, so $q_0 \leq p_0^*$.
Without loss of generality, assume $q \Vdash b$.
By $\star(f_1^\prime)$, $q_0^\frown \langle f_1^\prime \rangle \Vdash b$.
We claim that $p_0^* \Vdash \varphi$. 
Suppose not, let $G$ be a generic for $\mathbb{P}_{e_{0,1}(f_1^p)}$ containing $q_0$, so $p_0^* \in G$.
Then there is no $r \in G$ such that $r^\frown \langle f_1^\prime \rangle$ decides $b$, but $q_0^\frown \langle f_1^\prime \rangle \Vdash b$, so we reach a contradiction.

Similar proof eliminates the case of positive decision with respect to the Boolean value $\neg b$.
Hence, we conclude that $p_0^* \Vdash \exists p^\prime \in \dot{G}(p^\prime {}^\frown \langle f_1^* \rangle \Vdash b)$. 
We now show that $p^* \Vdash b$.
Suppose not, let $r \leq p^*$ be such that $r \Vdash \neg b$.
By the property $(\star(f_1^\prime))$, $r_0^\frown \langle f_1^* \rangle \Vdash \neg b$.
Let $G$ be a generic containing $r_0$, so containing $p_0^*$, find $r' \in G$ such that $r' {}^\frown \langle f_1^* \rangle \Vdash b$.
Now if $\tilde{r} \leq r',r_0$ and $\tilde{r} \in G$, we have that $\tilde{r} ^\frown \langle f_1^* \rangle \Vdash b,\neg b$, which is a contradiction.
Hence, the proof is done.

\end{claimproof}{(Claim \ref{case1prik2})}

\textbf{\underline{CASE II}: $1 \not \in \supp(p)$.}
Let $N \prec H_\theta$ for some sufficiently large regular cardinal $\theta$ such that $N$ is internally approachable in the way described as in Theorem \ref{prikry1}, witnessed by a sequence of elementary submodels of length $\kappa_1$, $|N|=\lambda$,  ${}^{<\kappa_1} N \subseteq N$. 
, $\lambda,\lambda_1^j,b,p, \mathbb{P} \in N$,  $f_1^p,\lambda \subseteq N$, and there is  $f_1^\prime \leq f_1^p$ be $N$-generic over $\mathbb{B}_1$, and $d_1^\prime=\dom(f_1^\prime)=N \cap \lambda_1^j$. 
The construction of such $N$ and $f_1^\prime$ is similar to  the proof in Theorem \ref{prikry1}.
Let $A_1^\prime \in E_1(d_1^\prime)$, $A_1^\prime \restriction d_1^p \subseteq A_1^p$.
For $\mu \in A_1^\prime$, $\dom(\mu) \subseteq d^\prime_1$, $\rge(\mu) \subseteq \lambda$, and $|\mu|<\kappa_1$, so $\mu \in N$.

Fix $\mu \in A_1^\prime$.
Define $D_\mu$ as the collection of $f \in \mathbb{B}_1$ such that $\dom(\mu) \subseteq \dom(f)$, and for every $t \in \mathbb{P}_{e_{0,1}(\mu)}$, if there is $g \leq f_1^p \oplus \mu$ such that $t^\frown \langle g \rangle$ decides  $b$, then $t^\frown \langle f^\prime \oplus \mu \rangle$ also decides $b$.

\begin{claim}
\label{finalcond11}
$D_\mu$ is open dense below $f_1^p$ and is in $N$.

\end{claim}

\begin{claimproof}{(\ref{finalcond11})}

Clearly $D_\mu$ is open. 
Since $D_\mu$ is defined using parameters in $N$, $D_\mu \in N$.
To show density of $D_\mu$, let $f\leq f_1^p$.
We may assume $\dom(\mu) \subseteq \dom(f)$.
Note that $\nu:=|\mathbb{P}_{e_{0,1}(\mu)}|<\kappa_1$.
Enumerate $\mathbb{P}_{e_{0,1}(\mu)}$ as $\{ t_\alpha: \alpha<\nu \}$.
Build a sequence $\langle f_{1,\alpha}: \alpha \leq \nu \rangle$ such that 

\begin{enumerate}

\item $f_{1,0}=f$.

\item for $\alpha \leq \nu$ limit, let $f_{1,\alpha}=\cup_{\beta<\alpha} f_{1,\beta}$.

\item for $\alpha=\beta+1$, find $f_{1,\alpha} \leq f_{1,\beta}$ such that if there is a $g \leq f_{1,\beta} \oplus \mu$ such that $t_\alpha {}^\frown \langle g \rangle$ decides $b$, then $t_\alpha {}^\frown \langle f_{1,\alpha} \oplus \mu \rangle$ decides $b$.

\end{enumerate}

Note that $\mathbb{B}_1$ is $\lambda^+$-closed
We can apply the argument from the proof of Claim \ref{denseopen1} for a construction on the successor stages.
Then $f_1^* :=f_{1,\nu} \in D_\mu$.

\end{claimproof}{(\ref{finalcond11})}

By a genericity of $f_1^\prime$, we have that $f_1^\prime \in D_\mu$.
We record the following property for $f_1^\prime$ (and call it $(\star(f_1^\prime))$): 

\begin{center}

$\boldsymbol{(\star(f_1^\prime))}$ For each $\mu \in A_1^\prime$ and $t \in \mathbb{P}_{e_{0,1}(\mu)}$, if there is $g \leq f_1^\prime \oplus \mu$ such that $t^\frown \langle g \rangle$ decides $b$, then so is $t^\frown \langle f_1^\prime \oplus \mu \rangle$.

\end{center}

Fix $\mu \in A_1^\prime$.
Let $\dot{G}$ be the canonical name for the forcing $\mathbb{P}_{e_{0,1}(\mu)}$.
By the Prikry property for a forcing defined from one extender, let $t(\mu) \leq^* (p+ (\mu \restriction d_1^p))_0$ such that $t(\mu)$ decides the following statement:

\begin{center}

$\varphi \equiv \exists t \in \dot{G}(t^\frown \langle f_1^\prime \oplus \mu \rangle \parallel b)$.

\end{center}

We may extend $t(\mu)$ further under the direct extension so that exactly one of the following decisions on $t(\mu)$ holds:

\begin{itemize}

\item $t(\mu) \Vdash \exists t \in \dot{G}(t^\frown \langle f_1^\prime \oplus \mu \rangle \Vdash b)$.

\item $t(\mu) \Vdash \exists t \in \dot{G}(t^\frown \langle f_1^\prime\oplus \mu \rangle \Vdash \neg b)$.

\item $t(\mu) \Vdash \nexists t \in \dot{G}(t^\frown \langle f_1^\prime\oplus \mu \rangle \parallel b)$.

\end{itemize}

Shrink $A_1^\prime$ so that every $\mu \in A_1^\prime$ makes the same decision.
Apply Lemma \ref{integration1} for $f^\prime:=f_1^\prime \leq f_1^p$, $A^\prime:=A_1^\prime \in E_1(\dom(f_1^\prime))$ and $t(\mu)$ for each $\mu \in A_1^\prime$, to find $q \leq^* p$ such that 

\begin{enumerate}

\item $f_1^q \leq f_1^\prime$.

\item For $\tau \in A_1^q$ with $\mu=\tau \restriction d_1^\prime$, $(q+\tau)_0=t(\mu)$.

\end{enumerate}

\begin{claim}
\label{finalcond12}
$q$ decides $b$.

\end{claim}

\begin{claimproof}{(Claim \ref{finalcond12})}

Let $r \leq q$ such that $r$ decides $b$. Without loss of generality, assume $r \Vdash b$.
and $r \leq q+\tau$ for some $\tau \in A_1^q$.
Let $\mu=\tau \restriction d_1^\prime$.
Hence, $r_0 \leq t(\mu)$.
Since $f_1^r \leq f_1^\prime \oplus \mu$, by $(\star(f_1^\prime))$, $r_0^\frown \langle f_1^\prime \oplus \mu \rangle \Vdash b$.
A proof which is similar to the proof in case I of Claim \ref{case1prik2}  shows that $t(\mu) \Vdash \exists t \in \dot{G} (t^\frown \langle f_1^* \rangle \Vdash b)$.
Hence, for every $\tau^\prime \in A_1^q$ with $\mu^\prime= \tau^\prime \restriction d_1^\prime$, $t(\mu^\prime) \Vdash \exists t \in \dot{G} (t^\frown \langle f_1^* \oplus \mu \rangle \Vdash b)$.
Again, an argument which is similar to the argument in the proof in case I of Claim \ref{case1prik2} shows that $t(\mu^\prime)^\frown \langle f_1^\prime \oplus \mu^\prime \rangle \Vdash b$.
Thus, by the density, $q \Vdash b$.

\end{claimproof}{(Claim \ref{finalcond12})}

\end{proof}

\section{forcing with countably infinite extenders}
\label{forcingomegaextender}

Assume the result from Theorem \ref{deriveext} for $\eta=\omega$.
As mentioned in Section \ref{forcing2extenders}, we use the Prikry property of the forcings defined from sequences of extenders of finite lengths.

\begin{defn}

A forcing $\mathbb{P}_{\langle E_n: n<\omega \rangle}$ consists of conditions $p$, and a {\em support of $p$}, which is $\supp(p) \in [\omega]^{<\omega}$, of the form $p=\langle p_n: n< \omega \rangle$, where

\begin{align*}
p_n=
\begin{cases}
\langle f_n \rangle & \text{ if } n \in \supp(p), \\
\langle f_n,A_n \rangle & \text{ otherwise},
\end{cases}
\end{align*}
and for each $n<\omega$ such that $\supp(p) \setminus (n+1) \neq \emptyset$, let $n^*=\min(\supp(p) \setminus (n+1))$, otherwise, $n^*$ is undefined, then the following hold:

\begin{enumerate}

\item for each $n$ where $n^*$ does not exist,

\begin{itemize}
  
\item if $n \in \supp(p)$ (which is exactly when $n=\max(\supp(p))$), then $f_n \in \mathbb{B}^{E_n}(\lambda,\lambda_n^j)$.
 $d_n=\dom(f_n)$ is an $n$-domain with respect to $E_n$, and if $n>0$, $f_n(\kappa_n)$ is $n$-reflected for the sequence $\langle E_m: m \in [\max\{0,\max(\supp(p) \cap n)\},n) \rangle$.

\item if $n \not \in \supp(p)$ (which is exactly when $n>\max(\supp(p))$), 

\begin{itemize}

\item $f_n \in \mathbb{B}^{E_n}(\lambda,\lambda_n^j)$.

\item let $d_n=\dom(f_n)$ is an $n$-domain with respect to $E_n$, and $A_n \in E_n(d_n)$.

\end{itemize}

\end{itemize}

\item for each $n$ where $n^*$ exists, 

\begin{itemize}

\item if $n \in \supp(p)$ (which is exactly when $n=\max(\supp(p) \cap n^*)$), then $f_n \in \mathbb{B}^{e_{n,n^*}(f_{n^*})}(\lambda_{n^*}(f_{n^*}),\lambda_{n,n^*}^j(f_{n^*}))$, $d_n:=\dom(f_n)$ is an $n$-domain with respect to $e_{n,n^*}(f_{n^*})$, and if $n>0$, $f_n(\kappa_n)$ is $n$-reflected for the sequence $\langle e_{m,n^*}(f_{n^*}): m \in [\max\{0,\max(\supp(p) \cap n)\},n )\rangle$.
Recall that $\lambda_{n^*}(f_{n^*})=s_{n^*}(f_{n^*}(\kappa_{n^*}))$ where $j_{E_{n^*}}(s_{n^*})(\kappa_{n^*})=\lambda$, $\lambda_{n,n^*}^j(f_{n^*})=t_{n^*}^n(f_{n^*}(\kappa_{n^*}))$ where $j_{E_{n^*}}(t_{n^*}^n)(\kappa_{n^*})=\lambda_n^j$ and $\lambda_n^j=j_{E_n}(\lambda)$, and $e_{n,n^*}(f_{n^*})=E_n \restriction t_{n^*}^n(f_{n^*}(\kappa_{n^*}))$.

\item if $n \not \in \supp(p)$ (which is exactly when $n \in (\max\{\max(\supp(p) \cap n^*),0\},n^*)$), 

\begin{itemize}

\item $f_n \in \mathbb{B}^{e_{n,n^*}(f_{n^*})}(\lambda_{n^*}(f_{n^*}),\lambda_{n,n^*}^j(f_{n^*}))$, and $d_n:=\dom(f_n)$ is an $n$-domain with respect to $e_{n,n^*}(f_{n^*})$.

\item  $\rge(f_n^p \restriction j_{e_{n,n^*}(f_{n^*})}(\kappa_n)) \subseteq \kappa_n$.

\item $A_n \in e_{n,n^*}(f_{n^*})(d_n)$.

\end{itemize}

\end{itemize}

\item for $\max(\supp(p)) \leq m < n < \omega$, $d_m \subseteq d_n$.

\item if $m<\omega$ is such that $m^*$ exists, then for each $m<n<m^*$, $d_m \subseteq d_n$.

\end{enumerate}

\end{defn}

A condition $p$ is {\em pure} if $\supp(p)=\emptyset$.
Otherwise, $p$ is said to be {\em impure}.

\begin{defn}

For $p,q \in \mathbb{P}$.
We say that $p$ is a {\em direct extension} of $q$, denoted by $p \leq^*q$ if

\begin{enumerate}

\item $\supp(p)=\supp(q)$.

\item for all $n<\omega$, $f_n^p\leq f_n^q$.

\item for $n \not \in \supp(p)$, $A_n^p \restriction d_n^q \subseteq A_n^q$.

\end{enumerate}

\end{defn}

Notice that if $p \leq^*q$, then for $n \in \supp(p)$, $f_n^p(\kappa_n)=f_n^q(\kappa_n)$.
Thus, $\lambda_n(f_n^p)=\lambda_n(f_n^q)$, and for $m<n$, $\lambda_{m,n}^j(f_n^p)=\lambda_{m,n}^j(f_n^q)$, and $e_{m,n}(f_n^p)=e_{m,n}(f_n^q)$.

\begin{defn}

Let $p \in \mathbb{P}$ and $n \not \in \supp(p)$.
Let $\mu \in A_n^p$ be $\langle p_m: m \in [\max\{0,\max(\supp(p) \cap n)\},n) \rangle$-squishable.
The {\em one-step extension of $p$ by $\mu$} is the condition $q$, denoted by $p+\mu$ such that

\begin{enumerate}

\item $\supp(q)=\supp(p) \cup \{n\}$.

\item for $m<\max\{\max(\supp(p) \cap n),0\}$,  $q_m=p_m$.

\item if $n^*=\min(\supp(p) \setminus (n+1))$ exists, then for $m \in (n,n^*)$, $q_m=\langle f_m^p, B_m \rangle$ where $B_m = \{\tau \in A_m^p: \dom(\mu) \cup \rge(\mu)  \subseteq \dom( \tau) \text{ and } \tau \text{ is } \langle f_n^p,A_n^p \rangle \text{-squishable}  \}$, and for $m \geq n^*$, $q_m=p_m$.

\item if $n^*$ does not exist, then for $m>n$, $q_m=\langle f_m^p, B_m \rangle$ where $B_m = \{\tau \in A_m^p: \dom(\mu) \cup \rge(\mu) \subseteq \dom( \tau)\text{ and } \tau \text{ is } \langle f_n^p,A_n^p \rangle \text{-squishable} \}$. 

\item $f_n^q=f_n^p \oplus \mu$.

\item for $m \in [\max\{\max(\supp(p) \cap n),0\},n)$, $f_m^q=\mu \circ f_m^p \circ  \mu^{-1}$ and $A_m^q=\mu \circ (A_m^p)_\mu \circ \mu^{-1}$.

\end{enumerate}

\end{defn}

 \begin{figure}[H]
 \centering
 \captionsetup{justification=centering} 
  \centering
 \includegraphics[width=435 pt,height=262 pt]{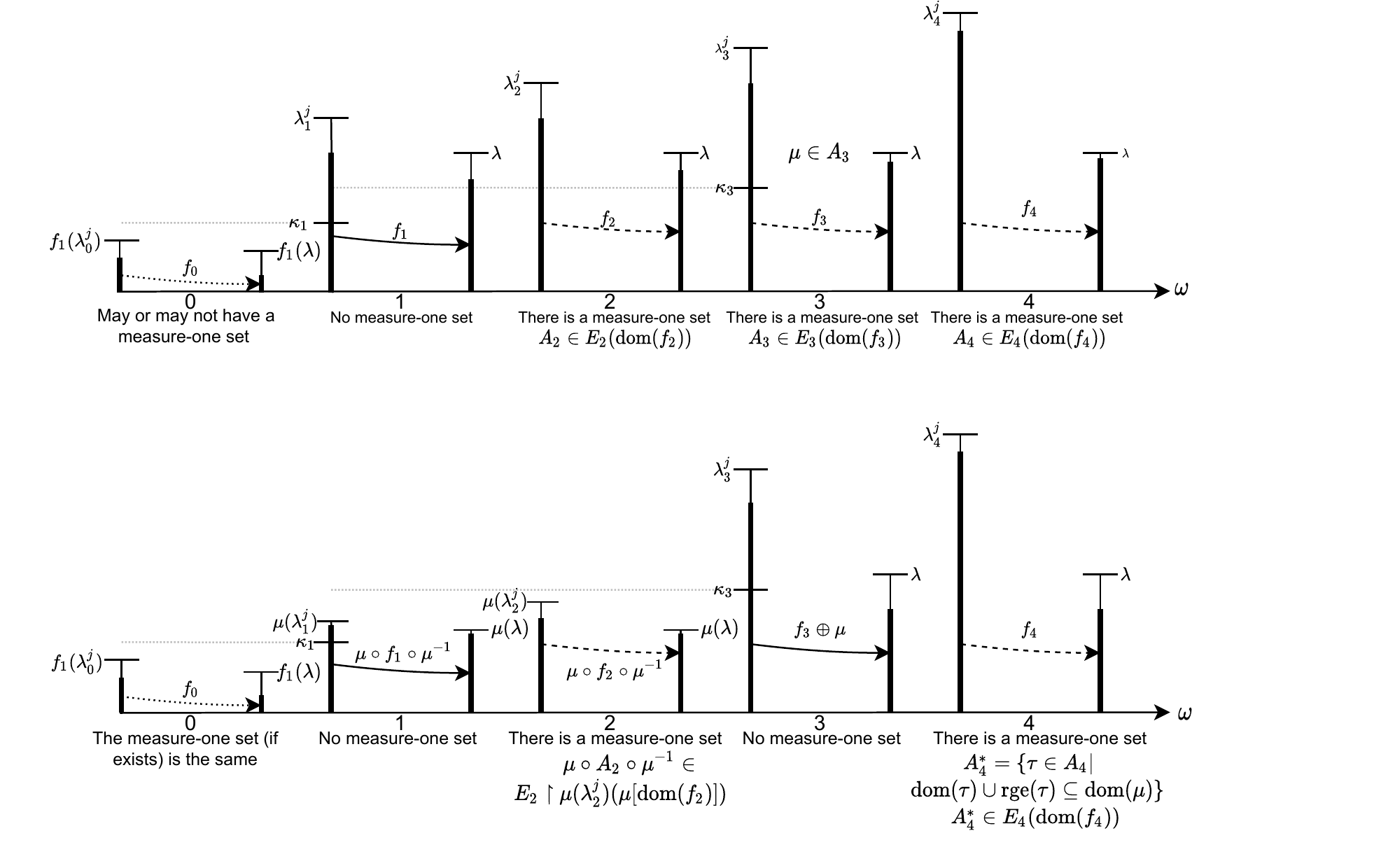}
  \caption{A one-step extension of a condition $p$ with $\{1\} \subseteq \supp(p) \subseteq \{0, 1\}$ using a $3$-object $\mu$}
  \label{extendomegaextenders}
\end{figure}

We exemplify on how one-step extension works.
From Figure \ref{extendomegaextenders}, we start with a condition $p$ with $\{1\} \subseteq \supp(p) \subseteq \{0,1\}$, which is exhibited at the upper-half of the figure.
Each $p_n$ is either $\langle f_n \rangle$ or $\langle f_n,A_n \rangle$.
Fix a $3$-object $\mu$. 
The condition $p+\mu$ is shown at the lower-half of the figure.
The $n$-th columns show how the $n$-th coordinate of the conditions look like.
Thick bars represent the heights of domains and ranges can be (note that they do \textbf{not} represent that domains and ranges are initial segments of ordinals, and in fact, they may not). 
The solid arrows represents maps of Cohen functions lying in the condition on the coordinates which are in the support of $p$, dash-line arrows represent otherwise.
We distinguish $f_0$ by a dot-line arrow, since we $0$ may or may not be in the support of $p$.
The gray line in the upper-half figure emphasizes the fact that $p_0 \in V_{\kappa_1}$.
$(p+\mu)_0$ is just $p_0$, so it remains unchanged.
$p_1$ and $p_2$ are now conjugated by $\mu$.
As a result, $(p+\mu)_1$ and $(p+\mu)_2$ live in $V_{\kappa_3}$ (which are emphasized by the upper gray line in the lower-half figure).
$f_3$ is (partially) overwritten by $\mu$.
For $n>4$, the measure-one set shrinks to make sure that every $n$-object keeps information about $\mu$ in a certain way.

We define an $n$-step extension recursively as follows:
 $p$ is an {\em $n$-step extension of $q$} for $n>1$ if $p=(q+\langle \mu_0, \dots, \mu_{n-2} \rangle) + \mu_{n-1}$, under the condition that for $i<n$, $\mu_i$ is legitimate to perform a $1$-step extension into $ q+\langle \mu_0, \dots, , \mu_{i-1} \rangle$.
Define $p \leq q$ if $p$ is a direct extension of some $n$-step extension of $q$ ($n$ can be $0$). 
The proof of the following lemma is similar to the proof in Lemma \ref{feature}, except that our forcing is more complicated.

\begin{lemma}
\label{transitivity2}
\begin{enumerate}

\item Let $n<\omega$.
Let $p$ be a condition such that $n \not \in \supp(p)$ and $q \leq^* p$.
Suppose $\mu \in A_n^q$ and $q+\mu$ is valid.
Then $q+\mu \leq^* p+(\mu \restriction d_n^p)$.

\item The ordering $\leq$ is transitive.

\item \label{commute} Suppose $p$ is pure and $q$ is a $n$-step extension of $p$ at coordinates $k_0< \dots < k_{n-1}$.
Then there are $\mu_i \in A_{k_i}^p$ for $i<n$ such that $q=p+ \langle \mu_0, \dots, \mu_{n-1} \rangle$.
As a consequence, the order of the objects we use to extend $p$ to $q$ can be commuted modulo squishing.

\end{enumerate}

\end{lemma}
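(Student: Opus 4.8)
The plan is to follow the three-part structure of Lemma \ref{feature} verbatim, carrying out each verification coordinate-by-coordinate, but now accounting for the fact that the support may contain several indices and that a single one-step extension at coordinate $n$ does two different things at once: it \emph{conjugates} the coordinates in the block $[\max\{\max(\supp(p)\cap n),0\},n)$ below $n$, and it \emph{shrinks} the measure-one sets in the block above $n$ up to the next support point $n^*$ (or all the way up if $n^*$ does not exist). So the proof is really the same bookkeeping as before, repeated on each block.

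For part (1), I would set $\tau=\mu\restriction d_n^p$ and first note that $q\leq^*p$ gives $A_n^q\restriction d_n^p\subseteq A_n^p$, hence $\tau\in A_n^p$, and that the squishability of $\mu$ over the relevant block restricts to squishability of $\tau$, so $p+\tau$ is a legitimate one-step extension with $\supp(p+\tau)=\supp(p)\cup\{n\}=\supp(q+\mu)$. I then check the three clauses of $\leq^*$ region by region: on coordinates below the conjugation block and on coordinates at or above $n^*$ the two extensions leave $p_m$ and $q_m$ untouched, so $q\leq^*p$ transfers directly; on the conjugation block the containment $\mu\supseteq\tau$ together with $f_m^q\leq f_m^p$ and $A_m^q\restriction d_m^p\subseteq A_m^p$ yields $\mu\circ f_m^q\circ\mu^{-1}\leq\tau\circ f_m^p\circ\tau^{-1}$ and the matching projection of the conjugated measure-one sets, exactly as in Lemma \ref{feature}(\ref{tran}); at coordinate $n$ one has $f_n^q\oplus\mu\leq f_n^p\oplus\tau$; and on the shrinking block $(n,n^*)$ the defining formula for $B_m$ is monotone in both $A_m$ and the domain of the overwriting object, so $A_m^q\restriction d_m^p\subseteq A_m^p$ and $\mu\supseteq\tau$ give $B_m^{q+\mu}\restriction d_m^{p+\tau}\subseteq B_m^{p+\tau}$.

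Part (2) is then immediate from part (1): given $p\leq q\leq r$, write $q$ as a direct extension of an $m$-step extension of $r$ and $p$ as a direct extension of an $n$-step extension of $q$, and apply part (1) repeatedly to push each object used to extend $q$ down to its restriction to the matching domain in $r$, exhibiting $p$ as a direct extension of an $(m{+}n)$-step extension of $r$. For part (3) I would induct on $n$ and reduce the general permutation statement to the commutation of two \emph{adjacent} one-step extensions, which is the content of the third item of Lemma \ref{feature}. The algebraic heart is again the identity $\mu_{k_j}\circ(f\oplus\mu_{k_i})\circ\mu_{k_j}^{-1}=(\mu_{k_j}\circ f\circ\mu_{k_j}^{-1})\oplus(\mu_{k_j}\circ\mu_{k_i}\circ\mu_{k_j}^{-1})$, valid whenever $k_i<k_j$, $\dom(\mu_{k_i})\cup\rge(\mu_{k_i})\subseteq\dom(\mu_{k_j})$, and $\mu_{k_j}$ is squishable over the block containing $k_i$; this interchanges the extensions at $k_i$ and $k_j$, and since adjacent transpositions generate the symmetric group, every ordering of $\mu_0,\dots,\mu_{n-1}$ yields the same condition, with the objects taken from the original sets $A_{k_i}^p$ of the pure $p$.

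The main obstacle I expect is the bookkeeping on the shrinking block $(n,n^*)$ in part (1), where the sets $B_m$ are cut down by a squishability constraint referencing $\langle f_n^p,A_n^p\rangle$: one must check that passing from $\mu$ to $\tau$ and enlarging domains never discards an object that the definition requires, i.e.\ that this constraint is genuinely monotone, and simultaneously keep straight which block each coordinate falls into as the support grows. This interaction of the conjugation-below and shrinking-above behavior across several blocks is exactly where the forcing is more complicated than in the two-extender case, and it is the point that needs the most careful writing.
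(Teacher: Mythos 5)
Your proposal is correct and follows essentially the same route as the paper, which proves this lemma only by reference to Lemma \ref{feature}: your part (1) is the block-by-block generalization of Lemma \ref{feature}(\ref{tran}) (conjugation block below $n$, overwrite at $n$, shrinking block up to $n^*$, untouched elsewhere), your part (2) is the same restriction-pushing argument, and your part (3) rests on the same overwrite--conjugation identity and adjacent-transposition reduction that the paper illustrates in its three-object example after the lemma. The monotonicity of the squishability constraint defining $B_m$ under $\mu \supseteq \tau$, which you correctly single out as the delicate point, is exactly the ``straightforward to check'' step the paper leaves implicit.
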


We explain the meaning of (\ref{commute}) in Lemma \ref{transitivity2} by giving an example.
Suppose $q$ is a $3$-step extension of $p$ with $\tau_2,\tau_0$, and $\tau_1$, respectively, where $\tau_2 \in A_{k_2}^p$, $\tau_0 \in A_{k_0}^{p+\tau_2}$, $\tau_1 \in A_{k_1}^{p+\langle \tau_2,\tau_0\rangle}$, and $k_0<k_1<k_2$.
Let $\mu_2=\tau_2$.
Then one can check that for $i=0,1$, $\tau_i$ is $\mu_2 \circ \mu_i \circ \mu_2^{-1}$ for some $\mu_i \in A_{k_i}^p$.
With some calculations, it is true that $q=p +\langle \mu_0,\mu_1,\mu_2 \rangle$. 
In fact, for every permutation $\sigma$ of $\{k_0,k_1,k_2\}$, there is a unique way to perform three $1$-step extensions of $p$ where the $1$-step extensions are performed in the order of $\sigma$.
In our example, if $\sigma(0)=1, \sigma(1)=2$, and $\sigma(2)=0$, one can see that $q=p+ \langle \mu_1, \mu_2, \mu_1 \circ \mu_0 \circ \mu_1^{-1} \rangle$.  

\begin{lemma}

$\mathbb{P}$ is $\lambda^{++}$-c.c.

\end{lemma}

\begin{proof}

Let $\{ p^\alpha: \alpha<\lambda^{++} \}$ be a collection of conditions in $\mathbb{P}$.
We first show that we may assume without loss of generality that $p_\alpha$ is pure for all $\alpha$.
Since $2^{\aleph_0}<\lambda^{++}$, we may shrink the set and assume that all $p_\alpha$s' have the same supports, say $a$.
Suppose $a \neq \emptyset$, let $n=\max(a)$.
Hence, for $\alpha<\lambda^{++}$, $p^\alpha \restriction n \in V_{\kappa_n}$.
Shrink the set so that $p^\alpha \restriction n$are all the same for all $\alpha$.
As a consequence, we can assume $p^\alpha$ is pure for all $\alpha$.
For each $n<\omega$, $d_n^{p^\alpha} \in [\lambda_n^j]^{\lambda}$.
By GCH, we shrink the set so that $ \langle d_n^{p^\alpha}:\alpha<\lambda^{++} \rangle$ forms a $\Delta$-system.
Let $r$ be the root.
Since $|r|=\lambda$, we shrink the collection so that $f_n^{p^\alpha} \restriction r$ are all the same.
Thus, $f_n^{p^\alpha}$ and $f_n^{p^\beta}$ are all compatible for $n<\omega$ and $\alpha,\beta<\lambda^{++}$.
Measure-one sets are compatible, hence, there is a pair of compatible conditions.

\end{proof}

If $p \in \mathbb{P}$ and $n \in \supp(p)$, then $\mathbb{P} / p$ factors into two posets $\mathbb{P}_0=((\mathbb{P}/p) \restriction n)_{\langle e_{m,n}(f_n^p)):m<n \rangle}$ and $\mathbb{P}_1=(\mathbb{P}/p) \setminus n$.

\begin{lemma}

$\mathbb{P}_0$ is $\lambda_n(f_n^p)^{++}$-c.c.
If $ n+1 \not \in \supp(p)$,  $(\mathbb{P}_1,\leq^*)$ is $\kappa_{n+1}$-closed.
If $n+1 \in \supp(p)$, $(\mathbb{P}_1,\leq^*)$ is $\lambda_{n+1}(f_{n+1}^p)^+$-closed.

\end{lemma}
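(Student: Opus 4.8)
The plan is to treat the two factors entirely separately, since the chain condition concerns only $\mathbb{P}_0$ and the closure only $\mathbb{P}_1$, and the two posets govern disjoint blocks of coordinates (coordinates $<n$ and coordinates $\geq n$ respectively).

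For the chain condition I would observe that $\mathbb{P}_0$ is a forcing of exactly the same shape as $\mathbb{P}_{\langle E_m:m<\omega\rangle}$, but built from the finite ``squished'' coherent sequence $\langle e_{m,n}(f_n^p):m<n\rangle$, whose base regular cardinal (playing the role of $\lambda$) is $\lambda_n(f_n^p)=s_n(f_n^p(\kappa_n))$; by $n$-reflectedness of $f_n^p(\kappa_n)$ (Definition \ref{reflect}) each $e_{m,n}(f_n^p)$ witnesses $\kappa_m$ being $\lambda_n(f_n^p)$-supercompact and $\lambda_n(f_n^p)$ is regular. I would then rerun the $\Delta$-system argument used for the $\lambda^{++}$-chain condition of $\mathbb{P}$, with $\lambda$ replaced throughout by $\lambda_n(f_n^p)$. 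Given $\lambda_n(f_n^p)^{++}$ conditions, the supports are subsets of the finite set $n$, so we may assume they all coincide; the squished impure data below $\max(\supp)$ lives in some $V_\gamma$ with $\gamma\leq\bar{\kappa}_n<\lambda_n(f_n^p)$, so by GCH there are fewer than $\lambda_n(f_n^p)$ possibilities and we may fix it. We then thin out so that the remaining Cohen domains, each an element of $[\lambda_{m,n}^j(f_n^p)]^{\lambda_n(f_n^p)}$, form a $\Delta$-system with common root and the Cohen parts agree on the root. Two such conditions have compatible Cohen parts on every coordinate and automatically compatible measure-one parts, yielding $\lambda_n(f_n^p)^{++}$-c.c.

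For the closure of $(\mathbb{P}_1,\leq^*)$ I would take a $\leq^*$-decreasing sequence $\langle q^\xi:\xi<\delta\rangle$ of the relevant length and build a lower bound coordinatewise. Since $\leq^*$ fixes supports, all $q^\xi$ share the same support, and for each $m\geq n$ I set $f_m^*=\bigcup_{\xi<\delta}f_m^{q^\xi}$. For $m\in\supp(p)$ this is all the data; for $m\notin\supp(p)$, writing $d_m^*=\dom(f_m^*)=\bigcup_\xi d_m^{q^\xi}$, I must produce $A_m^*$ in the appropriate measure on $d_m^*$ projecting down to each $A_m^{q^\xi}$. The key point is that for each $\xi$ the set $\{\mu\in\OB_m(d_m^*):\mu\restriction d_m^{q^\xi}\in A_m^{q^\xi}\}$ is of measure one by the projection property of the extender measures, so $A_m^*$ is the intersection of these $\delta$-many measure-one sets, which remains measure one provided $\delta$ is below the completeness $\kappa_m$ of the measure on $d_m^*$.

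The closure degree is therefore the minimum, over all coordinates, of the closure of the Cohen part and the completeness of the measure, and I would finish by identifying the bottleneck in each case. If $n+1\notin\supp(p)$, then coordinate $n+1$ carries a measure-one set drawn from a $\kappa_{n+1}$-complete measure, while every other coordinate contributes completeness $\geq\kappa_{n+1}$ and each Cohen part is closed past $\kappa_{n+1}$ (its size bound, $\lambda$ or some $\lambda_{n^*}(f_{n^*})$, exceeds $\kappa_{n+1}$ by reflectedness), so the closure is exactly $\kappa_{n+1}$. If $n+1\in\supp(p)$, then $n^*=n+1$ and the Cohen part at coordinate $n$ lives in $\mathbb{B}^{e_{n,n+1}(f_{n+1}^p)}(\lambda_{n+1}(f_{n+1}^p),\lambda_{n,n+1}^j(f_{n+1}^p))$, which is only $\lambda_{n+1}(f_{n+1}^p)^+$-closed; since every measure now appearing has completeness $\geq\kappa_{n+2}$ and the remaining Cohen parts are closed past $\lambda_{n+1}(f_{n+1}^p)$, this coordinate is the bottleneck and the closure is exactly $\lambda_{n+1}(f_{n+1}^p)^+$. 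The main obstacle is the middle step: checking that the measure-one components can be re-based onto the enlarged domain $d_m^*$ while still projecting down to every earlier $A_m^{q^\xi}$, which is precisely where the mere $\kappa_m$-completeness of the extender measures is used, and hence why $\kappa_{n+1}$ rather than $\lambda^+$ appears as the closure in the first case.
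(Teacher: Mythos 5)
Your proposal is correct and follows essentially the same route as the paper: the paper's (two-sentence) proof likewise identifies the $(n+1)$-th measure-one component, drawn from a $\kappa_{n+1}$-complete ultrafilter, as the closure bottleneck when $n+1 \notin \supp(p)$, noting the $n$-th Cohen part is $\theta$-closed for some $\theta>\kappa_{n+1}$, with the remaining claims left to the same standard arguments you spell out (the $\Delta$-system argument transplanted from the $\lambda^{++}$-c.c.\ lemma with $\lambda$ replaced by $\lambda_n(f_n^p)$, and the coordinatewise lower bound with measure-one sets re-based onto the union domain). Your added detail — in particular verifying that $\{\mu \in \OB_m(d_m^*): \mu\restriction d_m^{q^\xi} \in A_m^{q^\xi}\}$ is measure one and that the $n$-th Cohen part $\mathbb{B}^{e_{n,n+1}(f_{n+1}^p)}(\lambda_{n+1}(f_{n+1}^p),\lambda_{n,n+1}^j(f_{n+1}^p))$ is the bottleneck when $n+1\in\supp(p)$ — is exactly what the paper leaves implicit.
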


\begin{proof}

If $n+1 \not \in \supp(p)$, then the part that makes $\mathbb{P}_1$ has the lowest closure is the $n+1$th measure-one set: the $\leq^*$ relation on $n$-th coordinate is $\theta$-closed for some $\theta>\kappa_{n+1}$.
The correspond ultrafilter is $\kappa_{n+1}$-complete, so $(\mathbb{P}_1,\leq^*)$ is $\kappa_{n+1}$-closed.

\end{proof}

\begin{lemma}
\label{integration2}
Let $p \in \mathbb{P}$ and $n \not \in\supp(p)$.
Let $f^\prime \leq f_n^p$ with $d^\prime=\dom(f^\prime)$. 
Assume that if $n^*=\min(\supp(p) \setminus (n+1))$ exists, $A^\prime \in e_{n,n^*}(f_{n^*})(d^\prime)$, and if $n^*$ does not exist, $A^\prime \in E_n(d^\prime)$.
Fix $f_m$ for $m>n$ such that $f_m \leq f_m^p$ and $d_m:=\dom(f_m) \supseteq d^\prime$, and if $n<m<m^\prime$, then $\dom(f_m) \subseteq \dom(f_{m^\prime})$.
Suppose that for each $\mu \in A^\prime$, there is a condition $t(\mu) \leq^* (p+ (\mu \restriction d_n^p)) \restriction n$, and there is $\vec{r}(\mu)=\langle f_m,A_m(\mu): m>n \rangle$ such that $A_m(\mu)$ is of measure-one with respect to $d_m$. 
Then there is a condition $q \leq^* p$ such that 

\begin{enumerate}

\item $f_n^q \leq f^\prime$ and $A_n^q$ projects down to $A^\prime$.

\item for $\tau \in A_n^q$ with $\mu= \tau \restriction d^\prime$, 

\begin{itemize}

\item $(q+\tau) \restriction n = t(\mu)$.

\item $(q+\tau) \setminus (n+1) \leq^* \vec{r}(\mu)$.

\end{itemize}

\end{enumerate}

\end{lemma}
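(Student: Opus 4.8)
The plan is to follow the architecture of Lemma \ref{integration1}, splitting the data to be integrated into its ``below $n$'' and ``above $n$'' parts, which behave independently. The below-$n$ part is the assignment $\mu\mapsto t(\mu)$; as in Lemma \ref{integration1} I integrate it by pushing the function through the ultrapower embedding and evaluating at the master condition. The above-$n$ part is the assignment $\mu\mapsto\langle A_m(\mu):m>n\rangle$ of measure-one sets; since $|\OB_n(d')|=\lambda$ exceeds each $\kappa_m$, these cannot be intersected directly and must be combined by a diagonal intersection (Lemma \ref{diagintersect}). Write $j$ for the embedding of the extender governing coordinate $n$, namely $j=j_{E_n}$ when $n^*=\min(\supp(p)\setminus(n+1))$ does not exist and $j=j_{e_{n,n^*}(f_{n^*})}$ otherwise, and set $n_-=\max\{\max(\supp(p)\cap n),0\}$, so that a one-step extension at $n$ conjugates precisely the coordinates in $[n_-,n)$ and leaves those below $n_-$ fixed.

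For the coordinates below $n$, put $q\restriction n:=j(\mu\mapsto t(\mu))(\mc_n(d'))$. Since $t(\mu)\leq^*(p+(\mu\restriction d_n^p))\restriction n$ for every $\mu$, {\L}o\'s gives
\[
q\restriction n \;\leq^*\; j\bigl(\mu\mapsto (p+(\mu\restriction d_n^p))\restriction n\bigr)(\mc_n(d')) \;=\;\bigl(j(p)+\mc_n(d_n^p)\bigr)\restriction n ,
\]
where $\mc_n(d')\restriction j(d_n^p)=\mc_n(d_n^p)$ because $d_n^p\subseteq d'$. Exactly the conjugation identities of Lemmas \ref{aux}, \ref{squishablemeasure} and \ref{squishablefunction} — applied coordinatewise over $m\in[n_-,n)$, just as the displayed computation in Lemma \ref{integration1} does for the single coordinate $0$ — show that the right-hand side collapses to $p\restriction n$, so $q\restriction n\leq^* p\restriction n$. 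Write $\langle f_m^*,A_m^*\rangle$ (omitting $A_m^*$ on support coordinates) for the entries of $q\restriction n$ and $d_m^*=\dom(f_m^*)$, and set $d_n^*:=d'\cup\bigcup_{n_-\le m<n}d_m^*$; this is an $n$-domain of size $\lambda$.

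Now define the $n$-th coordinate and the tail. Let $f_n^q$ have domain $d_n^*$, agree with $f'$ on $d'$, and vanish elsewhere. Let $A_n^q$ consist of those $\tau$ (measure-one with respect to $E_n(d_n^*)$, or its restriction if $n^*$ exists) such that $\tau\restriction d'\in A'$, $\tau$ is squishable against each $\langle f_m^*,A_m^*\rangle$, and, writing $\mu=\tau\restriction d'$, the conjugations $\tau\circ f_m^*\circ\tau^{-1}$ and $\tau\circ(A_m^*)_\tau\circ\tau^{-1}$ reproduce the $m$-th entry of $t(\mu)$ for every $m\in[n_-,n)$. A single {\L}o\'s computation at the master condition $\mc_n(d_n^*)$ shows $A_n^q$ is measure-one: the containment $\tau\restriction d'\in A'$ reduces to $A'$ being measure-one, and the conjugation clauses hold there because $q\restriction n=j(\mu\mapsto t(\mu))(\mc_n(d'))$ means conjugating the integrated data by the master condition returns $t(\mu)$. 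For $m>n$ set $f_m^q:=f_m$ and
\[
A_m^q:=\bigl\{\sigma\in\triangle_{\mu\in\OB_n(d')}A_m(\mu): \sigma\restriction d_m^p\in A_m^p\bigr\},
\]
which is measure-one by Lemma \ref{diagintersect} together with $\kappa_m$-completeness; here I first enlarge each $d_m$ so that $d_n^*\subseteq d_m$, a harmless change since $|d_n^*|=\lambda=|d_m|$.

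It remains to verify the conclusions. One has $q\leq^* p$ because $q\restriction n\leq^* p\restriction n$, $f_m^q=f_m\le f_m^p$, and $A_m^q$ projects into $A_m^p$ by the added clause $\sigma\restriction d_m^p\in A_m^p$. Fix $\tau\in A_n^q$ and put $\mu=\tau\restriction d'$. On $[n_-,n)$ the one-step extension conjugates by $\tau$, which by construction of $A_n^q$ returns the $m$-th entry of $t(\mu)$; the coordinates below $n_-$ are untouched by the extension and, being $\mu$-independent, are returned directly by the integration; hence $(q+\tau)\restriction n=t(\mu)$. For $m>n$, the extension shrinks $A_m^q$ to the $\sigma$ with $\dom(\tau)\cup\rge(\tau)\subseteq\dom(\sigma)$, and since $\dom(\mu)\cup\rge(\mu)\subseteq\dom(\sigma)$ the diagonal-intersection property forces $\sigma\in A_m(\mu)$; together with $f_m^q=f_m$ this yields $(q+\tau)\setminus(n+1)\leq^*\vec{r}(\mu)$. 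The main obstacle is engineering the single set $A_n^q$ so that one $n$-object $\tau$ simultaneously recovers the conjugated below-$n$ data and meshes with the pre-shrunk tail; the point is that both demands reduce, through {\L}o\'s at $\mc_n(d_n^*)$, to the conjugation-returns-original identities and to Lemma \ref{diagintersect}. The remaining fuss — the domain containments $d_n^*\subseteq d_m$ and the case split on whether $n^*$ exists — is routine and parallels Lemma \ref{integration1}.
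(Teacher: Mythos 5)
Your proposal is correct and follows essentially the same route as the paper's proof: integrate the below-$n$ data by evaluating $j(\mu\mapsto t(\mu))$ at the master condition exactly as in Lemma \ref{integration1}, take the diagonal intersection of Lemma \ref{diagintersect} of the tail sets $A_m(\mu)$, pad the tail Cohen parts so their domains absorb the new $n$-th domain (the paper's $g_m$, your domain enlargement), and deduce $(q+\tau)\setminus(n+1)\leq^*\vec{r}(\mu)$ from the diagonal-intersection membership clause via $\dom(\mu)\cup\rge(\mu)\subseteq\dom(\sigma)$. The one small slip is your claim that the coordinates below $n_-$ of $t(\mu)$ are ``$\mu$-independent'' — the hypotheses do not give this, but it is harmless: shrink $A_n^q$ to the measure-one set of $\tau$ on which $t(\tau\restriction d')$ agrees below $n_-$ with the integrated value $j(\mu\mapsto t(\mu))(\mc_n(d'))$ (the paper sidesteps this entirely by assuming $p$ pure for simplicity).
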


\begin{proof}

The proof is modified from Lemma \ref{integration1}.
We will only point out key differences.
Assume $p$ is pure for simplicity.
For $m>n$, let $A_m=\triangle_{\mu \in A^\prime} A_m(\mu)$, so $A_m \in E_m(\dom(f_m))$.
One can modify the proof of Lemma \ref{integration1} to construct $q \restriction (n+1)$.
For $m>n$, define $q_m= \langle g_m, B_m \rangle$ such that $\dom(g_m)=\dom(f_m) \cup \dom(f_n^q)$, $g_m(\gamma)=f_m(\gamma)$ if $\gamma \in \dom(f_m)$, otherwise, $g_m(\gamma)=0$, $B_m \in E_m(\dom(g_m))$ and $B_m$ projects down to a subset of $A_m$ and $A_m^p$.
Fix $\tau \in A_n^q$ and $\mu=\tau \restriction d^\prime$ and $m>n$.
Then $(q+\tau)_m=\langle g_m, C_m \rangle$ where $C_m \subseteq \{\chi \in A_m^q: \dom(\tau) \cup \rge(\tau) \subseteq \dom(\chi)\}$.
Clearly $g_m \leq f_m$.
For $\chi \in C_m$, $\dom(\mu)=\dom(\tau \restriction d^\prime) \subseteq \dom(\chi \restriction d^\prime)$.
Since $\rge(\mu)=\rge(\tau \restriction d^\prime)$, it is a subset of $\dom(\chi)$.
Since $d^\prime \supseteq \lambda$, $\rge(\mu) \subseteq \dom(\chi \restriction d^\prime)$.
Since $d^\prime \subseteq d_m$ and $\chi \restriction d_m \in A_m$ we have that $\chi \restriction d_m \in A_m(\mu)$.
Hence, $(q+\tau)_m \leq ^* \langle f_m,A_m(\mu) \rangle$. 

\end{proof}

\begin{thm}
\label{prikryomega}
$\mathbb{P}$ has the Prikry property.

\end{thm}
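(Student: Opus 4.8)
The plan is to prove the Prikry property by a fusion argument that feeds on the Prikry property of the finite-length forcings established in Sections~\ref{forcing1extender} and~\ref{forcing2extenders}, together with the integration Lemma~\ref{integration2} and the diagonal intersection Lemma~\ref{diagintersect}. First I would reduce to the case where $p$ is pure: if $\supp(p)=a\neq\emptyset$ with $n=\max(a)$, then $\mathbb{P}/p$ factors into the finite-extender forcing $\mathbb{P}_0$ living below $\kappa_n$ (of size ${<}\kappa_n$) and the tail $\mathbb{P}_1$, whose direct-extension order is $\kappa_{n+1}$-closed. Since $\mathbb{P}_0$ has size ${<}\kappa_{n+1}$, one enumerates its conditions and builds a single $\leq^*$-fusion through the tail deciding $b$ over each of them, exactly as in \textbf{Case I} of the two-extender proof; thus it suffices to treat a pure $p$, whose only genuine difficulty is the infinitude of active (pure) coordinates.

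For pure $p$ and a Boolean value $b$, I would construct a $\leq^*$-decreasing sequence $\langle p^n:n<\omega\rangle$ with $p^0=p$ and take $p^*$ to be its limit. At stage $n$ I handle one-step extensions at coordinate $n$. Fixing $p^n$, for each object $\mu\in A_n^{p^n}$ the part of $p^n+\mu$ below $n$ is a condition in the finite-extender forcing $\mathbb{P}_{\langle e_{m,n}(\mu):m<n\rangle}$, whose height is below $\kappa_n$; using an internally approachable model to produce a generic Cohen function on coordinate $n$ (as in Theorem~\ref{prikry1}) to secure a $(\star)$-property, and then applying the Prikry property of this finite forcing, I obtain a direct extension $t(\mu)$ of $(p^n+\mu)\restriction n$ deciding the projected statement ``some condition of the relevant generic forces the upper part to decide $b$.'' Lemma~\ref{integration2} then integrates the family $\langle t(\mu):\mu\in A_n^{p^n}\rangle$, together with tail data $\vec r(\mu)$, into a single $p^{n+1}\leq^* p^n$ whose coordinate-$n$ objects uniformly realize these decisions. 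I arrange that coordinates below $n$ are frozen from stage $n$ on, so the Cohen parts stabilize; the measure-one sets on coordinates ${\geq}n$ are controlled by taking diagonal intersections (Lemma~\ref{diagintersect}), and the Cohen parts by the $\lambda^+$-closure of each $\mathbb{B}^{E_n}(\lambda,\lambda_n^j)$. The limit $p^*$ is then assembled coordinatewise, and one checks it is a genuine condition (each $\dom(f_n^{p^*})$ is still an $n$-domain, and reflectedness and squishability survive the diagonalization).

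To conclude that $p^*$ decides $b$, suppose $q\leq p^*$ with, say, $q\Vdash b$. By transitivity and the commutation of objects (Lemma~\ref{transitivity2}, item~\ref{commute}) we may write $q\leq^* p^*+\langle\mu_0,\dots,\mu_{k-1}\rangle$ at coordinates $c_0<\dots<c_{k-1}$ with the object at the top coordinate $c_{k-1}$ outermost. Peeling this top object off fixes a Cohen value on coordinate $c_{k-1}$ and conjugates everything below by $\mu_{k-1}$; the $(\star)$-property and the integrated decision built into $p^*$ at coordinate $c_{k-1}$ then reduce the decision of $b$ to that of a finite-extender condition carrying only $k-1$ objects, and I induct downward on $k$, the base case ($k=0$) being the finite Prikry decision recorded in $p^*$. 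Since the argument applies uniformly to every legitimate choice of objects and the values thereby forced agree, a density argument over the measure-one sets yields $p^*\Vdash b$.

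The hard part will be the fusion limit and the propagation of decisions through it. Two points deserve care. First, assembling $p^*$ requires simultaneously preserving, across all $\omega$ coordinates, the $n$-domain and $n$-reflectedness constraints and the squishability conditions that make $p^*$ a legitimate condition; this is where the diagonal intersection Lemma~\ref{diagintersect} must be shown to cohere with the integration Lemma~\ref{integration2} stage by stage. Second, in the concluding downward induction, peeling the top object $\mu_{k-1}$ conjugates the decision data below it, so I must verify that the recorded decision of the projected statement is invariant under this conjugation --- precisely the content of Lemmas~\ref{squishablemeasure} and~\ref{squishablefunction}, guaranteeing that the conjugated condition still $\leq^*$-decides the conjugated statement in the shorter finite forcing.
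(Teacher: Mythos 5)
Your pure-case construction is essentially the paper's own proof: the same $\leq^*$-decreasing fusion $\langle p^n : n<\omega\rangle$, built at each stage from an internally approachable model, a generic for the full-support product of the upper Cohen parts, dense sets $D_\mu$ securing a $(\star)$-property for the whole tail (Cohen parts \emph{and} measure-one sets), the Prikry property of the squished finite-length forcings $\mathbb{P}_{n,\mu}$ to produce $t(\mu)$ deciding the lower-generic statement $\varphi\equiv\exists t\in\dot G\,(t^\frown(\text{tail})\parallel b)$, uniformization of the decision over a measure-one set, and Lemmas \ref{integration2} and \ref{diagintersect} to assemble $p^{n+1}$; the concluding density argument also matches (though your downward induction on $k$ is unnecessary --- once the top object is peeled off via $(\star)$, the whole lower part of $q$ is just one condition of the finite forcing extending $t(\mu)$, and the finite Prikry property absorbs it in one step). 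One inaccuracy in this part: you cannot arrange that ``coordinates below $n$ are frozen from stage $n$ on.'' The lower part of $p^{n+1}$ produced by Lemma \ref{integration2} is $j_{E_{n+1}}(\mu\mapsto t(\mu))(\mc_{n+1}(d'))$, a proper direct extension of the lower part of $p^n$, and it must be, since each $t(\mu)$ genuinely decides $\varphi$. Stabilization is neither available nor needed; the $\lambda^+$-closure of the Cohen posets and completeness of the measures suffice for the limit $p^*$.

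The genuine gap is your opening reduction of impure conditions to pure ones. The analogy with Case I of the two-extender proof breaks down: there, after fusing $f_1'$ over an enumeration of the small lower forcing, the property $(\star(f_1'))$ covers \emph{every} extension of $p$, because above the top support coordinate there is nothing --- any $q\leq p$ has the form $t^\frown\langle g\rangle$ with $g$ a Cohen extension. When $\eta=\omega$ and $n=\max(\supp(p))$, a single fusion of length $|\mathbb{P}_0|$ through $(\mathbb{P}_1,\leq^*)$ yields a $(\star)$-property quantifying only over \emph{direct} extensions of the tail; but a condition $q\leq p$ deciding $b$ may acquire support at coordinates above $n$ (conjugating coordinate $n$ and everything in between along the way), and such $q$ escape your $(\star)$-property entirely --- this is precisely the new phenomenon of the infinite case. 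Deciding ``$t^\frown\vec r\parallel b$'' over each $t$ by going through the tail would require the Prikry property of the tail, which is an $\omega$-length forcing of the same type, so the reduction is circular. Accordingly the paper does not reduce: for impure $p$ it reruns the entire $\omega$-stage fusion, splitting stage $m$ into three cases --- $m\notin\supp(p)$ with $\supp(p)\setminus(m+1)\neq\emptyset$ (pure-style construction with the rescaled parameters $e_{n,m^*}(f^p_{m^*})$, $\lambda_{m^*}(f^p_{m^*})$, and the extra factor $\bar{\mathbb{S}}_m$), $m\in\supp(p)$ (where your Case-I-style enumeration-plus-$\varphi$-decision is exactly the right move, but only as the stage-$m$ step), and $m>\max(\supp(p))$ (as in the pure case). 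Your toolkit contains all the required pieces; they must be interleaved stage by stage across all of $\omega$ rather than invoked once up front.
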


\begin{proof}

Let $b$ be a Boolean value.
We will  start by proving the Prikry property for a pure condition $p \in \mathbb{P}$.
Later, we will explain how to modify the proof for impure conditions.
Our goal is to build a $\leq^*$-decreasing sequence $\langle p^n: n<\omega \rangle$ such that $p^0 \leq^*p$ and if $q \leq p$, $q$ decides $b$ and $\max(\supp(q))=n$, then $p^n$ decides $b$.
Then we claim at the end of the proof that by letting $p^*$  to be a $\leq^*$-lower bound of $\langle p^n: n<\omega \rangle$, then $p^*$ decides $b$.
We give a convention for the proof.
Let $p$ be a pure condition.
If there is $q \leq^*p$ such that $q$ decides $b$, then we are done.
Suppose not.
Let $\mathbb{B}_n=\mathbb{B}^{E_n}(\lambda,\lambda_n^j)$.
If $f \in \mathbb{B}_n$ and $\mu$ is an $n$-object with respect to $\dom(f)$, then $f \oplus \mu \in \mathbb{B}_n$.

Construction of $p^0$:
Let

\begin{center}

$\mathbb{Q}_0 = \{ \langle f_n:n < \omega \rangle \in \prod_{n<\omega} \mathbb{B}_n \mid \langle \dom(f_n): n< \omega \rangle$ is $\subseteq$-increasing $\}$

\end{center}

where $\prod_{n<\omega} \mathbb{B}_n$ is a full support product, and

\begin{center}

$\mathbb{R}_0 =\{ (f,\vec{r}) \in \mathbb{B}_0 \times (\mathbb{P}_{\langle E_n: n>0 \rangle},\leq^*): \dom(f) \subseteq \dom(f_1^r)\}$,

\end{center}

where the first coordinate of $\vec{r}$ is $\langle f_1^r,A_1^r \rangle$.
By adapting the proof in Theorem \ref{prikry1}, we find 

\begin{enumerate}

\item $N_0 \prec H_\theta$ for some sufficiently large regular cardinal $\theta>\bar{\lambda}_\omega^j$ such that $N_0$ is internally approachable in the way described as in Theorem \ref{prikry1}, witnessed by a sequence of elementary submodels of length $\kappa_0$, $|N_0|=\lambda$ and ${}^{<\kappa_0} N_0 \subseteq N_0$.
In addition, $\lambda ,b,p, \mathbb{P} \in N_0$, $\lambda,d_0^p \subseteq N_0$, and for all $n<\omega$, $\lambda_n^j \in N_0$

\item $\langle f_n^0: n < \omega \rangle$ which is $(N_0,\mathbb{Q}_0)$-generic, and $\langle f_n^0:n< \omega \rangle \leq_{\mathbb{Q}_0} \langle f_n^p: n<\omega \rangle$.

\item $d_0^0=N_0 \cap \lambda_0^j$.

\end{enumerate}

The construction is similar to the explanation at the beginning of Theorem \ref{prikry1}. 
Let $A_n^0 \in E_n(d_n^0)$ be a measure-one set projecting down to $A_n^p$ for all $n$.
For $\mu \in A_0^0$, $|\mu|<\kappa_0$, $\dom(\mu) \subseteq d_n^0$ and $\rge(\mu) \subseteq \lambda$, so $\mu \in N_0$.
For $\mu \in A_0^0$, define $D_\mu$ as the collection of $(f,\vec{r}) \in \mathbb{R}_0$ such that $\dom(\mu) \subseteq \dom(f)$ and if there are $g \leq f \oplus \mu$ and $\vec{r}^\prime \leq^* \vec{r}$ such that $\langle g \rangle^\frown \vec{r}^\prime$ decides $b$, then $\langle f \oplus \mu \rangle^\frown \vec{r}$ decides $b$ the same way.
Let $\pi: \mathbb{R}_0 \to \mathbb{Q}_0$ be the natural projection and $D_\mu^\prime = \pi[D_\mu]$.

\begin{claim}
\label{denseopen3}
$D_\mu$ is open dense below $\langle f_0^p \rangle^\frown \langle p_n : n>0 \rangle$ and $D_\mu \in N_0$. As a consequence, $D_\mu^\prime$ is an open dense set and $D_\mu^\prime \in N_0$.

\end{claim}

\begin{claimproof}{(Claim \ref{denseopen3})}

It is easy to check that $D_\mu$ is open.
$D_\mu$ is defined using $\mathbb{R}_0$, $\mu$, and $b$, so $D_\mu \in N_0$.
It remains to show that $D_\mu$ is dense below $\langle f_0^p \rangle^\frown \langle p_n : n>0 \rangle$.
Let $\langle f \rangle^\frown \vec{r} \leq_{\mathbb{R}_0} \langle f_0^p \rangle ^\frown \langle p_n:n>0 \rangle$.
By the density, assume $\dom(\mu) \subseteq \dom(f)$.
If there are $g \leq f \oplus \mu$ and $\vec{r}^\prime \leq^* \vec{r}$ such that $\langle g \rangle^\frown \vec{r}^\prime$ decides $b$, let $g^\prime$ be a function with $\dom(g^\prime)=\dom(g)$ and $g^\prime(\gamma)=f(\gamma)$ for $\gamma \in \dom(f)$, otherwise $g^\prime(\gamma)=g(\gamma)$, then $\langle g^\prime \rangle {}^\frown \vec{r}^\prime \in D_\mu$ and $\langle g^\prime \rangle {}^\frown \vec{r}^\prime \leq_{\mathbb{R}_0} \langle f \rangle ^\frown \vec{r}$.

\end{claimproof}{(Claim \ref{denseopen3})}

By genericity, we have that $\langle f_n^0: n< \omega \rangle \in D_\mu^\prime$.
Find $A_n^0(\mu) \in E_n(d_n^0)$ for $n>0$ such that $\langle f_0^0 \rangle^\frown \langle \langle f_n^0,A_n^0(\mu) \rangle: n>0 \rangle \in D_\mu$.
For $n>0$, define $(A_n^0)^* = A_n^0 \cap \triangle_{\mu \in A_0^0} A_n^0(\mu)$.
By Lemma \ref{diagintersect}, $(A_n^0)^* \in E_n(d_n^0)$.
Let $q^0= \langle f_0^0,A_0^0 \rangle^\frown \langle \langle f_n^0,(A_n^0)^* \rangle: n>0 \rangle$.
For $\mu \in A_0^0$, $n>0$, we have that $A_n^{q^0+\mu}\subseteq \{ \tau \in (A_n^0)^*: \dom(\mu) \cup \rge(\mu) \subseteq \dom(\tau)\} \subseteq A_n^0(\mu)$.
Thus, we have the following property for $q^0$ (call it $(\star(q^0))$: 

\begin{center}

$\boldsymbol{(\star(q^0))}$ For $\mu \in A_0^0$, if there are $f \leq (q^0+\mu)_0$ (which is $f_0^0 \oplus \mu$), and $\vec{r} \leq^* (q^0+\mu) \setminus 1$ such that $\langle f \rangle^\frown \vec{r}$ decides $b$, then $q^0+\mu$ decides $b$ the same way.

\end{center}

Let 

\begin{align*}
A_0 &= \{\mu \in A_0^0: \exists f \leq f_0^0 \oplus\mu (\exists \vec{r} \leq^* (q^0+\mu) \setminus 1(\langle f \rangle^\frown \vec{r} \Vdash b))\} \\
A_1 &= \{ \mu \in A_0^0 \setminus A_0: \exists f \leq f_0^0 \oplus \mu (\exists \vec{r} \leq^* (q^0+\mu) \setminus 1(\langle f \rangle^\frown \vec{r} \Vdash \neg b))\} \\
A_2&= A_0^0 \setminus (A_0 \cup A_1).
\end{align*}

Note that $\mu \in A_0$ implies that $q^0+\mu \Vdash b$, and $\mu \in A_1$ implies that $q^0+\mu \Vdash \neg b$.
There exists unique $i=\{0,1,2\}$ such that $A_i$ is of measure-one, call it $i_0$.
Let 
\begin{center}

$p^0 = \langle f_0^0,A_{i_0} \rangle ^\frown \langle \langle f_n^0,(A_n^0)^* \rangle: n>0 \rangle$.

\end{center}

We now show that if there is $q \leq p^0$ with $\supp(q)= \{0\}$ and $q$ decides $b$, then $p^0$ decides $b$.
Let $q$ be such a witness, and without loss of generality, suppose that $q \Vdash b$ and $q \leq^* p^0+\mu$ for some $\mu \in A_0^{p^0}$.
Then $f_0^q \leq f_0^0 \oplus \mu=(q^0+\mu)_0$ and  $q \setminus 1 \leq^* (p^0+\mu) \setminus 1=(q^0+\mu) \setminus 1$.
By $\star(q^0)$, we have that $q^0+\mu \Vdash b$ and $\mu \in A_0$.
This means $i_0=0$.
For every extension $r$ of $p^0$ with $0 \in \supp(r)$, we have $r \leq p+ \mu^\prime$ for some $\mu^\prime \in A_0$.
With the property $\star(q^0)$, we have that $q^0+\mu^\prime \Vdash b$, and hence, $r \Vdash b$.
By the density, $p^0 \Vdash b$.

Construction of $p^{m+1}$:

Let

\begin{center}

$\mathbb{Q}_{m+1} = \{ \langle f_n:n >m \rangle \in \prod_{n>m} \mathbb{B}_n \mid \langle \dom(f_n): n>m \rangle$ is $\subseteq$-increasing$\}$

\end{center}

where $\prod_{n>m} \mathbb{B}_n$ is a full support product, and

\begin{center}

$\mathbb{R}_{m+1} =\{ (f,\vec{r}) \in \mathbb{B}_{m+1} \times (\mathbb{P}_{\langle E_n: n>m+1 \rangle},\leq^*): \dom(f) \subseteq \dom(f_{m+2}^r)\}$,

\end{center}

where the first coordinate of $\vec{r}$ is $\langle f_{m+2}^r,A_{m+2}^r \rangle$.
By adapting the proof in Theorem \ref{prikry1}, we find 

\begin{enumerate}

\item $N_{m+1} \prec H_\theta$ for some sufficiently large regular cardinal $\theta>\bar{\lambda}_\omega^j$ such that $N_0$ is internally approachable in a way described as in Theorem \ref{prikry1}, witnessed by a sequence of elementary submodels of length $\kappa_{m+1}$, $|N_{m+1}|=\lambda$ and ${}^{<\kappa_{m+1}} N_{m+1} \subseteq N_{m+1}$.
Also, $\lambda ,b,p^m, \mathbb{P} \in N_{m+1}$, $\lambda,f_{m+1}^{p^m} \subseteq N_{m+1}$, and for all $n>m$, $\lambda_n^j \in N_{m+1}$

\item $\langle f_n^{m+1}: n >m \rangle$ which is $(N_{m+1},\mathbb{Q}_{m+1})$-generic, and $\langle f_n^{m+1}:n> m \rangle \leq_{\mathbb{Q}_{m+1}} \langle f_n^{p^m}: n>m \rangle$.

\item $d_n^{m+1}=\dom(f_n^{m+1})=N_{m+1} \cap \lambda_n^j$ for all $n>m$. 
In particular, $d_{m+1}^{m+1} \subseteq N_{m+1}$.

\end{enumerate}

The construction is similar to the explanation at the beginning of Theorem \ref{prikry1}. 
Let $A_n^{m+1} \in E_n(d_n^{m+1})$ be a measure-one set projecting down to $A_n^{p^m}$ for all $n>m$.
Since for $\mu \in A_{m+1}^{m+1}$, $|\mu|<\kappa_{m+1}$, $\dom(\mu) \subseteq d_{m+1}^{m+1}$ and $\rge(\mu) \subseteq \lambda$, so $\mu \in N_{m+1}$.
Fix $\mu \in A_{m+1}^{m+1}$.
Let $\mathbb{P}_{m+1,\mu}= \mathbb{P}_{\langle e_{n,m+1}(\mu):n \leq m \rangle}$.
Note that $|\mathbb{P}_{m+1,\mu}|< \kappa_{m+1}$.
Let $\langle t_\alpha: \alpha< |\mathbb{P}_{m+1,\mu}| \rangle$ be an enumeration in $N_{m+1}$ of conditions in $\mathbb{P}_{m+1,\mu}$.
 Define $D_\mu$ as the collection of $(f,\vec{r}) \in \mathbb{R}_{m+1}$ such that $\dom(\mu) \subseteq \dom(f)$ and for each $\alpha<|\mathbb{P}_{m+1,\mu}|$, if there are a function $g \leq f \oplus \mu$ and $\vec{r}^\prime \leq^* \vec{r}$ such that $t_\alpha^\frown \langle g \rangle^\frown \vec{r}^\prime$ decides $b$, then $t_\alpha^\frown \langle f \oplus \mu \rangle^\frown \vec{r}$ decides $b$ the same way.
Let $\pi: \mathbb{R}_{m+1} \to \mathbb{Q}_{m+1}$ be the natural projection and $D_\mu^\prime = \pi[D_\mu]$.

\begin{claim}
\label{denseopen4}
$D_\mu$ is open dense below $\langle f_{m+1}^{p^m} \rangle^\frown \langle p^m_n : n>m+1 \rangle$ and $D_\mu \in N_{m+1}$.
As a consequence, $D_\mu^\prime$ is an open dense set and $D_\mu^\prime \in N_{m+1}$.

\end{claim}

\begin{claimproof}{Claim (\ref{denseopen4})}

It is easy to check that $D_\mu$ is open.
$D_\mu$ is defined using $\mathbb{R}_{m+1}$, $\mu$, and $b$, so $D_\mu \in N_{m+1}$.
It remains to show that $D_\mu$ is dense below $\langle f_{m+1}^{p^m} \rangle^\frown \langle p_n : n>{m+1} \rangle$.
Let $\langle f \rangle^\frown \vec{r} \leq_{\mathbb{R}_{m+1}} \langle f_{m+1}^{p^m} \rangle ^\frown \langle p_n:n>m+1 \rangle$.
By density, assume $\dom(\mu) \subseteq \dom(f)$.
Build $\langle (f_{m+1,\alpha},\vec{r}_{m+1,\alpha}): \alpha \leq | \mathbb{P}_{m+1,\mu}| \rangle$ as follows:

\begin{enumerate}

\item $f_{m+1,0}=f$ and $\vec{r}_{m+1,0}=\vec{r}$.

\item If $\alpha$ is limit, let $(f_{m+1,\alpha}$  $\vec{r}_{m+1,\alpha})$ be a $\mathbb{R}_{m+1}$-lower bound of $\langle ( f_{m+1,\beta},\vec{r}_{m+1,\beta} ): \beta<\alpha \rangle$. 

\item For $\alpha< |\mathbb{P}_{m+1,\mu}|$, ask whether there are $g \leq f_{m+1,\alpha} \oplus \mu$ and $\vec{r}^\prime \leq^* \vec{r}_{m+1,\alpha}$ such that $t_\alpha^\frown \langle g \rangle^\frown \vec{r}^\prime$ decides $b$.
If the answer is positive, let $f_{m+1,\alpha}$ be a function with $\dom(f_{m+1,\alpha})=\dom(g)$, $f_{m+1,\alpha}(\gamma)=f_{m+1,\alpha}(\gamma)$ if $\gamma \in \dom(f_{m+1,\alpha})$, otherwise $f_{m+1,\alpha+1}(\gamma)=g(\gamma)$, and let $\vec{r}_{m+1,\alpha+1}=\vec{r}^\prime$.
If the answer is negative, let $f_{m+1,\alpha+1}=f_{m+1,\alpha}$ and $\vec{r}_{m+1,\alpha+1}=\vec{r}_{m+1,\alpha}$.

\end{enumerate}

The argument at the successor stages is similar to the argument in Claim \ref{denseopen1}.
 The construction proceeds to the end since $\mathbb{R}_{m+1}$ is $\kappa_{m+1}$-closed.
Let $g^\prime = f_{m+1,|\mathbb{P}_{m+1,\mu}|}$ and $\vec{r}^\prime =\vec{r}_{m+1,|\mathbb{P}_{m+1,\mu}|}$.
Then $\langle g^\prime \rangle{}^\frown \vec{r}^\prime \in D_\mu$ and $\langle g^\prime \rangle^\frown \vec{r} \leq_{\mathbb{R}_{m+1}} \langle g\rangle^\frown \vec{r}$.

\end{claimproof}{Claim (\ref{denseopen4})}

Thus, by genericity, we have that $\langle f_n^{m+1}: n>m \rangle \in D_\mu^\prime$.
Find $A_n^{m+1}(\mu) \in E_n(d_n^{m+1})$ for $n>m+1$ such that $\langle f_{m+1}^{m+1} \rangle^\frown \langle \langle f_n^{m+1},A_n^{m+1}(\mu) \rangle: n>m+1 \rangle \in D_\mu$.
For $n>{m+1}$, let $(A_n^{m+1})^* = A_n^{m+1} \cap \triangle_{\mu \in A_{m+1}^{m+1}} A_n^{m+1}(\mu)$.
By Lemma \ref{diagintersect}, $(A_n^{m+1})^* \in E_n(d_n^{m+1})$.
Let $q^{m+1}= p^m \restriction (m+1)^\frown \langle f_{m+1}^{m+1},A_{m+1}^{m+1} \rangle^\frown \langle \langle f_n^{m+1},(A_n^{m+1})^* \rangle: n>m+1 \rangle$.
For $\mu \in A_{m+1}^{m+1}$, $n>m+1$, we have that $A_n^{q^{m+1}+\mu}\subseteq \{ \tau \in (A_n^{m+1})^*: \dom(\mu) \cup \rge(\mu) \subseteq \dom(\tau)\} \subseteq A_n^{m+1}(\mu)$.
Thus, we have the following property for $q^{m+1}$ (call it $(\star(q^{m+1}))$: 

\begin{center}

$\boldsymbol{(\star(q^{m+1}))}$: For $\mu \in A_{m+1}^{m+1}$, for $t \in \mathbb{P}_{m+1,\mu}$, if there are $f \leq (q^{m+1}+\mu)_{m+1}$ (which is $f_{m+1}^{m+1} \oplus \mu$), and $\vec{r} \leq^* (q^{m+1}+\mu) \setminus (m+2))$ such that $t^\frown \langle f \rangle^\frown \vec{r}$ decides $b$, then $t^\frown (q^{m+1}+\mu) \setminus (m+1)$ decides $b$ the same way.

\end{center}
Fix $\mu \in A_{m+1}^{m+1}$.
Let $\dot{G}$ be the canonical name for the generic object for $\mathbb{P}_{m+1,\mu}$.
By the induction hypothesis, find $t(\mu) \leq^* p^m \restriction (m+1)$ which decides the following sentence:

\begin{center}

$\varphi \equiv \exists t \in \dot{G} (t^\frown (q^m + \mu) \setminus (m+1) \parallel b)$.

\end{center}

Let

\begin{align*}
A_0 &= \{\mu \in A_{m+1}^{m+1}: t(\mu) \Vdash \exists t \in \dot{G} (t^\frown (q^{m+1} + \mu) \setminus (m+1) \Vdash b)\} \\
A_1 &= \{ \mu \in A_{m+1}^{m+1} \setminus A_0: t(\mu) \Vdash \exists t \in \dot{G} (t^\frown (q^{m+1} + \mu) \setminus (m+1) \Vdash \neg b)\} \\
A_2&= A_{m+1}^{m+1} \setminus (A_0 \cup A_1).
\end{align*}

There exists unique $i \in \{0,1,2\}$ such that $A_i$ is of measure-one.
Let $i_{m+1}$ be as such.
We now apply Lemma \ref{integration2} to the following setting: the $``p"$ in Lemma \ref{integration2} is $p^m$, $n=m+1$, $f^\prime=f_{m+1}^{m+1}$, $A^\prime = A_{i_{m+1}}$, $t(\mu)$ and $\vec{r}(\mu)=\langle f_n^{m+1},A_n^{m+1}(\mu): n>m+1 \rangle$ for each $\mu \in A_{i_{m+1}}$ as described earlier, to find $p^{m+1} \leq ^* p^m$ such that

\begin{enumerate}

\item $f_{m+1}^{p^{m+1}} \leq f_{m+1}^{m+1}$ and $A_{m+1}^{p^{m+1}}$ projects down to $A_{i_{m+1}}$.

\item for $\tau \in A_{m+1}^{p^{n+1}}$ with $\mu=\tau \restriction d_{m+1}^{m+1}$, 

\begin{itemize}

\item $(p^{m+1}+ \tau) \restriction (m+1) =t(\mu)$.

\item $(p^{m+1}+\tau) \setminus (m+2) \leq^* \vec{r}(\mu)$.

\end{itemize}

\end{enumerate}

We now show that if there is $q \leq p^{m+1}$ with $\max(\supp(q))=m+1$ and $q$ decides $b$, then $p^{m+1}$ decides $b$ the same way $q$ does.
Let $q \leq p^{m+1}$ be a condition deciding $b$ and $\max(\supp(q))=m+1$. Without loss of generality, assume $q \Vdash b$.
Suppose $q \leq p^{m+1}+ \tau$ for some $\tau \in A_{m+1}^{p^{m+1}}$.
Let $\mu=\tau \restriction d_{m+1}^{m+1}$.
By the property $\star(q^{m+1})$, we have that $q \restriction  (m+1)^\frown (q^{m+1}+\mu) \setminus (m+1) \Vdash b$. 
Note that $q \restriction (m+1) \leq (p^{m+1}+\tau) \restriction (m+1)=t(\mu)$.
We claim that $i_{m+1}=0$.
Suppose $i_{m+1}=1$ (the case $i_{m+1}=2$ is similar).
Let $G$ be generic containing $q \restriction (m+1)$, hence, containing $t(\mu)$.
Let $t \in G$ be such that $t^\frown (q^{m+1} +\mu) \restriction (m+1) \Vdash \neg b$.
We may assume $t \leq q \restriction (m+1)$, but then $t^\frown (q^{m+1}+\mu) \setminus (m+1) \Vdash b,\neg b$, which is a contradiction.
Thus, $i_{m+1}=0$.
A similar proof shows that already $t(\mu)^\frown (q^{m+1}+\mu) \setminus (m+1) \Vdash b$, and hence, $p^{m+1}+\tau \Vdash b$.
Since $i_{m+1}=0$, for every $\tau^\prime \in A_{m+1}^{p^{m+1}}$ with $\mu^\prime= \tau^\prime \restriction d_{m+1}^{m+1}$, $\mu^\prime \in A_0$.
A similar argument shows that $p^{m+1}+\tau^\prime \Vdash b$.
By a density argument, $p^{m+1} \Vdash b$.
We finish the argument for $p^{m+1}$.
Recall that $p^*$ is a $\leq^*$-lower bound of $\langle p^n: n<\omega \rangle$.

\begin{claim}
\label{finalcond2}
$p^* \leq^*p$ and $p^*$ satisfies the Prikry property.

\end{claim}

\begin{claimproof}{(Claim \ref{finalcond2})}

To show that there is a condition $q \leq^* p^*$ which decides $b$, let $q \leq p^*$ such that $q$ decides $b$.
If $q \leq^* p^*$, then $q \leq^* p$, but this contradicts our assumption that there is no direction extension of $p$ deciding $b$.
Hence, $q$ is impure, let $n=\max(\supp(p))$.
Since $q \leq p^n$, by our construction of $p^n$, we have that $p^n$ decides $b$ the same way $q$ does.
Since $p^* \leq^* p^n$, $p^*$ also decides $b$, which is a contradiction.
Hence, we are done.

\end{claimproof}{(Claim \ref{finalcond2})}

We finished the proof of the Prikry property for pure conditions.
We now give an outline of the proof when we start with an impure condition $p$ with a focus on the modifications.
Assume that $p$ is impure. 
We will give an outline on how to  construct a $\leq^*$-decreasing sequence $\langle p^m: m< \omega \rangle$ satisfying the properties described as in the  proof starting with a pure condition.
The construction will split into three cases: $p^m$ for $m \not \in \supp(p)$ such that $\supp(p) \setminus (m+1) \neq \emptyset$, $p^m$ for $m \in \supp(p)$ and $p^m$ for $m>\max(\supp(p))$.

$p^m$ for $m \not \in \supp(p)$ which $\supp(p) \setminus (m+1)$ is nonempty: the construction is similar as in the pure case, except that the parameters change.
Let $m^*=\min(\supp(p) \setminus (m+1))$, $\lambda_{m^*}=\lambda_{m^*}(f_{m^*}^p)$.
For $n \in [m,m^*)$, let $\lambda_{n,m^*}^j=\lambda_{n,m^*}^j(f_{m^*}^p)$,  and $\mathbb{B}_{n,m^*}=B_{n,m^*}^{e_{n,m^*}(f_{m^*}^p)}(\lambda_{m^*},\lambda_{n,m^*}^j)$.

Let

\begin{align*}
\bar{\mathbb{Q}}_{m}= & \{ \langle f_n:n  \geq m \rangle \in \prod_{n \in [m,m^*)} \mathbb{B}_{n,m^*} \mid  \\
& \langle \dom(f_n): n \in [m,m^*) \rangle \text{ is } \subseteq\text{-increasing}\}, \\ 
\bar{\mathbb{R}}_{m} = & \{ (f,\vec{r}) \in \mathbb{B}_{m,m^*} \times (\mathbb{P}_{\langle e_{n,m^*}(f_{m^*}^p): n \in [m+1,m^*) \rangle},\leq^*) \mid \\
& \dom(f) \subseteq \dom(f_{m+1}^r)\}, \\
\bar{\mathbb{S}}_m = &((\mathbb{P}/p) \setminus m^*,\leq^*)
\end{align*}

where the first coordinate of $\vec{r}$ appearing in $\bar{\mathbb{R}}_m$ has the first coordinate $f_{m+1}^r$
Recall that $\kappa_{m^*}(f_{m^*}^p)$ ,$\lambda_{m^*}$ are regular, and greater than $\bar{\kappa}_{m^*}$.
Note that $\bar{\mathbb{Q}}_m$ is $\lambda_{m^*}^+$-closed
Let $\bar{N}_m \prec H_\theta$ for some sufficiently large regular cardinal $\theta>\bar{\lambda}_\omega^j$ containing ``enough" information, such that $|\bar{N}_m|=\lambda_{m^*}$ and ${}^{<\kappa_m} N_m \subseteq N_m$.
Let $\langle f_n^m:  n \in [m,m^*) \rangle^\frown \vec{r}$ be $(N_{m+1},\bar{\mathbb{Q}}_m \times \bar{\mathbb{S}}_m)$-generic below $\langle f^{p^{m-1}}_n: n \in [m,m^*) \rangle^\frown p^{m-1} \setminus m^*$, where $p^{-1}=p$.
Let $d_n^m=\dom(f_n^m)$.
Let $A_n^m \in e_{n,m^*}(f_{m^*}^p)(d_n^m)$ be a measure-one set projecting down to $A_n^{p^{m-1}}$ for all $n \in [m,m^*)$.
Fix $\mu \in A_m^m$, so $\mu \in \bar{N}_m$.
Let $\bar{\mathbb{P}}_{m,\mu}= \mathbb{P}_{\langle e_{n,m}(\mu):n < m \rangle}$.
 Define $\bar{D}_\mu$ as the collection of $(f,\vec{r}_0,\vec{r}_1) \in \bar{\mathbb{R}}_m \times \bar{\mathbb{S}}_m$ such that $\dom(\mu) \subseteq \dom(f)$ and for each $t \in \mathbb{P}_{m+1,\mu}$, if there are a function $g \leq f \oplus \mu$ and $\vec{r}_0^\prime \leq^* \vec{r}_0$, $\vec{r}_1^\prime \leq^* \vec{r}_1$ such that $t^\frown \langle g \rangle^\frown \vec{r}_0^\prime {}^\frown \vec{r}_1^\prime$ decides $b$, then $t^\frown \langle f \oplus \mu \rangle^\frown \vec{r}_0 {}^\frown \vec{r}_1$ decides $b$ the same way.
Let $\pi: \bar{\mathbb{R}}_m \times \bar{\mathbb{S}}_m \to \bar{\mathbb{Q}}_m \times \bar{\mathbb{S}}_m$ be the natural projection and $\bar{D}_\mu^\prime = \pi[\bar{D}_\mu]$.
One can check that $\bar{D}_\mu$ is an open dense set in $\bar{N}_m$. 

Thus, we have that $\langle f_n^m:  n \in [m,m^*) \rangle^\frown \vec{r} \in \bar{D}_\mu^\prime$.
Find $A_n^m(\mu) \in e_{n,m^*}(f_{m^*}^p)(d_n^m)$ for $n \in [m+1,m^*)$ such that $\langle f_m^m \rangle^\frown \langle \langle f_n^m,A_n^m(\mu) \rangle: n \in [m+1,\mu^*) \rangle^\frown \vec{r} \in D_\mu$.
For $n \in [m+1,m^*)$, let $(A_n^m)^* = A_n^m \cap \triangle_{\mu \in A_m^m} A_n^m(\mu)$.
By Lemma \ref{diagintersect}, $(A_n^{m+1})^* \in e_{n,m^*}(f_{m^*}^p)(d_n^m)$.
Let $q^m= p^{m-1} \restriction m^\frown \langle f_m^m,A_m^m \rangle^\frown \langle \langle f_n^m,(A_n^m)^* \rangle: n>m+1 \rangle^\frown \vec{r}$ (if $m=0$ the first term $p^{m-1}$ does not exist).
Thus, we have the maximizing property for $q^m$ (same as in $\star(q^m)$ in the pure case).
If $m=0$, shrink the set $A_m^m$ so that every object in $A_m^m$ behaves the same, and then we can form $p^m$ by just $q^m$ with the first shrunk measure-one set.
If $m>0$, for $\mu \in A_m^m$, find $t(\mu) \in \bar{\mathbb{P}}_{m,\mu}$ which is a direct extension of $(q^m+\mu) \restriction m$ deciding certain statement.
Shrink the measure-one set $A_m^m$ so that for $\mu \in A_m^m$, $t(\mu)$ decides the statement in the same direction.
Use Lemma \ref{integration2} to form $p^m$.

$p^m$ for $m \in \supp(p)$:
Assume $m>0$ (the case $m=0$ is simpler). 
Let $\bar{\mathbb{P}}_m=\mathbb{P}_{\langle e_{n,m}(f_m^p):n<m \rangle}$.
Let $\theta=|\bar{\mathbb{P}}_m|$, then $\theta<\kappa_m$. 
Enumerate the conditions in $\bar{\mathbb{P}}_m$ as $\langle t_\alpha:\alpha<\theta \rangle$.
Build a $\leq^*$-decreasing sequence $\langle \vec{r}_\alpha: \alpha \leq \theta \rangle$ in $(\mathbb{P}/p) \setminus m$ such that 

\begin{enumerate}

\item $\vec{r}_0=p^{m-1} \setminus m$.

\item if $\alpha \leq \theta$ is a limit ordinal, let $\vec{r}_\alpha$ be a $\leq^*$-lower bound of $\langle \vec{r}_\beta:\beta<\alpha\rangle$.

\item for $\alpha<\theta$, if $\vec{r}_\alpha$ is built, and there is $\vec{r} \leq^* \vec{r}_\alpha$ such that $t_\alpha^\frown \vec{r}$ decides $b$, let $\vec{r}_{\alpha+1}$ be such a $\vec{r}$, otherwise $\vec{r}_{\alpha+1}=\vec{r}_\alpha$.

\end{enumerate}

The construction proceeds to the $\theta$-th stage, since $((\mathbb{P}/p) \setminus m,\leq^*)$ is $\kappa_m^+$-closed.
Let $\vec{r}^*=\vec{r}_\theta$.
Let $\dot{G}$ be the canonical name of a generic object for $\bar{\mathbb{P}}_m$.
Let $t \leq^* p^{m-1} \restriction m$ be deciding the following statement:

\begin{center}

$\varphi \equiv \exists t^\prime \in \dot{G}(t^\prime {}^\frown \vec{r} \parallel b)$.

\end{center}

By extending $t$ regarding the direct extension if necessary, assume that either $t \Vdash \exists t^\prime \in \dot{G}(t^\prime {}^\frown \vec{r} \Vdash b)$, $t \Vdash \exists t^\prime \in \dot{G}(t^\prime {}^\frown \vec{r} \Vdash \neg b)$, or $t \Vdash \nexists t^\prime \in \dot{G}(t^\prime {}^\frown \vec{r} \Vdash b)$. 
We finish the construction by letting $p^m=t^\frown \vec{r}$.

$p^m$ for $m> \max(\supp(p))$: the construction is exactly the same as for the construction of $p^m$ for the pure case.

We finish the proof of Theorem \ref{prikryomega}.

\end{proof}

\begin{thm}
\label{strongprikry1}
$\mathbb{P}$ has the strong Prikry property. Namely, for each dense open set $D \subseteq \mathbb{P}$ and $p \in \mathbb{P}$, there is a condition $q \leq^*p$ and a finite subset $a$ of $\omega$ ($a$ can be empty) such that

\begin{enumerate}

\item $a \cap \supp(p) =\emptyset$.

\item every $|a|$-step extension of $q$ using objects from $\{A_n^q:n \in a\}$ lies in $D$.

\end{enumerate}

\end{thm}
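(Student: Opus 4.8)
The plan is to bootstrap the strong Prikry property from the Prikry property already established in Theorem \ref{prikryomega}, together with the integration machinery of Lemma \ref{integration2} and the commutativity of one-step extensions recorded in Lemma \ref{transitivity2}. As in Theorem \ref{prikryomega}, I would first treat the case where $p$ is pure; the impure case differs only in bookkeeping, by factoring $\mathbb{P}/p$ at $\max(\supp(p))$ exactly as in the Prikry-property proof and running the same argument on the tail while keeping the produced set $a$ disjoint from $\supp(p)$.

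The central device is to reformulate ``reaching $D$'' as a statement about the generic and to decide it via the Prikry property. For a finite $a \subseteq \omega \setminus \supp(p)$, let $\psi_a$ be the statement that there is $r \in \dot{G}$ with $r \in \check{D}$ and $\supp(r) \setminus \supp(\check{p}) \subseteq \check{a}$. Each $\psi_a$ is a statement in the forcing language, so Theorem \ref{prikryomega} produces a direct extension deciding it. Enumerating the countably many finite sets $a$ and refining successively, I would build a $\le^*$-decreasing sequence and take a $\le^*$-lower bound $q^*$, which exists by the countable closure of $(\mathbb{P},\le^*)$ on pure conditions via unions of Cohen parts and diagonal intersections (Lemma \ref{diagintersect}), precisely as in the construction of $p^*$ in Theorem \ref{prikryomega}. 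Since each $\psi_a$ is a forcing statement, the decisions are preserved downward, so $q^*$ decides every $\psi_a$ simultaneously. By density of $D$, any generic below $q^*$ contains some $r \in D$ whose support minus $\supp(p)$ is a finite set $a_r$, so $\psi_{a_r}$ holds; hence $\{a : q^* \Vdash \psi_a\}$ is nonempty, and I fix such an $a$.

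Next I would convert $q^* \Vdash \psi_a$ into a uniform family of direct extensions. Fixing a tuple $\vec{\mu} = \langle \mu_n : n \in a\rangle$ of objects from the (squishable) measure-one sets of $q^*$ and a generic $G$ through $q^* + \langle \vec{\mu}\rangle$, the witness $r$ supplied by $\psi_a$ has support contained in $\supp(p) \cup a$ and, being compatible with $q^* + \langle\vec{\mu}\rangle$ inside $G$, agrees with it on the objects at the coordinates of $a$; enlarging $r$ within $D$ to full support $\supp(p)\cup a$ (legitimate since $D$ is open) and taking a common $\le^*$-lower bound yields $s_{\vec{\mu}} \le^* q^* + \langle\vec{\mu}\rangle$ with $s_{\vec{\mu}} \in D$. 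Thus every legitimate $|a|$-step extension $q^* + \langle\vec{\mu}\rangle$ admits a direct extension into $D$. Finally I would amalgamate the family $\langle s_{\vec{\mu}} \rangle$ into a single direct extension $q \le^* q^*$ so that $q + \langle\vec{\mu}\rangle \le^* s_{\vec{\mu}}$ for each $\vec{\mu}$; because $D$ is open, every $|a|$-step extension of $q$ using objects from $\{A_n^q : n \in a\}$ then lies in $D$, which is the desired conclusion.

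The hard part will be this last amalgamation step. Integrating the $\vec{\mu}$-indexed direct extensions into one condition cannot be done coordinate-by-coordinate naively, since extending at a higher coordinate $n \in a$ conjugates (``squishes'') the data at the lower coordinates by the object used, so the components of $s_{\vec{\mu}}$ below $n$ must be chosen coherently with this conjugation. I would handle this by an induction on $|a|$ whose successor step is Lemma \ref{integration2}: for the largest $n \in a$, and for each object $\mu_n$, solve the sub-problem at the coordinates $a \cap n$ for the condition $q^* + \mu_n$ (reducing $|a|$), then integrate over $\mu_n$ via Lemma \ref{integration2}. The commutativity of one-step extensions modulo squishing, Lemma \ref{transitivity2}(\ref{commute}), guarantees that the order in which the coordinates of $a$ are filled does not affect the resulting condition, so the amalgamation is well-defined and $D$-membership is preserved throughout. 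Verifying that the squished lower components continue to witness membership in $D$ under these conjugations is the delicate point, and it is exactly where the earlier analysis of the conjugation of measures and of functions, Lemmas \ref{squishablemeasure} and \ref{squishablefunction}, is invoked.
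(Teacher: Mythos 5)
Your plan founders at the step where you convert $q^* \Vdash \psi_a$ into a family of direct extensions $s_{\vec{\mu}} \leq^* q^* + \langle \vec{\mu} \rangle$ lying in $D$; this is a genuine gap, not a technicality. Writing $u = q^* + \langle \vec{\mu} \rangle$, the witness $r \in \dot{G} \cap D$ with $\supp(r) \subseteq \supp(p) \cup a$ is merely compatible with $u$ inside the generic filter, and a common extension found in $G$ will in general have support strictly larger than $\supp(p) \cup a$. A common extension with support exactly $\supp(p) \cup a$ would indeed be a direct extension of $u$ (any extension with equal support is a $0$-step extension), but its existence requires the Cohen parts $f_n^r$ and $f_n^u$ to be compatible as functions at every coordinate $n$. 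Membership of both $r$ and $u$ in $G$ does not guarantee this: it only guarantees compatibility of their images after conjugation by objects used at higher coordinates, and the conjugation $f \mapsto \mu \circ f \circ \mu^{-1}$ \emph{truncates} $f$ to the part mapped inside $\dom(\mu)$, a set of size $s_\alpha(\mu(\kappa_\alpha)) < \kappa_\alpha$, far smaller than $|f| \leq \lambda$. So a genuine disagreement between $f_n^r$ and $f_n^u$ at points outside the domains of the later objects is simply erased by subsequent one-step extensions, and $r$ and $u$ can cohabit a generic while admitting no common $\leq^*$-lower bound of support $\supp(p) \cup a$. In effect, the statement you need --- for \emph{every} $\vec{\mu}$ in the product of measure-one sets there is $s_{\vec{\mu}} \leq^* q^* + \langle \vec{\mu}\rangle$ in $D$ --- is essentially the conclusion of the strong Prikry property itself, and it is not a formal consequence of deciding the existential statements $\psi_a$ via Theorem \ref{prikryomega}. (Your final amalgamation step, by contrast, is sound in outline: integrating $\vec{\mu}$-indexed data is exactly what Lemma \ref{integration2} does, and Lemma \ref{transitivity2} handles the reordering; but without the preceding step there is nothing to integrate.)

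The paper does not attempt any such soft reduction. Its proof of Theorem \ref{strongprikry1} runs the Prikry-property construction again in parallel, with $D$ in place of a Boolean value: it builds internally approachable models and $(N,\mathbb{Q})$-generic Cohen sequences, uses auxiliary dense sets $D_\mu$ and diagonal intersections (Lemma \ref{diagintersect}) to secure the maximality property $\star(q^{m+1})$ (``if some direct extension below a step extension enters $D$, the canonical one does''); it then applies the \emph{induction hypothesis} --- the strong Prikry property for the shorter forcings $\mathbb{P}_{m+1,\mu}$ --- to the auxiliary open dense sets $\bar{D}_\mu$ to extract $t(\mu)$ together with a lexicographically least finite set $\vec{n}^\mu$; next a Rowbottom-style partition argument (the function $F_\mu$, conditions S1/S2) shrinks the measure-one sets inside $t(\mu)$ so that membership in $D$ is uniform over the whole product, stabilizes $\vec{n}^\mu$ on a measure-one set of $\mu$'s, and only then invokes Lemma \ref{integration2} to assemble $p^{m+1}$. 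It is precisely this measure-one constancy argument, absent from your proposal, that converts generically available witnesses into the uniform statement that \emph{all} $|a|$-step extensions land in $D$; if you want to salvage your approach, you would have to replace the $\psi_a$-decision device by this kind of partition-and-shrink analysis at each coordinate.
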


\begin{proof}

 (Sketch) The proof has the same structure as in the proof of the Prikry property.
  We only emphasize the key different ingredients from the proof of the Prikry property. For more details, consult the proof of the Prikry property.
  We will also only prove for pure conditions. 
  The proof for arbitrary conditions can be modified as in the proof of the Prikry property for impure conditions.

Let $p$ be a pure condition.
Fix an open dense set $D$.
We will build a $\leq^*$-decreasing sequence $\langle p^m : m<\omega \rangle$, such that if there is $q \leq p^m$ with $q \in D$, $\max(\supp(q))=m$ such that for $r \leq q^m$ with $r \in D$, and $\max(\supp(r))=m$, we have $\supp(q) \leq \supp(r)$ in the usual well-ordering in $[OR]^{<\omega}$, then for every $\vec{\mu} \in \prod\limits_{n \in \supp(q)} A_n^{p^m}$, we have $p^m+\vec{\mu} \in D$.
  It will then be routine to check that a lower bound of the sequence $\langle  p^m : m<\omega \rangle$ will satisfy the condition for the strong Prikry property.
  Let $\mathbb{B}_n=\mathbb{B}^{E_n}(\lambda,\lambda_n^j)$.

\textbf{Construction of} $p^0$: Let

\begin{center}

$\mathbb{Q}_0 = \{ \langle f_n:n < \omega \rangle \in \prod_{n<\omega} \mathbb{B}_n \mid \langle \dom(f_n): n< \omega \rangle$ is $\subseteq$-increasing$\}$

\end{center}

where $\prod_{n<\omega} \mathbb{B}_n$ is a full support product, and

\begin{center}

$\mathbb{R}_0 =\{ (f,\vec{r}) \in \mathbb{B}_0 \times (\mathbb{P}_{\langle E_n: n>0 \rangle},\leq^*): \dom(f) \subseteq \dom(f_1^r)\}$,

\end{center}

where $f_1^r$ is the first Cohen part of $\vec{r}$. 
Fix a sufficiently large regular cardinal $\theta$. Build an elementary submodel $N_0 \prec H_\theta$ of size $\lambda$ which is the union of an internally approachable chain of length $\kappa_0$, $N_0$ is closed under ${<}\kappa_0$-sequences containing enough information. 
Let $\langle f_n^0: n < \omega \rangle$ be $(N_0,\mathbb{Q}_0)$-generic, and $\langle f_n^0:n< \omega \rangle \leq \langle f_n^p: n<\omega \rangle$.
Let $d_n^0=\dom(f_n^0)$.
Let $A_n^0 \in E_n(d_n^0)$ be a measure-one set projecting down to $A_n^p$ for all $n$.

Fix $\mu \in A_0^0$.
Define $D_\mu$ as the collection of $(f,\vec{r})\in \mathbb{R}_0$ such that $\dom(\mu) \subseteq \dom(f)$, and if there is $(g,\vec{r}^\prime) \leq_{\mathbb{R}_0} (f \oplus \mu, \vec{r})$ such that $\langle g \rangle^\frown \vec{r}^\prime \in D$, then $\langle f \oplus \mu \rangle^\frown \vec{r} \in D$.
Let $\pi: \mathbb{R}_0 \to \mathbb{Q}_0$ be the natural projection.
Let $D_\mu^\prime = \pi[D_\mu]$.
Then $D_\mu$ is a dense open subset of $\mathbb{Q}_0$ and is in $N_0$. Hence, $\langle f_n^0: n \in \omega \rangle \in D_\mu^\prime$.
Let  $\langle A_n^0(\mu):n>0 \rangle$ be such that $\langle f_0^0 \rangle^\frown \langle \langle f_n^0,A_n^0(\mu):n>0 \rangle \in D_\mu$.
For $n>0$, let $(A_n^0)^* = A_n^0 \cap \triangle_{\mu \in A_0^0}A_n^0(\mu)$.
Let $q^0= \langle f_0^0,A_0^0 \rangle^\frown \langle \langle f_n^0,(A_n^0)^* \rangle: n>0 \rangle$.
For $\mu \in A_0^0$, $n>0$, we have that $A_n^{q^0+\mu}\subseteq \{ \tau \in (A_n^0)^*: \dom(\mu) \cup \rge(\mu) \subseteq \dom(\tau)\} \subseteq A_n^0(\mu)$.
Thus, we have the following property for $q^0$ (call it $(\star(q^0))$: 

\begin{center}

$\boldsymbol{(\star(q^0))}$ for $\mu \in A_0^0$, if there are $f \leq (q^0+\mu)_0$ (which is $f_0^0 \oplus \mu$), and $\vec{r} \leq^* (q^0+\mu) \setminus 1$ such that $\langle f \rangle^\frown \vec{r} \in D$, then $q^0+\mu \in D$.

\end{center}

Shrink $A_0^0$ to a measure-one set $B$ so that 

\begin{enumerate}

\item either for every $\mu \in A_0^0$, $q^0+\mu \in D$,

\item or for every $\mu \in A_0^0$, $q^0+\mu \not \in D$.

\end{enumerate}

From $q^0$, shrink the measure-one set $A_0^0$ to $B$ and call the new condition $p^0$.
Here is the property of $p^0$: if there is $q \leq p^0$ such that $\supp(q)=\{0\}$ and $q \in D$, then for every $\mu \in A_0^{p^0}$, $p^0+\mu \in D$.

\textbf{Construction of} $p^{m+1}$: Suppose $p^m$ is constructed. Let

\begin{center}

$\mathbb{Q}_{m+1} = \{ \langle f_n:n >m \rangle \in \prod_{n>m} \mathbb{B}_n \mid \langle \dom(f_n): n>m \rangle$ is $\subseteq$-increasing $\}$

\end{center}

and,

\begin{center}

$\mathbb{R}_{m+1} =\{ (f,\vec{r}) \in \mathbb{B}_{m+1} \times (\mathbb{P}_{\langle E_n: n>m+1 \rangle},\leq^*): \dom(f) \subseteq \dom(f_{m+2}^r)\}$,

\end{center}

where the first coordinate of $\vec{r}$ is $\langle f_{m+2}^r,A_{m+2}^r \rangle$.

Let $N_{m+1}$ be the union of an internally approachable chain of elementary substructure of $H_\theta$ for sufficiently large regular $\theta$, the length of the chain is $\kappa_{m+1}$, where $N_{m+1}$ contains ``enough" information, $|N_{m+1}|=\lambda$, ${}^{<\kappa_{m+1}}N_{m+1} \subseteq N_{m+1}$.
Let $\langle f_n^{m+1}: n >m \rangle$ be $(N_{m+1},\mathbb{Q}_{m+1})$-generic, and $\langle f_n^{m+1}:n> m \rangle \leq_{\mathbb{Q}_{m+1}} \langle f_n^{p^m}: n>m \rangle$.
Let $d_n^{m+1}=\dom(f_n^{m+1})$.
Let $A_n^{m+1} \in E_n(d_n^{m+1})$ be a measure-one set projecting down to $A_n^{p^m}$ for all $n>m$.
Fix $\mu \in A_{m+1}^{m+1}$.
Let $\mathbb{P}_{m+1,\mu}= \mathbb{P}_{\langle e_{n,m+1}(\mu):n \leq m \rangle}$.
Define $D_\mu$ as the collection of $(f,\vec{r}) \in \mathbb{R}_{m+1} $ such that $\dom(\mu) \subseteq \dom(f)$, and for each $t \in \mathbb{P}_{m+1,\mu}$, if there is $(g,\vec{r}^\prime) \leq_{\mathbb{R}_{m+1}} (f \oplus \mu,\vec{r})$ such that $t^\frown \langle g \rangle^\frown \vec{r}^\prime \in D$, then $t^\frown \langle f\oplus \mu \rangle^\frown \vec{r} \in D$. 
Let $\pi:\mathbb{R}_{m+1} \to \mathbb{Q}_{m+1}$ be the natural projection, and $D_\mu^\prime = \pi[D_\mu]$.
The set $D_\mu$ is a dense, open subset of $\mathbb{R}_{m+1}$, and $D_\mu \in N_{m+1}.$ 
Hence, we have $\langle f_n^{m+1}: n >m \rangle\in D_\mu^\prime$.
Let $A_n^{m+1}(\mu)$ be such that $\langle f_{m+1}^{m+1} \rangle^\frown \langle \langle f_n^{m+1},A_n^{m+1}(\mu) \rangle: n>m+1 \rangle \in D_\mu$.
For $n>m+1$, let $(A_n^{m+1})^*=A_n^{m+1} \cap \triangle_{\mu \in A_{m+1}^{m+1}}A_n^{m+1}(\mu)$.
Let $q^{m+1}= (p^m \restriction (m+1))^\frown \langle f_{m+1}^{m+1},A_{m+1}^{m+1} \rangle^\frown \langle \langle f_n^{m+1}, (A_n^{m+1})^* \rangle: n>m+1 \rangle$.
For $\mu \in A_{m+1}^{m+1}$ and $n>m+1$, $A^{q^{m+1}+\mu}_n \subseteq A_n^{m+1}(\mu)$.
Thus, we have the following property called $(\star(q^{m+1}))$:

\begin{center}

$\boldsymbol{(\star(q^{m+1}))}$: For $\mu \in A_{m+1}^{m+1}$ and $t \in \mathbb{P}_{m+1,\mu}$, if there are $g \leq f_{m+1}^q \oplus \mu$ and $\vec{r} \leq^* (q^{m+1}+\mu) \setminus (m+2)$ such that $t^\frown \langle g \rangle^\frown \vec{r} \in D$, then $t^\frown (q^{m+1}+\mu) \setminus (m+1) \in D$. 

\end{center}

Fix $\mu \in A_{m+1}^{m+1}$.
Outside $N_{m+1}$, let $\bar{D}_\mu$ be the collection of
$t \in \mathbb{P}_{m+1,\mu}$ such that either

\begin{center}
$t^\frown ((q^{m+1}+\mu) \setminus (m+1)) \in D$,
\end{center}

or for all $g \leq f_{m+1}^{q^{m+1}} \oplus \mu$,  $\vec{r} \leq^* (q^{m+1}+\mu) \setminus (m+2)$,
\begin{center}
$t ^\frown \langle g \rangle^\frown \vec{r} \not \in D$.
\end{center}
 
 We can use the property of $D_{\mu}$ to show that $\bar{D}_\mu$ is open dense (in fact, $\bar{D}_\mu=\mathbb{P}_{m+1,\mu})$.
  Use the induction hypothesis to find $t(\mu) \leq^* (q^{m+1}+\mu) \restriction (m+1)$, with the least $[\omega]^{<\omega}$-element in the lexicographic order $\vec{n}^\mu=\{n_0,\dots, n_{k(\mu)-1}^\mu\}$ such that $n^\mu_0< \dots <n^\mu_{k(\mu)-1}$,  such that for every $\vec{\tau} \in \prod\limits_{n \in \vec{n}^\mu}A_n^{t(\mu)}$, we have $t(\mu)+\vec{\tau} \in \bar{D}_\mu$.

For each $\mu \in A_{m+1}^{m+1}$, define
$F_\mu:A_{n_0^\mu}^{t(\mu)} \times \dots  \times A_{n_{k(\mu)-1}^\mu}^{t(\mu)} \to 2$ by $F_{\mu}(\tau_0, \dots, \tau_{k(\mu)-1})=1$ if and only if 

\begin{center}
$(t(\mu) + \langle \tau_0, \dots , \tau_{k(\mu)-1} \rangle)^\frown (q^{m+1}+\mu) \setminus (m+1)  \in D$.
\end{center}

We have a measure one set $B_{n^\mu_i}^{t(\mu)} \subseteq A_{n_i^\mu}^{t(\mu)}$ for all $i<k(\mu)$ such that
$F \restriction B_{n_0^\mu}^{t(\mu)} \times \dots B_{n_{k(\mu)-1}^\mu}^{t(\mu)}$ is constant. Shrink the measure one sets $A_{n_0^\mu}^{t(\mu)}, \dots, A_{n_{k(\mu)-1}^\mu}^{t(\mu)}$ inside $t(\mu)$ to $B_{n_0^\mu}^{t(\mu)}, \dots, B_{n_{k(\mu)-1}^\mu}^{t(\mu)}$, respectively.
Call the resulting condition $t^*(\mu)$. By the shrinking of measure one sets in $t(\mu)$, we have arranged that

\begin{enumerate}[label=(S\arabic*)]

\item \label{S1} either ($t^*(\mu) + \langle \tau_0, \dots, \tau_{k(\mu)-1} \rangle)^\frown (q^{m+1}+\mu) \setminus (m+1) \in D$ for all $\vec{\tau}$ in the product of measure-one sets $B_{n_i^\mu}^{t(\mu)}$,
\item \label{S2} or for all $\vec{\tau}$ in the product of measure-one sets $B_{n_i^\mu}^{t(\mu)}$, there are no $g \leq f_{m+1}^{m+1} \oplus \mu$, and $\vec{r} \leq^* (q^{m+1}+\mu) \setminus (m+2)$ such that $(t^*(\mu) + \langle \tau_0, \dots, \tau_{k(\mu)-1}) \rangle\frown \langle g \rangle^\frown \vec{r} \in D$.
\end{enumerate}

Shrink $A_{m+1}^{q^{m+1}}$ further so that every $\mu$ satisfies \ref{S1}, or every $\mu$ satisfies \ref{S2}. If every $\mu$ satisfies \ref{S1}, shrink further so that there is a sequence $\vec{n}_{m+1}$ such that for every $\mu \in A_{m+1}^\prime$, $\vec{n}_\mu:=\langle n_0^{t(\mu)}, \dots, n_{k(\mu)-1}^{t(\mu)} \rangle=\vec{n}_{m+1}$.

Observe that $t^*(\mu) \leq^* (p^m + (\mu \restriction \dom(f_{m+1}^{p^m}))) \restriction (m+1)$. 
Use Lemma \ref{integration2} to integrate these components (with $t^*(\mu)$, not $t(\mu)$) together to form a condition $p^{m+1} \leq^* p^m$. Hence, $f_{m+1}^{p^{m+1}} \leq f_{m+1}^{m+1}$, $A_{m+1}^{p^{m+1}}$ projects down to $A_{m+1}^{m+1}$, and $\tau \in A_{m+1}^{p^{m+1}}$ with $\mu=\tau \restriction \dom(f_{m+1}^{p^m})$, we have that $(p^{m+1}+\tau) \restriction (m+1) = t^*(\mu)$  and for $(p^{m+1}+\tau) \setminus (m+1) \leq^* (q^{m+1}+\mu) \setminus (m+1)$.
This completes the construction of $p^{m+1}$.
Here is what we have: if $q$ is an extension of $p^{m+1}$ such that $\supp(q)$ is the least in the lexicographic order in $[\omega]^{<\omega}$, $\max(\supp(q))=m+1$, and $q \in D$, then every extension $q^{\prime}$ of $p^{m+1}$ with $\supp(q^{\prime})=\supp(q)$ is in $D$. 
Now let $p^*$ be a $\leq^*$-lower bound of $\langle p^n: n<\omega \rangle$.

\begin{claim} 
\label{finalcondstrong}
$p^*$ satisfies the strong Prikry property.
\end{claim}

\begin{claimproof}{(Claim \ref{finalcondstrong})}

(sketch) Let $q\leq p^*$ with $q \in D$. Assume $q$ is not pure with the least $\supp(q)$ in the lexicographic order in $[\omega]^{< \omega}$, meaning if there is $r \leq p^*$ with $r \in D$, then $\supp(q) \leq \supp(r)$.
 Enumerate $\supp(q)$ in increasing order as $n_0<\dots < n_{k-1}$. If $n_{k-1}=0$, then the proof is easy. Assume $n_{k-1}=m+1$. Using the notations from the construction of $p^{m+1}$, we have that for every $\tau \in A^{q}_{m+1}$, $\tau \restriction d_{m+1}^{m+1}$ satisfies the property \ref{S1}, and $\vec{n}_{m+1}=\langle n_0, \dots,n_{k-2} \rangle$. By the way we shrank $A_{m+1}^{m+1}$, for every $\vec{\mu}\in \prod\limits_{n \in \vec{n}_{m+1} \cup \{m+1\}} A_n^{p^{m+1}}$, we have $p^{m+1}+ \vec{\mu} \in D$.
 This implies that for $\vec{\mu} \in \prod_{n\in \vec{n}_{m+1} \cup \{m+1\}} A_n^{p^*}$, $p^*+\vec{\mu} \in D$.
 Hence, the proof is done.
 
\end{claimproof}{(Claim \ref{finalcondstrong})}

This completes the proof of Theorem \ref{strongprikry1}.

\end{proof}

\section{forcing with arbitrarily many extenders}
\label{forcinganyextender}

We state without proofs in this section.
All the proofs are close to the proofs in Section \ref{forcingomegaextender}.
The structure of the proof of the Prikry property is almost the same as the proof in Section \ref{forcingomegaextender}: for each condition $p$ and a Boolean value $b$, build a $\leq^*$-decreasing sequence $\langle p^\alpha: \alpha<\eta \rangle$.
The construction of $p^\alpha$ for successor $\alpha$ is similar to the construction of $p^{m+1}$ in Theorem \ref{prikryomega}.
If $\alpha$ is limit, we only take $p^\alpha$ as a $\leq^*$-lower bound of $\langle p^\beta: \beta<\alpha \rangle$.
Assume $\eta>0$ is an arbitrary ordinal and the result from Theorem \ref{deriveext} holds for $\eta$.

\begin{defn}

A forcing $\mathbb{P}_{\langle E_\alpha: \alpha<\eta \rangle}$ consists of conditions $p$, and a {\em support of $p$}, which is $\supp(p) \in [\eta]^{<\omega}$, of the form $p=\langle p_\alpha: \alpha< \eta \rangle$, where

\begin{align*}
p_\alpha=
\begin{cases}
\langle f_\alpha \rangle & \text{ if } \alpha \in \supp(p), \\
\langle f_\alpha ,A_\alpha \rangle & \text{ otherwise},
\end{cases}
\end{align*}
and for each $\alpha<\eta$ such that $\supp(p) \setminus (\alpha+1) \neq \emptyset$, let $\alpha^*=\min(\supp(p) \setminus (\alpha+1))$, then the following hold:

\begin{enumerate}

\item for each $\alpha$ where $\alpha^*$ does not exist,

\begin{itemize}
  
\item if $\alpha \in \supp(p)$ (which is exactly when $\alpha=\max(\supp(p))$), then $f_\alpha\in \mathbb{B}^{E_\alpha}(\lambda,\lambda_\alpha^j)$,  $d_\alpha:=\dom(f_\alpha)$ is an $\alpha$-domain with respect to $E_\alpha$, and if $\alpha>0$, $f_\alpha(\kappa_\alpha)$ is $\alpha$-reflected for the sequence $\langle E_\beta: \beta \in [\max\{0,\max(\supp(p) \cap \alpha)\},\alpha) \rangle$

\item if $\alpha \not \in \supp(p)$ (which is exactly when $\alpha>\max(\supp(p))$, 

\begin{itemize}

\item $f_\alpha \in\mathbb{B}^{E_\alpha}(\lambda,\lambda_\alpha^j)$.

\item Let $d_\alpha=\dom(f_\alpha)$ is an $\alpha$-domain with respect to $E_\alpha$, and $A_\alpha \in E_\alpha(d_\alpha)$.

\end{itemize}

\end{itemize}

\item for each $\alpha$ where $\alpha^*$ exist, then 

\begin{itemize}

\item if $\alpha \in \supp(p)$ (which is exactly when $\alpha=\max(\supp(p) \cap \alpha^*))$), $f_\alpha \in \mathbb{B}^{e_{\alpha,\alpha^*}(f_{\alpha^*})}(\lambda_{\alpha^*}(f_{\alpha^*}),\lambda_{\alpha,\alpha^*}^j(f_{\alpha^*}))$,  $d_\alpha:=\dom(f_\alpha)$ is an $\alpha$-domain with respect to $e_{\alpha,\alpha^*}(f_{\alpha^*})$, and if $\alpha>0$, then $f_\alpha(\kappa_\alpha)$ is $\alpha$-reflected for the sequence $\langle e_{\beta,\alpha^*}^j(f_{\alpha^*}): \beta \in [\max\{0,\max(\supp(p) \cap \alpha)\},\alpha) \rangle$.
Recall that $\lambda_{\alpha^*}(f_{\alpha^*})=s_{\alpha^*}(f_{\alpha^*}(\kappa_{\alpha^*}))$ where $j_{E_{\alpha^*}}(s_{\alpha^*})(\kappa_{\alpha^*})=\lambda$, $\lambda_{\alpha,\alpha^*}^j(f_{\alpha^*})=t_{\alpha^*}^n(f_{\alpha^*}(\kappa_{\alpha^*}))$ where $j_{E_{\alpha^*}}(t_{\alpha^*}^\alpha)(\kappa_{\alpha^*})=\lambda_\alpha^j$ and $\lambda_\alpha^j=j_{E_\alpha}(\lambda)$, and $e_{\alpha,\alpha^*}(f_{\alpha^*})=E_\alpha \restriction t_{\alpha^*}^\alpha(f_{\alpha^*}(\kappa_{\alpha^*}))$.

\item if $\alpha \not \in \supp(p)$ (which is exactly when $\alpha \in (\max(\supp(p) \cap \alpha^*),\alpha^*)$ and $\max(\emptyset)=-1$), 

\begin{itemize}

\item $f_\alpha \in \mathbb{B}^{e_{\alpha,\alpha^*}(f_{\alpha^*})}(\lambda_{\alpha^*}(f_{\alpha^*}),\lambda_{\alpha,\alpha^*}^j(f_{\alpha^*}))$, and $d_\alpha:=\dom(f_\alpha)$ is an $\alpha$-domain with respect to $e_{\alpha,\alpha^*}(f_{\alpha^*})$.

\item $\rge(f_\alpha \restriction j_{e_{\alpha,\alpha^*}(f_{\alpha^*})}(\kappa_\alpha)) \subseteq \kappa_\alpha$.

\item $A_\alpha \in e_{\alpha,\alpha^*}(f_{\alpha^*})(d_\alpha)$.

\end{itemize}

\end{itemize}

\item For $\max(\supp(p)) \leq \beta < \alpha < \eta$, $d_\beta \subseteq d_\alpha$.

\item If $\beta<\eta$ is such that $\beta^*$ exists, then for each $\beta<\alpha<\beta^*$, $d_\beta \subseteq d_\alpha$.

\item if $\alpha<\eta$ is limit and $\alpha \in \supp(p)$, then 

\begin{itemize}

\item if $\alpha^*$ does not exist, then $f_\alpha(\lambda)=s_\alpha(f_\alpha(\kappa_\alpha))$, and for $\beta<\alpha$, $f_\alpha(\lambda_\beta^j)=t_\alpha^\beta(f_\alpha(\kappa_\alpha))$.
Also, $f_\alpha(\bar{\lambda}_\alpha^j)=u_\alpha^\alpha(f_\alpha(\kappa_\alpha))$, which is greater than $t_\alpha^\beta(f_\alpha(\kappa_\alpha))$ for all $\beta<\alpha$ (recall $j_{E_{\alpha}}(u_\alpha^\alpha)(\kappa_\alpha)=\bar{\lambda}_\alpha^j=\sup_{\beta<\alpha}(\lambda_\beta^j)$, where $\lambda_\beta^j=j_{E_{\beta}}(\lambda)$.

\item if $\alpha^*$ exists, then
$f_\alpha(\lambda_{\alpha^*}(f_{\alpha^*}))=s_\alpha(f_\alpha(\kappa_\alpha))$, and for $\beta<\alpha$, $f_\alpha(\lambda_{\beta,\alpha^*}^j)=t_\alpha^\beta(f_\alpha(\kappa_\alpha))$.
Also, $f_{\alpha}(\bar{\lambda}_{\alpha,\alpha^*}^j)=u_\alpha^\alpha(f_\alpha(\kappa_\alpha))$, which is greater than $t_\alpha^\beta(f_\alpha(\kappa_\alpha))$ for all $\beta<\alpha$.

\end{itemize}

\end{enumerate}

\end{defn}

A condition $p$ is {\em pure} if $\supp(p)=\emptyset$.
Otherwise, $p$ is said to be {\em impure}.

\begin{defn}

For $p,q \in \mathbb{P}$.
We say that $p$ is a {\em direct extension} of $q$, denoted by $p \leq^*q$ if

\begin{enumerate}

\item $\supp(p)=\supp(q)$.

\item for all $\alpha<\eta$, $f_\alpha^p\leq f_\alpha^q$.

\item for $\alpha \not \in \supp(p)$, $A_\alpha^p \restriction d_\alpha^q \subseteq A_\alpha^q$.

\end{enumerate}

\end{defn}

Notice that if $p \leq^*q$, then for $\alpha \in \supp(p)$, $f_\alpha^p(\kappa_\alpha)=f_\alpha^q(\kappa_\alpha)$, so all parameters defined from $f_\alpha^p(\kappa_\alpha)$ are the same as those defined from $f_\alpha^q(\kappa_\alpha)$. 

\begin{defn}

Let $p \in \mathbb{P}$ and $\alpha \not \in \supp(p)$.
Let $\mu \in A_\alpha^p$ be $\langle p_\beta: \beta \in [\max(\supp(p) \cap \alpha),\alpha) \rangle$-squishable.
The {\em one-step extension of $p$ by $\mu$} is the condition $q$, denoted by $p+\mu$, such that

\begin{enumerate}

\item $\supp(q)=\supp(p) \cup \{\alpha\}$.

\item for $\beta<\max(\supp(p) \cap \alpha)$, $q_\beta=p_\beta$.

\item if $\alpha^*=\min(\supp(p) \setminus (\alpha+1))$ exists, then for $\beta \in (\alpha,\alpha^*)$, $q_\beta=\langle f_\beta^p, B_\beta \rangle$ where $B_\beta = \{\tau \in A_\beta^p: \tau$ is $\langle f_\alpha^p,A_\alpha^p \rangle$-squishable, and $\dom(\mu) \cup \rge(\mu)  \subseteq \dom( \tau)\}$, and for $\beta \geq \alpha^*$, $q_\beta=p_\beta$.

\item if $\alpha^*$ does not exist, then for $\beta>\alpha$, $q_\beta=\langle f_\beta^p, B_\beta \rangle$ where $B_\beta = \{\tau \in A_\beta^p: \tau$ is $\langle f_\alpha^p,A_\alpha^p \rangle$-squishable and $\dom(\mu) \cup \rge(\mu)  \subseteq \dom( \tau)\}$. 

\item $f_\alpha^q=f_\alpha^p \oplus \mu$.

\item for $\beta \in [\max(\supp(p) \cap \alpha),\alpha)$, $f_\beta^q=\mu \circ f_\beta^p \circ  \mu^{-1}$ and $A_\beta^q=\mu \circ (A_\beta^p)_\mu \circ \mu^{-1}$.

\end{enumerate}

\end{defn}

We define an $n$-step extension recursively as follows:
$p$ is an {\em $n$-step extension of $q$} for $n>1$ if $p=(q+\langle \mu_0, \cdots, \mu_{n-2} \rangle) + \mu_{n-1}$, under the condition that for $i<n$, $\mu_i$ is legitimate to perform a $1$-step extension into $ q+\langle \mu_0, \cdots, , \mu_{i-1} \rangle$.
Define $p \leq q$ if $p$ is a direct extension of some $n$-step extension of $q$ ($n$ can be $0$).

\begin{lemma}
\label{transitivity3}
\begin{enumerate}

\item Let $\alpha<\eta$.
Let $p$ be a condition such that $\alpha \not \in \supp(p)$ and $q \leq^* p$.
Suppose $\mu \in A_\alpha^q$, $q+\mu$ is valid.
Then $q+\mu \leq^* p+(\mu \restriction d_\alpha^p)$.

\item The ordering $\leq$ is transitive.

\item \label{commute2} Suppose $p$ is pure and $q$ is a $n$-step extension of $p$ at coordinates $\alpha_0< \dots < \alpha_{n-1}$.
Then there are $\mu_i \in A_{\alpha_i}^p$ for $i<n$ such that $q=p+ \langle \mu_0, \dots, \mu_{n-1} \rangle$.
As a consequence, the order of the objects we use to extend $p$ to $q$ can be commuted modulo squishing.

\end{enumerate}

\end{lemma}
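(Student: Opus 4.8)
The plan is to follow the template of Lemma~\ref{feature} and Lemma~\ref{transitivity2}, with item (1) doing the real work and items (2) and (3) deduced from it. For (1), I would set $\tau = \mu \restriction d_\alpha^p$ and first check that $\tau$ is a legitimate object over $p$: since $q \leq^* p$ forces $\supp(q) = \supp(p)$ and $A_\alpha^q \restriction d_\alpha^p \subseteq A_\alpha^p$, the object $\tau$ lies in $A_\alpha^p$; as $\kappa_\alpha \in d_\alpha^p$ it agrees with $\mu$ at $\kappa_\alpha$, so $\tau$ determines the same reflected ordinal $\mu(\kappa_\alpha) = \tau(\kappa_\alpha)$ and hence the same restricted extenders $e_{\beta,\alpha}$ governing the conjugations; and $\tau$ is squishable with respect to $\langle p_\beta : \beta \in [\max(\supp(p)\cap\alpha),\alpha)\rangle$ because $\mu$ is squishable over the corresponding data of $q$ and the $q_\beta$ project to the $p_\beta$. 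Then I would compare $q+\mu$ with $p+\tau$ coordinate by coordinate. The supports agree; at $\alpha$ one has $f_\alpha^q \oplus \mu \leq f_\alpha^p \oplus \tau$ from $f_\alpha^q \leq f_\alpha^p$ and $\mu \supseteq \tau$; on the conjugated coordinates $\beta \in [\max(\supp(p)\cap\alpha),\alpha)$ the relations $f_\beta^q \leq f_\beta^p$ and $A_\beta^q \restriction d_\beta^p \subseteq A_\beta^p$ push through the conjugations by $\mu$ and $\tau$ to give $\mu \circ f_\beta^q \circ \mu^{-1} \leq \tau \circ f_\beta^p \circ \tau^{-1}$ and the corresponding projection of measure-one sets; on the coordinates above $\alpha$ (below $\alpha^*$ when it exists) the shrunk sets $A_\beta^{q+\mu}$ project into $A_\beta^{p+\tau}$, since $\dom(\mu)\cup\rge(\mu) \supseteq \dom(\tau)\cup\rge(\tau)$ and squishability over $q$-data implies squishability over $p$-data; and elsewhere the relation $q+\mu \leq^* p+\tau$ is inherited directly from $q \leq^* p$.

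For (2) I would derive transitivity of $\leq$ by pushing objects down through (1). Given $p \leq q \leq r$, the case $p \leq^* q$ is immediate, so write $p \leq^* q + \langle \mu_0,\dots,\mu_{n-1}\rangle$ and $q \leq^* r + \langle \nu_0,\dots,\nu_{m-1}\rangle$. Applying (1) repeatedly, each $\mu_i$ restricted to the appropriate $r$-domain is a legitimate object over the matching extension of $r$ and induces a one-step extension that again direct-extends; iterating shows that $q + \langle \mu_0,\dots,\mu_{n-1}\rangle$ is a direct extension of an $(n+m)$-step extension of $r$, whence $p$ is a direct extension of an $(n+m)$-step extension of $r$ and $p \leq r$.

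For (3), with $p$ pure and $q$ an $n$-step extension at coordinates $\alpha_0 < \cdots < \alpha_{n-1}$, I would induct on $n$ to produce $\mu_i \in A_{\alpha_i}^p$ with $q = p + \langle \mu_0,\dots,\mu_{n-1}\rangle$ and to show the order of extensions is immaterial modulo squishing. The base case $n \leq 1$ is trivial, and the inductive step reduces to swapping two adjacent coordinates $\alpha_j < \alpha_{j+1}$, since no other coordinate is affected by such a swap. This swap is exactly the two-coordinate computation isolated in Lemma~\ref{feature}: the governing identity $\mu_{j+1} \circ (f \oplus \mu_j) \circ \mu_{j+1}^{-1} = (\mu_{j+1} \circ f \circ \mu_{j+1}^{-1}) \oplus (\mu_{j+1} \circ \mu_j \circ \mu_{j+1}^{-1})$ holds because $\dom(\mu_j) \cup \rge(\mu_j) \subseteq \dom(\mu_{j+1})$ and $\mu_{j+1}$ is squishable with respect to the lower data, and the measure-one sets transform in the same way on both sides; the individual objects $\mu_i$ are then pinned down by $q$ together with the squishing prescription, giving uniqueness. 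The hard part will be precisely this commutativity: one must verify that the conjugations carried out at each of the possibly many intermediate coordinates lying strictly between two support points compose consistently under an arbitrary permutation of the extension order, and that the shrinkings of the measure-one sets, which depend on the full squished data $\langle f_{\alpha_i}^p, A_{\alpha_i}^p\rangle$, agree across all orderings. Once the adjacent-transposition identity above is checked uniformly in the coordinate, the general statement follows from the decomposition of an arbitrary permutation into adjacent transpositions, exactly as in the countable case of Lemma~\ref{transitivity2}.
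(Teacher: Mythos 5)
Your proposal is correct and follows essentially the same route as the paper: the paper states Lemma \ref{transitivity3} without proof, deferring to Lemma \ref{transitivity2} and ultimately to Lemma \ref{feature}, and your argument is exactly that template — the restriction $\tau = \mu \restriction d_\alpha^p$ with coordinate-wise comparison for (1), iterated pushing-down of objects for (2), and the conjugation identity $\mu_{j+1} \circ (f \oplus \mu_j) \circ \mu_{j+1}^{-1} = (\mu_{j+1} \circ f \circ \mu_{j+1}^{-1}) \oplus (\mu_{j+1} \circ \mu_j \circ \mu_{j+1}^{-1})$ together with decomposition into transpositions for (3), matching the paper's discussion of commuting extensions modulo squishing.
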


\begin{lemma}

$\mathbb{P}$ is $\lambda^{++}$-c.c.

\end{lemma}

If $p \in \mathbb{P}$ and $\alpha \in \supp(p)$, then $\mathbb{P} / p$ factors into two posets $\mathbb{P}_0=((\mathbb{P}/p) \restriction \alpha)_{\langle e_{\beta,\alpha}(f_\alpha^p)): \beta<\alpha \rangle}$ and $\mathbb{P}_1=(\mathbb{P}/p) \setminus \alpha$.

\begin{lemma}
\label{factorization}
$\mathbb{P}_0$ is $\lambda_\alpha(f_\alpha^p)^{++}$-c.c.
If $ \alpha+1 \not \in \supp(p)$,  $(\mathbb{P}_1,\leq^*)$ is $\kappa_{\alpha+1}$-closed.
If $\alpha+1 \in \supp(p)$, $(\mathbb{P}_1,\leq^*)$ is $\lambda_{\alpha+1}(f_{\alpha+1}^p)^+$-closed.

\end{lemma}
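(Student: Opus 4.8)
The plan is to establish the two halves --- the chain condition of $\mathbb{P}_0$ and the closure of $(\mathbb{P}_1,\leq^*)$ --- separately, the whole argument turning on the single inequality $\lambda>\bar{\kappa}_\eta$. Since every $\kappa_\gamma$ lies below $\bar{\kappa}_\eta$, and $\lambda>\bar{\kappa}_\eta\geq\kappa_{\alpha+1}$, a $\lambda^+$-closed poset is automatically $\kappa_{\alpha+1}$-closed; more generally, a coordinate whose local value of $\lambda$ is $\gamma_0$ contributes a Cohen factor of the form $\mathbb{A}(\gamma_0,\gamma_1)$, which is $\gamma_0^+$-closed. These two observations are exactly what keep the Cohen coordinates from ever being the obstruction to the closure of $\mathbb{P}_1$, while still yielding a sharp chain condition for $\mathbb{P}_0$.

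For the chain condition of $\mathbb{P}_0$ I would rerun, with the parameters scaled down, the $\Delta$-system argument that shows $\mathbb{P}$ is $\lambda^{++}$-c.c. Put $\nu=\lambda_\alpha(f_\alpha^p)=s_\alpha(f_\alpha^p(\kappa_\alpha))$; because $f_\alpha^p(\kappa_\alpha)$ is $\alpha$-reflected (Definition \ref{reflect}), $\nu$ is regular with $\bar{\kappa}_\alpha<\nu<\kappa_\alpha$, and every coordinate $\beta<\alpha$ of $\mathbb{P}_0$ carries the \emph{same} local value $\nu$ of $\lambda$ (since $j_{e_\beta}(s_\beta)(\kappa_\beta)=s_\alpha(f_\alpha^p(\kappa_\alpha))=\nu$), its Cohen part ranging into $\nu$ with domain in $[\lambda_{\beta,\alpha}^j(f_\alpha^p)]^{\nu}$. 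Given $\{q^\xi:\xi<\nu^{++}\}$, first shrink to a common support $a\subseteq\alpha$; if $a\neq\emptyset$ the part of $q^\xi$ below $\max(a)$ lives in $V_{\kappa_{\max(a)}}$, a set of size $\kappa_{\max(a)}<\nu$, so shrink again to assume every $q^\xi$ is pure. The footprint $\bigcup_{\beta<\alpha}d_\beta^{q^\xi}$ then has size $\nu$ (here one uses $|\alpha|\leq\eta<\kappa_0\leq\bar{\kappa}_\alpha<\nu$), so by GCH at $\nu$ the $\Delta$-system lemma applies and we may assume the footprints form a $\Delta$-system. Matching the Cohen parts on the (size $\leq\nu$) root costs $\nu^{\nu}=\nu^+$ possibilities per coordinate, hence $\leq\nu^+<\nu^{++}$ overall, so a final shrinking makes all Cohen parts pairwise compatible; the measure-one components are always compatible by intersecting and projecting. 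Two compatible conditions result, giving $\nu^{++}$-c.c. (When $\alpha=0$ the poset $\mathbb{P}_0$ is trivial and there is nothing to prove.)

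For the closure of $(\mathbb{P}_1,\leq^*)$ I would form the lower bound of a $\leq^*$-decreasing sequence $\langle q^i:i<\theta\rangle$ coordinate by coordinate: union the Cohen parts and intersect-and-project the measure-one sets. This is legitimate because along a $\leq^*$-chain the values $f_\beta^{q^i}(\kappa_\beta)$ are constant for $\beta\in\supp(p)$, so all conjugated extenders $e_{\gamma,\beta}(\cdot)$, local parameters $\lambda_\beta(\cdot)$ and ambient ultrafilters are fixed along the chain, and the coordinate-wise operations stay inside one Cohen poset and one ultrafilter. It then remains to locate the least closed factor. If $\alpha+1\notin\supp(p)$, then coordinate $\alpha+1$ carries a measure-one set for a $\kappa_{\alpha+1}$-complete ultrafilter, whereas every Cohen coordinate $\beta\geq\alpha$ has local value of $\lambda$ equal to $\lambda$ or to $\lambda_{\alpha^*}(f_{\alpha^*}^p)$ for $\alpha^*=\min(\supp(p)\setminus(\alpha+1))\geq\alpha+2$, and both exceed $\kappa_{\alpha+1}$ (the latter because $\lambda_{\alpha^*}(f_{\alpha^*}^p)>\bar{\kappa}_{\alpha^*}\geq\kappa_{\alpha+1}$); hence each Cohen factor is more than $\kappa_{\alpha+1}$-closed and the bottleneck is the $\kappa_{\alpha+1}$-complete ultrafilter, giving $\kappa_{\alpha+1}$-closure. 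If instead $\alpha+1\in\supp(p)$, then coordinate $\alpha$ is governed by $\alpha^*=\alpha+1$, so its Cohen part lies in $\mathbb{B}^{e_{\alpha,\alpha+1}(f_{\alpha+1}^p)}(\lambda_{\alpha+1}(f_{\alpha+1}^p),\lambda_{\alpha,\alpha+1}^j(f_{\alpha+1}^p))$ and is only $\lambda_{\alpha+1}(f_{\alpha+1}^p)^+$-closed, while every other factor is more closed: the neighbouring Cohen coordinate $\alpha+1$ has a strictly larger local $\lambda$, and the first non-support coordinate is $\geq\alpha+2$ with a $\kappa_{\alpha+2}$-complete ultrafilter, where $\kappa_{\alpha+2}>\lambda_{\alpha+1}(f_{\alpha+1}^p)^+$. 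This yields $\lambda_{\alpha+1}(f_{\alpha+1}^p)^+$-closure. (If $\alpha+1\geq\eta$ the poset $\mathbb{P}_1$ is a single top Cohen factor and is simply $\lambda^+$-closed.)

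The genuinely delicate step is the bookkeeping in the last paragraph: one must confirm that among all coordinates $\geq\alpha$ the minimal closure is realized exactly where claimed, which reduces to checking the chain of inequalities relating the $\kappa_\gamma$, the global $\lambda$, and the local values $\lambda_{\alpha^*}(f_{\alpha^*}^p)$, all of which rest on the range constraints $\rge(s_\gamma),\rge(t_\gamma^\delta),\rge(u_\gamma^\delta)\subseteq(\bar{\kappa}_\gamma,\kappa_\gamma)$ together with $\lambda>\bar{\kappa}_\eta$. The remaining verifications --- that the coordinate-wise lower bound is a legitimate condition of $\mathbb{P}_1$ (increasing domains, the constraint $\rge(f\restriction\kappa_\beta^{j})\subseteq\kappa_\beta$ surviving unions, and the measure-one sets projecting correctly) --- are routine and parallel to the computations in Lemma \ref{integration2}.
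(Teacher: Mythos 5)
Your proposal is correct and follows essentially the same route as the paper: the paper (which states this lemma without proof in Section \ref{forcinganyextender}, deferring to the $\eta=\omega$ case) argues exactly by identifying the least-closed factor --- the $\kappa_{\alpha+1}$-complete ultrafilter at coordinate $\alpha+1$ when $\alpha+1\notin\supp(p)$, and the squished Cohen poset $\mathbb{B}^{e_{\alpha,\alpha+1}(f_{\alpha+1}^p)}(\lambda_{\alpha+1}(f_{\alpha+1}^p),\lambda_{\alpha,\alpha+1}^j(f_{\alpha+1}^p))$ otherwise --- and obtains the chain condition from the same standard $\Delta$-system argument used for $\lambda^{++}$-c.c.\ of $\mathbb{P}$ and in the two-extender factorization proposition. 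Your write-up simply supplies the parameter bookkeeping (the common local value $\nu=\lambda_\alpha(f_\alpha^p)$, the inequalities from the reflected ranges, and the compatibility of measure-one parts by intersecting and projecting) that the paper leaves implicit.
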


\begin{lemma}

Let $p \in \mathbb{P}$ and $\alpha \not \in \supp(p)$.
Let $f^\prime \leq f_\alpha^p$ with $d^\prime=\dom(f^\prime)$. 
Assume that if $\alpha^*=\min(\supp(p) \setminus (\alpha+1))$ exists, $A^\prime \in e_{\alpha,\alpha^*}(f_{\alpha^*})(d^\prime)$, and if $\alpha^*$ does not exist, $A^\prime \in E_\alpha(d^\prime)$.
Fix $f_\beta$ for $\beta>\alpha$ such that $f_\beta \leq f_\beta^p$ and $d_\beta:=\dom(f_\beta) \supseteq d^\prime$, and if $\alpha<\beta<\beta^\prime$, $\dom(f_\beta) \subseteq \dom(f_{\beta^\prime})$.
Suppose that for each $\mu \in A^\prime$, there is a condition $t(\mu) \leq^* (p+ (\mu \restriction d_\alpha^p)) \restriction \alpha$, and there is $\vec{r}(\mu)=\langle f_\beta,A_\beta(\mu): \beta>\alpha\rangle$ such that $A_\beta(\mu)$ is of measure-one with respect to $\dom(f_\beta)$. 
Then there is a condition $q \leq^* p$ such that 

\begin{enumerate}

\item $f_\alpha^q \leq f^\prime$ and $A_\alpha^q$ projects down to $A^\prime$.

\item for $\tau \in A_\alpha^q$ with $\mu= \tau \restriction A^\prime$, we have that 

\begin{itemize}

\item $(q+\tau) \restriction \alpha = t(\mu)$.

\item $(q+\tau) \setminus (\alpha+1) \leq^* \vec{r}(\mu)$.

\end{itemize}

\end{enumerate}

\end{lemma}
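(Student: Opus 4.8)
The plan is to follow the two established integration lemmas (Lemma \ref{integration1} and Lemma \ref{integration2}) and to emphasize only the points that are genuinely new for an arbitrary $\eta$, namely the possibility that $\alpha$ is a limit ordinal and that the extender sitting at coordinate $\alpha$ may be a proper restriction. Write $\mathcal{E}$ for the extender governing coordinate $\alpha$: if $\alpha^*=\min(\supp(p)\setminus(\alpha+1))$ fails to exist, set $\mathcal{E}=E_\alpha$, and otherwise set $\mathcal{E}=e_{\alpha,\alpha^*}(f_{\alpha^*})$, so that in either case $A^\prime\in\mathcal{E}(d^\prime)$, the map $j_{\mathcal{E}}$ and the collapse $\mc_\alpha(d)=(j_{\mathcal{E}}\restriction d)^{-1}$ are defined, and (by the remark following Definition \ref{domain}) the whole conjugation calculus of Lemmas \ref{squishablemeasure}, \ref{squishablefunction} and \ref{aux} applies verbatim. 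As in the earlier lemmas we may assume $p\restriction\alpha$ is pure, the general support below $\alpha$ being absorbed into the conditions $t(\mu)$ and changing only the notation.

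The first step integrates the parts below $\alpha$. Put $p^*_{<\alpha}:=j_{\mathcal{E}}(\mu\mapsto t(\mu))(\mc_\alpha(d^\prime))$. From the hypothesis $t(\mu)\leq^*(p+(\mu\restriction d_\alpha^p))\restriction\alpha$ and elementarity,
\[
p^*_{<\alpha}\leq^* j_{\mathcal{E}}\bigl(\mu\mapsto (p+(\mu\restriction d_\alpha^p))\restriction\alpha\bigr)(\mc_\alpha(d^\prime)) = \bigl(j_{\mathcal{E}}(p)+\mc_\alpha(d_\alpha^p)\bigr)\restriction\alpha,
\]
and Lemma \ref{aux} together with the identities $\mc_\alpha(d_\alpha^p)\circ j_{\mathcal{E}}(f_\beta^p)\circ\mc_\alpha(d_\alpha^p)^{-1}=f_\beta^p$ and $\mc_\alpha(d_\alpha^p)\circ j_{\mathcal{E}}[A_\beta^p]\circ\mc_\alpha(d_\alpha^p)^{-1}=A_\beta^p$ (the exact computations of Lemmas \ref{squishablemeasure} and \ref{squishablefunction}, now carried out simultaneously at every $\beta<\alpha$) show this last condition is just $p\restriction\alpha$. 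Hence $p^*_{<\alpha}$ is a direct extension of $p\restriction\alpha$.

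Next I build the $\alpha$-th coordinate. Let $\dom(f_\alpha^q)=d^\prime\cup\bigcup_{\beta<\alpha}\dom(f_\beta^{p^*_{<\alpha}})$, set $f_\alpha^q\restriction d^\prime=f^\prime$ and let $f_\alpha^q$ vanish elsewhere; since $d^\prime$ is an $\alpha$-domain it already contains $\lambda$, the $\lambda_\beta^j$ and $\bar{\lambda}_\alpha^j$, so when $\alpha$ is limit the coherence clauses for $q+\tau$ at coordinate $\alpha$ follow automatically from clause (8) of Definition \ref{alphaobject} once the overwrite by an $\alpha$-object $\tau$ is performed. Define $A_\alpha^q$ to consist of those $\tau\in\mathcal{E}(\dom(f_\alpha^q))$ with $\tau\restriction d^\prime\in A^\prime$ for which, writing $\mu=\tau\restriction d^\prime$, the conjugation of each below-$\alpha$ component of $p^*_{<\alpha}$ by $\tau$ equals the corresponding component of $t(\mu)$; this set is of measure one because $\mc_\alpha(\dom(f_\alpha^q))$ visibly witnesses the defining property, exactly as in Lemma \ref{integration1}. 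By construction $(q+\tau)\restriction\alpha=t(\mu)$.

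Finally, for $\beta>\alpha$ I set $A_\beta=\triangle_{\mu\in A^\prime}A_\beta(\mu)$, which lies in the appropriate measure by Lemma \ref{diagintersect}, and define $q_\beta=\langle g_\beta,B_\beta\rangle$ with $g_\beta$ extending $f_\beta$ by $0$ to $\dom(f_\beta)\cup\dom(f_\alpha^q)$ and $B_\beta$ projecting into both $A_\beta$ and $A_\beta^p$. For $\tau\in A_\alpha^q$ with $\mu=\tau\restriction d^\prime$, the surviving $\chi$ in $(q+\tau)_\beta$ satisfy $\dom(\mu)\cup\rge(\mu)\subseteq\dom(\chi)$ (using $\lambda\subseteq d^\prime\subseteq\dom(f_\beta)$) and $\chi\restriction\dom(f_\beta)\in A_\beta\subseteq A_\beta(\mu)$, whence $(q+\tau)\setminus(\alpha+1)\leq^*\vec{r}(\mu)$, just as in Lemma \ref{integration2}. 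I expect the main obstacle to be purely organizational: verifying that the conjugation identities of the first step hold simultaneously across the possibly infinitely many coordinates $\beta<\alpha$ in the limit case, and that the diagonal intersection over $\beta>\alpha$ is compatible with the coherence of the $e_{\beta,\alpha^*}(\cdot)$ when $\alpha^*$ exists. Both reduce to the reflection package recorded in Definition \ref{reflect} and Lemma \ref{los}, so no new idea beyond a careful assembly of the conjugation lemmas already proved is required.
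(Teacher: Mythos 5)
Your proposal is correct and takes essentially the same approach the paper intends: Section \ref{forcinganyextender} states this lemma without proof, saying the arguments are close to those of Section \ref{forcingomegaextender}, and your write-up is exactly that adaptation of Lemmas \ref{integration1} and \ref{integration2} — integrate the below-$\alpha$ parts via $j_{\mathcal{E}}(\mu \mapsto t(\mu))(\mc_\alpha(d^\prime))$, recover $p \restriction \alpha$ through the conjugation identities of Lemmas \ref{aux}, \ref{squishablemeasure} and \ref{squishablefunction}, cut the $\alpha$-th measure-one set to the $\tau$ whose conjugations reproduce $t(\tau \restriction d^\prime)$, and handle the coordinates above $\alpha$ by zero-extended Cohen parts together with the diagonal intersection of Lemma \ref{diagintersect}. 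Your extra care for the case where $\alpha^*$ exists (replacing $E_\alpha$ by $e_{\alpha,\alpha^*}(f_{\alpha^*})$) and for limit $\alpha$ (clause (8) of Definition \ref{alphaobject} plus the reflection package of Definition \ref{reflect}) is precisely what the paper's conventions supply, so there is no divergence from the paper's route.
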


\begin{thm}

$\mathbb{P}$ has the Prikry property.

\end{thm}

\begin{thm}

$\mathbb{P}$ has the strong Prikry property. Namely, for each dense open set $D \subseteq \mathbb{P}$ and $p \in \mathbb{P}$, there is a condition $q \leq^*p$ and a finite subset $a$ of $\eta$ ($a$ can be empty) such that

\begin{enumerate}

\item $a \cap \supp(p) =\emptyset$.

\item every $|a|$-step extension of $p$ using objects from $\{A_\alpha^q:\alpha \in a\}$ lies in $D$.

\end{enumerate}

\end{thm}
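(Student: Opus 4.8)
The plan is to run an induction on the length $\eta$ of the extender sequence, paralleling the proof of Theorem~\ref{strongprikry1} line by line; the outer inductive hypothesis is that the strong Prikry property already holds for every forcing built from a strictly shorter coherent sequence of extenders, and in particular for each squished forcing $\mathbb{P}_{\langle e_{\gamma,\alpha}(\mu):\gamma<\alpha\rangle}$ that appears below a coordinate $\alpha<\eta$. As in the $\omega$-case I would fix a dense open $D$ and a condition $p$, reducing to pure $p$ (the impure case being handled by the modifications described in Theorem~\ref{prikryomega}), and build a $\leq^*$-decreasing chain $\langle p^\alpha:\alpha<\eta\rangle$ with $p^0\leq^* p$. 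The invariant to maintain is: whenever some $q\leq p^\alpha$ lies in $D$ with $\max(\supp(q))=\alpha$ and with $\supp(q)$ least in the lexicographic order among such witnesses, every extension of $p^\alpha$ with support $\supp(q)$ already lies in $D$.

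At a successor or zero stage $\alpha$ I would copy the construction of $p^{m+1}$: build an internally approachable $N_\alpha\prec H_\theta$ of size $\lambda$, of length $\kappa_\alpha$ and closed under ${<}\kappa_\alpha$-sequences; take an $(N_\alpha,\mathbb{Q}_\alpha)$-generic tuple of Cohen parts for the full-support increasing-domain product $\mathbb{Q}_\alpha$ of the $\mathbb{B}_\beta$ with $\beta\geq\alpha$; isolate the dense open sets $D_\mu$ over the auxiliary poset $\mathbb{R}_\alpha$; diagonalize them by Lemma~\ref{diagintersect} to obtain the maximizing property $\star(q^\alpha)$; and then for each squishable $\mu\in A_\alpha$ invoke the outer inductive hypothesis on $\mathbb{P}_{\langle e_{\gamma,\alpha}(\mu):\gamma<\alpha\rangle}$ to produce $t(\mu)$ together with a least finite ``active'' support $\vec{n}^\mu$. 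The strong-Prikry content then comes, exactly as in Theorem~\ref{strongprikry1}, from the $2$-valued colourings $F_\mu$ on the finite product of the measure-one sets indexed by $\vec{n}^\mu$: using $\kappa_\gamma$-completeness of the relevant $e_{\gamma,\alpha}(\mu)$-ultrafilters one homogenizes each $F_\mu$, shrinks $A_\alpha$ so that all $\mu$ share the same dichotomy and the same $\vec{n}_\alpha$, and finally glues the resulting $t^*(\mu)$ together via the integration lemma (the $\eta$-analogue of Lemma~\ref{integration2} stated above) to obtain $p^\alpha\leq^* p^{<\alpha}$.

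At a limit stage $\alpha$ I would only take $p^\alpha$ to be a $\leq^*$-lower bound of $\langle p^\beta:\beta<\alpha\rangle$. Such a bound exists because $(\mathbb{P},\leq^*)$ is sufficiently closed: every $\mathbb{B}_\beta$ is $\lambda^+$-closed, every $E_\beta$-ultrafilter is $\kappa_\beta$-complete, and the chain has length $\alpha<\eta<\kappa_0$, so the coordinatewise unions of Cohen parts and coordinatewise intersections (projected to the union domains) of measure-one sets form a condition. The reason no extra work is needed at a limit $\alpha$ is the phenomenon already driving Theorem~\ref{strongprikry1}: at every earlier stage $\beta<\alpha$ the product $\mathbb{Q}_\beta$ already contains $\mathbb{B}_\alpha$ and the entire tail, so the $\alpha$-th Cohen part has been made generic and the $\alpha$-th measure-one set diagonally thinned to record all the $D_\mu$-information cofinally often below $\alpha$; since a $\leq^*$-lower bound preserves genericity of the Cohen parts and the projections of the measure-one sets, $p^\alpha$ inherits the invariant at coordinate $\alpha$ automatically.

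Finally I would let $p^*$ be a $\leq^*$-lower bound of the whole chain (again by closure, as $\eta<\kappa_0$), take the least witness $q\leq p^*$ in $D$ with $\alpha:=\max(\supp(q))$, and set $a=\supp(q)\setminus\supp(p)$. Since $p^*\leq^* p^\alpha$, the invariant for $p^\alpha$ applies, and transitivity of $\leq$ (Lemma~\ref{transitivity3}) then yields that every $|a|$-step extension of $p^*$ by objects from $\{A_\beta^{p^*}:\beta\in a\}$ lands in $D$, as required. I expect the main obstacle to be the limit stages, and specifically the second of the two points above: justifying that a limit coordinate genuinely needs no separate processing, i.e. that the accumulated genericity of $f_\alpha$ across the stages $\beta<\alpha$ (forced by $\mathbb{B}_\alpha\subseteq\mathbb{Q}_\beta$) together with the accumulated diagonal intersections really does deliver the maximizing property after merely taking a lower bound. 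The companion difficulty, shared with the successor case, is carrying the colouring/homogenization and the integration lemma uniformly over a measure-one set of objects $\mu$ while respecting the squishing maps $\mu\circ(\cdot)\circ\mu^{-1}$.
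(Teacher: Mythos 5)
Your overall skeleton --- induction on $\eta$ with the strong Prikry property assumed for all strictly shorter (in particular squished) sequences, a $\leq^*$-decreasing chain whose stages copy the construction of $p^{m+1}$ in Theorem \ref{strongprikry1}, closure of $(\mathbb{P},\leq^*)$ together with $\eta<\kappa_0$ (Theorem \ref{deriveext}) to produce lower bounds, and the final least-witness argument via Lemma \ref{transitivity3} --- is exactly the argument the paper intends. The genuine gap is your treatment of limit coordinates. You claim that at a limit $\alpha$ a bare $\leq^*$-lower bound of $\langle p^\beta:\beta<\alpha\rangle$ automatically inherits the maximizing property at coordinate $\alpha$, because $\mathbb{B}_\alpha$ sits inside every earlier product $\mathbb{Q}_\beta$ and so genericity and diagonal thinning have ``accumulated''. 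This is false. The dense sets met at stage $\beta$ are the sets $D_\mu$ for $\mu$ ranging over \emph{coordinate-$\beta$} objects: they control one-step extensions at coordinate $\beta$ only. The property $\star$ needed at coordinate $\alpha$ quantifies over coordinate-$\alpha$ objects $\mu$ and over conditions $t$ of the squished forcing $\mathbb{P}_{\langle e_{\gamma,\alpha}(\mu):\gamma<\alpha\rangle}$, and none of this was touched below $\alpha$: (i) a coordinate-$\alpha$ object has size ${<}\kappa_\alpha$ but in general ${\geq}\kappa_\beta$, so it need not belong to $N_\beta$ at all --- the $\omega$-case argument uses $|\mu|<\kappa_{m+1}$ together with ${}^{<\kappa_{m+1}}N_{m+1}\subseteq N_{m+1}$ to place $\mu$ inside the model, which requires a fresh model of closure ${<}\kappa_\alpha$ built \emph{at} stage $\alpha$, on whose trace the stage-$\alpha$ domains and measure-one sets are then defined; (ii) the coordinate-$\alpha$ measure-one set is never diagonally intersected over coordinate-$\alpha$ objects (Lemma \ref{diagintersect}), the colourings $F_\mu$ on the finite products indexed by $\vec{n}^\mu\in[\alpha]^{<\omega}$ are never homogenized, and $\vec{n}^\mu$ is never stabilized --- a step needing $\kappa_\alpha$-completeness together with $|[\alpha]^{<\omega}|<\kappa_\alpha$ (available since $\eta<\kappa_0$), which only makes sense once the stage-$\alpha$ data exist; (iii) the appeal to the induction hypothesis for the length-$\alpha$ squished forcing happens precisely at this step, so skipping the step skips the induction. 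Consequently the lower bound $p^\alpha$ gives no control over witnesses $q\in D$ with $\max(\supp(q))=\alpha$ limit, and your analogue of Claim \ref{finalcondstrong} breaks down exactly there.

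The repair is what the paper's summary actually means: only the \emph{recursion's} limit steps are trivial, not the limit \emph{coordinates}. Every coordinate $\alpha<\eta$, limit or successor, must receive its own full processing stage --- fresh internally approachable $N_\alpha$ of closure ${<}\kappa_\alpha$, a generic Cohen tuple for the tail product, the dense sets $D_\mu$ for coordinate-$\alpha$ objects, the induction hypothesis applied to $\mathbb{P}_{\langle e_{\gamma,\alpha}(\mu):\gamma<\alpha\rangle}$ to extract $t(\mu)$ and $\vec{n}^\mu$, homogenization and stabilization over a measure-one set, and the integration lemma --- with the $\leq^*$-lower bound at limit $\alpha$ serving merely as the starting condition for that stage. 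Since all completeness and closure parameters at coordinate $\alpha$ are at least $\kappa_\alpha>|\alpha|\cdot\lambda^{+}\cdot\aleph_0$ in the relevant places is not needed verbatim --- what is needed is $\kappa_\alpha>|\mathbb{P}_{\langle e_{\gamma,\alpha}(\mu):\gamma<\alpha\rangle}|$ and $\kappa_\alpha>|[\alpha]^{<\omega}|$, both of which hold because $\eta<\kappa_0$ --- your successor-stage text then applies uniformly to every coordinate, and with that rearrangement the remainder of your proof is correct.
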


\section{cardinal preservation}
\label{cardinalpreserve}
Let $\eta$ be a limit ordinal and $\mathbb{P}=\mathbb{P}_{\langle E_\alpha: \alpha<\eta \rangle}$.
In this section we determine the cardinals which are preserved and collapsed.
Note that $\mathbb{P}$ is $\lambda^{++}$-c.c., so every cardinal above and including $\lambda^{++}$ is preserved.

\begin{thm}

When forcing with $\mathbb{P}$, cardinals below and including $\bar{\kappa}_\eta$ which are not in the intervals $(\bar{\kappa}_\alpha,\kappa_\alpha)$ for $\alpha<\eta$ limit are preserved.

\end{thm}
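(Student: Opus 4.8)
The plan is to prove the theorem by induction on the length $\eta$ of the extender sequence, combining two already-available black boxes: the factorization of $\mathbb{P}/p$ from Lemma~\ref{factorization}, and the standard principle that a Prikry-type forcing whose direct-extension order $\le^*$ is $\theta$-closed adds no new bounded subsets of $\theta$ (hence preserves all cardinals $\le\theta$). The latter I would extract from the Prikry property (Sections~\ref{forcingomegaextender}--\ref{forcinganyextender}) by deciding a putative new characteristic function $\dot x:\delta\to 2$, $\delta<\theta$, bit-by-bit along a $\le^*$-decreasing sequence of length $<\theta$ and taking a lower bound. It is convenient to prove a slightly stronger statement by the induction: for every admissible length $\zeta$ and regular $\lambda$, the forcing collapses no cardinal outside $\bigcup_{0<\beta<\zeta,\ \beta\text{ limit}}(\bar\kappa_\beta,\kappa_\beta)$ together with the top interval $(\bar\kappa_\zeta,\lambda^+)$ \emph{when $\zeta$ is a limit}. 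The extra clause — that a \emph{successor}-length forcing preserves its top Cohen interval — is exactly what the induction must feed back into itself.

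First I would dispose of $\mu\le\kappa_0$: since $(\mathbb{P},\le^*)$ is $\kappa_0$-closed (the binding constraint is the $\kappa_0$-completeness of $E_0(\kappa_0)$, the Cohen parts being $\lambda^+$-closed), the Prikry property yields no new bounded subsets of $\kappa_0$, so these cardinals are preserved. Next comes the bulk, a cardinal $\mu$ with $\kappa_\alpha\le\mu\le\kappa_{\alpha+1}$, which captures every $\kappa_\alpha$ and every non-excluded successor gap $(\kappa_\alpha,\kappa_{\alpha+1})$. Given a condition $q$, I split on the support at $\alpha+1$. If $\alpha+1\notin\supp(q)$, I extend $q$ by a one-step extension at coordinate $\alpha$ (which does not touch $\alpha+1$) and factor at $\alpha$: then $\mathbb{P}_0$ is $\lambda_\alpha(f_\alpha^q)^{++}$-c.c. with $\lambda_\alpha(f_\alpha^q)^{++}<\kappa_\alpha$ (by inaccessibility of $\kappa_\alpha$), hence preserves cardinals $\ge\kappa_\alpha$, while $(\mathbb{P}_1,\le^*)$ is $\kappa_{\alpha+1}$-closed, hence preserves cardinals $\le\kappa_{\alpha+1}$; the product preserves $\mu$, the product argument going through by forcing the closed factor first (which leaves $\mathbb{P}_0$'s chain condition intact). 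If instead $\alpha+1\in\supp(q)$ I factor at $\alpha+1$: here $\mathbb{P}_1=(\mathbb{P}/q)\setminus(\alpha+1)$ is more than $\kappa_{\alpha+1}^+$-closed (closure $\kappa_{\alpha+2}$ or $\lambda_{\alpha+2}(f_{\alpha+2}^q)^+$, both past $\kappa_{\alpha+1}$), and $\mathbb{P}_0$ has length $\alpha+1<\zeta$ and parameter $\lambda'=\lambda_{\alpha+1}(f_{\alpha+1}^q)\in(\kappa_\alpha,\kappa_{\alpha+1})$, so by the inductive hypothesis — invoking its top-interval clause, since $\alpha+1$ is a successor — $\mathbb{P}_0$ preserves every cardinal in $[\kappa_\alpha,\kappa_{\alpha+1}]$. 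That preservation transfers to $V^{\mathbb{P}_1}$ because the closed factor adds no new element of $H_{\kappa_{\alpha+1}^+}$, inside which $\mathbb{P}_0$ ($|\mathbb{P}_0|<\kappa_{\alpha+1}$) and all names for surjections onto $\mu\le\kappa_{\alpha+1}$ reside. The top-interval clause at a successor length $\gamma+1$ is the identical computation performed at the top coordinate $\gamma$, where $\mathbb{P}_1$ is the single-extender forcing, which preserves \emph{all} cardinals by the remark following Theorem~\ref{prikry1}.

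Finally, the singular limits $\mu=\bar\kappa_\alpha$ for a limit $0<\alpha\le\eta$ (in particular $\bar\kappa_\eta$) require no new machinery: were $\bar\kappa_\alpha$ collapsed, a surjection from some $\rho<\bar\kappa_\alpha$ onto it would give $|\bar\kappa_\alpha|\le\rho<\kappa_\beta$ for some $\beta+1<\alpha$, contradicting that $\kappa_{\beta+1}<\bar\kappa_\alpha$ remains a cardinal by the preceding paragraph. The hard part, and the whole reason the induction together with the strengthened statement is necessary, is precisely the configuration where $\mu$ lies in a successor gap $(\kappa_\alpha,\kappa_{\alpha+1})$ while the given condition already carries $\alpha+1$ in its support: one cannot shrink the support, so one is forced to factor at $\alpha+1$ and must already know that the resulting lower forcing — itself of successor length, with top Cohen interval exactly $(\kappa_\alpha,(\lambda')^+)$ — does not collapse inside that interval. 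This is the inductive content, ultimately anchored at the single-extender base case; the remaining routine points are the size estimate $|\mathbb{P}_0|<\kappa_{\alpha+1}$ and the density of the support configurations used above.
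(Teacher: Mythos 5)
Your proposal is correct, and every load-bearing ingredient in it is exactly what the paper uses: the factorization of Lemma~\ref{factorization}, the fact that the squished lower part has chain condition $\lambda_\alpha(f_\alpha^p)^{++}<\kappa_\alpha$, the fact that a single squished Cohen coordinate preserves all cardinals (the remark after Theorem~\ref{prikry1}), and the standard consequence of the Prikry property with $\theta$-closed $\leq^*$ that no bounded subsets of $\theta$ are added. The difference is purely organizational: the paper runs \emph{no} induction on $\eta$. For a cardinal in $(\kappa_\alpha,\kappa_{\alpha+1}]$ it passes to the dense set of conditions $p$ with \emph{both} $\alpha,\alpha+1\in\supp(p)$ and factors $\mathbb{P}/p$ in one stroke into three posets: $\mathbb{P}_0=(\mathbb{P}/p)\restriction\alpha$ ($\lambda_\alpha(f_\alpha^p)^{++}$-c.c., bound below $\kappa_\alpha$), the middle coordinate $\mathbb{P}_1=\mathbb{B}^{e_{\alpha,\alpha+1}(f_{\alpha+1}^p)}(\lambda_{\alpha+1}(f_{\alpha+1}^p),\lambda_{\alpha,\alpha+1}^j(f_{\alpha+1}^p))$ (preserves all cardinals), and $\mathbb{P}_2$ on coordinates $\geq\alpha+1$ (whose $\leq^*$ is $\kappa_{\alpha+1}$-closed). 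This shows that your closing meta-claim — that the induction with the strengthened top-interval clause is \emph{necessary} — is not right: your ``hard'' configuration $\alpha+1\in\supp(q)$ dissolves once you extend further so that $\alpha\in\supp$ as well, whereupon your inductive appeal to the successor-length lower block is replaced by factoring that block once more at its top coordinate; in other words, your induction only ever unwinds a single step, and the paper performs that step directly (your two nested two-way factorizations compose to the paper's three-way one). What your route buys is a quotable preservation statement for every initial-segment squished forcing, which may be convenient elsewhere; what the paper's route buys is brevity. One small point in your favor: your grouping $\kappa_\alpha\leq\mu\leq\kappa_{\alpha+1}$ explicitly covers $\kappa_\alpha$ for limit $\alpha$, a cardinal that the paper's stated cases — $\mu\leq\kappa_0$, the intervals $(\kappa_\alpha,\kappa_{\alpha+1}]$, and the limits $\bar{\kappa}_\alpha$ — technically leave to the reader, though it is handled by the identical factorization argument.
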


\begin{proof}

First, note that $(\mathbb{P}, \leq^*)$ is $\kappa_0$-closed, so every cardinal below and including $\kappa_0$ is preserved.
We now consider the cardinals in the interval $(\kappa_\alpha,\kappa_{\alpha+1}]$.
Let $p \in \mathbb{P}$ be such that $\alpha,\alpha+1 \in \supp(p)$.
Then $\mathbb{P}/p$ factors into 3 posets, namely $\mathbb{P}_0=(\mathbb{P}/p) \restriction \alpha=\mathbb{P}_{\langle e_{\beta,\alpha}(f_\alpha^p) : \beta<\alpha \rangle}$, $\mathbb{P}_1= (\mathbb{P}/p)(\alpha)=\mathbb{B}^{e_{\alpha,\alpha+1}(f_{\alpha+1}^p)}(\lambda_{\alpha+1}(f_{\alpha+1}^p)$, $\lambda_{\alpha,\alpha+1}^j(f_{\alpha+1}^p))$, and $\mathbb{P}_2=\mathbb{P}_{\langle E_\beta: \beta \geq \alpha+1 \rangle}$.
$\mathbb{P}_0$ is $\lambda_\alpha(f_\alpha^p)^{++}$-c.c. and $\lambda_\alpha(f_\alpha^p)^{++}<\kappa_\alpha$.
$\mathbb{P}_1$ is $\lambda_{\alpha+1}(f_{\alpha+1}^p)$-closed, and $\lambda_{\alpha+1}(f_{\alpha+1}^p)^+$-c.c., so $\mathbb{P}_1$ preserves all cardinals, and
$(\mathbb{P}_2,\leq^*)$ is $\kappa_{\alpha+1}$-closed.
Hence, all cardinals in the interval $(\kappa_\alpha,\kappa_{\alpha+1}]$ are preserved.
For $\alpha<\eta$ limit, $\bar{\kappa}_\alpha=\sup_{\beta<\alpha} \kappa_\beta$, so $\bar{\kappa}_\alpha$ is preserved.

\end{proof}

To understand the cardinal behaviors in the interval $(\bar{\kappa}_\alpha,\kappa_\alpha)$ for $\alpha<\eta$ limit, as well as the interval $(\bar{\kappa}_\eta,\lambda^{++})$, let's simplify by considering the case where $\eta=\omega$ first, since this will be used to describe general cases.

Let $\eta=\omega$, namely $\mathbb{P}=\mathbb{P}_{\langle E_n: n<\omega \rangle}$.
We analyze the $V$-cardinals in the interval $(\bar{\kappa}_\omega,\lambda^+)$ in a generic extension.
Fix a cardinal $\gamma \in (\bar{\kappa}_\omega,\lambda^+)$ with $\cf(\gamma) > \bar{\kappa}_\omega$.
Then for all $n$, in $M_n$, $\cf(\sup j_{E_n}[\gamma])=\cf(\gamma)\leq \lambda<j_{E_n}(\kappa_n)<\cf(j_{E_n}(\gamma))$, which implies $\sup j_{E_n}[\gamma] < j_{E_n}(\gamma)$.
Set $\gamma_n=\sup(j_{E_n}[\gamma])$ and $\bar{\gamma}_n=\gamma^{j_{E_n}}$ (which is $j_{E_n}(\gamma))$).
In $V[G]$, define $H_\gamma$ whose domain is $\omega$ as follows:
if there is a condition $p \in G$ such that $\max(\supp(p))=n$ and $\gamma_n \in \dom(f_n^p)$, define $H_\gamma(n)=f_n^p(\gamma_n)$.
Otherwise, let $H_\gamma(n)=0$.
By genericity, $H_\gamma$ is well-defined.

\begin{lemma}
\label{addcofseq}
When forcing with $\mathbb{P}_{\langle E_n:n<\omega \rangle}$, $H_\gamma$ is an $\omega$-cofinal sequence in $\gamma$.

\end{lemma}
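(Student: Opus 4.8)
The plan is to reduce the statement to two reflection computations with the ultrafilters $E_n(d_n)$ together with a single density argument. First I would record the structural facts that make $\gamma_n$ a usable point. Since $\gamma$ is a \emph{cardinal} in $(\bar{\kappa}_\omega,\lambda^+)$ and there is no cardinal strictly between $\lambda$ and $\lambda^+$, in fact $\bar{\kappa}_\omega<\gamma\le\lambda$; hence $\gamma_n=\sup j_{E_n}[\gamma]\le\sup j_{E_n}[\lambda]<j_{E_n}(\lambda)=\lambda_n^j$, because $j_{E_n}[\lambda]$ has order type $\lambda<\cf^{M_n}(\lambda_n^j)$ (recall $\cf^{M_n}(\lambda_n^j)\ge j_{E_n}(\kappa_n)>\lambda$). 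Thus $\gamma_n$ is a legitimate point of an $n$-domain, and by Proposition \ref{addsmallset} I may assume $\gamma_n\in d_n^p$ whenever convenient. Moreover $\gamma_n<\bar{\gamma}_n=j_{E_n}(\gamma)$, as already noted before the lemma.

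The key identity is $\mc_n(d_n)(j_{E_n}(\gamma_n))=\gamma_n$, valid because $\mc_n(d_n)=(j_{E_n}\restriction d_n)^{-1}$ and $\gamma_n\in d_n$. Feeding this through the definition of $E_n(d_n)$ gives, for $\delta<\gamma$, that $\{\mu\in\OB_n(d_n):\mu(\gamma_n)>\delta\}\in E_n(d_n)$ if and only if $\gamma_n>j_{E_n}(\delta)$, and $\{\mu:\mu(\gamma_n)<\gamma\}\in E_n(d_n)$ if and only if $\gamma_n<j_{E_n}(\gamma)$. Both right-hand sides hold: $j_{E_n}(\delta)<\sup j_{E_n}[\gamma]=\gamma_n$ since $\delta<\gamma$, and $\gamma_n<\bar{\gamma}_n$ by the previous step. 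Hence for measure-one many $\mu$ one has $\delta<\mu(\gamma_n)<\gamma$, and intersecting with the squishability sets of Lemmas \ref{squishablemeasure} and \ref{squishablefunction} and with $\{\mu:\gamma_n\in\dom(\mu)\}$ (all measure-one, $E_n(d_n)$ being $\kappa_n$-complete) keeps this measure-one.

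From here the upper bound $\rge(H_\gamma)\subseteq\gamma$ follows by refining, via genericity, every measure-one set $A_n^p$ (for $n\notin\supp(p)$ with $\gamma_n\in d_n^p$) to lie inside $\{\mu:\gamma_n\in\dom(\mu),\ \mu(\gamma_n)<\gamma\}$: then the object $\mu$ that first places $n$ into the support installs $f_n(\gamma_n)=(f_n^p\oplus\mu)(\gamma_n)=\mu(\gamma_n)<\gamma$, and this value is frozen under later direct extensions and under extensions at coordinates below $n$, so $H_\gamma(n)<\gamma$ (the default value $0$ is of course also $<\gamma$). For cofinality, fix $\delta<\gamma$ and let $D_\delta$ be the set of conditions $q$ for which there is $n$ with $\max(\supp(q))=n$, $\gamma_n\in d_n^q$ and $f_n^q(\gamma_n)>\delta$. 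Given any $p$, choose $n>\max(\supp(p))$, arrange $\gamma_n\in d_n^p$, pick $\mu\in A_n^p$ from the measure-one set with $\delta<\mu(\gamma_n)<\gamma$ that is legitimate for a one-step extension, and set $q=p+\mu$; then $\max(\supp(q))=n$ and $f_n^q(\gamma_n)=\mu(\gamma_n)>\delta$, so $D_\delta$ is dense. Genericity produces $n$ with $H_\gamma(n)>\delta$, using the already-noted well-definedness of $H_\gamma$ to identify the value. As this holds for every $\delta<\gamma$ while $\rge(H_\gamma)\subseteq\gamma$, we conclude $\sup_{n<\omega}H_\gamma(n)=\gamma$, i.e. $H_\gamma$ is an $\omega$-cofinal sequence in $\gamma$.

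The main obstacle is bookkeeping rather than a single hard idea: one must be certain that $f_n^p(\gamma_n)$ is the value genuinely installed by the activating $n$-object and is not subsequently disturbed. Extensions at coordinates below $n$ leave $p_n$ untouched and direct extensions only enlarge $f_n$, so the value persists while $n$ remains the top of the support; once a coordinate above $n$ is activated, $f_n$ is conjugated and $n$ is no longer maximal, so that condition feeds a different value of $H_\gamma$. Keeping this interaction with the squishing straight, and invoking the prior well-definedness claim at the one point it is needed, is the only delicate part of the argument.
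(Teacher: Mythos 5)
Your proof is correct and follows essentially the same route as the paper's: both arguments come down to the reflection fact $j_{E_n}(\delta)<\gamma_n<j_{E_n}(\gamma)$ together with a density argument over conditions whose measure-one sets consist of objects defined at $\gamma_n$ (with the relevant squishability sets intersected in), and both read the installed value off the top coordinate of the support using the well-definedness of $H_\gamma$. The only difference is packaging: the paper's dense set $D_\nu$ pins the exact values $\mu(j_{E_n}(\nu))=\nu$ and $\mu(j_{E_n}(\gamma))=\gamma$ and uses order-preservation of objects to place an entire tail of $H_\gamma$ inside $(\nu,\gamma)$, whereas you certify $\delta<\mu(\gamma_n)<\gamma$ on a measure-one set directly via the $\mc_n(d_n)$ computation and split the bound from the unboundedness — which, as a minor bonus, requires only $\gamma_n\in d_n$ (not $j_{E_n}(\gamma)\in d_n$) and so handles the case $\gamma=\lambda$ uniformly rather than by the paper's separate ``similar'' case.
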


\begin{proof}

Assume $\gamma<\lambda$ (the case $\gamma=\lambda$ is similar).
Let $\nu<\gamma$.
Define $\nu_n=\nu^{j_{E_n}}$.
Define $D_\nu$ as the collection of $p \in \mathbb{P}$ such that

\begin{enumerate}

\item there is $n_0<\omega$ such that $\supp(p) \subseteq n_0$.

\item for $n \geq n_0$,

\begin{itemize}

\item $\nu_n,\gamma_n,\bar{\gamma}_n \in d_n^p$.

\item for $\mu \in A_n^p$, $\nu_n,\gamma_n,\bar{\gamma}_n \in \dom(\mu)$, $\mu(\nu_n)=\nu$ and $\mu(\bar{\gamma}_n)=\gamma$.

\end{itemize}

\end{enumerate}

Clearly $D_\nu$ is dense.
Let $p \in D_\nu \cap G$.
For $q \leq p$ with $\max(\supp(q))=n \geq n_0$, $q \in G$, we have that $q \leq p+ \mu$ for some $\mu \in A_n^p$.
Hence, $\nu_n,\gamma_n,\bar{\gamma}_n \in d_n^q$, $\nu=f_n^q(\nu_n)=\mu(\nu_n)<\mu(\gamma_n)=f_n^q(\gamma_n)$ and $f_n^q(\gamma_n)=\mu(\gamma_n)<\mu(\bar{\gamma}_n)=\gamma$.
Hence, $H_\gamma(n) \in (\nu,\gamma)$.
Since $\nu<\gamma$ is arbitrary, we are done.

\end{proof}

We see that for every $V$-cardinal $\gamma$ in the interval $(\bar{\kappa}_\omega,\lambda^+)$ with $\cf(\gamma)>\bar{\kappa}_\omega$, $V[G]$ adds an $\omega$-cofinal sequence of $\gamma$.
Recall that $\bar{\kappa}_\omega$ is preserved. 
We now explain that if $\gamma \in (\bar{\kappa}_\omega,\lambda^+)$, $\gamma$ is collapsed to have cardinal $\bar{\kappa}_\omega$.
Otherwise, let $\gamma$ be the least in the interval which is preserved.
Then $\gamma=(\bar{\kappa}_\omega^+)^{V[G]}$.
If $\gamma$ is a cardinal with $\cf^V(\gamma)>\bar{\kappa}_\omega$, then $\cf^{V[G]}(\gamma)=\omega$, which is a contradiction.
If $\cf(\gamma)<\bar{\kappa}_\omega$, then $\cf^{V[G]}(\gamma)<\bar{\kappa}_\omega<(\bar{\kappa}_\omega^+)^{V[G]}$,   contradicting the fact that $\gamma=(\bar{\kappa}_\omega^+)^{V[G]}$.

\begin{coll}
\label{collapse1}
When forcing with $\mathbb{P}_{\langle E_n:n<\omega \rangle}$, all cardinals in the interval $(\bar{\kappa}_\omega,\lambda^+)$ are collapsed.

\end{coll}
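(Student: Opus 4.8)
The plan is to argue by a minimal-counterexample on cofinalities, feeding off Lemma~\ref{addcofseq} and the preservation of $\bar{\kappa}_\omega$. First I would collect the inputs I need. From the preceding cardinal-preservation theorem, $\bar{\kappa}_\omega$ remains a cardinal in $V[G]$. From $\lambda^{++}$-c.c.\ (more precisely, from the fact that set forcing creates no new cardinals) I get that every $V[G]$-cardinal is already a $V$-cardinal, i.e.\ $\mathrm{Card}^{V[G]} \subseteq \mathrm{Card}^V$. Finally, cofinal maps persist to forcing extensions, so $\cf^{V[G]}(\gamma) \leq \cf^{V}(\gamma)$ for every limit $\gamma$. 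With these in hand, suppose toward a contradiction that some $V$-cardinal in $(\bar{\kappa}_\omega,\lambda^+)$ survives as a cardinal in $V[G]$, and let $\gamma$ be the least such.

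Next I would pin down that $\gamma = (\bar{\kappa}_\omega^+)^{V[G]}$. Any $V[G]$-cardinal $\mu$ with $\bar{\kappa}_\omega < \mu < \gamma$ is a $V$-cardinal lying in $(\bar{\kappa}_\omega,\lambda^+)$ below $\gamma$ and surviving into $V[G]$, contradicting the minimality of $\gamma$; hence there are no $V[G]$-cardinals strictly between $\bar{\kappa}_\omega$ and $\gamma$. Since $\bar{\kappa}_\omega$ is itself a $V[G]$-cardinal, this forces $\gamma = (\bar{\kappa}_\omega^+)^{V[G]}$. In particular $\gamma$ is a successor cardinal of $V[G]$, hence \emph{regular} in $V[G]$, so $\cf^{V[G]}(\gamma) = \gamma > \bar{\kappa}_\omega$.

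Then I would derive the contradiction by splitting on $\cf^{V}(\gamma)$. Because a cofinality is always a regular cardinal while $\bar{\kappa}_\omega = \sup_{n<\omega}\kappa_n$ is singular, $\cf^{V}(\gamma) \neq \bar{\kappa}_\omega$, so exactly one of two cases holds. If $\cf^{V}(\gamma) > \bar{\kappa}_\omega$, then Lemma~\ref{addcofseq} applies to $\gamma$ and produces $H_\gamma$, an $\omega$-cofinal sequence in $\gamma$ in $V[G]$, whence $\cf^{V[G]}(\gamma) = \omega$. If $\cf^{V}(\gamma) < \bar{\kappa}_\omega$, then a $V$-cofinal map into $\gamma$ of length below $\bar{\kappa}_\omega$ survives into $V[G]$, so $\cf^{V[G]}(\gamma) \leq \cf^{V}(\gamma) < \bar{\kappa}_\omega$. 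In either case $\cf^{V[G]}(\gamma) < \gamma$, contradicting the regularity of $\gamma$ in $V[G]$ established in the previous paragraph. This contradiction shows no $V$-cardinal in $(\bar{\kappa}_\omega,\lambda^+)$ survives, proving the corollary.

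The only genuinely nontrivial ingredient is the input Lemma~\ref{addcofseq}: producing the $\omega$-cofinal sequence $H_\gamma$ by the density argument that runs through the $\max(\supp(p))$ coordinates and reads off $f_n^p(\gamma_n)$. Once that lemma is granted, the corollary is a routine pigeonhole on cofinalities, so I expect no real obstacle here beyond bookkeeping. I would also remark in passing that the argument actually yields the stronger statement anticipated in the discussion preceding the corollary, namely that each such $\gamma$ is collapsed all the way down to $\bar{\kappa}_\omega$, since $\gamma$ acquires $V[G]$-cardinality at most $\bar{\kappa}_\omega$.
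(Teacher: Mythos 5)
Your proof is correct and follows essentially the same route as the paper: the paper also takes the least $V$-cardinal $\gamma\in(\bar{\kappa}_\omega,\lambda^+)$ surviving in $V[G]$, observes $\gamma=(\bar{\kappa}_\omega^+)^{V[G]}$, and derives a contradiction by splitting on $\cf^V(\gamma)$, invoking Lemma~\ref{addcofseq} when $\cf^V(\gamma)>\bar{\kappa}_\omega$ and downward persistence of cofinal maps when $\cf^V(\gamma)<\bar{\kappa}_\omega$. Your explicit observation that $\cf^V(\gamma)\neq\bar{\kappa}_\omega$ because $\bar{\kappa}_\omega$ is singular makes precise a step the paper leaves implicit.
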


\begin{lemma}
\label{strongpreserve1}
When forcing with $\mathbb{P}_{\langle E_n:n<\omega\rangle}$, $\lambda^+$ is preserved.

\end{lemma}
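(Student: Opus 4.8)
The plan is to show that no function $g \colon \bar{\kappa}_\omega \to \lambda^+$ lying in $V[G]$ is cofinal; this suffices. Indeed, by Corollary \ref{collapse1} every cardinal in $(\bar{\kappa}_\omega,\lambda^+)$ is collapsed while $\bar{\kappa}_\omega$ is preserved, so if $\lambda^+$ were collapsed we would have $|\lambda^+|^{V[G]}=\bar{\kappa}_\omega$, and hence there would be a surjection, in particular a cofinal map, from $\bar{\kappa}_\omega$ onto $\lambda^+$. So I would fix $p \in \mathbb{P}$ and a name $\dot{g}$ with $p \Vdash \dot{g}\colon \bar{\kappa}_\omega \to \lambda^+$, and aim to produce $q^* \leq^* p$ together with $\delta<\lambda^+$ such that $q^* \Vdash \rge(\dot{g}) \subseteq \delta$.

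The arithmetic input is that $|A_n^q| \le \lambda$ for every condition $q$ and every $n$. Each $\mu \in \OB_n(d_n^q)$ has $\dom(\mu)\subseteq d_n^q$ with $|\dom(\mu)|=s_n(\mu(\kappa_n))<\kappa_n$ and $\rge(\mu)\subseteq\lambda$; since $|d_n^q|=\lambda$, $\kappa_n<\lambda=\cf(\lambda)$, and GCH holds, there are only $\lambda^{<\kappa_n}=\lambda$ such functions (for squished coordinates the analogous count is even smaller). Consequently $\bigcup_{n<\omega}A_n^q$ has size $\le\lambda$, and the family of finite tuples of objects drawn from it also has size $\le\lambda$; each such tuple $\vec{\mu}$ yields at most one multi-step extension $q+\vec{\mu}$.

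The heart of the argument is a fusion identical in form to the proof of Theorem \ref{prikryomega}, but carried out simultaneously for the whole family $\{\dot{g}(i) : i<\bar{\kappa}_\omega\}$ rather than for a single Boolean value. I would build a $\leq^*$-decreasing sequence $\langle q^m : m<\omega\rangle$ below $p$ and let $q^*$ be a $\leq^*$-lower bound, arranging the maximality property $(\star)$: for every $i<\bar{\kappa}_\omega$ and every finite extension $q^*+\vec{\mu}$, if some $r\le q^*+\vec{\mu}$ decides $\dot{g}(i)$, then $q^*+\vec{\mu}$ already decides $\dot{g}(i)$. At coordinate $m$ this is precisely the effect of meeting, with the $N_m$-generic Cohen function, the dense sets $D_{\mu,i}$ (indexed by objects $\mu$ and by $i<\bar{\kappa}_\omega$) of the kind used in Theorem \ref{prikryomega}; there are only $\bar{\kappa}_\omega\cdot\lambda=\lambda$ of them, and all lie in the internally approachable model $N_m$ of size $\lambda$, so a single $N_m$-generic function absorbs them all at once.

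Finally I would count. Fix $G\ni q^*$ and $i<\bar{\kappa}_\omega$. Since $D_i=\{r:r\text{ decides }\dot{g}(i)\}$ is dense open, pick $r\in G$ with $r\le q^*$ deciding $\dot{g}(i)$; then $r\le q^*+\vec{\mu}$ for some finite $\vec{\mu}$, and by $(\star)$ the condition $q^*+\vec{\mu}$ decides $\dot{g}(i)$, necessarily as $g(i)^G$. Hence $g(i)\in V_i:=\{\eta:\exists\,\text{finite }\vec{\mu}\ (q^*+\vec{\mu}\Vdash\dot{g}(i)=\eta)\}$, and $|V_i|\le\lambda$ by the second paragraph. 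Therefore $q^*\Vdash\rge(\dot{g})\subseteq\bigcup_{i<\bar{\kappa}_\omega}V_i$, a set of size $\le\bar{\kappa}_\omega\cdot\lambda=\lambda<\lambda^+$, and taking $\delta$ to be its supremum gives the desired contradiction. The main obstacle is the simultaneous enforcement of $(\star)$ for all $\bar{\kappa}_\omega$-many names: the ultrafilters $E_n(d_n)$ are only $\kappa_n$-complete with $\kappa_n<\lambda$, so one cannot stabilise the values by shrinking measure-one sets $\lambda$-many times. The point is that $(\star)$ is obtained purely through the genericity of the length-$\kappa_m$ internally approachable models of size $\lambda$, which capture all $\le\lambda$ relevant dense sets with no appeal to completeness of the ultrafilters; the combinatorial count $|A_n^q|\le\lambda$ then does the rest. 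The impure case is treated exactly as the impure case of Theorem \ref{prikryomega}.
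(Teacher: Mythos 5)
Your overall strategy (bound the range of a name for a function $\bar{\kappa}_\omega \to \lambda^+$ by the $\leq\lambda$-many values decided by finite-step extensions of a single fused condition) is the right kind of argument, and your counting paragraph is fine: measure-one sets do have size $\leq\lambda$, so the set of finite tuples of objects, hence of decidable values, has size $\leq\lambda$. The gap is exactly at the point you flag and then claim to overcome: the simultaneous enforcement of $(\star)$ for all $\bar{\kappa}_\omega$-many names $\dot{g}(i)$. Genericity of the internally approachable models only handles the \emph{Cohen} coordinates of the dense sets $D_{\mu,i}$: since the Cohen posets are $\lambda^+$-closed and all $\lambda$-many $D_{\mu,i}$ lie in $N_m$, the single $N_m$-generic sequence indeed meets all their projections. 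But membership in $D_{\mu,i}$ is membership of a \emph{pair} (Cohen part, upper part), and for each pair $(\mu,i)$ the witness includes shrunken measure-one sets $A_n^m(\mu,i)$. To assemble one condition $q^m$ you must amalgamate these over all $(\mu,i)$: over $\mu$ this is done by the diagonal intersection of Lemma \ref{diagintersect}, which works precisely because the index set is a set of objects; over $i$ it is a plain intersection of $\bar{\kappa}_\omega$-many sets in an ultrafilter $E_n(d_n)$ that is only $\kappa_n$-complete with $\kappa_n<\bar{\kappa}_\omega$, and this intersection need not be measure-one. The same obstruction blocks the alternative of folding the quantifier over $i$ into a single dense set $D_\mu$: the density proof would require a $\leq^*$-decreasing run of length $\bar{\kappa}_\omega$ through the upper parts, whose direct-extension order is only $\kappa_{m+1}$-closed. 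So $(\star)$ as stated is not obtainable by your construction, and the assertion that it needs ``no appeal to completeness of the ultrafilters'' is where the proof fails.

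The paper's proof is designed around this obstruction by first shrinking the number of names below the completeness threshold. Assuming $\lambda^+$ is collapsed, Corollary \ref{collapse1} gives $|\lambda^+|^{V[G]}=\bar{\kappa}_\omega$, hence $\xi:=\cf^{V[G]}(\lambda^+)<\bar{\kappa}_\omega$ (as $\bar{\kappa}_\omega$ is singular), and one picks $n$ with $\xi<\kappa_n$. Factoring at a condition with $n$ in its support, the lower part $\mathbb{P}_{\langle e_{m,n}(f_n^p)\,:\,m<n\rangle}$ is a small forcing, so a cofinal $\xi$-sequence $\langle \dot{\gamma}_i : i<\xi\rangle$ in $\lambda^+$ must be added by the tail $(\mathbb{P}/p)\setminus n$, whose $\leq^*$-order is $\kappa_n$-closed. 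Now only $\xi<\kappa_n$ names need handling, and they are handled \emph{sequentially}, not simultaneously: one builds a $\leq^*$-decreasing sequence $\langle p_i : i<\xi\rangle$ where $p_{i+1}$ witnesses the strong Prikry property (Theorem \ref{strongprikry1}) for $D_i=\{q : q \text{ decides } \dot{\gamma}_i\}$, the $\kappa_n$-closure of $\leq^*$ carrying the sequence through to a lower bound $r$. Each $\dot{\gamma}_i$ then takes at most $\lambda$-many values below $r$, so $r$ forces the sequence bounded by some $\beta<(\lambda^+)^V$, contradicting cofinality. In short: where you attempt one fusion for $\bar{\kappa}_\omega$-many names, the paper trades the collapse for a short cofinal sequence and replaces simultaneity by a $\xi$-length $\leq^*$-iteration of the strong Prikry property; some such reduction is necessary, and your argument cannot be repaired without it.
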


\begin{proof}

Suppose not.
Since all cardinals in the interval $(\bar{\kappa}_\omega,\lambda^+)$ are collapsed,  $(\lambda^+)^V$ is collapsed to have size $\bar{\kappa}_\omega$.
Then in $V[G]$, let $\xi=\cf{\lambda^+}<\overline{\kappa}_\omega$. Choose $n<\omega$ such that $\xi<\kappa_n$. 
Extend $p$ so that $n \in \supp(p)$. Break $p$ into $p \restriction n$ and $p \setminus n$. Since $p \restriction n$ lies in $\mathbb{P}_{\langle e_{m,n}(f_n^p): m<n \rangle}$ which is $\lambda_n(f_n^p)$-.c.c., so $\lambda^+$ is collapsed in the forcing in which $p \setminus n$ lives (which is $(\mathbb{P}/p) \setminus n$).
Note that $(\mathbb{P}/p) \setminus n ,\leq^*)$ is $\kappa_n$-closed.

In $V$, let $\{\dot{\gamma}_i:i<\xi\}$ be a sequence of names, forced by $ p \setminus n$ to be a cofinal sequence in $\lambda^+$. Build a sequence of conditions
$\{p_i : i<\xi \}$ such that $p_0=p \setminus n$, $\{p_i : i<\xi \}$ is $\leq^*$-decreasing, and $p_{i+1}$ satisfies Lemma \ref{strongprikry1} for $D_i=\{q \in \mathbb{P} \setminus n: q$ decides the value of $\dot{\gamma}_i \}$. 

Set $r$ to be a $\leq^*$-lower bound of $\{p_i :i<\xi\}$ in $\mathbb{P} \setminus n$. 
Since each measure-one set has size at most $\lambda$, for each $i<\xi$, $A_i=\{\beta : \exists r^\prime \leq r, r^\prime \Vdash \dot{\gamma}_i=\check{\beta}\}$ has size at most $\lambda$.
Set $\beta_i=\sup A_i$, and $\beta=\sup\limits_{i<\xi} \beta_i$. Then $r \Vdash \sup\{ \dot{\gamma}_i:i<\xi\} \leq \check{\beta}$ and $\beta<(\lambda^+)^V$, which is a contradiction.

\end{proof}

We now consider the case $\eta>\omega$ is limit.
For $\alpha<\eta$ limit and $p \in G$ with $\alpha \in \supp(p)$ and $\lambda_\alpha=\lambda_\alpha(f_\alpha^p)$, the forcing $\mathbb{P}_\alpha=(\mathbb{P}/p) \restriction \alpha$  is equivalent to the forcing defined from a sequence of extenders with shorter length $\mathbb{P}_{\langle e_{\beta,\alpha}(f_\alpha^p): \beta<\alpha \rangle}$.
We can generalize the proofs of Lemma \ref{addcofseq}, Corollary \ref{collapse1}, and Lemma \ref{strongpreserve1} in a similar fashion to show the following:

\begin{thm}
\label{cardinalstructure}
Assume $\mathbb{P}=\mathbb{P}_{\langle E_\alpha:\alpha<\eta \rangle}$ and $\eta>\omega$ is regular.
For $\alpha<\eta$ limit, let $\lambda_\alpha=\lambda_\alpha(f_\alpha^p)$ for some $p \in G$.
Then in the interval $(\bar{\kappa}_\alpha,\kappa_\alpha)$, all cardinals in $(\bar{\kappa}_\alpha,\lambda_\alpha^+)$ are collapsed, cardinals in the interval $[\lambda_\alpha^+,\kappa_\alpha)$ are preserved.
In particular, $(\lambda_\alpha^+)^V=(\bar{\kappa}_\alpha^+)^{V[G]}$.
For $\gamma \in (\bar{\kappa}_\alpha,\lambda_\alpha^+)$ with  $\cf^V(\gamma)>\bar{\kappa}_\alpha$, we have that $cf^{V[G]}(\gamma)=\cf(\alpha)$.
In addition, all cardinals in the intervals $(\bar{\kappa}_\eta,\lambda^+)$ are collapsed, and $\lambda^+$ is preserved, hence, $(\lambda^+)^V=(\bar{\kappa}_\eta^+)^{V[G]}$.
For $\gamma \in (\bar{\kappa}_\eta,\lambda^+)$ with $\cf^V(\gamma)>\bar{\kappa}_\eta$, we have that $\cf^{V[G]}(\gamma)=\eta$.
Other $V$-cardinals which were not mentioned are preserved.

\end{thm}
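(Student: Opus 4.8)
The plan is to reduce the statement, via the factorization of Lemma \ref{factorization}, to the three results already established for $\eta=\omega$ (Lemma \ref{addcofseq}, Corollary \ref{collapse1}, Lemma \ref{strongpreserve1}), applied both to the full forcing $\mathbb{P}$ of length $\eta$ and to the restriction forcings of length $\alpha$ that appear as bottom factors. Fix $\alpha<\eta$ limit; by density choose $p\in G$ with $\alpha\in\supp(p)$ and write $\mathbb{P}/p=\mathbb{P}_0\times\mathbb{P}_1$, where $\mathbb{P}_0=\mathbb{P}_\alpha=\mathbb{P}_{\langle e_\beta:\beta<\alpha\rangle}$ with $e_\beta=e_{\beta,\alpha}(f_\alpha^p)$, and $\mathbb{P}_1=(\mathbb{P}/p)\setminus\alpha$. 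By Lemma \ref{factorization} the pair $(\mathbb{P}_1,\leq^*)$ is $\kappa_\alpha$-closed (in either case of that lemma), and since $\mathbb{P}_1$ has the Prikry property this makes $\mathbb{P}_1$ add no new bounded subsets of $\kappa_\alpha$ and preserve every cardinal $\leq\kappa_\alpha$; hence $V[G]$ and the intermediate model $V[G_0]$ agree on all cardinals and cofinalities $\leq\kappa_\alpha$, so the behaviour inside $(\bar\kappa_\alpha,\kappa_\alpha)$ may be computed in $V[G_0]$ from $\mathbb{P}_\alpha$ alone. The crucial observation is that, by Definition \ref{reflect}, $\mathbb{P}_\alpha$ is itself an instance of the present construction: the role of $\lambda$ is played by $\lambda_\alpha$, each $e_\beta$ witnesses $\kappa_\beta$ being $\lambda_\alpha$-supercompact, the sequence is coherent, and its length is $\alpha$; thus all lemmas of this and earlier sections apply to it, using that the Prikry and strong Prikry properties hold for forcings of arbitrary length (Section \ref{forcinganyextender}).

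Next I would transcribe the three $\omega$-results to $\mathbb{P}_\alpha$. For Lemma \ref{addcofseq}: given a $V$-cardinal $\gamma\in(\bar\kappa_\alpha,\lambda_\alpha^+)$ with $\cf^V(\gamma)>\bar\kappa_\alpha$, put $\gamma_\beta=\sup j_{e_\beta}[\gamma]$ and $H_\gamma(\beta)=f_\beta^q(\gamma_\beta)$ for $q\in G_0$ with $\max(\supp(q))=\beta$; the same dense sets $D_\nu$ show $\{H_\gamma(\beta):\beta<\alpha\}$ is cofinal in $\gamma$. Fixing a cofinal $C=\{c_\zeta:\zeta<\cf(\alpha)\}\subseteq\alpha$ and setting $K_\gamma(\zeta)=\sup\{H_\gamma(\beta):\beta<c_\zeta\}$ yields a non-decreasing cofinal map $K_\gamma:\cf(\alpha)\to\gamma$ with every value below $\gamma$, since $\cf^V(\gamma)>\bar\kappa_\alpha>|c_\zeta|$ keeps the partial suprema under $\gamma$; this gives $\cf^{V[G]}(\gamma)\leq\cf(\alpha)$. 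The collapse of $(\bar\kappa_\alpha,\lambda_\alpha^+)$ (generalizing Corollary \ref{collapse1}) then runs as before: if some cardinal there were preserved, the least one would be $(\bar\kappa_\alpha^+)^{V[G]}$ and hence regular in $V[G]$, yet its $V[G]$-cofinality is either $\cf(\alpha)$ (when its $V$-cofinality exceeds $\bar\kappa_\alpha$) or at most its $V$-cofinality, both $<\bar\kappa_\alpha$, a contradiction; here one uses that $\bar\kappa_\alpha$ is a preserved singular cardinal of cofinality $\cf(\alpha)\leq\alpha<\eta<\kappa_0$.

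For the preservation of $\lambda_\alpha^+$ I would copy the proof of Lemma \ref{strongpreserve1}: assuming it is collapsed, its $V[G]$-cofinality $\xi$ lies below $\bar\kappa_\alpha$, choose $\beta<\alpha$ with $\xi<\kappa_\beta$, split off the $\kappa_\beta$-closed (under $\leq^*$) tail $(\mathbb{P}_\alpha/q)\setminus\beta$, and use the strong Prikry property to build a $\leq^*$-decreasing $\xi$-sequence; as each measure-one set has size $\leq\lambda_\alpha$, every term of the putative cofinal map is pinned into a set of size $\leq\lambda_\alpha$, so its supremum stays below the regular cardinal $\lambda_\alpha^+$, a contradiction. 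The identical argument with $\bar\kappa_\eta,\lambda$ and $\beta<\eta$ (available because $\eta$ is regular) preserves $\lambda^+$ and collapses $(\bar\kappa_\eta,\lambda^+)$, where the added $K_\gamma$ now has length $\eta=\cf(\eta)$. The exact cofinality follows from the matching lower bound $\cf^{V[G]}(\gamma)\geq\cf(\alpha)$: any cofinal $g:\delta\to\gamma$ with $\delta<\cf(\alpha)$ has each $g(i)$ below some $K_\gamma(\zeta_i)$, and $\sup_i\zeta_i<\cf(\alpha)$ by regularity would bound $g$ below $K_\gamma(\sup_i\zeta_i)<\gamma$, using that $K_\gamma$ is non-decreasing. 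Finally, ``other cardinals are preserved'' combines the first theorem of this section (cardinals $\leq\bar\kappa_\eta$ outside the intervals $(\bar\kappa_\alpha,\kappa_\alpha)$), the $\lambda_\alpha^{++}$-c.c. of $\mathbb{P}_0$ and $\lambda^{++}$-c.c. of $\mathbb{P}$ preserving $[\lambda_\alpha^+,\kappa_\alpha)$ and everything $\geq\lambda^{++}$, and the two preservation results just proved.

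The main obstacle I anticipate is the exact cofinality computation $\cf^{V[G]}(\gamma)=\cf(\alpha)$. Unlike the case $\eta=\omega$, where $\cf^{V[G]}(\gamma)=\omega$ holds automatically once an $\omega$-cofinal sequence is added, for uncountable $\cf(\alpha)$ one must genuinely exclude cofinal maps of length $<\cf(\alpha)$; this is precisely where the regularity of $\cf(\alpha)$ and the monotone reorganization $K_\gamma$ of $H_\gamma$ are needed. A secondary point requiring care is the verification that the highly closed tail factor $\mathbb{P}_1$ contributes nothing to the cardinal arithmetic below $\kappa_\alpha$, which legitimizes transferring the whole interior analysis to the shorter forcing $\mathbb{P}_\alpha$.
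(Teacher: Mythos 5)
Your overall strategy is exactly the paper's: the paper proves this theorem in one stroke by observing that for $p\in G$ with $\alpha\in\supp(p)$ the bottom factor $(\mathbb{P}/p)\restriction\alpha$ is $\mathbb{P}_{\langle e_{\beta,\alpha}(f_\alpha^p):\beta<\alpha\rangle}$, a forcing of the same kind of shorter length, and then asserting that Lemma \ref{addcofseq}, Corollary \ref{collapse1} and Lemma \ref{strongpreserve1} generalize ``in a similar fashion.'' Your factorization, the transfer of the analysis inside $(\bar\kappa_\alpha,\kappa_\alpha)$ to the bottom factor via the $\leq^*$-closure plus Prikry property of $(\mathbb{P}/p)\setminus\alpha$, the least-preserved-cardinal argument for the collapse, and the Lemma \ref{strongpreserve1}-style counting for the preservation of $\lambda_\alpha^+$ and $\lambda^+$ all agree with what the paper intends and are fine.

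However, there is a genuine gap precisely at the point you identified as the main obstacle. Your claim that $K_\gamma(\zeta)=\sup\{H_\gamma(\beta):\beta<c_\zeta\}<\gamma$ ``since $\cf^V(\gamma)>\bar\kappa_\alpha>|c_\zeta|$ keeps the partial suprema under $\gamma$'' is an invalid inference: the set $\{H_\gamma(\beta):\beta<c_\zeta\}$ lies in $V[G]$, not in $V$, and the $V$-cofinality of $\gamma$ only bounds suprema of $V$-sets. Indeed, in $V[G]$ the ordinal $\gamma$ acquires cofinal subsets of size $\cf(\alpha)<\bar\kappa_\alpha$, so no cardinality count of this shape can keep a $V[G]$-set of size $<\bar\kappa_\alpha$ below $\gamma$. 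Since both your upper bound (monotone cofinal $K_\gamma$ of length $\cf(\alpha)$) and your lower bound (bounding a short cofinal $g$ by $K_\gamma(\sup_i\zeta_i)<\gamma$) rest on this boundedness, the exact computation $\cf^{V[G]}(\gamma)=\cf(\alpha)$ (and likewise $=\eta$ for the top interval) is not established. Both directions can be repaired from the forcing structure rather than cardinality. For the upper bound, drop $K_\gamma$ and run the density argument of Lemma \ref{addcofseq} along a fixed cofinal $C\subseteq\alpha$ of order type $\cf(\alpha)$: given $\nu<\gamma$ one may always perform the relevant one-step extension at a coordinate chosen inside $C$, so already $\{H_\gamma(\beta):\beta\in C\}$ is cofinal in $\gamma$, giving $\cf^{V[G]}(\gamma)\leq\cf(\alpha)$. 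For the lower bound, given a name $\dot g:\theta\to\gamma$ for a cofinal map with $\theta<\cf(\alpha)$ regular, build a $\leq^*$-decreasing $\theta$-sequence applying the strong Prikry property to $D_i=\{q: q \text{ decides } \dot g(i)\}$ (possible since $\theta<\eta<\kappa_0$ and $\leq^*$ is $\kappa_0$-closed); because $\theta<\cf(\alpha)$, the union of the finite sets $a_i$ is bounded by some $\beta^{**}<\alpha$, and after one further one-step extension at a coordinate $\beta^{***}\in(\beta^{**},\alpha)$ all the measure-one sets at coordinates in $\bigcup_i a_i$ are squished into $V_{\kappa_{\beta^{***}}}$; by the commuting of extensions (Lemma \ref{transitivity3}) the decisions persist, so each $\dot g(i)$ is forced into a $V$-set of possible values of size $<\kappa_{\beta^{***}}<\bar\kappa_\alpha<\cf^V(\gamma)$, whose union over $i<\theta$ is bounded below $\gamma$ --- a contradiction. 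Note that the literal count of Lemma \ref{strongpreserve1}, with value sets of size $\leq\lambda_\alpha$, is useless here, since $\cf^V(\gamma)$ may well be $\leq\lambda_\alpha$; the ``squish first, then count'' step is what the paper's phrase ``generalize \ldots in a similar fashion'' is concealing, and it is the idea missing from your write-up.
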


\section{blowing up power sets}
\label{blowinguppowersets}
We now show that forcing with $\mathbb{P}$ will add $|\bar{\lambda}_\eta^j|$ new subsets of $\bar{\kappa}_\eta$ (recall that $\bar{\lambda}_\eta^j=\sup_{\alpha<\eta}j_{E_\alpha}(\lambda)$ and $\bar{\kappa}_\eta =\sup_{\alpha<\eta} \kappa_\alpha)$.
Lemma \ref{scaledef} will be used to obtain a condition in the generic extension that can be used to define scales (the formal definition of scales is in Section \ref{scaleanalysis}).
The technical conditions in the lemma ascertain that the scales will be well-defined.

\begin{lemma}\label{scaledef}
  Let $\beta \leq \eta$ be a limit ordinal.
   Let $p^\beta$ be a condition such that 
   
   \begin{enumerate}
   
\item if $\beta<\eta$, then we require that $\beta \in \supp(p^\beta)$, we let $\lambda_\beta=\lambda_\beta(f_\beta^{p^\beta})$,  $\lambda_{\alpha,\beta}^j=t_\beta^\alpha(f_\beta^{p^\beta}(\kappa_\beta))$ and $\theta_\beta:=u_\beta^\beta(f_\beta^{p^\beta}(\kappa_\beta))$ (recall that $j_{E_{\beta}}(t_\beta^\alpha)(\kappa_\beta)=\lambda_\alpha^j$, and $j_{E_{\beta}}(u_\beta^\beta)(\kappa_\beta)=\bar{\lambda}_\beta^j=\sup_{\alpha<\beta} \lambda_\alpha^j$).
	
	\item if $\beta=\eta$, then let $\lambda_\beta=\lambda$, $\lambda_{\alpha,\beta}^j=\lambda_\alpha^j$, and $\theta_\beta=\bar{\lambda}_\eta^j$.   
   
   \end{enumerate}
   
  Fix $\gamma \in [\overline{\kappa}_\beta,\theta_\beta)$.
 We also assume that $p^\beta$ satisfies the following: there is $\bar{\alpha}<\beta$ with $\supp(p) \cap \beta \subseteq \bar{\alpha}$, $\gamma \in d_{\bar{\alpha}}^{p^\beta}$ and for $\alpha^\prime \geq \bar{\alpha}+1$ and $\mu \in A_{\alpha^\prime}^{p^\beta}$, we have $\gamma \in \dom(\mu)$ and $\mu(\gamma)<j_{e_\iota,e_{\alpha^\prime}(\mu)}(\kappa_\iota)$ for all $\iota \in [\alpha+1,\alpha^\prime]$ (here $j_{e_\alpha^\prime,e_\alpha^\prime(\mu)}(\kappa_\alpha^\prime):=\kappa_\alpha^\prime)$.
 Fix $\alpha \in [\bar{\alpha},\beta)$.
   Let $D$ be the collection of $q \leq p^\beta$ such that 
\begin{enumerate}
\item $\alpha \in \supp(q)$.
\item If we enumerate $(\supp(q) \cap \beta) \setminus \alpha$ in decreasing order as
    $\alpha_0> \dots >\alpha_{k-1}$, then 
    \begin{enumerate}
    
	\item $k>1$ and $\alpha_{k-1}=\alpha$ and $\alpha_{k-2}=\alpha+1$.
	
	\item \label{gamma_i} $\gamma \in \dom(f^q_{\alpha_0})$.  
	Furthermore, let $\langle \gamma_i \rangle_i$ be the sequence of ordinals defined inductively by setting $\gamma_0 = \gamma$ and $\gamma_{i+1} = f^q_{\alpha_i}(\gamma_i)$ for as long as $\gamma_i \in \dom(f^q_{\alpha_i})$, then $\langle \gamma_i \rangle_i$
    reaches a stage where $\gamma_{k-1}$ is defined, and $\gamma_{k-1} \in \dom(f^q_\alpha)$. 
    
  \item \label{technicalcase2} let $\langle \gamma_i \rangle_i$ be as in (\ref{gamma_i}), then for $i<k-2$, we have that $\gamma_{i+1}<j_{e_{\alpha_{i+1},\alpha_i}(f_{\alpha_i}^q)}(\kappa_{\alpha_{i+1}})$.
  If $f_{\alpha_i}^q$ is satisfies the condition in this item, we say that $f_{\alpha_i}^q$ is {\em sensible}.
    
    \item \label{satisfyobject} let $\xi \in (\beta \setminus (\alpha+2))$.
     If $\xi>\alpha_0$, then for $\mu \in A_\xi^q$, $\gamma_0,\gamma_1 \in \dom(\mu)$ and $\mu(\gamma_0)<j_{e_{\alpha_1,\alpha_0}(\mu)}(\kappa_{\alpha_1})$
     If $i \in [0,k-2)$ is the least such that $\xi<\alpha_i$, then for $\mu \in A_\xi^q$, $\gamma_{i+1},\gamma_{i+2} \in \dom(\mu)$ and $\mu(\gamma_{i+1})<j_{e_{\alpha_{i+1},\alpha_i}(\mu)}(\kappa_{\alpha_{i+1}})$.
       If $f_{\alpha_i}^q$ is satisfies the condition in this item, we say that $f_{\alpha_i}^q$ is {\em sensible}.
	
    \end{enumerate}
    
    \end{enumerate}

Then $D$ is open dense below $p^\beta$.

\end{lemma}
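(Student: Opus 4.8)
Suppose $q \in D$ and $q' \leq q$; the goal is to see $q' \in D$. By Lemma~\ref{transitivity3} the extension $q'$ is a direct extension of a finite iteration of one-step extensions of $q$, so it suffices to treat the two generating cases. If $q' \leq^* q$, then $\supp(q')=\supp(q)$ and each $f_{\alpha_i}^{q'} \supseteq f_{\alpha_i}^q$, so the trace $\langle \gamma_i \rangle_i$ and the sensibility clause~\ref{technicalcase2} are unchanged, while the measure-one sets only project down, so clause~\ref{satisfyobject} is inherited. If $q' = q+\mu$ is a one-step extension at a coordinate $\xi$, then clause~\ref{satisfyobject} for $q$ is precisely the statement that $\mu$ contains the relevant trace values in its domain and sends them below the correct critical point; hence when $\xi \in \beta \setminus (\alpha+2)$ the new support coordinate prolongs the trace sensibly, and the measure-one sets at the remaining coordinates, which are shrunk to objects compatible with $\mu$, continue to satisfy clause~\ref{satisfyobject} (extensions at coordinates below $\alpha$ or at or above $\beta$ leave the coordinates $\geq\alpha$ untouched). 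Thus $D$ is open.

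\textbf{Density.} Fix $q \leq p^\beta$. Since $\supp(p^\beta) \cap \beta \subseteq \bar\alpha \leq \alpha$, neither $\alpha$ nor $\alpha+1$ lies in $\supp(p^\beta)$, so the measure-one sets $A_\alpha^q$ and $A_{\alpha+1}^q$ are available, and by the carrying hypothesis on $p^\beta$ (inherited by $q$, whose measure-one sets project down into those of $p^\beta$) every object in the relevant sets contains $\gamma$ in its domain and sends it below the pertinent critical point. I first perform one-step extensions to force $\alpha,\alpha+1 \in \supp$, together with whatever coordinates are needed so that the trace $\gamma_0=\gamma$, $\gamma_{i+1}=f_{\alpha_i}(\gamma_i)$ descends all the way to $\gamma_{k-1}\in\dom(f_\alpha)$. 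By Lemma~\ref{transitivity3}(\ref{commute2}) the order of these extensions is immaterial modulo squishing, so I may choose, at each coordinate $\alpha_i$, an object whose domain contains the current trace value; such objects form a measure-one set by Proposition~\ref{addsmallset} together with the boundedness discussion preceding it, which also keeps each $\gamma_{i+1}$ below the required $j_{e_{\alpha_{i+1},\alpha_i}(f_{\alpha_i})}(\kappa_{\alpha_{i+1}})$, yielding the sensibility clause~\ref{technicalcase2}. This produces $q'' \leq q$ realising every requirement of $D$ except possibly clause~\ref{satisfyobject}. To secure that clause I shrink the measure-one sets of $q''$ at the coordinates $\xi \in \beta \setminus (\alpha+2)$ not in $\supp(q'')$: for each such $\xi$ the trace values $\gamma_{i+1},\gamma_{i+2}$ in clause~\ref{satisfyobject} are fixed ordinals, so $\{\mu \in A_\xi^{q''} : \gamma_{i+1},\gamma_{i+2}\in\dom(\mu),\ \mu(\gamma_{i+1})<j_{e_{\alpha_{i+1},\alpha_i}(\mu)}(\kappa_{\alpha_{i+1}})\}$ is of measure one, again by Proposition~\ref{addsmallset} and the boundedness discussion; a single diagonal shrinking over $\xi$ via Lemma~\ref{diagintersect} then gives $q' \leq^* q''$ with $q' \in D$.

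\textbf{Main obstacle.} The delicate point is propagating the trace $\langle \gamma_i \rangle_i$ and its sensibility bounds through the conjugation (squishing) that each one-step extension performs on the lower coordinates: when I extend at a higher coordinate $\alpha_i$, the Cohen functions and measure-one sets below are replaced by their conjugates $\mu \circ (\cdot) \circ \mu^{-1}$, and I must verify that the value handed down to coordinate $\alpha_{i+1}$, as well as the inequality $\gamma_{i+1}<j_{e_{\alpha_{i+1},\alpha_i}(f_{\alpha_i})}(\kappa_{\alpha_{i+1}})$, survives this conjugation and stays consistent with the carrying hypothesis inherited from $p^\beta$. The definition of squishability (Definition~\ref{defnsquishablefunction}) and the commutation of one-step extensions (Lemma~\ref{transitivity3}(\ref{commute2})) are exactly the tools for this bookkeeping, but checking that these bounds persist uniformly across all the coordinates simultaneously is where the genuine care is required.
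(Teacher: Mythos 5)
There is a genuine gap here, and you in effect flag it yourself in your closing paragraph: the ``bookkeeping'' you defer as the main obstacle is not a remaining technicality — it \emph{is} the proof, and you never carry it out. The paper's argument consists precisely of that computation. It proves the stronger statement that \emph{every} $q \leq p^\beta$ with $\alpha,\alpha+1 \in \supp(q)$ already belongs to $D$, with no Cohen-part extensions and no shrinking of measure-one sets. Writing $q \leq^* p^\beta + \langle \mu_{k-1},\dots,\mu_0 \rangle$ with $\mu_i \in A_{\alpha_i}^{p^\beta}$, one shows by induction along the decreasing enumeration that $\gamma_{i+1}=\mu_i(\gamma)$: since $f_{\alpha_i}^q = \mu_{i-1} \circ (f_{\alpha_i}^{p^\beta} \oplus \mu_i) \circ \mu_{i-1}^{-1}$ and $\gamma_i = \mu_{i-1}(\gamma)$, the carrying hypothesis gives $\gamma \in \dom(\mu_i)$ and (taking $\iota=\alpha_i$ in the hypothesis) $\mu_i(\gamma) < \kappa_{\alpha_i} \leq \bar{\kappa}_{\alpha_{i-1}} < \mu_{i-1}(\kappa_{\alpha_{i-1}})$, so the outer conjugating object $\mu_{i-1}$ \emph{fixes} $\mu_i(\gamma)$ by item (7) of Definition \ref{alphaobject}; the sensibility bounds in clauses (\ref{technicalcase2}) and (\ref{satisfyobject}) then fall directly out of the carrying hypothesis on $p^\beta$, which projection of measure-one sets preserves. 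Without this identification of the trace values as $\mu_i(\gamma)$, your openness paragraph (``the new support coordinate prolongs the trace sensibly,'' the conjugated measure-one sets ``continue to satisfy clause (\ref{satisfyobject})'') asserts the conclusion rather than proving it: you must check, exactly as above, that a one-step extension at $\xi$ between two support coordinates hands down a trace value below $\bar{\kappa}_\xi$, hence a fixed point of the conjugation.

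Your density argument also has a structural flaw. For an arbitrary $q \leq p^\beta$, the coordinates $\alpha_0 > \dots > \alpha_{k-3}$ above $\alpha+1$ may already lie in $\supp(q)$, where the Cohen functions $f_{\alpha_i}^q$ are frozen; you cannot ``choose, at each coordinate $\alpha_i$, an object whose domain contains the current trace value'' there, since one-step extensions are available only at coordinates outside the support, and Proposition \ref{addsmallset} shrinks measure-one sets, which no longer exist at those coordinates. That $\gamma$ lies in $\dom(f_{\alpha_0}^q)$ and the trace descends through these fixed functions (clause (\ref{gamma_i})) must hold \emph{automatically}, which is exactly what the paper's induction delivers from the carrying hypothesis and what your construction cannot arrange after the fact. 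Once the stronger claim is in hand, both density (extend any $q$ so that $\alpha,\alpha+1$ enter the support) and openness (supports only grow under $\leq$, by Lemma \ref{transitivity3}) are immediate, so the entire content of the lemma lives in the computation your proposal leaves undone.
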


\begin{proof}

We prove for the case $\beta=\eta$.
The case $\beta<\eta$ is similar.
We will show something stronger: if $q\leq p$ and $\alpha,\alpha+1 \in \supp(q)$, then $q \in D$.
Let $q \leq p$ be such that $\alpha,\alpha+1 \in \supp(q)$.
Enumerate $\supp(q) \setminus \alpha$ as $\alpha_0>\alpha_1> \dots >\alpha_{k-2}>\alpha_{k-1}$, where $k>1$, $\alpha_{k-1}=\alpha$ and $\alpha_{k-2}=\alpha+1$.
Since $q \leq p$, let $\mu_i \in A_{\alpha_i}^{p^\beta}$ be such that $q \leq^* p+ \langle \mu_{k-1},\mu_{k-2}, \cdots, \mu_1,\mu_0 \rangle$ (note that the order of the objects is important).
By induction on $i<k-1$, we show that

\begin{enumerate}

\item for $\gamma_i \in \dom(f_{\alpha_i}^q)$ and $f_{\alpha_i}^q$ is sensible.

\item if $\xi \in (\alpha_i,\alpha_{i-1})$ and $\mu \in A_\xi^q$, then $\mu$ is sensible, where $\alpha_{-1}:=\eta$.

\item $\gamma_{i+1}=\mu_i(\gamma)$.

\end{enumerate}

$i=0$: Note that $f_{\alpha_0}^q=f_{\alpha_0}^p \oplus \mu_0$, so $\gamma_0 \in \dom(f_{\alpha_0}^q)$, and $f_{\alpha_0}^q(\gamma_0)=\mu_0(\gamma)<j_{e_{\alpha_1,\alpha_0}(\mu_0)}(\kappa_{\alpha_1})$.
For $\xi \in (\alpha_0,\eta)$ and $\mu \in A_\xi^q$, $\dom(\mu_0) \cup \rge(\mu_0)$, so $\gamma_0,\gamma_1 \in \dom(\mu)$.
Clearly $\mu$ is sensible.
Note that $\gamma_1=\mu_0(\gamma)$.

$i=j+1$: By commuting the objects, we can see that $f_{\alpha_i}^q=\mu_j \circ (f_{\alpha_i}^p \oplus \mu_i) \circ \mu_j^{-1}$.
Note that $\gamma_i=\mu_j(\gamma)$, so $ (f_{\alpha_i}^p \oplus \mu_i) \circ \mu_j^{-1}(\gamma_i)=\mu_i(\gamma)$.
Since $i<k-1$, we have $\alpha_i>\alpha$, so $\gamma< j_{E_{\alpha_i}}(\kappa_{\alpha_i})$.
By this clause, $\mu_i(\gamma)<\kappa_{\alpha_i}$.
Hence $\mu_i(\gamma) \in \dom(\mu_j)$ and $\mu_j$ fixes $\mu_i(\gamma)$.
As a consequence, $\gamma_{i+1}=f_{\alpha_i}^q(\gamma_i)=\mu_i(\gamma)$, which is sensible.
Now for $\xi \in  (\alpha_i,\alpha_j)$, $\mu \in A_\xi^q$, we have that $\dom(f_{\alpha_i}^q) \cup \rge(f_{\alpha_i}^q) \subseteq \dom(\mu)$, hence, $\gamma_i,\gamma_{i+1} \in \dom(\mu)$.
Finally, since $\mu \in A_\xi^q$, $\mu=\mu_j \circ \tau \circ \mu_j^{-1}$ for some $\tau \in A_\xi^p$.
Hence, $\mu(\gamma_i)=\mu_j \circ \tau(\gamma)=\tau(\gamma)$ (again, $\mu_j$ fixes $\tau(\gamma)$).
Hence $\mu$ is sensible.

Now we see that $\gamma_{k-1}$ is defined.
Note that $f_\alpha^q=\mu_{k-2} \circ(f_\alpha^p \oplus \mu_{k-1}) \circ \mu_{k-2} ^{-1}$.
Since $\gamma_{k-1}=\mu_{k-2}(\gamma)$, $(f_\alpha^p \oplus \mu_{k-1}) \circ \mu_{k-2}^{-1}(\gamma_{k-1})=\mu_{k-1}(\gamma)$.
Finally, since $\dom(\mu_{k-1}) \cup \rge(\mu_{k-1}) \subseteq \dom(\mu_{k-2})$, we have that $\gamma_{k-1} \in \dom(f_\alpha^q)$ as required.

\end{proof}

Let $G$ be a generic object for $\mathbb{P}$.
Fix a limit ordinal $\beta \leq \eta$. 
Note that the collection of $p^\beta$ satisfying the initial condition in Lemma \ref{scaledef} is open dense, we may assume that $p^\beta \in G$.
From now on, in $V[G]$, fix $\lambda_\beta$ and $\theta_\beta$ as in Lemma \ref{scaledef}, which are defined from $p^\beta$.
Note that by genericity
of $G$, $\lambda_\beta$ and $\theta_\beta$ is well-defined.

For $\gamma \in [\overline{\kappa}_\beta,\theta_\beta)$, define a function $F^\beta_\gamma:\beta \rightarrow \bar{\kappa}_\beta$ as follows: if $\alpha \leq \bar{\alpha}$, let $F^\beta_\gamma(\alpha)=0$.
 Assume $\gamma<\lambda_{\alpha,\beta}^j$, find $p \in G$ with $p$ lying in the dense open set below $p^\beta$ from Lemma \ref{scaledef}.
  Enumerate $\supp(p) \cap (\beta+1) \setminus \alpha$ in decreasing order as $\alpha_0> \dots >\alpha_{k-1}=\alpha$.
Define $\gamma_0, \dots ,\gamma_{k-1}$ as in Lemma \ref{scaledef}, and define $F^\beta_\gamma(\alpha)=f^p_\alpha(\gamma_{k-1})$.
If $\beta=\eta$, we remove the superscript $\beta$ and just write $F_\gamma$ instead of $F_\gamma^\beta$.
Let us visualize the definition of $F_\gamma(\alpha)$ in Figure \ref{scalepic}, where $\beta=\eta$.

 \begin{figure}[H]
 \centering
 \captionsetup{justification=centering} 
 \includegraphics[scale = 0.65]{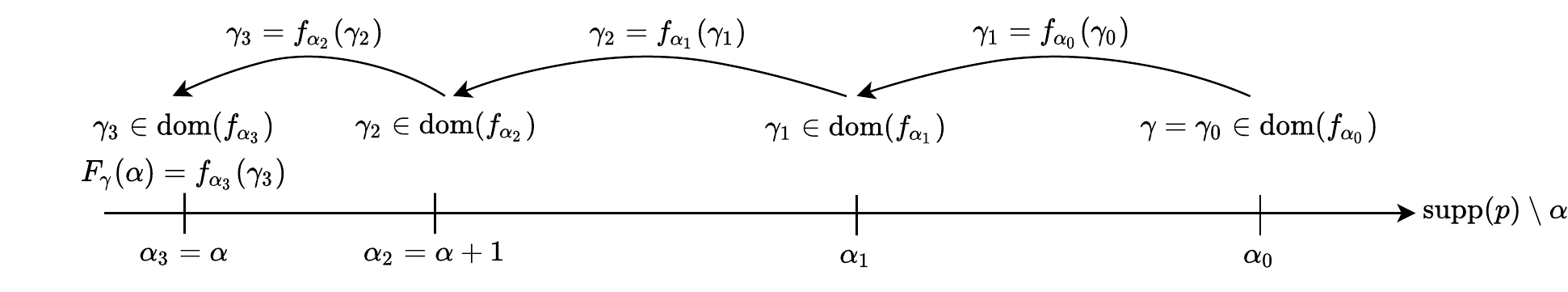}
  \caption{The definition of $F_\gamma(\alpha)$ when $| \supp(p) \setminus \alpha |=4$}
  \label{scalepic}
\end{figure}

In Figure \ref{scalepic}, we assume that in Lemma \ref{scaledef}, $\beta=\eta$ and $p \in D \cap G$.
Each $p_\alpha$ is either $\langle f_\alpha \rangle$ or $\langle f_\alpha,A_\alpha \rangle$.
We also assume $\gamma< \lambda_\alpha^j$ and assume that a decreasing enumeration of $\supp(p) \setminus \alpha$ is $\alpha_0>\alpha_1>\alpha_2>\alpha_3=\alpha$ (where $\alpha_2=\alpha+1$).

To check that $F^\beta_\gamma$ is well-defined, we consider only the case $\eta=\beta$.
Suppose $p,q \in G$ satisfy the conditions in Lemma \ref{scaledef}. 
Find $r \in G$ with $r \leq p,q$. 
Hence, $((\supp(p) \cup \supp(q)) \cap \beta) \setminus \alpha \subseteq (\supp(r) \cap \beta) \setminus \alpha$.
Assume $r \leq^* p+ \langle \mu_0, \dots, \mu_{l-1} \rangle$ and $r \leq^* q+ \langle \tau_0, \dots, \tau_{l-1} \rangle$. 
For simplicity, assume $\mu_i$ is a $\beta_i$-object, $\tau_j$ is a $\zeta_j$-object, $\alpha<\beta_0< \dots < \beta_{l-1}$ and $\alpha<\zeta_0< \dots <\zeta_{m-1}$. 
We will show that $p$ and $r$ compute the same $F_\gamma(\alpha)$-value. A similar argument will show that $q$ computes the same $F_\gamma(\alpha)$ as $r$. We simplify further that $l=1$, $\mu=\mu_0$.

We see that the value $F_\gamma(\alpha)$ computed by $p$ is $f_{\alpha_{k-1}}^p \circ \dots f_{\alpha_n}^p \circ f_{\alpha_{n-1}}^q \circ \dots f_{\alpha_0}^p(\gamma)$.

\textbf{\underline{CASE I}} $\xi>\alpha_0$:
As in the proof of Lemma \ref{scaledef}, $\mu$ fixes $f_{\alpha_0}^p(\gamma)$ and hence,

\begin{align*}
f_{\alpha_{k-1}}^r \circ \dots \circ f_{\alpha_0}^r \circ f_\xi^r(\gamma) &= f_{\alpha_{k-1}}^p \circ \dots \circ f_{\alpha_1}^p \circ \mu \circ f_{\alpha_0}^p \circ \mu^{-1} \circ \mu(\gamma) \\
& = f_{\alpha_{k-1}}^p \circ \dots \circ f_{\alpha_1}^p \circ \mu \circ f_{\alpha_0}^p(\gamma) \\
&=  f_{\alpha_{k-1}}^p \circ \dots \circ f_{\alpha_1}^p  \circ f_{\alpha_0}^p(\gamma).
\end{align*}

\textbf{\underline{CASE II}} $\xi <\alpha_0$:
Let $n$ be the least such that $\xi<\alpha_n$.
As in the proof of Lemma \ref{scaledef}, $\mu$ fixes $f_{\alpha_{n+1}}^p(\gamma_{n+1})$, where $\gamma_{n+1} =f_{\alpha_n}^p \circ f_{\alpha_{n-1}}^p \circ \dots \circ f_{\alpha_0}^p(\gamma)$, so 

\begin{align*}
f_{\alpha_{k-1}}^r \circ \dots f_{\alpha_{n+1}}^r \circ f_\xi^r \circ f_{\alpha_n}^r \circ \dots \circ f_{\alpha_0}^r(\gamma)
&= f_{\alpha_{k-1}}^p \circ \dots \circ \mu \circ f_{\alpha_{n+1}}^p \circ \mu^{-1} \circ \mu \circ f_{\alpha_n}^p \circ \dots \circ f_{\alpha_0}^p(\gamma) \\
&=f_{\alpha_{k-1}}^p \circ \dots \circ \mu \circ  f_{\alpha_{n+1}}^p  \circ f_{\alpha_n}^p \circ \dots \circ  f_{\alpha_0}^p(\gamma) \\
& =f_{\alpha_{k-1}}^p \circ \dots f_{\alpha_{n+1}}^p \circ f_{\alpha_n}^p \circ \dots \circ f_{\alpha_0}^p(\gamma).
\end{align*}

Thus, $p$ and $r$ compute the same $F_\gamma(\alpha)$.

\begin{rmk}

If $\gamma<j_{E_\alpha}(\kappa_{\alpha+1})$, then the requirements for a condition $p \in G$ that is used to compute $F_\gamma(\alpha)$ can be weakened a bit: the requirement that $\alpha+1 \in \supp(p)$ is  not necessary.
In practice, if $\gamma<j_{E_0}(\kappa_1)$, then a scale analysis will be slightly simpler.
Here is a reason.
Suppose we start with a condition $p$ with $\max(\supp(p))<\alpha$, $\gamma \in d_\alpha^p$.
Let $\mu_\alpha \in A_\alpha^p$ and $\mu_{\alpha+1}^p$ be such that $\gamma \in \dom(\mu_\alpha)$, $\dom(\mu_\alpha) \cup \rge(\mu_\alpha) \subseteq \dom(\mu_{\alpha+1})$.
Then $\mu_\alpha(\gamma)<\mu_\alpha(j_{E_\alpha}(\kappa_{\alpha+1}))=\kappa_{\alpha+1}$.
Hence $\mu_{\alpha+1} \circ \mu_\alpha \circ \mu_{\alpha+1}^{-1} \circ \mu_{\alpha+1} (\gamma)=\mu_\alpha(\gamma)$.
This concludes that $p+\mu_\alpha$ and $p+\langle \mu_\alpha,\mu_{\alpha+1} \rangle$ compute the same $F_\gamma(\alpha)$-value.

\end{rmk}

\begin{prop}\label{unbdd} 
In $V[G]$, $\langle F^\beta_\gamma : \gamma \in[\bar{\kappa}_\beta,\theta_\beta)\rangle $ is $<_{bd}$- increasing, where for each pair of ordinal functions $t,t^\prime$ with domains limit ordinal $\theta$,  $t<_{bd} t^\prime$ means there is $\alpha<\theta$ such that for all $\alpha^\prime>\alpha$, $t(\alpha^\prime)<t^\prime(\alpha^\prime)$.
\end{prop}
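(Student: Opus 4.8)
The plan is to reduce the whole statement to the order-preservation of objects (Definition \ref{alphaobject}(5)) together with the computation already carried out inside the proof of Lemma \ref{scaledef}. I will treat the case $\beta=\eta$ explicitly; the case $\beta<\eta$ is identical once $\lambda^j_\alpha,\kappa^j_\alpha,\bar\lambda^j_\eta$ are replaced by their local counterparts $\lambda^j_{\alpha,\beta}$, $j_{e_{\alpha,\beta}(f^{p^\beta}_\beta)}(\kappa_\alpha)$, and $\theta_\beta$, exactly as in that lemma.

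First I would fix $\gamma<\gamma'$ in $[\bar\kappa_\beta,\theta_\beta)$ and locate the threshold. Using the note after Theorem \ref{deriveext} that $\lambda^j_{\beta'}<\kappa^j_\alpha<\lambda^j_\alpha$ whenever $\beta'<\alpha$, one gets $\sup_{\alpha<\beta}\kappa^j_\alpha=\bar\lambda^j_\beta=\theta_\beta$, so there is $\alpha^*<\beta$ with $\gamma<\gamma'<\kappa^j_{\alpha^*}$. I claim this $\alpha^*$ witnesses $F^\beta_\gamma<_{bd}F^\beta_{\gamma'}$, i.e. $F^\beta_\gamma(\alpha)<F^\beta_{\gamma'}(\alpha)$ for all $\alpha\in(\alpha^*,\beta)$; note that for such $\alpha$ both values are defined, since $\gamma<\gamma'<\kappa^j_{\alpha^*}<\kappa^j_\alpha<\lambda^j_\alpha$.

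The substantive step is to compute both values through a single condition. Fixing $\alpha\in(\alpha^*,\beta)$, the values $F^\beta_\gamma(\alpha)$ and $F^\beta_{\gamma'}(\alpha)$ are computed from conditions of $G$ lying in the dense set of Lemma \ref{scaledef}. I would pass to a common lower bound $r\in G$ with $r\le p^\beta$ and $\alpha,\alpha+1\in\supp(r)$ (so that $r$ lies in that dense set by the stronger statement proved there), and, invoking Proposition \ref{addsmallset} together with genericity, arrange that $\gamma,\gamma'$ lie in the domain of every relevant object of $r$. The well-definedness computation following the definition of $F^\beta_\gamma$ then guarantees that $r$ computes both scale values. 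Writing $\supp(r)\setminus\alpha=\{\alpha_0>\dots>\alpha_{k-1}=\alpha\}$ with $\alpha_{k-2}=\alpha+1$, and taking the objects $\langle\mu_i\rangle_{i<k}$ with $r\le^* p^\beta+\langle\mu_{k-1},\dots,\mu_0\rangle$ as in the proof of Lemma \ref{scaledef}, the very same induction yields $F^\beta_\gamma(\alpha)=\mu_{k-2}(\mu_{k-1}(\gamma))$, and likewise for $\gamma'$. The final simplification is the crux: since $\gamma<\kappa^j_\alpha$, Definition \ref{alphaobject}(6) gives $\mu_{k-1}(\gamma)<\kappa_\alpha<\bar\kappa_{\alpha+1}<\mu_{k-2}(\kappa_{\alpha+1})$, so the $(\alpha+1)$-object $\mu_{k-2}$ fixes $\mu_{k-1}(\gamma)$ by Definition \ref{alphaobject}(7); hence $F^\beta_\gamma(\alpha)=\mu_{k-1}(\gamma)$, and symmetrically $F^\beta_{\gamma'}(\alpha)=\mu_{k-1}(\gamma')$.

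With this reduction in hand the conclusion is immediate: $\mu_{k-1}\in A^{p^\beta}_\alpha$ is an $\alpha$-object, hence order-preserving by Definition \ref{alphaobject}(5), and $\gamma<\gamma'$ both lie in its domain, so $F^\beta_\gamma(\alpha)=\mu_{k-1}(\gamma)<\mu_{k-1}(\gamma')=F^\beta_{\gamma'}(\alpha)$. The main obstacle is confined to the third paragraph, namely producing one condition $r\in G$ that simultaneously computes both scale values through the same tower of objects and puts $\gamma,\gamma'$ into all the required domains; once that bookkeeping is arranged, order-preservation of a single $\alpha$-object finishes the argument, and the threshold $\alpha^*$ is uniform in $\alpha$ precisely because $\sup_{\alpha<\beta}\kappa^j_\alpha=\theta_\beta$.
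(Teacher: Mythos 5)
Your proof is correct, and it rests on the same two pillars as the paper's own argument: a density/genericity step that puts $\gamma$ and $\gamma'$ into the domains of all relevant Cohen parts and objects of a single condition of $G$, and order-preservation of objects (Definition \ref{alphaobject}(5)) applied along the tower of objects from Lemma \ref{scaledef}. The difference lies in how monotonicity is extracted. The paper fixes $\xi$ with $\gamma'<\lambda^j_\xi$, passes by density to $p'\le p$ with $\max(\supp(p'))\le\xi_0$ and with $\gamma,\gamma'$ in all domains beyond $\xi_0$, and then simply observes that for any $q\le p'$ the two values $F_\gamma(\alpha)$ and $F_{\gamma'}(\alpha)$ are obtained by applying the \emph{same} composition of objects to $\gamma$ and to $\gamma'$; since a composition of order-preserving maps is order-preserving, that already finishes the proof, with no need to identify the value. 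You push the computation one step further: choosing the threshold $\alpha^*$ with $\gamma'<\kappa^j_{\alpha^*}$ (legitimate, since the interleaving $\lambda^j_{\beta'}<\kappa^j_\alpha<\lambda^j_\alpha$ noted after Theorem \ref{deriveext} gives $\sup_{\alpha<\beta}\kappa^j_\alpha=\theta_\beta$), you show the tower collapses to $F^\beta_\gamma(\alpha)=\mu_{k-1}(\gamma)$, because Definition \ref{alphaobject}(6) puts $\mu_{k-1}(\gamma)$ below $\kappa_\alpha$ and items (3) and (7) then force the $(\alpha+1)$-object to fix it. This buys a stronger pointwise identification of the scale values --- exactly the form implicitly used later, e.g.\ in Proposition \ref{scale2}, where the value computed by $q+\vec\mu$ is read off as $\mu_{\alpha+1}(\mu_\alpha(\xi))$ --- at the cost of needing $\theta_\beta$ to equal the supremum of the (local) $\kappa^j$'s, which the paper's coarser threshold $\lambda^j_\xi$ avoids for $\beta=\eta$; for $\beta<\eta$ both arguments need the reflected analogue, which holds on a measure-one set, so this is no loss relative to the paper. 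One cosmetic slip: $\bar\kappa_{\alpha+1}=\kappa_\alpha$, so your chain should read $\mu_{k-1}(\gamma)<\kappa_\alpha=\bar\kappa_{\alpha+1}<\mu_{k-2}(\kappa_{\alpha+1})$; the conclusion is unaffected, since $(\alpha+1)$-reflectedness of $\mu_{k-2}(\kappa_{\alpha+1})$ supplies the final strict inequality.
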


\begin{proof}
  We prove the case $\beta=\eta$. The case $\beta<\eta$ is similar. Let $\gamma,\gamma^\prime \in [\overline{\kappa}_\eta,\bar{\lambda}_\eta^j)$, $\gamma<\gamma^\prime$.
  Let $\xi<\eta$ be such that $\gamma^\prime < \lambda_\xi^j$.
    We will show that $F_\gamma <_{bd} F_{\gamma^\prime}$ by a density argument. Let $p\in \mathbb{P}$. We can find $p^\prime \leq p$ and $\xi_0 \geq \xi$ such that $\max(\supp(p^\prime)) \leq \xi_0$, $\gamma$ and $\gamma^\prime$ are in the domains of the $\alpha$th Cohen parts and measure-one parts of $p$ for all $\alpha >\xi_0$.

We can also assume that for $\alpha>\alpha_0$, the domain of each object in
$A_\alpha^{p^\prime}$ contains $\gamma$, and $\gamma^\prime$. We will show
\begin{center}
    $p^\prime \Vdash \forall \alpha>\xi_0(\dot{F}_\gamma(\alpha)<\dot{F}_{\gamma^\prime}(\alpha))$.
\end{center}
This is true because for each $\alpha>\xi_0$, if $q\leq p^\prime$ can be used to define $F_\gamma(\alpha)$ and $F_{\gamma^\prime}(\alpha)$, the functions used to compute their values computed (from $q$) are just compositions of objects.  Every object is order-preserving, and by a density argument, we are done.
\end{proof}

By Proposition \ref{unbdd}, we conclude that in $V[G]$, $2^{\bar{\kappa}_\eta} \geq |\bar{\lambda}_\eta^j|$.
Recall that $|\lambda_\alpha^j|^V$ is regular.
If $\langle |\lambda_\alpha^j|^V:\alpha<\eta \rangle$ is not constant on the tail, then by a classical König's result on cardinal arithmetic, $2^{\bar{\kappa}_\eta} >| \bar{\lambda}_\eta^j|$.
With GCH, $|\mathbb{P}|=|\lambda_\eta^j|$.
By the nice name theorem, $(2^{\bar{\kappa}_\eta})^{V[G]} \leq ||\mathbb{P}|^{\lambda^+ \otimes \bar{\kappa}_\eta}|^V$.
We now have two cases: the case where $\langle |\lambda_\alpha^j|^V: \alpha<\eta \rangle$ is eventually constant, and the case where $\langle |\lambda_\alpha^j|^V: \alpha<\eta \rangle$ is not eventually constant.
For the first case, $\langle |\lambda_\alpha^j|^V: \alpha<\eta \rangle$ is eventually constant, say the constant value is $\theta$.
Then $||\mathbb{P}|^{\lambda^+ \otimes \bar{\kappa}_\eta}|^V=\theta=\sup_{\alpha<\eta} |\lambda_\alpha^j|$.
Hence $2^{\bar{\kappa}_\eta}=\sup_{\alpha<\eta} |\lambda_\alpha^j|$.
For the second case, we have that  $||\mathbb{P}|^{\lambda^+ \otimes \bar{\kappa}_\eta}|^V=|\bar{\lambda}_\eta^j|^+$.
Hence $2^{\bar{\kappa}_\eta}=(\sup_{\alpha<\eta} |\lambda_\alpha^j|)^+$.
We now conclude the cardinal arithmetic of $2^{\bar{\kappa}_\eta}$.

\begin{thm}

By forcing with $\mathbb{P}_{\langle E_\alpha: \alpha<\eta \rangle}$, where $\eta$ is regular, we have that

\begin{enumerate}

\item if $\langle |\lambda_\alpha^j|^V: \alpha<\eta \rangle$ is eventually constant,  then in $V[G]$, $2^{\bar{\kappa}_\eta}=|\bar{\lambda}_\eta^j|$.

\item if $\langle |\lambda_\alpha^j|^V: \alpha<\eta \rangle$ is not eventually constant, then in $V[G]$, $2^{\bar{\kappa}_\eta}=|\bar{\lambda}_\eta^j|^+$.

\end{enumerate}

\end{thm}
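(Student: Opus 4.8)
The plan is to combine a lower bound coming from the scales constructed before the statement with an upper bound coming from a nice-name count, and then to read off the two cases from the $V$-cardinal arithmetic of $\bar{\lambda}_\eta^j=\sup_{\alpha<\eta}\lambda_\alpha^j$ under GCH.

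First, for the lower bound I would apply Proposition \ref{unbdd} with $\beta=\eta$, so that $\theta_\eta=\bar{\lambda}_\eta^j$: the family $\langle F_\gamma:\gamma\in[\bar{\kappa}_\eta,\bar{\lambda}_\eta^j)\rangle$ is $<_{bd}$-increasing, hence its members are pairwise distinct functions $\eta\to\bar{\kappa}_\eta$, and since $\eta<\kappa_0\le\bar{\kappa}_\eta$ each such function codes (via its graph) a subset of $\bar{\kappa}_\eta$. As there are $|\bar{\lambda}_\eta^j|$ many indices $\gamma$, this yields $2^{\bar{\kappa}_\eta}\ge|\bar{\lambda}_\eta^j|$ in $V[G]$. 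In the non-eventually-constant case I would sharpen this: using $\lambda_\beta^j<\lambda_\alpha^j$ for $\beta<\alpha$ (the remark after Theorem \ref{deriveext}), the sequence $\langle|\lambda_\alpha^j|^V:\alpha<\eta\rangle$ is non-decreasing and cofinal in $|\bar{\lambda}_\eta^j|$, so $\cf(|\bar{\lambda}_\eta^j|)=\cf(\eta)=\eta<\bar{\kappa}_\eta$. König's theorem then gives $\cf(2^{\bar{\kappa}_\eta})>\bar{\kappa}_\eta>\eta$, so $2^{\bar{\kappa}_\eta}\ne|\bar{\lambda}_\eta^j|$ and hence $2^{\bar{\kappa}_\eta}\ge|\bar{\lambda}_\eta^j|^+$.

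For the upper bound I would use that $\mathbb{P}$ is $\lambda^{++}$-c.c., so every antichain has size $\le\lambda^+$ and a nice name for a subset of $\bar{\kappa}_\eta$ is a $\bar{\kappa}_\eta$-indexed sequence of antichains; since $\bar{\kappa}_\eta\le\lambda$, the number of nice names is at most $(|\mathbb{P}|^{\lambda^+})^{\bar{\kappa}_\eta}=|\mathbb{P}|^{\lambda^+}$, giving $(2^{\bar{\kappa}_\eta})^{V[G]}\le(|\mathbb{P}|^{\lambda^+})^V$. The GCH count of Cohen parts and measure-one sets carried out before the statement gives $|\mathbb{P}|=|\bar{\lambda}_\eta^j|$, so it remains to compute $|\bar{\lambda}_\eta^j|^{\lambda^+}$ in $V$. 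If the sequence is eventually constant with value $\theta$, then $|\bar{\lambda}_\eta^j|=\theta$ is regular and $>\lambda^+$, so by GCH $\theta^{\lambda^+}=\theta$ and $2^{\bar{\kappa}_\eta}\le|\bar{\lambda}_\eta^j|$; matching the lower bound gives equality. If it is not eventually constant, then $\cf(|\bar{\lambda}_\eta^j|)=\eta\le\lambda^+$, so by GCH (the Hausdorff/Tarski formula) $|\bar{\lambda}_\eta^j|^{\lambda^+}=|\bar{\lambda}_\eta^j|^+$ and $2^{\bar{\kappa}_\eta}\le|\bar{\lambda}_\eta^j|^+$; matching the sharpened lower bound again gives equality.

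I expect no deep obstacle here, since essentially all of the content has already been assembled in the discussion preceding the statement; the theorem mainly packages it. The only points requiring care are (i) confirming that the index set of Proposition \ref{unbdd} really has full cardinality $|\bar{\lambda}_\eta^j|$ and that the $F_\gamma$ genuinely determine distinct subsets of $\bar{\kappa}_\eta$, and (ii) the singular GCH arithmetic in the non-eventually-constant case, where one must correctly identify $\cf(|\bar{\lambda}_\eta^j|)=\eta$ and invoke both König (for the strict lower bound $2^{\bar{\kappa}_\eta}\ge|\bar{\lambda}_\eta^j|^+$) and the GCH power computation (for the matching upper bound $|\bar{\lambda}_\eta^j|^{\lambda^+}=|\bar{\lambda}_\eta^j|^+$).
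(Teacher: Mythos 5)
Your proposal is correct and follows essentially the same route as the paper: the lower bound $2^{\bar{\kappa}_\eta}\ge|\bar{\lambda}_\eta^j|$ from the $<_{bd}$-increasing family of Proposition \ref{unbdd}, K\H{o}nig's theorem to sharpen it in the non-eventually-constant case, and a nice-name count via the $\lambda^{++}$-c.c. with the GCH computation of $|\mathbb{P}|^{\lambda^+}$ split into the regular-constant-value and singular-cofinality-$\eta$ cases. You merely fill in details the paper leaves implicit (distinctness of the $F_\gamma$, the cofinality identification $\cf(|\bar{\lambda}_\eta^j|)=\eta$), so no further comparison is needed.
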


\section{scale analysis}
\label{scaleanalysis}

First we define a notion of scales.

\begin{defn}
\label{defscale}
Let $\rho$ be a singular cardinal and $\xi>\rho$.
A {\em scale} on $\rho$ of length $\xi$ is a family of functions $\vec{f}=\langle f_\alpha: \alpha<\xi \rangle$, together with $\vec{\rho}=\langle \rho_\beta: \beta<\cf(\rho) \rangle$ such that 

\begin{enumerate}

\item $\vec{\rho}$ is an increasing sequence of regular cardinals, cofinal in $\rho$, and $\rho_0>\cf(\rho)$.

\item $f_\alpha \in \prod\limits_{\beta<\cf(\rho)} \rho_\beta$.

\item for $\alpha_0<\alpha_1<\xi$, $f_{\alpha_0} <_{bd} f_{\alpha_1}$, (recall that this means there exists $\beta_0<\cf(\rho)$ such that for $\beta>\beta_0$, $f_{\alpha_0}(\beta)<f_{\alpha_1}(\beta)$).
A $\vec{f}$ satisfying this property is said to be {\em $<_{bd}$-increasing}.

\item $\vec{f}$ is cofinal, namely for any $h \in \prod\limits_{\beta<\cf(\rho)} \rho_\beta$, there is an $\alpha<\xi$ such that $h<_{bd} f_\alpha$.

\end{enumerate}

\end{defn}

In Definition \ref{defscale}, it is oftenly enough to define $f_\alpha(\beta)$ for all sufficiently large $\beta$, since the scale properties only refer to how functions are on their tails.

\begin{defn}
Let $\vec{f}=\langle f_\alpha: \alpha<\xi \rangle$ be a scale.
Then $f$ is an {\em exact upper bound (eub)} for $\vec{f}$ if

\begin{enumerate}

\item $f$ is an {\em upper bound} for $\vec{f}$, i.e. for $\alpha<\xi$, $f_\alpha<_{bd} f$.

\item If $g <_{bd} f$ then there is an $\alpha$ such that $g<_{bd} f_\alpha$.

\end{enumerate}

\end{defn}

Note that if $f$ is an eub of $\vec{f}$, $f(\alpha)$ is regular for sufficiently large $\alpha$, and $f$ is increasing on the tail, then $\vec{f}$ is a scale on $\prod\limits_{\alpha<\eta} f(\alpha)$.

\begin{defn}

For a scale $\langle f_\alpha: \alpha<\rho^+ \rangle$ on $\prod\limits_{\beta<\cf(\rho)}\rho_\beta$ where $\rho=\sup_{\beta<\cf(\rho)} \rho_\beta$, we say that an ordinal $\alpha<\rho^+$ with $\cf(\alpha)>\cf(\rho)$ is {\em very good} if there are a club $C \subseteq \alpha$ of order-type $\cf(\alpha)$ and an ordinal $\beta_0<\cf(\rho)$ such that for $\alpha_0<\alpha_1<\xi$ and $\beta>\beta_0$, $f_{\alpha_0}(\beta)<f_{\alpha_1}(\beta)$.
A scale for a singular cardinal $\rho$ is very good if every $\alpha$ with $\cf(\alpha)>\cf(\rho)$ is very good.

\end{defn}

We follow our definitions from Section \ref{blowinguppowersets}.
Namely, we have the ordinal functions $F_\gamma^\beta$ as well as $F_\gamma$.
We now make some scale analysis.
Note that if $\beta \leq \eta$ is limit, we have that $(\lambda_\beta^+)^V=(\bar{\kappa}_\beta^+)^{V[G]}$.
In particular, $(\lambda^+)^V=(\bar{\kappa}_\eta^+)^{V[G]}$.

\begin{prop}
\label{scale2}
In $V[G]$, for $\beta \leq \eta$ limit.
Let $\xi<\theta_\beta$ be such that $\cf^{V[G]}(\xi)>\bar{\kappa}_\beta$.
Then $F^\beta_\xi$ is an eub for $\langle F^\beta_\gamma: \gamma \in [\bar{\kappa}_\beta,\xi) \rangle$.

\end{prop}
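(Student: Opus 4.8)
The plan is to verify the two defining clauses of an exact upper bound separately. The upper-bound clause is immediate: by Proposition \ref{unbdd} the sequence $\langle F^\beta_\gamma : \gamma \in [\bar\kappa_\beta,\theta_\beta)\rangle$ is $<_{bd}$-increasing, and since $\cf^{V[G]}(\xi) > \bar\kappa_\beta$ forces $\xi \in (\bar\kappa_\beta,\theta_\beta)$ so that $F^\beta_\xi$ is defined, we get $F^\beta_\gamma <_{bd} F^\beta_\xi$ for every $\gamma < \xi$. So the entire content lies in the exactness clause: given $g <_{bd} F^\beta_\xi$ in $V[G]$, I must produce $\gamma < \xi$ with $g <_{bd} F^\beta_\gamma$.

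For the exactness clause I would first record two structural facts about the readings, both consequences of the observation (already used in the proof of Proposition \ref{unbdd}) that $F^\beta_\gamma(\alpha)$ is computed, from any condition in the dense set $D$ of Lemma \ref{scaledef}, as a composition of order-preserving objects applied to $\gamma$. Fact A (pointwise monotonicity): for a fixed tail coordinate $\alpha$ and $\gamma \leq \gamma' < \lambda^j_{\alpha,\beta}$, evaluating both through a common condition of $G$ gives $F^\beta_\gamma(\alpha) \leq F^\beta_{\gamma'}(\alpha)$, since a single order-preserving reading sends the smaller ordinal to the smaller value. Fact B (continuity): for a tail of $\alpha<\beta$ one has $F^\beta_\xi(\alpha) = \sup_{\gamma<\xi} F^\beta_\gamma(\alpha)$. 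Granting these, the argument finishes cleanly. Work below a condition $p^* \in G$ with $p^* \le p^\beta$ deciding an ordinal $\alpha_*<\beta$ and forcing $\dot g(\alpha) < \dot F^\beta_\xi(\alpha)$ for all $\alpha \in (\alpha_*,\beta)$. By Fact B, for each such $\alpha$ there is $\gamma_\alpha < \xi$ with $g(\alpha) < F^\beta_{\gamma_\alpha}(\alpha)$. Because the number of coordinates is $|\beta| \le |\eta| < \kappa_0 \le \bar\kappa_\beta < \cf^{V[G]}(\xi)$, the ordinal $\gamma := \sup_{\alpha \in (\alpha_*,\beta)} \gamma_\alpha$ is still below $\xi$, and $\gamma \in [\bar\kappa_\beta,\theta_\beta)$, so $F^\beta_\gamma$ is defined. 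Fact A then gives $g(\alpha) < F^\beta_{\gamma_\alpha}(\alpha) \le F^\beta_\gamma(\alpha)$ for all $\alpha \in (\alpha_*,\beta)$, i.e. $g <_{bd} F^\beta_\gamma$ with $\gamma < \xi$, which is exactly what exactness demands.

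The hard part is Fact B, the continuity of the reading at $\xi$, and this is where the cofinality hypothesis and genericity do the real work. I would prove it by a density argument. Fix a tail coordinate $\alpha$ (so that $\xi < \lambda^j_{\alpha,\beta}$ and $F^\beta_\xi(\alpha)$ is computed nontrivially), put $v = F^\beta_\xi(\alpha)$, and fix $w < v$; it suffices to find, by density below a condition of $G$, a condition together with some $\gamma < \xi$ witnessing $F^\beta_\gamma(\alpha) > w$. Taking any $q \le p^*$ in the dense set $D$ of Lemma \ref{scaledef} that computes $F^\beta_\xi(\alpha) = v$ via the decreasing enumeration $\alpha_0 > \dots > \alpha_{k-1} = \alpha$ of $\supp(q) \cap (\beta+1) \setminus \alpha$, the value $v$ is the image of $\xi$ under $f^q_{\alpha_{k-1}} \circ \dots \circ f^q_{\alpha_0}$, whose relevant restriction is a chain of order-preserving objects. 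Since each such object has domain of size below $\kappa_{\alpha_0} < \bar\kappa_\beta < \cf^{V[G]}(\xi)$, the ordinals below $\xi$ read by any single condition are bounded below $\xi$, so $\xi$ is never captured by one condition; I would exploit this, together with the freedom furnished by the well-definedness computations in the two CASE displays preceding Proposition \ref{unbdd}, to extend the topmost object of $q$ by an order-preserving piece sending some $\gamma$ just below $\xi$ to an image whose downward reading beats $w$. Order-preservation guarantees that as $\gamma \uparrow \xi$ the readings $F^\beta_\gamma(\alpha)$ increase toward $v$, so some readable $\gamma < \xi$ exceeds $w$, and genericity of $G$ supplies it. The main obstacle is precisely making this extension legitimate — it must respect the object and domain constraints, the squishability requirements, and the coherence of the reading chain all the way down to $\alpha$ — and checking that the extended reading really does exceed $w$ after passing through every lower coordinate; this is a bookkeeping-heavy but essentially routine adaptation of the computation already carried out in establishing that $F^\beta_\gamma$ is well defined.
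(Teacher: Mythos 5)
Your upper-bound clause is fine, but the exactness argument has a genuine gap: both Fact A (global pointwise monotonicity at a fixed coordinate) and Fact B (continuity of the reading at $\xi$) are false as stated, and they fail for structural reasons rather than bookkeeping ones. At a fixed coordinate $\alpha$, the value $F^\beta_\gamma(\alpha)$ is a genuine (nonzero, object-read) value only for those $\gamma$ that were incorporated into the relevant domains before the generic passed coordinate $\alpha$; for any $\gamma'$ whose witnessing condition $p^\beta(\gamma')$ has $\bar\alpha(\gamma') \geq \alpha$, the definition sets $F^\beta_{\gamma'}(\alpha)=0$, so $\gamma<\gamma'$ certainly does not give $F^\beta_\gamma(\alpha)\leq F^\beta_{\gamma'}(\alpha)$ at that $\alpha$ — Proposition \ref{unbdd} only yields $<_{bd}$ with a tail depending on the \emph{pair}, which is too weak for your final step $g(\alpha)<F^\beta_{\gamma_\alpha}(\alpha)\leq F^\beta_\gamma(\alpha)$ at every $\alpha$ simultaneously. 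Fact B fails even more decisively: compatibility with the one-step extension at the first support point above $\alpha$ forces every $\gamma$ readable at $\alpha$ into the domain of the object $\mu$ used there, a set of size below $\bar\kappa_\beta<\cf^V(\xi)$, hence bounded below $\xi$ — and this set is frozen once the generic passes $\alpha$. Objects are order-preserving but not continuous: when $\cf^V(\xi)>\lambda$ (a case your proposition must cover, e.g.\ $V$-regular $\xi$ as in Corollary \ref{beingscale}), even the model object satisfies $\mc_\alpha(d)(j_{E_\alpha}(\xi))=\xi>\sup(d\cap\xi)$, so measure-one many objects have $\mu(\xi)>\sup\mu[\dom(\mu)\cap\xi]$, the gap survives the downward reading, and $\sup_{\gamma<\xi}F^\beta_\gamma(\alpha)<F^\beta_\xi(\alpha)$ at such coordinates. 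Your repair — ``extend the topmost object of $q$ by an order-preserving piece'' — is not a legal move either: one may add ordinals to object domains by shrinking measure-one sets (Proposition \ref{addsmallset}), but the values $\mu(\gamma)$ are then dictated by reflection of $\mc_\alpha$, not freely chosen.

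The underlying problem is that you run the argument inside $V[G]$, choosing $\gamma_\alpha$ after the fact, whereas the needed pointwise comparisons can only be arranged \emph{before} the relevant coordinates are passed. The paper's proof therefore works in $V$ with a name $\dot h$ for an element of $\prod_\alpha \dot F_\xi(\alpha)$: at each coordinate it applies the strong Prikry property to decide $\dot h(\alpha)$ along measure-one many extensions, uses the fact that the measures at coordinates beyond $\alpha+1$ are complete well past $\lambda$ to stabilize the decided value to a single ordinal $\gamma_{\alpha,\psi}<\psi(\xi)$ for each $\psi$, and then represents $\gamma_\alpha=j_{E_\alpha}(\psi\mapsto\gamma_{\alpha,\psi})(\mc_\alpha(d_\alpha^q))<\xi$ so that on a measure-one set $\tau(\gamma_\alpha)=\gamma_{\alpha,\tau}$, i.e.\ the decided value of $\dot h(\alpha)$ \emph{is} $\dot F_{\gamma_\alpha}(\alpha)$. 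Only then does $\cf(\xi)>\eta$ give $\gamma^*=\sup_\alpha\gamma_\alpha<\xi$, and — this is the step your plan cannot reproduce — a single direct extension $s$ incorporates all the $\gamma_\alpha$ and $\gamma^*$ into every domain and every object from coordinate $0$ on (Claim \ref{geteub}), which is precisely what legitimizes the uniform pointwise inequality $\dot F_{\gamma_\alpha}(\alpha)<\dot F_{\gamma^*}(\alpha)$ at all coordinates and yields $s\Vdash \dot h<_{bd}\dot F_{\gamma^*}$. That retro-incorporation is available only because the $\gamma_\alpha$ are produced in $V$ before forcing; your after-the-fact $\gamma_\alpha$'s cannot be inserted into a generic that has already passed the coordinates where you need the comparison.
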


\begin{proof}

We have already checked in Proposition \ref{unbdd} that the sequence is $<_{bd}$-increasing.
It remains to show that the sequence is cofinal.
We will only show the case $\beta=\eta$.
Assume for simplicity that $\xi<\lambda_0^j$.

Clearly $F_\xi$ is an upper bound of $\langle F_{\xi^\prime}:\xi^\prime<\xi \rangle$.
Let $\dot{h}$ be a $\mathbb{P}$-name such that $p \Vdash \dot{h} \in \prod_{\alpha<\eta} \dot{F}_\xi(\alpha)$.
We are going to build a $\leq^*$-decreasing sequence $\langle q^\alpha: \alpha<\eta$ or $\alpha=-1 \rangle$.
Let $q^{-1}=p$. 
Suppose $q^\beta$ is built for all $\beta \in\alpha \cup \{-1\}$.
Let $(q^\alpha)^*$ be a $\leq^*$-lower bound of $\langle q^\beta: \beta \in \alpha \cup \{-1\} \rangle$.
Fix $\alpha<\eta$.
Let $D_\alpha=\{q: q$ decides $\dot{h}(\alpha)\}$.
By the strong Prikry property, find $q \leq^* (q^\alpha)^*$ witnessing the strong Prikry property of $D_\alpha$ with the corresponding finite set $I_\alpha$.
We assume that $\alpha,\alpha+1 \in I_\alpha$.
For $\vec{\mu} \in \prod\limits_{\gamma \in I_\alpha} A_\gamma^q$, write $\vec{\mu}=\langle \mu_\gamma \rangle_{\gamma \in I_\alpha}$. Define $Y_\alpha(\vec{\mu})$ as the value that $q+\vec{\mu}$ decides for $\dot{h}(\alpha)$.
It is easy to check that the decided value is less than the value $F_\xi(\alpha)$ computed by $q+\vec{\mu}$, which is $\mu_{\alpha+1}(\mu_\alpha(\xi))$.
For $\vec{\mu} \in \prod_{\gamma \in I_\alpha \setminus \alpha} A_\gamma^q$, define $Z_\alpha(\vec{\mu})=\sup_{\vec{\tau}}(Y_\alpha(\vec{\tau}^\frown \vec{\mu}))+1$.
Note that the number of possible such $\vec{\tau}$ is $\lambda_\alpha(\mu_\alpha)=s_\alpha(\mu_\alpha(\kappa_\alpha))$.
By our assumption on cofinality of $\xi$, we may assume that $\lambda_\alpha(\mu_\alpha)<\cf(\mu_{\alpha+1}(\mu_\alpha(\xi))$ (this can be done on a measure-one sets), so $Z_\alpha(\vec{\mu})<\mu_{\alpha+1}(\mu_\alpha(\xi))$ and for $\vec{\mu} \in \prod\limits_{\gamma \in I_\alpha \setminus \alpha} A_\gamma^q$, $q+\vec{\mu} \Vdash \dot{h}(\alpha)<Z_\alpha(\vec{\mu})$.
For $\gamma>\alpha+1$, let $A_\gamma^\prime=A_\gamma^{j_{E_{\alpha+1}}(q)+\mc_{\alpha+1}(d_{\alpha+1}^q)}$.
Then for $\vec{\mu} \in \prod\limits_{\gamma \in I_\alpha \setminus (\alpha+2)} A_\gamma^\prime$ and $\psi \in A_\alpha^q$ (recall that $A_\alpha^q=\mc_{\alpha+1}(d_{\alpha+1}^q) \circ (j_{E_{\alpha+1}}(A_\alpha^q))_{\mc_{\alpha+1}} \circ \mc_{\alpha+1}^{-1}(d_{\alpha+1}^q)$ as shown in Lemma \ref{aux}),

\begin{align*}
j_{E_{\alpha+1}}(q)+\langle \mc_{\alpha+1}(d_{\alpha+1}^q)^\frown \psi {}^\frown \vec{\mu} \rangle \Vdash j_{E_{\alpha+1}}(\dot{h}(\alpha))<\mc_{\alpha+1}(d_{\alpha+1}^q)(j_{E_\alpha+1}(\psi)j_{E_\alpha+1}(\xi)).
\end{align*}

Notice that the order of the objects is not increasing: $\mc_{\alpha+1}$ is an $\alpha+1$-object, while $\psi$ is an $\alpha$-object.
Also, $j_{E_{\alpha+1}}(q)+\langle \mc_{\alpha+1}(d_{\alpha+1}^q)^\frown \psi \rangle=j_{E_{\alpha+1}}(q)+\langle j_{E_{\alpha}}(\psi)^\frown \mc_{\alpha+1}(d_{\alpha+1}^q)\rangle$, which is why the value at the rightmost of the forcing relation is $\mc_{\alpha+1}(d_{\alpha+1}^q)(j_{E_\alpha+1}(\psi)j_{E_\alpha+1}(\xi))$.
Note that the last value in the forcing relation, which is $\mc_{\alpha+1}(d_{\alpha+1}^q)(j_{E_{\alpha+1}}(\psi)j_{E_{\alpha+1}}(\xi))$, is equal to $\psi(\xi)$.
Note that $\psi(\xi)<\lambda$.
Fix $\psi \in A_\alpha^q$.
For $\gamma \in I_\alpha \setminus (\alpha+2)$, $j_{E_{\alpha+1}}(A_\gamma^q)$ comes from a measure which is $j_{E_{\alpha+1}}(\kappa_\gamma)$-complete, and $j_{E_{\alpha+1}}(\kappa_\gamma)>\kappa_\alpha^j>\lambda$, we can inductively shrink $A_\gamma^*$ for $\gamma\in I_\alpha \setminus(\alpha+1)$ to $B_{\gamma,\psi}$ so that the following holds: there is $\gamma_{\alpha,\psi}<\psi(\xi)$ such that for all $\vec{\mu} \in \prod\limits_{\gamma \in I_\alpha \setminus (\alpha+2)} B_{\gamma, \psi}$,

\begin{align*}
j_{E_{\alpha+1}}(q)+\langle \mc_{\alpha+1}^\frown \psi{}^\frown \vec{\mu} \rangle \Vdash j_{E_{\alpha+1}}(\dot{h}(\alpha))=\gamma_{\alpha,\psi},
\end{align*}

which is equivalent to saying that for $\psi \in j_{E_{\alpha+1}}[A_\alpha^q]$, and $\vec{\mu} \in \prod\limits_{\gamma \in I_\alpha \setminus (\alpha+2)} B_{\gamma, \psi}$, 

\begin{align*}
j_{E_{\alpha+1}}(q)+\langle \psi^\frown \mc_{\alpha+1}{}^\frown \vec{\mu} \rangle \Vdash j_{E_{\alpha+1}}(\dot{h}(\alpha))=\gamma_{\alpha,j_{E_{\alpha+1}}^{-1}(\psi)}
\end{align*}
Let's replace the notation $\gamma_{\alpha,j_{E_{\alpha+1}}^{-1}(\psi)}$ by $\gamma_{\alpha,\psi}$.
Fix $\psi \in A_\alpha^q$.
By elementarity, there are measure-one sets $A_{\gamma,\psi} \subseteq A_\gamma^{q+\psi}$ for $\gamma \in I_\alpha \setminus (\alpha+1)$ such that for $\vec{\mu} \in \prod\limits_{\gamma \in I_\alpha \setminus (\alpha+1)} A_{\gamma,\psi}$, 

\begin{align*}
q+\langle \psi^\frown \vec{\mu} \rangle \Vdash \dot{h}(\alpha)=\mu_{\alpha+1}(\gamma_{\alpha,\psi}).
\end{align*}

For $\gamma \in I_\alpha \setminus (\alpha+1)$, let $A_\gamma^*=\triangle_{\psi \in A_\alpha^q} A_{\gamma,\psi}$.
Shrink all measure-one sets $A_\gamma^q$ for such $\gamma$ in $q$ to $A_\gamma^*$ and call the new condition $q^\prime$.
Let $\gamma_\alpha=j_{E_\alpha}(\psi \mapsto \gamma_{\alpha,\psi})(\mc_\alpha(d_\alpha^q))$.
Then $\gamma_\alpha<\lambda_0^j$.
Extend $q^\prime$ to $q^*$ so that $\gamma_\alpha \in d_\alpha^{q^*}$.
Then note that

\begin{align*}
j_{E_\alpha}(\tau \mapsto \tau(\gamma_\alpha))(\mc_\alpha(d_\alpha^{q^*}))&=\gamma_\alpha\\
&=j_{E_\alpha}(\psi \mapsto \gamma_{\alpha,\psi})(\mc_\alpha(d_\alpha^q))\\
&= j_{E_\alpha}(\tau \mapsto \gamma_{\alpha,\tau \restriction d_\alpha^q})(\mc_\alpha(d_\alpha^{q^*})).
\end{align*}

Then there is a measure-one set $A_\alpha^* \in E_\alpha(d_\alpha^{q^*})$ such that for $\tau \in A_\alpha^*$, $\tau(\gamma_\alpha)=\gamma_{\alpha,\tau \restriction d_\alpha^q}$.
Shrink the $\alpha$-th measure-one set to in $q^*$ to $A_\alpha^*$ and call the final condition $q^\alpha$.
Replace the notation $\gamma_{\alpha,\tau \restriction d_\alpha^q}$ by $\gamma_{\alpha,\tau}$.
We see that for $\tau \in A_\alpha^{q^\alpha}$, $\mu \in A_{\alpha+1}^{q^\alpha}$, and $\vec{\sigma} \in \prod_{\gamma \in I_\alpha \setminus (\alpha+1)} A_\gamma^{q^\alpha}$, we have

\begin{align*}
q^\alpha+\langle \tau^\frown \mu {}^\frown \vec{\sigma} \rangle \Vdash \dot{h}(\alpha)=\mu(\tau(\gamma_\alpha))=\dot{F}_{\gamma_\alpha}(\alpha).  
\end{align*}

Let $r$ be a $\leq^*$-lower bound of $\langle q^\alpha: \alpha<\eta \rangle$, and $\gamma^*=\sup_\alpha \gamma_\alpha$.
This is less than $\xi$ since $\cf(\xi)>\eta$.

\begin{claim}
\label{geteub}
There is $s \leq^*r$ such that $s \Vdash \forall \alpha \forall \beta<\eta (\dot{F}_{\gamma_\beta}(\alpha)<\dot{F}_{\gamma^*}(\alpha))$.

\end{claim}

\begin{claimproof}{(Claim \ref{geteub})}

Note that we assume $\xi<\lambda_0^j$.
Let $s \leq^*r$ be such that $\gamma_\alpha$ for all $\alpha<\eta$,  and $\gamma^*$, are in $d_0^s$ and for $\beta<\eta$, $\mu \in A_\beta^s$, we assume that $\{\gamma_\alpha: \alpha<\eta\} \cup \{\gamma^*\} \subseteq \dom(\mu)$.
Since objects are order-preserving, we have that $s \Vdash \forall \alpha<\eta \forall \beta<\eta (\dot{F}_{\gamma_\beta}(\alpha)<\dot{F}_{\gamma^*}(\alpha))$.

\end{claimproof}{(Claim \ref{geteub})}

We now show that $s \Vdash \dot{h}<_{bd} \dot{F}_{\gamma^*}$.
Fix an $\alpha<\eta$.
Let $s^\prime \leq s$.
By the density, assume that $I_\alpha \subseteq \supp(s^\prime)$, so $s^\prime$ decide $\dot{h}(\alpha)$.
Note that $s^\prime \leq q^\alpha +\vec{\mu}$ for some $\vec{\mu} \in \prod\limits_{\gamma \in I_\alpha} A_\gamma^{q^\alpha}$, so $s^\prime \Vdash \dot{h}(\alpha)=\dot{F}_{\gamma_\alpha}(\alpha)<\dot{F}_\gamma(\alpha)$.

\end{proof}

\begin{coll}
\label{beingscale}
In $V[G]$, for $\beta \leq \eta$ limit.
Let $\xi \in (\lambda_\beta,\theta_\beta)$ be regular.
Then $\langle F^\beta_\gamma: \gamma \in [\bar{\kappa}_\beta,\xi) \rangle$ is a scale in $\prod_{\alpha<\eta} F_\xi^\beta(\alpha)$.

\end{coll}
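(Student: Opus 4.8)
The plan is to derive the corollary from Proposition~\ref{scale2} together with the remark following the definition of an exact upper bound, which reduces the four clauses of Definition~\ref{defscale} to three facts: that $F_\xi^\beta$ is an eub of $\langle F_\gamma^\beta:\gamma\in[\bar\kappa_\beta,\xi)\rangle$, that $F_\xi^\beta(\alpha)$ is a regular cardinal of $V[G]$ for all sufficiently large $\alpha$, and that $\langle F_\xi^\beta(\alpha)\rangle$ is increasing on a tail with supremum $\bar\kappa_\beta$. The first fact is exactly Proposition~\ref{scale2}, and the $<_{bd}$-increasing clause of Definition~\ref{defscale} is Proposition~\ref{unbdd}. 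As in the previous proofs I would carry out the case $\beta=\eta$ and obtain the general $\beta<\eta$ case by the same argument applied to the shorter forcing $\mathbb{P}_{\langle e_{\alpha,\beta}(f_\beta^{p^\beta}):\alpha<\beta\rangle}$, to which $(\mathbb{P}/p^\beta)\restriction\beta$ is equivalent.

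For the monotonicity and the supremum I would unwind the definition of $F_\xi^\beta(\alpha)$: by Lemma~\ref{scaledef} its value is computed, on a dense set below $p^\beta$, as a composition of order-preserving objects applied to $\xi$, whose outermost map is (an overwrite by) an $\alpha$-object $\mu$ with $\rge(\mu\restriction\kappa_\alpha^j)\subseteq\kappa_\alpha$ and $\mu(\kappa_\alpha)>\bar\kappa_\alpha$. Hence on a tail one gets $\bar\kappa_\alpha<F_\xi^\beta(\alpha)<\kappa_\alpha\le\bar\kappa_{\alpha+1}$, so the sequence is increasing past any fixed stage and $\sup_\alpha F_\xi^\beta(\alpha)=\sup_\alpha\kappa_\alpha=\bar\kappa_\beta$. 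This simultaneously shows that $\prod_{\alpha<\beta}F_\xi^\beta(\alpha)$ is a product cofinal in the singular cardinal $\bar\kappa_\beta$, whose cofinality is $\cf(\beta)$, matching clause (1). Cofinality of the scale (clause (4)) then falls out of the eub property: any $h$ in the product satisfies $h<_{bd}F_\xi^\beta$, so by the defining property of an eub some $F_\gamma^\beta$ dominates it.

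The regularity clause is where the real work lies, and I expect it to be the main obstacle. Since $\xi$ is regular, for each $\alpha$ the collection of $\alpha$-objects $\mu$ with the pushed-down value of $\xi$ regular is of measure one, because $j_{E_\alpha}(\xi)$ is regular in $M_\alpha$ and $\mc_\alpha$ reflects this (by \L{}o\'s); shrinking the measure-one sets accordingly and arguing by density I would force $F_\xi^\beta(\alpha)$ to be a $V$-regular cardinal in $(\bar\kappa_\alpha,\kappa_\alpha)$. The delicate point is that such a cardinal need not stay regular in $V[G]$: by Theorem~\ref{cardinalstructure} the sub-interval $(\bar\kappa_\alpha,\lambda_\alpha^+)$ is collapsed (indeed singularized to cofinality $\cf(\alpha)$) at limit $\alpha$, and $F_\xi^\beta(\alpha)<\lambda_\alpha$ lands there. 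I would resolve this by passing to the cofinal set of successor coordinates, where the whole interval $(\kappa_{\alpha-1},\kappa_\alpha]$ consists of preserved cardinals by the cardinal-preservation results of Section~\ref{cardinalpreserve}; this set has order type $\beta$, so reindexing the scale along it (scale-hood depending only on tails) yields genuine regular values while preserving the eub and increasing properties. Combining preserved regularity with the facts assembled above then verifies all clauses of Definition~\ref{defscale}, so $\langle F_\gamma^\beta:\gamma\in[\bar\kappa_\beta,\xi)\rangle$ is a scale in $\prod_{\alpha<\beta}F_\xi^\beta(\alpha)$.
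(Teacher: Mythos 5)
Your overall skeleton (read the corollary off Proposition \ref{scale2} via the remark following the eub definition, check tail-monotonicity and the supremum $\bar\kappa_\beta$ by order-preservation of objects, get cofinality of the scale from the eub property, get $V$-regularity of the values by \L{}o\'s reflection since $\xi$ is regular in $M_\alpha$, and reduce $\beta<\eta$ to the quotient forcing) is exactly the intended derivation. But the step you flag as ``the delicate point'' rests on a false inequality, and your repair does not prove the stated corollary. You claim $F^\beta_\xi(\alpha)<\lambda_\alpha$, so that the value lands in the collapsed interval $(\bar\kappa_\alpha,\lambda_\alpha^+)$. The inequality goes the other way: the hypothesis $\xi>\lambda_\beta$ is there precisely to prevent this. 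The push-down of $\lambda_\beta$ at coordinate $\alpha$ is exactly $\lambda_\alpha=s_\alpha(f^p_\alpha(\kappa_\alpha))$ (Definition \ref{alphaobject}, clause (8): every $\alpha$-object $\mu$ satisfies $\mu(\lambda)=s_\alpha(\mu(\kappa_\alpha))$, and the conjugations respect this), so order-preservation of the objects in the composition computing $F^\beta_\xi(\alpha)$ gives $F^\beta_\xi(\alpha)>\lambda_\alpha$. Hence a $V$-regular value is $\geq\lambda_\alpha^+$, i.e.\ it lies in the interval $[\lambda_\alpha^+,\kappa_\alpha)$ which Theorem \ref{cardinalstructure} says is preserved — including at limit coordinates $\alpha$. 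This is consistent with Proposition \ref{verygoodpt}: for the least admissible $\xi=(\lambda^+)^V$ the values are $s_\alpha(f_\alpha(\kappa_\alpha))^+=\lambda_\alpha^+$, the first preserved cardinal above $\bar\kappa_\alpha$ in $V[G]$. The paper makes this point explicitly right after the corollary: ``we need that $\xi>\lambda_\beta$ because all cardinals in the interval $(\bar\kappa_\beta,\lambda_\beta]$ are collapsed.''

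Because the problem you set out to solve does not occur, the reindexing device is unnecessary; worse, it would not yield the corollary even if the problem were real. Restricting to the successor coordinates produces a $<_{bd}$-increasing cofinal family in the sub-product $\prod_{\alpha\in S}F^\beta_\xi(\alpha)$ for $S$ the set of successors, which is a different object from the product $\prod_{\alpha}F^\beta_\xi(\alpha)$ over all coordinates that the corollary asserts; if the limit-coordinate values were genuinely singular in $V[G]$, the statement as written would simply be false and no reindexing could recover it. So the fix should be deleted and replaced by the computation above: shrink measure-one sets (as the paper does with the $v_\xi$ functions when $\xi<\kappa_0^j$, or by adding ``$\mu(\xi)$ is regular'' to the object requirements, which reflects from $\mc_\alpha$ since $\mc_\alpha(j_{E_\alpha}(\xi))=\xi$ is regular in $M_\alpha$) so that densely the computed value is a $V$-regular cardinal in $(\lambda_\alpha,\kappa_\alpha)$, and then quote Theorem \ref{cardinalstructure} for its regularity in $V[G]$. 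With that correction the rest of your argument goes through and coincides with the paper's route.
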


In Collorary \ref{beingscale}, we need that $\xi>\lambda_\beta$ because all cardinals in the interval $(\bar{\kappa}_\beta,\lambda_\beta]$ are collapsed in $V[G]$ (as in Theorem \ref{cardinalstructure}).
We may read off $F_\xi^\beta$ for $\xi<\kappa_0^j$ from a generic object in a simpler way if we assume that for $\alpha<\eta$, there is $v_\alpha: \kappa_\alpha \to \kappa_\alpha$ such that $j_{E_\alpha}(v_\alpha)(\kappa_\alpha)=\xi$.
Then one can verify that in the extension, $F_\xi(\alpha)=v_\xi(f_\alpha^p(\kappa_\alpha))$ for some $p \in G$ with $\alpha \in \supp(p)$.
In particular, if $\xi \in (\lambda,\kappa_0^j)$ is regular, then $\langle F_\gamma: \gamma \in [\bar{\kappa}_\beta,\xi) \rangle$ is a scale in $\prod\limits_{\alpha<\eta} v_\xi(f^p_\alpha(\kappa_\alpha))$ for $p \in G$ with $\alpha \in \supp(p)$.
Note that since $j_{E_\alpha}(s_\alpha)(\kappa_\alpha)=\lambda$ and ${}^{\lambda} \Ult(V,E_\alpha) \subseteq \Ult(V,E_\alpha)$, the function $s_\alpha^\prime:\kappa_\alpha \to \kappa_\alpha$ defined by $s_\alpha^\prime(\gamma)=s_\alpha(\gamma)^+$ represents $\lambda^+$ in $\Ult(V,E_\alpha)$.
Hence $\langle F_\gamma \in [\bar{\kappa}_\eta,(\lambda^+)^V) \rangle$ is a scale in $\prod\limits_{\alpha<\eta} s_\alpha(f_\alpha(\kappa_\alpha))^+$ when $f_\alpha=f_\alpha^p$ for some $p \in G$ with $\alpha \in \supp(p)$.
A similar argument explains a situation for limit $\beta<\eta$.
Thus we have the following.

\begin{prop}
\label{verygoodpt}
In $V[G]$, for $\beta \leq \eta$ limit, $\langle F^\beta_\gamma: \gamma \in [\bar{\kappa}_\beta,(\bar{\kappa}_\beta^+)^{V[G]})\rangle$ is a scale in $\prod\limits_{\alpha<\eta} s_\alpha(f_\alpha(\kappa_\alpha))^+$.
Every $\gamma \in (\bar{\kappa}_\beta,(\bar{\kappa}_\beta^+)^{V[G])} \rangle$ with $\beta<\cf^V(\gamma)<\bar{\kappa}_\beta$ is very good.
As a consequence, the scale is very good.

\end{prop}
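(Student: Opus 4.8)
The plan is to separate the statement into its two assertions and dispatch the scale part by citation, reserving the real work for very-goodness. For the scale part I would set $\xi=(\bar\kappa_\beta^+)^{V[G]}$; by Theorem \ref{cardinalstructure} this equals $(\lambda_\beta^+)^V$, so it is a regular cardinal of $V[G]$ lying in $(\lambda_\beta,\theta_\beta)$, and Corollary \ref{beingscale} immediately gives that $\langle F^\beta_\gamma:\gamma\in[\bar\kappa_\beta,\xi)\rangle$ is a scale, while the computation recorded just before the statement identifies $F^\beta_\xi(\alpha)$ with $s_\alpha(f_\alpha(\kappa_\alpha))^+$. Before the main argument I would record two facts I use repeatedly: first, $\cf^{V[G]}(\bar\kappa_\beta)=\cf(\beta)$ (the $\kappa_\alpha$ are preserved and $\bar\kappa_\beta=\sup_{\alpha<\beta}\kappa_\alpha$), so the relevant $\cf(\rho)$ is $\cf(\beta)$; second, a coordinate threshold below $\beta$ is admissible, since fixing a cofinal sequence in $\beta$ of order type $\cf(\beta)$, any $\alpha^*<\beta$ sits below all but fewer than $\cf(\beta)$ of its entries and hence translates to a threshold below $\cf(\rho)$ as demanded by the definition of a very good point.

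Next I would fix $\gamma$ with $\cf(\beta)<\mu<\bar\kappa_\beta$, where $\mu=\cf^V(\gamma)$ (I will run the argument under this weaker hypothesis, which already contains the stated case $\beta<\cf^V(\gamma)$). Choosing $\alpha_0<\beta$ with $\mu<\kappa_{\alpha_0}$, I would first verify via Theorem \ref{cardinalstructure} that $\cf^{V[G]}(\gamma)=\mu$: if $\mu$ were collapsed it would lie in some $(\bar\kappa_{\alpha'},\lambda_{\alpha'}^+)$ with $\alpha'<\beta$ limit and its $V[G]$-cofinality would drop to $\cf(\alpha')$, but the points forced to be very good are exactly those of $V[G]$-cofinality $>\cf(\beta)$, which (again by the cardinal structure) forces $\mu$ to be a preserved regular cardinal, whence $\cf^{V[G]}(\gamma)=\mu$ by the standard preserved-regular-cardinal argument. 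I would then fix in $V$ a continuous increasing cofinal sequence $\vec\delta=\langle\delta_i:i<\mu\rangle$ in $\gamma$; its range $C$ is the candidate witnessing club, of order type $\mu=\cf^{V[G]}(\gamma)$.

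The crux is to produce, below any prescribed condition, a single condition $q\in G$ that simultaneously captures all $\mu$ of the $\delta_i$ at every coordinate $\alpha\in[\alpha_0,\beta)$. I would build a $\le^*$-decreasing chain $\langle q_i:i<\mu\rangle$ in which step $i$ inserts $\delta_i$ — together with the finitely many intermediate ordinals arising when $F^\beta_{\delta_i}(\alpha)$ is computed along the decreasing sequence of support coordinates, exactly as in Lemma \ref{scaledef} — into the domain of each Cohen part $f^{q_i}_\alpha$, and shrinks each measure-one set $A^{q_i}_\alpha$ so that every object it contains has all these ordinals in its domain; the latter shrinking is legitimate by Proposition \ref{addsmallset}, and the $\mu<\kappa_{\alpha_0}\le\kappa_\alpha$ many such shrinkings are absorbed by the $\kappa_\alpha$-completeness of $E_\alpha(\cdot)$. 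A lower bound $q$ exists because the direct-extension order on the tail $(\mathbb{P}/p)\setminus\alpha_0$ is $\kappa_{\alpha_0}$-closed, hence $\mu^+$-closed, by Lemma \ref{factorization}. Since the collection of such $q$ below a fixed $p_0\in G$ is dense, $G$ meets it. I expect \emph{this} to be the main obstacle: reconciling the circularity between which intermediate composition values appear and which objects one retains, while balancing measure completeness against direct-extension closure so that the capture is genuinely uniform across all tail coordinates and all $\mu$ indices at once.

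Finally, for any such $q\in G$ and any $\alpha\in[\alpha_0,\beta)$, the value $F^\beta_{\delta_i}(\alpha)$ is computed from an extension of $q$ as one and the same composition of the order-preserving objects sitting at the coordinates of $\supp(q)\cap\beta$ between $\alpha$ and $\gamma$, applied to the varying input $\delta_i$ — this is precisely the composition verified in the well-definedness discussion and in Lemma \ref{scaledef}. Order-preservation of objects then yields $F^\beta_{\delta_i}(\alpha)<F^\beta_{\delta_{i'}}(\alpha)$ whenever $\delta_i<\delta_{i'}$, and $q$ forces this for all $i<i'<\mu$ and all $\alpha>\alpha^*$ with $\alpha^*=\max(\alpha_0,\max(\supp(q)\cap\beta)+1)<\beta$; this is the uniform version of the monotonicity mechanism of Proposition \ref{unbdd}. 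Hence $C$ and $\alpha^*$ witness that $\gamma$ is very good. The final clause then follows: since the argument used only $\cf(\beta)<\mu<\bar\kappa_\beta$ with $\mu$ preserved, and by Theorem \ref{cardinalstructure} every index of $V[G]$-cofinality exceeding $\cf(\beta)$ has exactly this form, every point that is required to be very good is very good, so the scale is very good.
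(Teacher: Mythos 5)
Your proposal is correct and follows essentially the same route as the paper: fix a ground-model club $C\subseteq\gamma$ of order type $\cf(\gamma)<\kappa_{\alpha_0}$, densely arrange that $C$ lies in the domains of all Cohen parts and all objects at coordinates $\geq\alpha_0$, and then read off uniform monotonicity of $F^\beta_{\delta}(\alpha)$ for $\delta\in C$ from the order-preservation of objects, with the scale assertion dispatched through Corollary \ref{beingscale} exactly as in the discussion preceding the proposition. The only difference is cosmetic: your transfinite $\leq^*$-decreasing chain of length $\mu$ is unnecessary, since $|C|<\kappa_{\alpha_0}$ means Proposition \ref{addsmallset} (together with the $\lambda$-sized Cohen domains) lets you insert the entire club in a single direct extension, which is what the paper does.
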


\begin{proof}

We prove only the case $\beta=\eta$.
Recall that $(\lambda^+)^V=(\bar{\kappa}_\eta^+)^{V[G]}$.
In $V$, let $\gamma \in [\bar{\kappa}_\eta,\lambda^+)$ with $\eta<\cf^{V[G]}(\gamma)<\bar{\kappa}_\eta$.
By Theorem \ref{cardinalstructure}, we have that $\cf^V(\gamma)<\bar{\kappa}_\eta$.
Assume $\cf(\gamma)<\kappa_{\alpha_0}$.
Let $C \subseteq \gamma \setminus \bar{\kappa}_\eta$ be a club of order-type $\cf(\gamma)$.
Extend $p$ to $q$ such that $C \subseteq d_{\alpha_0}^q$, and for $\alpha \geq \alpha_0$ and $\mu \in A_\alpha^q$, $C \subseteq \dom(\mu)$.
It is now easy to see that $q \Vdash \forall \beta_0,\beta \in C(\beta_0<\beta_1 \implies \forall \alpha \geq \alpha_0(F_{\beta_0}(\alpha)<F_{\beta_1}(\alpha)))$.

\end{proof}

\section{conclusion}
\label{conclusion}
We prove the following theorem:

\begin{thm}
\label{conclude}

Assume GCH.
Assume the result of Theorem \ref{deriveext}, where the length of the sequence of extenders is a regular cardinal $\eta$ (including $\omega$).
Assume that in the context of Theorem \ref{deriveext}, $|j_{E_\alpha}(\lambda)|^V$ is regular for all $\alpha<\eta$.
There is a $\lambda^{++}$-c.c. forcing notion $\mathbb{P}=\mathbb{P}_{\langle E_\alpha: \alpha<\eta \rangle }$ such that

\begin{enumerate}

\item $\mathbb{P}$ satisfies the Prikry property and the strong Prikry property.

\item let $G$ be $\mathbb{P}$-generic.
For limit $\alpha<\eta$, let $\lambda_\alpha=s_\alpha(f_\alpha^p(\kappa_\alpha))$ where $j_{E_\alpha}(s_\alpha)(\kappa_\alpha)=\lambda$.
All cardinals in the intervals $(\bar{\kappa}_\alpha,\lambda_\alpha^+)$, for $\alpha<\eta$ limit, and $(\bar{\kappa}_\eta,\lambda^+)$ are collapsed.
Other cardinals are preserved.

\item If $\langle |j_{E_\alpha}(\lambda)|^V: \alpha<\eta \rangle$ is eventually constant, then $2^{\bar{\kappa}_\eta}=\sup_{\alpha<\eta}|j_{E_\alpha}(\lambda)|$, otherwise $2^{\bar{\kappa}_\eta}=(\sup_{\alpha<\eta}|j_{E_\alpha}(\lambda)|)^+$.

\item Fix a limit ordinal $\beta<\eta$, $p \in G$ and $\beta \in \supp(p)$.
For $\alpha<\beta$, let $\lambda_{\alpha,\beta}^j=t_\beta^\alpha(f_\beta^p(\kappa_\beta))$.
Then if $\langle |\lambda_{\alpha,\beta}^j|^V: \alpha<\beta \rangle$ is eventually constant, then $2^{\bar{\kappa}_\beta}=\sup_{\alpha<\beta} |\lambda_{\alpha,\beta}^j|$.
Otherwise, $2^{\bar{\kappa}_\beta}=(\sup_{\alpha<\beta} |\lambda_{\alpha,\beta}^j|)^+$.

\end{enumerate}

\end{thm}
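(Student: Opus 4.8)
The plan is to derive Theorem~\ref{conclude} by assembling the results established throughout Sections~\ref{forcinganyextender}--\ref{blowinguppowersets}, since this final theorem is a consolidation rather than a genuinely new argument. Throughout I work under the standing hypotheses of Theorem~\ref{deriveext} with $|j_{E_\alpha}(\lambda)|^V$ regular, which is exactly the setting in which $\mathbb{P}=\mathbb{P}_{\langle E_\alpha:\alpha<\eta\rangle}$ was defined. Clause~(1) is then immediate: the Prikry property and the strong Prikry property for $\mathbb{P}$ are precisely the two theorems proved for arbitrary regular $\eta$ (including $\omega$) at the end of Section~\ref{forcinganyextender}, and the $\lambda^{++}$-chain condition is the lemma immediately preceding them. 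I would simply cite these.

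For clause~(2) I would invoke Theorem~\ref{cardinalstructure}, which already records the full cardinal structure: the intervals $(\bar{\kappa}_\alpha,\lambda_\alpha^+)$ for limit $\alpha<\eta$ and $(\bar{\kappa}_\eta,\lambda^+)$ are collapsed while all other $V$-cardinals are preserved. The one point needing comment is that $\lambda_\alpha=s_\alpha(f_\alpha^p(\kappa_\alpha))$ does not depend on the choice of $p\in G$ with $\alpha\in\supp(p)$: once $\alpha$ enters the support, $f_\alpha(\kappa_\alpha)$ is fixed under all further extensions (as noted after the definition of $\leq^*$), so by genericity $\lambda_\alpha$ is well-defined, exactly as $\lambda_\beta$ and $\theta_\beta$ were seen to be well-defined in the discussion following Lemma~\ref{scaledef}.

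Clause~(3) is the main theorem of Section~\ref{blowinguppowersets}, which computes $2^{\bar{\kappa}_\eta}$ as $|\bar{\lambda}_\eta^j|$ or $|\bar{\lambda}_\eta^j|^+$ according as $\langle|\lambda_\alpha^j|^V:\alpha<\eta\rangle$ is eventually constant or not. To match the phrasing of the theorem I would record the cardinal-arithmetic identity $|\bar{\lambda}_\eta^j|=\sup_{\alpha<\eta}|\lambda_\alpha^j|=\sup_{\alpha<\eta}|j_{E_\alpha}(\lambda)|$: since $|\lambda_\alpha^j|\leq|\sup_{\beta<\eta}\lambda_\beta^j|$ for each $\alpha$ we get $\sup_{\alpha}|\lambda_\alpha^j|\leq|\bar{\lambda}_\eta^j|$, while conversely $|\bar{\lambda}_\eta^j|\leq|\eta|\cdot\sup_{\alpha}|\lambda_\alpha^j|=\sup_{\alpha}|\lambda_\alpha^j|$ because $\eta<\kappa_0\leq\lambda_\alpha^j$. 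Substituting this identity into the conclusion of Section~\ref{blowinguppowersets} gives clause~(3) verbatim.

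The only clause requiring genuine (though still routine) work is clause~(4), and this is where I expect the sole obstacle. I would fix a limit $\beta<\eta$ and $p\in G$ with $\beta\in\supp(p)$, and use the factorization of Lemma~\ref{factorization} together with the identification made before Theorem~\ref{cardinalstructure} to realize the initial-segment forcing $(\mathbb{P}/p)\restriction\beta$ as the poset $\mathbb{P}_{\langle e_{\alpha,\beta}(f_\beta^p):\alpha<\beta\rangle}$ built from the coherent sequence of restricted extenders $e_{\alpha,\beta}(f_\beta^p)=E_\alpha\restriction t_\beta^\alpha(f_\beta^p(\kappa_\beta))$, whose supercompactness targets are $\lambda_{\alpha,\beta}^j=t_\beta^\alpha(f_\beta^p(\kappa_\beta))$ and whose top value $\theta_\beta=\sup_{\alpha<\beta}\lambda_{\alpha,\beta}^j$ plays the role of $\bar{\lambda}_\eta^j$. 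The power-set analysis of Section~\ref{blowinguppowersets}---in particular Proposition~\ref{unbdd}, which is already stated for an arbitrary limit $\beta\leq\eta$, for the lower bound, and the nice-name counting for the upper bound---then applies to this factor of length $\beta$ and yields $2^{\bar{\kappa}_\beta}=\sup_{\alpha<\beta}|\lambda_{\alpha,\beta}^j|$ or its successor, according to the eventual-constancy dichotomy. The care needed here is twofold: first, to check that $\langle e_{\alpha,\beta}(f_\beta^p):\alpha<\beta\rangle$ does satisfy the coherence and reflection hypotheses required to run the Section~\ref{blowinguppowersets} argument, which is guaranteed by Definition~\ref{reflect} and Lemma~\ref{los} since $f_\beta^p(\kappa_\beta)$ is $\beta$-reflected; and second, to confirm, again by genericity and the fixing of $f_\beta(\kappa_\beta)$ under extension, that $\lambda_{\alpha,\beta}^j$ is independent of the chosen $p$. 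With these two checks in place all four clauses are established.
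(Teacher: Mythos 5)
Your proposal is correct and takes essentially the same route as the paper: Theorem \ref{conclude} is presented there precisely as a consolidation, with clause (1) coming from the Prikry and strong Prikry theorems of Section \ref{forcinganyextender} together with the $\lambda^{++}$-c.c.\ lemma, clause (2) from Theorem \ref{cardinalstructure}, clause (3) from the final theorem of Section \ref{blowinguppowersets}, and clause (4) from the factorization $(\mathbb{P}/p)\restriction\beta\cong\mathbb{P}_{\langle e_{\alpha,\beta}(f_\beta^p):\alpha<\beta\rangle}$ combined with the $F_\gamma^\beta$-machinery (Lemma \ref{scaledef}, Proposition \ref{unbdd}), which the paper already set up for arbitrary limit $\beta\leq\eta$. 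The supplementary checks you flag---well-definedness of $\lambda_\alpha$ via the invariance of $f_\alpha(\kappa_\alpha)$ under extension, the identity $|\bar{\lambda}_\eta^j|=\sup_{\alpha<\eta}|j_{E_\alpha}(\lambda)|$, and the coherence and reflection properties of the restricted sequence guaranteed by Definition \ref{reflect} and Lemma \ref{los}---are exactly the glue the paper leaves implicit, and they are correct.
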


We point out an interesting phenomenon, thanks to a discussion with Gitik.
If $\kappa$ is a singular cardinal in a ground model, then it is possible to preserve $\kappa$, collapse $\kappa^+$, and preserve all cardinals above $\kappa^+$.
Here is our explanation.
When we set $\lambda=(\sup_{\alpha<\eta}\kappa_\alpha)^+$ in Theorem \ref{conclusion}, then the only cardinal above $\sup_{\alpha<\eta} \kappa_\alpha$ which is collapsed is $(\sup_{\alpha<\eta} \kappa_\alpha)^+$.
All the cardinals above are preserved in an extension.

\begin{rmk}

\begin{enumerate}

\item The assumption for Theorem \ref{conclude} that $|\lambda_\alpha^j|^V$ is regular can be relaxed.
We actually just need $\cf(|\lambda_\alpha^j|^V) > \lambda^+$ to obtain the same cardinal arithmetics.
It is still possible to weaken the GCH assumption to build the forcing, and the value of $2^{\bar{\kappa}_\eta}$ will depend on the cardinal arithmetic assumption assumed in the ground model.

\item The functions $u_\alpha^\beta$ for $\beta \leq \alpha$ are not necessary to define the forcing at all,
In fact, the functions $s_\alpha,t_\alpha^\beta,u_\alpha^\beta$ defined at the beginning of Section \ref{analysisofextenders} are not necessary.
Here is the reason: for each $\alpha$-object $\mu$, we have $s_\alpha(\mu(\kappa_\alpha))=\mu(\lambda)$, $t_\alpha^\beta(\mu(\kappa_\alpha))=\mu(\lambda_\beta^j)$ and $u_\alpha^\beta(\mu(\kappa_\alpha))=\mu(\bar{\lambda}_\beta^j)$.
Instead of imposing the requirements for $\gamma$ to be $\alpha$-reflected in Definition \ref{reflect}, we can add more corresponding requirements to $\mu$ being an $\alpha$-object in Definition \ref{alphaobject}, for example,  since $\lambda=j_{E_\alpha}(\mu \mapsto \mu(\lambda))(\mc_\alpha(d_\alpha))$, we can say that every $\alpha$-object $\mu$ is such that $\mu(\lambda)$ is regular.
Hence, Lemma \ref{woodinscc1} is enough to build a desired sequence of extenders that is enough to build the forcing.
However, definability of important cardinals helps keeping track of the reflection phenomenon in a slightly easier way.

\end{enumerate}

\end{rmk}

\bibliographystyle{ieeetr}
\bibliography{supercompactsquishedforcingreference}

\begin{thebibliography}{1}

\bibitem{gitik2019}
M.~Gitik, ``Blowing up the power of a singular cardinal of uncountable
  cofinality,'' {\em The Journal of Symbolic Logic}, vol.~84, no.~4,
  p.~1722–1743, 2019.

\bibitem{jir2021}
S.~Jirattikansakul, ``Blowing up the power of a singular cardinal of
  uncountable cofinality with collapses.'' submitted.

\bibitem{gitik2021nonap}
M.~Gitik, ``Another method for constructing models of not approachability and
  not {SCH},'' {\em Archive for Mathematical Logic}, vol.~60, pp.~469--475,
  2021.

\bibitem{benhayutunger2021}
O.~Ben-Neria, Y.~Hayut, and S.~Unger, ``Stationary reflection and the failure
  of {SCH}.'' submitted.

\bibitem{merimovich}
C.~Merimovich, ``Supercompact extender based prikry forcing,'' {\em Archive of
  Mathematical Logic}, vol.~50, no.~5-6, pp.~592--601, 2011.

\bibitem{higherinf}
A.~Kanamori, {\em The Higher Infinite}.
\newblock Springer-Verlag Berlin Heidelberg, 2003.

\bibitem{handbook}
M.~Foreman and A.~Kanamori, {\em Handbook of Set Theory}.
\newblock Springer Netherlands, 2010.

\bibitem{perlmutterthesis}
N.~L. Perlmutter, {\em Inverse limits of models of set theory and the large
  cardinal hierarchy near a high-jump cardinal}.
\newblock PhD thesis, 2013.

\end{thebibliography}

\textsc{School of Mathematical Sciences, Tel Aviv University, Tel Aviv-Yafo,  Israel, 6997801}
\newline
\textit{E-mail address}:
\texttt{jir.sittinon@gmail.com}

\end{document}